\numberwithin{figure}{section}
\numberwithin{equation}{section}
\title{Fuss--Catalan algebras on generalized Dyck paths via non-crossing partitions}
\author[K.~Shigechi]{Keiichi~Shigechi}	
\email{k1.shigechi AT gmail.com}
\date{\today}
\newcommand\tikzpic[2]{
\raisebox{#1\totalheight}{
\begin{tikzpicture}
#2
\end{tikzpicture}
}}
\newtheorem{theorem}[figure]{Theorem}
\newtheorem{example}[figure]{Example}
\newtheorem{lemma}[figure]{Lemma}
\newtheorem{defn}[figure]{Definition}
\newtheorem{prop}[figure]{Proposition}
\newtheorem{cor}[figure]{Corollary}
\newtheorem{remark}[figure]{Remark}
\begin{document}

\begin{abstract}
We study the Fuss--Catalan algebras, which are generalizations of the Temperley--Lieb 
algebra and act on generalized Dyck paths, through non-crossing partitions.
First, the Temperley--Lieb algebra is defined on non-crossing partitions, and 
a bijection between a Dyck path and a non-crossing partition is shown 
to be compatible with the Temperley--Lieb algebra on Dyck paths, or 
equivalently chord diagrams. We show that the Kreweras endomorphism on non-crossing partitions
is equivalent to the rotation of chord diagrams under the bijection.
Secondly, by considering an increasing $r$-chain in the graded lattice of non-crossing partitions,
we define the Fuss--Catalan algebras on increasing $r$-chains. Through a bijection between 
an increasing $r$-chain and a generalized Dyck path, one naturally obtains the Fuss--Catalan algebra
on generalized Dyck paths.
As generalizations of the Fuss--Catalan algebra, we introduce the one- and two-boundary 
Fuss--Catalan algebras. 
Increasing $r$-chains of symmetric non-crossing partitions give symmetric generalized Dyck 
paths by the bijection, and the boundary Fuss--Catalan algebras naturally act on them.
We show that these representations are compatible with the diagrammatic representations of the 
algebras by use of generalized chord diagrams. 
Thirdly, we discuss the integrability of the Fuss--Catalan algebras.
For the Fuss--Catalan algebras with boundaries, we obtain a new solution of the reflection 
equation in the case of $r=2$.
\end{abstract}

\maketitle

\setcounter{tocdepth}{1}
\tableofcontents

\section{Introduction}
The Dyck paths of size $n$ are one of the combinatorial objects whose total 
number is given by the Catalan number 
$C_{n}:=\genfrac{}{}{}{}{1}{n+1}\genfrac{(}{)}{0pt}{}{2n}{n}=1,2,5,14,42,\ldots$.
The diagrammatic representation  \cite{Kau87} of the Temperley--Lieb algebra $\mathbb{TL}_{2n}$ \cite{TL71}
allows us to act $\mathbb{TL}_{2n}$ on the set of the Dyck paths of size $n$ by
identifying a Dyck path with a diagram called a chord diagram.
A chord diagram is a non-crossing complete matchings on $2n$ vertices
(see e.g. item 61 in Section 2 in \cite{Sta15}).
The Fuss--Catalan algebra introduced by D.~Bisch and V.~Jones in \cite{BisJon97} is a 
generalization of the Temperley--Lieb algebra, which can be viewed as an $r$-color 
generalization.
Since then, the Fuss--Catalan algebra has been studied in mathematics and mathematical
physics (see for example \cite{BabGep07,BisJon97b,Dif98,Hus19,Lan01}).
The Fuss--Catalan algebra acts on a generalization of chord diagrams.
The generalization of chord diagrams can naturally be identified with 
a generalization of Dyck paths called the $r$-Dyck paths.
The total number of $r$-Dyck paths of size $n$ is given by the Fuss--Catalan number, 
which contains the Catalan number as a special case $r=1$.
Therefore, the Fuss--Catalan algebra naturally acts on $r$-Dyck paths in the same way that the 
Temperley--Lieb algebra acts on Dyck paths.
Like Catalan combinatorics  for the Temperley--Lieb algebra, it is natural 
to ask what kind of combinatorics behind the Fuss--Catalan algebra is.
A non-crossing partition introduced below plays a central role to answer this question.

A non-crossing partition $\pi$ of the set $[1,n]:=\{1,2,\ldots,n\}$ is another 
combinatorial object whose total number is again given by the Catalan number \cite{Sta15}.
The systematic study of non-crossing partitions was initiated by G.~Kreweras in \cite{Kre72} 
(see also \cite{Pou72}).
The non-crossing partitions are extensively studied in combinatorics 
(see for example \cite{Arm09,Bia97,Sim94,Sim00,Sta97}).
By establishing a bijection between a Dyck path and a non-crossing partition,
we show that the Temperley--Lieb algebra naturally acts on non-crossing partitions.
Since the Fuss--Catalan algebra is a generalization of Temperley--Lieb algebra,
the set which the Fuss--Catalan algebra acts on supposed to be a generalization 
of non-crossing partitions.
This is achieved by considering increasing $r$-chains of non-crossing partitions.
More precisely, recall that the partially ordered set of non-crossing partitions 
is a graded lattice \cite{Kre72,Sim00}. 
Then, we have an increasing $r$-chain $\pi^{(r)}:=(\pi_1,\ldots,\pi_{r})$ such that
$\pi_1\le\ldots\le\pi_{r}$.
By the bijection between an $r+1$-ary tree and an increasing $r$-chains given by P.~H.~Edelman 
in \cite{Edel80,Edel82}, the number of increasing $r$-chains is equal to that of $r+1$-ary trees,
equivalently, the Fuss--Catalan number (see also \cite{EdelSim94}).
This suggests that a natural combinatorial object for the Fuss--Catalan algebra is 
an increasing $r$-chain of non-crossing partitions.
\begin{table}[ht]
\begin{tabular}{c||c}
Algebra &  Combinatorial objects \\ \hline
Temperley--Lieb algebra & Dyck paths, chord diagrams, non-crossing partitions \\ \hline
Fuss--Catalan algebra & $r$-Dyck paths, generalized chord diagrams, increasing $r$-chains
\end{tabular}
\caption{}
\label{table:Alcom}
\end{table}
In Table \ref{table:Alcom}, we summarize the algebras and corresponding combinatorial 
objects.

In this paper, we study the Fuss--Catalan and Temperley--Lieb algebras from a combinatorial view 
by use of non-crossing partitions.
In the case of Temperely--Lieb algebra, we introduce a bijection $\Psi$ between a Dyck path
and a non-crossing partition. Since we have a natural bijection between a Dyck path
and a chord diagram, we have a bijection between a non-crossing partition and 
a chord diagram, and denote this bijection by $\Psi$ by abuse of notation.
On the one hand, a chord diagram possesses a rotation $\sigma$, which is equivalent 
to the jeu de taquin operation given by M.-P.~Sch\"utzenberger \cite{Sch77}. 
On the other hand, a non-crossing partition possesses another rotation $\rho$ which is called 
Kreweras endomorphism \cite{Kre72}.
We show that $\sigma$ on a chord diagram is compatible with $\rho$ on a non-crossing partition
under the bijection $\Psi$ (Proposition \ref{prop:Fe}).
By introducing generators acting on a non-crossing partition, we show that 
the set of the generators generates the Temperley--Lieb algebra on non-crossing partitions.

Let $e_{i}$, $1\le i\le 2n-1$, be generators of $\mathbb{TL}_{2n}$, 
and $C_{j}$ and $\pi_{j}$ a chord diagram and a non-crossing partition  such that $C_{j}=\Psi(\pi_{j})$
for $j\in\{1,2\}$.
As a consequence of $\Psi$, we prove that $C_{2}=e_{i}C_{1}$ if and only if 
$\pi_{1}$ covers $\pi_{2}$, or $\pi_2$ covers $\pi_1$. 
In some cases, there may be no $i$ such that $C_{2}=e_{i}C_{1}$ for given $C_1$ and $C_2$
(Proposition \ref{prop:NCchord}). 
In this sense, the cover relation in the poset of non-crossing partitions reflects 
the action of a generator on chord diagrams.

By considering an increasing $r$-chain, the results on the Temperley--Lieb algebra
can be applied to the case of the Fuss--Catalan algebras.
We first establish a bijection between an increasing $r$-chain and an $r$-Dyck 
path, which is a generalization of the bijection between a non-crossing partition
and a Dyck path given in \cite{Stu13} by C.~Stump (Section \ref{sec:GDPic}).
Especially, we give two bijections $\Psi$ and $\Phi$ between an increasing $r$-chain and  
a generalized chord diagram.
We show that these two bijections are related by the rotation $\sigma^{(r)}$ on a generalized 
chord diagram (Proposition \ref{prop:PhiPsi}).
By introducing the notion of another combinatorial object called cover-exclusive Dyck tilings,
we give a map from an increasing $r$-chain to a generalized chord diagram.
This description is essentially the same as $\Phi$ (Proposition \ref{prop:Philambda}).

Each non-crossing partition in an $r$-chain gives a chord diagram by $\Psi$.
Since the $r$-chain is increasing, i.e., we have $\pi_i\le\pi_{i+1}$ for $1\le i\le r-1$,
we can prove that a superposition of $r$ chord diagrams corresponding to $\pi_{i}$, $1\le i\le r$, 
is admissible as a generalized chord diagram. More precisely, the chord diagrams $C_{i}:=\Psi(\pi_i)$ and $C_{j}:=\Psi(\pi_{j})$
with $i<j$ do not intersect (Proposition \ref{prop:chainadd}).
Thus, we have independence of $r$ chord diagrams, and this is the reason why 
the Fuss--Catalan algebra is viewed as an $r$-color generalization of the Temperley--Lieb 
algebra.
We define the Fuss--Catalan algebra on an increasing $r$-chain by use of the property that 
$r$ chord diagrams are independent and non-crossing.

After reviewing the diagrammatic representation of the Fuss--Catalan algebra which 
acts on generalized chord diagrams as in \cite{BisJon97,Dif98}, we show that the 
Fuss--Catalan algebra acting on increasing $r$-chains is isomorphic to that on 
generalized chord diagrams (Proposition \ref{prop:isoNCTL}).
The bijection $\Psi$ between an increasing $r$-chain and a generalized 
chord diagram plays a central role.

The one- and two-boundary Temperley--Lieb algebras  are generalizations 
of the Temperley--Lieb algebra. 
The algebras with boundaries naturally appear when we consider a physical system 
with boundaries.
Especially, they arise by considering the six-vertex model with the addition 
of integrable boundary terms \cite{deGNicPyaRit05,deGPya04}. 
These algebras are well studied in mathematical physics to 
describe a physical system with one or two boundaries
\cite{deGNic09,deGNicPyaRit05,deGPya04,MarSal93,MarSal94,MarWoo00,MarWoo03}.

Since the one-boundary Temperley--Lieb algebra acts on symmetric Dyck paths,
it is natural to consider ``symmetric" non-crossing partitions on which 
the algebra acts.
In fact, a symmetric non-crossing partition turns out to be a non-crossing partition 
which is symmetric along the vertical line in the middle in the pictorial
representation such that the point labeled $1$ is on the vertical line (Proposition \ref{prop:SNCtoSC}).
The bijection $\Psi$ behaves nicely since it gives symmetric Dyck paths or symmetric chord diagrams
from symmetric non-crossing partitions.
The existence of the boundary can be reflected by the existence of symmetric chords in a chord diagram.
The symmetry along the vertical line captures the boundary.
As in the case of the Fuss--Catalan algebra, we can define the one-boundary 
Fuss--Catalan algebra on increasing $r$-chains of symmetric non-crossing partitions.

To define the two-boundary Temperley--Lieb algebra on symmetric non-crossing partitions,
we introduce primed integers in symmetric non-crossing partitions. 
In other words, we attach more information on symmetric non-crossing partitions
by primes.
As we have already seen, the existence of a boundary is captured by the 
symmetry along the vertical line.
Since we have two boundaries, the symmetry along the vertical line itself is not enough to capture 
the second boundary.
The primed integers in a symmetric non-crossing partition correspond to 
symmetric chords in the corresponding chord diagram, and this extra 
information captures the existence of the second boundary.
This allows us to define the two-boundary Temperley--Lieb algebra 
on symmetric non-crossing partitions with primed integers.
By a similar way, we define two-boundary Fuss--Catalan algebra on 
increasing $r$-chains of symmetric non-crossing partitions with 
primed integers.

We introduce the one- and two-boundary Fuss--Catalan algebras 
on generalized chord diagrams by use of diagrammatic presentations.
A generalized diagram may have right-end points for the one-boundary Fuss--Catalan
algebra, and both left- and right-end points for the two-boundary Fuss--Catalan 
algebra.
We show that the diagrammatic one- and two-boundary Fuss--Catalan
algebras are isomorphic to those on symmetric non-crossing 
partitions without or with primed integers respectively 
(Theorems \ref{thrm:isoSNC1BTL} and \ref{thrm:2SNC2BTL}).

We discuss the integrability of the one- and two-boundary 
Fuss--Catalan algebras for $r=2$ as a special case.
In \cite{Dif98}, P. Di Francesco gave a solution of the Yang--Baxter 
equation \cite{Bax82} for the Fuss--Catalan algebra in the case of 
general $r\ge1$ (see also \cite{BabGep07}).
If the system with a boundary is integrable, 
we have the reflection equation \cite{Skl88} at the boundary.
In fact, in the case of one-boundary Fuss--Catalan algebra 
for $r=2$, we obtain a new solution of the reflection 
equation (Proposition \ref{prop:RE}).

The paper is organized as follows.
In Section \ref{sec:GDPTL}, we introduce the notion of $r$-Dyck paths,
chord diagrams, and a rotation on them. We review the Temperley--Lieb algebra 
acting on chord diagrams.
In Section \ref{sec:NCP}, we introduce non-crossing partitions, the 
Kreweras endomorphism, and the Temperley--Lieb algebra on non-crossing partitions.
The bijection $\Psi$ from a non-crossing partition to a chord diagram 
is given.
In Section \ref{sec:FCGDP}, we define the Fuss--Catalan algebra on an increasing 
$r$-chain by use of the results in previous sections.
We give a bijection from an increasing $r$-chain to an $r$-Dyck path, and 
two bijections from an increasing $r$-chain to a generalized chord diagram.
In Section \ref{sec:Pic}, we summarize the diagrammatic representation of 
the Fuss--Catalan algebra, and show that the algebra is isomorphic to 
the Fuss--Catalan algebra on increasing $r$-chains.
In Section \ref{sec:SNC}, we introduce the one- and two-boundary 
Fuss--Catalan algebras on symmetric non-crossing partitions possibly
with primed integers.
In Section \ref{sec:1bFC}, we introduce the diagrammatic representation 
of the one-boundary Fuss--Catalan algebra, and show that this algebra 
is isomorphic to the one-boundary Fuss--Catalan algebra on symmetric 
non-crossing partitions.
In Section \ref{sec:2BFC}, the diagrammatic representation of the two-boundary
Fuss--Catalan algebra is given, and we show that this algebra is isomorphic 
to the two-boundary Fuss--Catalan algebra on symmetric non-crossing 
partitions with primed integers.
Finally, we discuss the integrability of the one- or two-boundary 
Fuss--Catalan algebra in Section \ref{sec:RE}. Especially, we obtain a new solution 
of the reflection equation for $r=2$.

\section{Generalized Dyck paths and Temperley--Lieb algebra}
\label{sec:GDPTL}
\subsection{Definition}
Fix a positive integer $r\ge1$.
A {\it generalized Dyck path} of size $n$ is an up-right lattice path from 
$(0,0)$ to $(rn,n)$ such that it never goes below the line $y=x/r$.
We denote the set of generalized Dyck paths of size $n$ by $\mathcal{P}_{n}^{(r)}$.
A generalized Dyck path is also called an $r$-Dyck path for short.
We represent a generalized Dyck path of size $n$ by a word consisting of two 
alphabets $U$ and $R$, where $U$ (resp. $R$) stands for an up (resp. right) step 
in a generalized Dyck path.
For example, $\mathcal{P}_{2}^{(3)}$ consists of four $3$-Dyck paths:
\begin{align*}
URRRURRR \qquad URRURRRR \qquad URURRRRR \qquad UURRRRRR
\end{align*}

The number of generalized Dyck paths in $\mathcal{P}_{n}^{(r)}$ is given by 
the well-known Fuss--Catalan numbers, {\it i.e.}, 
\begin{align*}
|\mathcal{P}_{n}^{(r)}|=\genfrac{}{}{}{}{1}{nr+1}\genfrac{(}{)}{0pt}{}{n(r+1)}{n}.
\end{align*}
When $r=1$, a $1$-Dyck path in $\mathcal{P}_{n}^{(1)}$ coincides with 
the standard notion of a Dyck path of size $n$.
The total number of Dyck paths of size $n$ is given by the Catalan number.

An {\it $r$-Young diagram} of size $n$ is a diagram consisting of $2rn$ cells 
in such a way that there are $n$ rectangles of shape $1\times r$ in the first row 
and $rn$ rectangles of shape $1\times 1$ in the second row.
An {\it $r$-Young tableau} of size $n$ is a tableau such that each rectangle 
is filled by a positive integer. 
The integers in $r$-Young tableau are increasing from left to right and from top to bottom.

We have a bijection between an $r$-Dyck path of size $n$ and an $r$-Young tableau of 
size $n$.
Recall that a $r$-Dyck path $P$ is a word of alphabets $\{U,R\}$.
Let $x_i$, $1\le i\le n$, be the position of the $i$-th up step in $P$, and 
$y_{i}$, $1\le i\le rn$, be the position of the $i$-th right step in $P$. 
We obtain an $r$-Young tableau $Y(P)$ by putting integers in $\{x_i:1\le i\le n\}$ 
(resp. $\{y_i:1\le i\le rn\}$) from left to right in the first (resp. second) 
row of $Y(P)$.
This map from $P$ to $Y(P)$ is obviously invertible, and we obtain a bijection
between the two sets.

\begin{example}
Let $(n,r)=(2,2)$. We have three $2$-Young tableaux as shown in Figure \ref{fig:YT}.
\begin{figure}[ht]
\tikzpic{-0.5}{[scale=0.6]
\draw(0,0)--(4,0)--(4,-2)--(0,-2)--(0,0)(2,0)--(2,-2)(0,-1)--(4,-1)(1,-1)--(1,-2)
(3,-1)--(3,-2);
\draw(1,-0.5)node{$1$}(3,-0.5)node{$2$};
\draw(0.5,-1.5)node{$3$}(1.5,-1.5)node{$4$}(2.5,-1.5)node{$5$}(3.5,-1.5)node{$6$};
}
\tikzpic{-0.5}{[scale=0.6]
\draw(0,0)--(4,0)--(4,-2)--(0,-2)--(0,0)(2,0)--(2,-2)(0,-1)--(4,-1)(1,-1)--(1,-2)
(3,-1)--(3,-2);
\draw(1,-0.5)node{$1$}(3,-0.5)node{$3$};
\draw(0.5,-1.5)node{$2$}(1.5,-1.5)node{$4$}(2.5,-1.5)node{$5$}(3.5,-1.5)node{$6$};
}
\tikzpic{-0.5}{[scale=0.6]
\draw(0,0)--(4,0)--(4,-2)--(0,-2)--(0,0)(2,0)--(2,-2)(0,-1)--(4,-1)(1,-1)--(1,-2)
(3,-1)--(3,-2);
\draw(1,-0.5)node{$1$}(3,-0.5)node{$4$};
\draw(0.5,-1.5)node{$2$}(1.5,-1.5)node{$3$}(2.5,-1.5)node{$5$}(3.5,-1.5)node{$6$};
}
\caption{Three $2$-Young tableaux of size $2$}
\label{fig:YT}
\end{figure}
The tableaux correspond to the $2$-Dyck paths, $U^2R^4$, $URUR^3$ and $UR^2UR^2$ 
from left to right respectively.
\end{example}

We define a rotation on $r$-Dyck paths by use of $r$-Young tableaux and the modified 
operation on a two-row Young tableau called {\it jeu de taquin}.
The jeu de taquin operation was introduced in \cite{Sch77} by M.-P.~Sch\"utzenberger. 
In our setup, the jeu de taquin is equivalent to the promotion studied in \cite{Schu72,Schu76}.

Let $Y$ be an $r$-Young tableau of size $n$. The tableau $Y$ is filled by integers in $[1,(r+1)n]$.
The modified jeu de taquin on $Y$ is defined as follows.
\begin{enumerate}[($\heartsuit$1)]
\item Delete the integer $(r+1)n$ from $Y$.
\item Let $c_0$ be the rectangle without an integer. We consider two cases:
\begin{enumerate}
\item
Suppose that $c_0$ is in the first row or in the first column of $Y$. 
We move the integer in the cell left to $c_{0}$, or above $c_0$ to the cell $c_{0}$ respectively.
\item
Suppose that $c_{0}$ is in the second row of $Y$. Let $c_1$ and $c_2$
be the cell left to and above the cell $c_{0}$. We denote by $l(c_1)$ (resp. $l(c_2)$) the integer
in the cell $c_1$ (resp. $c_2$).
We consider two cases:
\begin{enumerate}
\item $l(c_1)>l(c_2)$. We move the integer $l(c_1)$ from $c_1$ to $c_0$.
\item $l(c_1)<l(c_2)$. We move the integer $l(c_2)$ from $c_2$ to $c_0$.
\end{enumerate}
\end{enumerate}
As a result, the cell $c_1$ or $c_2$ becomes an empty cell.
\item We repeat ($\heartsuit$2) until the left-most rectangle in the first row 
becomes empty. 
\item Increase all integers by one.
\item Since the left-most cell in the first row is empty, we put the integer $1$ in this cell.
\end{enumerate}

\begin{defn}
We denote by $\xi$ the modified jeu de taquin operation on $r$-Young tableaux.
\end{defn}

\begin{example}
We consider the action of $\xi$ on the second $2$-Young tableau in Figure \ref{fig:YT}.
\begin{align*}
\xi:\tikzpic{-0.5}{[scale=0.6]
\draw(0,0)--(4,0)--(4,-2)--(0,-2)--(0,0)(2,0)--(2,-2)(0,-1)--(4,-1)(1,-1)--(1,-2)
(3,-1)--(3,-2);
\draw(1,-0.5)node{$1$}(3,-0.5)node{$3$};
\draw(0.5,-1.5)node{$2$}(1.5,-1.5)node{$4$}(2.5,-1.5)node{$5$}(3.5,-1.5)node{$6$};
}
&\leadsto
\tikzpic{-0.5}{[scale=0.6]
\draw(0,0)--(4,0)--(4,-2)--(0,-2)--(0,0)(2,0)--(2,-2)(0,-1)--(4,-1)(1,-1)--(1,-2)
(3,-1)--(3,-2);
\draw(1,-0.5)node{$1$}(3,-0.5)node{$3$};
\draw(0.5,-1.5)node{$2$}(1.5,-1.5)node{$4$}(2.5,-1.5)node{$5$}(3.5,-1.5)node{};
}\leadsto
\tikzpic{-0.5}{[scale=0.6]
\draw(0,0)--(4,0)--(4,-2)--(0,-2)--(0,0)(2,0)--(2,-2)(0,-1)--(4,-1)(1,-1)--(1,-2)
(3,-1)--(3,-2);
\draw(1,-0.5)node{$1$}(3,-0.5)node{$3$};
\draw(0.5,-1.5)node{}(1.5,-1.5)node{$2$}(2.5,-1.5)node{$4$}(3.5,-1.5)node{$5$};
}\leadsto
\tikzpic{-0.5}{[scale=0.6]
\draw(0,0)--(4,0)--(4,-2)--(0,-2)--(0,0)(2,0)--(2,-2)(0,-1)--(4,-1)(1,-1)--(1,-2)
(3,-1)--(3,-2);
\draw(1,-0.5)node{}(3,-0.5)node{$3$};
\draw(0.5,-1.5)node{$1$}(1.5,-1.5)node{$2$}(2.5,-1.5)node{$4$}(3.5,-1.5)node{$5$};
}\\[12pt]
&\leadsto
\tikzpic{-0.5}{[scale=0.6]
\draw(0,0)--(4,0)--(4,-2)--(0,-2)--(0,0)(2,0)--(2,-2)(0,-1)--(4,-1)(1,-1)--(1,-2)
(3,-1)--(3,-2);
\draw(1,-0.5)node{}(3,-0.5)node{$4$};
\draw(0.5,-1.5)node{$2$}(1.5,-1.5)node{$3$}(2.5,-1.5)node{$5$}(3.5,-1.5)node{$6$};
}\leadsto
\tikzpic{-0.5}{[scale=0.6]
\draw(0,0)--(4,0)--(4,-2)--(0,-2)--(0,0)(2,0)--(2,-2)(0,-1)--(4,-1)(1,-1)--(1,-2)
(3,-1)--(3,-2);
\draw(1,-0.5)node{$1$}(3,-0.5)node{$4$};
\draw(0.5,-1.5)node{$2$}(1.5,-1.5)node{$3$}(2.5,-1.5)node{$5$}(3.5,-1.5)node{$6$};
}
\end{align*}
In terms of a word, we have $\xi(URURRR)=URRURR$.
\end{example}

\subsection{Chord diagrams}
\label{sec:chord}
A {\it chord diagram} of size $2n$ is a visualization of a Dyck paths of size $n$ 
by a set of $n$ arches.
This is the same notion as a non-crossing complete 
matchings on $2n$ (see item 61 in Section 2 of \cite{Sta15}).
A chord diagram consists of $2n$ labeled points and $n$ non-crossing arches which 
connect two labeled points.
We denote an arch connecting the point $i$ and $j$ by $(i,j)$ with $i<j$. 
A chord diagram is a set of arches $(i,j)$ and we obtain a Dyck path in 
terms of a word $\{U,R\}^{2n}$ by 
replacing $i$ (resp. $j$) by $U$ (resp. $R$) in an arch $(i,j)$.

For example, two Dyck paths of size $3$ correspond to the following 
chord diagrams:
\begin{align*}
URUURR \leftrightarrow
\tikzpic{-0.3}{[scale=0.8]
\draw(0,0)node[anchor=north]{$1$}to[out=90,in=90](1,0)node[anchor=north]{$2$};
\draw(2,0)node[anchor=north]{$3$}to[out=90,in=90](5,0)node[anchor=north]{$6$};
\draw(3,0)node[anchor=north]{$4$}to[out=90,in=90](4,0)node[anchor=north]{$5$};
} 
\qquad
UURURR \leftrightarrow
\tikzpic{-0.3}{[scale=0.8]
\draw(0,0)node[anchor=north]{$1$}to[out=90,in=90](5,0)node[anchor=north]{$6$};
\draw(1,0)node[anchor=north]{$2$}to[out=90,in=90](2,0)node[anchor=north]{$3$};
\draw(3,0)node[anchor=north]{$4$}to[out=90,in=90](4,0)node[anchor=north]{$5$};
} 
\end{align*}

We denote by $\mathcal{C}_{n}$ the set of chord diagram of size $2n$.

For later purpose, we introduce the set $\mathcal{C}_{n}^{(r)}\subseteq\mathcal{C}_{nr}$ of chord diagrams of size $2rn$
satisfying the following conditions:
\begin{enumerate}[({A}1)]
\item Let $(i,j)$, $1\le i<j\le 2rn$, be an arch in a chord diagram of size $2nr$. Then, we 
impose a condition on $i$ and $j$:
\begin{align}
\label{eq:condA}
i+j-1\equiv 0 \pmod{2r}.
\end{align}
\end{enumerate}
A chord diagram in $\mathcal{C}_{n}^{(r)}$ has $2nr$ points.
We bundle the $2nr$ points into groups of $r$ points from left to right. 
Then, we label $2n$ bundled points by $1,1', 2,2',\cdots,n,n'$ from left to right. 

The condition (\ref{eq:condA}) implies that an arch $(i,j)$ connects a bundled point 
with another bundled point such that one of the points is primed and the other 
is not primed.

\begin{example}
An example of a chord diagram in $\mathcal{C}_{3}^{(2)}$ is 
\begin{align*}
\tikzpic{-0.5}{[xscale=0.6]
\draw(0,0)..controls(0,2.5)and(10.5,2.5)..(10.5,0);
\draw(0.5,0)to[out=90,in=90](2,0);
\draw(2.5,0)..controls(2.5,1.35)and(8,1.35)..(8,0);
\draw(4,0)to[out=90,in=90](6.5,0);
\draw(4.5,0)to[out=90,in=90](6,0);
\draw(8.5,0)to[out=90,in=90](10,0);
\draw(0.25,-0.5)node[anchor=south]{$1$};
\draw(2.25,-0.5)node[anchor=south]{$1'$};
\draw(4.25,-0.5)node[anchor=south]{$2$};
\draw(6.25,-0.5)node[anchor=south]{$2'$};
\draw(8.25,-0.5)node[anchor=south]{$3$};
\draw(10.25,-0.5)node[anchor=south]{$3'$};
}
\end{align*}
The following chord diagram is not in $\mathcal{C}_{3}^{(2)}$ since it violates 
the condition (A1). There exist chords connecting $1$ and $2$, and $2'$ and $3'$.
\begin{align}
\label{eq:dnadm}
\tikzpic{-0.5}{[xscale=0.6]
\draw(0,0)to[out=90,in=90](4.5,0);
\draw(0.5,0)to[out=90,in=90](2,0);
\draw(2.5,0)to[out=90,in=90](4,0);	
\draw(6,0)to[out=90,in=90](10.5,0);
\draw(6.5,0)to[out=90,in=90](8,0);
\draw(8.5,0)to[out=90,in=90](10,0);	
\draw(0.25,-.5)node[anchor=south]{$1$};
\draw(2.25,-.5)node[anchor=south]{$1'$};
\draw(4.25,-.5)node[anchor=south]{$2$};
\draw(6.25,-.5)node[anchor=south]{$2'$};
\draw(8.25,-.5)node[anchor=south]{$3$};
\draw(10.25,-.5)node[anchor=south]{$3'$};
}
\end{align}

\end{example}

We introduce another generalization of a chord diagram.
Fix a positive integer $r\ge1$.
A generalized chord diagram $\widetilde{C}$ of size $(r+1)n$ is a visualization 
of generalized Dyck path by $n$ arches.
The diagram $\widetilde{C}$ consists of $(r+1)n$ points and $n$ non-crossing arches which 
connect $r+1$ points.
We denote by $\widetilde{\mathcal{C}}_{n}^{(r)}$ the set of generalized chord diagram 
of size $(r+1)n$.

\begin{example}
Let $(n,r)=(2,2)$. We have three generalized chord diagrams in $\widetilde{\mathcal{C}}_{2}^{(2)}$:
\begin{align*}
\tikzpic{-0.8}{
\draw(0,0)node[anchor=north]{$1$};
\draw(0.8,0)node[anchor=north]{$2$};
\draw(1.6,0)node[anchor=north]{$3$};
\draw(2.4,0)node[anchor=north]{$4$};
\draw(3.2,0)node[anchor=north]{$5$};
\draw(4,0)node[anchor=north]{$6$};
\draw(0,0)..controls(0,0.6)and(0.8,0.6)..(0.8,0)..controls(0.8,0.6)and(1.6,0.6)..(1.6,0);
\draw(2.4,0)..controls(2.4,0.6)and(3.2,0.6)..(3.2,0)..controls(3.2,0.6)and(4,0.6)..(4,0);
}\qquad
\tikzpic{-0.5}{
\draw(0,0)node[anchor=north]{$1$};
\draw(0.8,0)node[anchor=north]{$2$};
\draw(1.6,0)node[anchor=north]{$3$};
\draw(2.4,0)node[anchor=north]{$4$};
\draw(3.2,0)node[anchor=north]{$5$};
\draw(4,0)node[anchor=north]{$6$};
\draw(0,0)..controls(0,0.6)and(0.8,0.6)..(0.8,0)..controls(0.8,1.2)and(4,1.2)..(4,0);
\draw(1.6,0)..controls(1.6,0.6)and(2.4,0.6)..(2.4,0)..controls(2.4,0.6)and(3.2,0.6)..(3.2,0);
}\qquad
\tikzpic{-0.5}{
\draw(0,0)node[anchor=north]{$1$};
\draw(0.8,0)node[anchor=north]{$2$};
\draw(1.6,0)node[anchor=north]{$3$};
\draw(2.4,0)node[anchor=north]{$4$};
\draw(3.2,0)node[anchor=north]{$5$};
\draw(4,0)node[anchor=north]{$6$};
\draw(0,0)..controls(0,1.2)and(3.2,1.2)..(3.2,0)..controls(3.2,0.6)and(4,0.6)..(4,0);
\draw(0.8,0)..controls(0.8,0.6)and(1.6,0.6)..(1.6,0)..controls(1.6,0.6)and(2.4,0.6)..(2.4,0);
}
\end{align*}
\end{example}

\begin{prop}
The set $\widetilde{\mathcal{C}}_{n}^{(r)}$ is bijective to the set $\mathcal{P}_{n}^{(r)}$.
\end{prop}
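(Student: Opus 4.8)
The plan is to establish the bijection $\widetilde{\mathcal{C}}_{n}^{(r)}\leftrightarrow\mathcal{P}_{n}^{(r)}$ directly by reading off a word in $\{U,R\}$ from a generalized chord diagram, mirroring the $r=1$ case already described in Section~\ref{sec:chord}. Recall that a generalized chord diagram in $\widetilde{\mathcal{C}}_{n}^{(r)}$ consists of $(r+1)n$ labeled points together with $n$ non-crossing arches, each arch connecting exactly $r+1$ points. First I would fix a convention for how each arch distinguishes its constituent points: in the $r=1$ case an arch $(i,j)$ has a left endpoint (receiving $U$) and a right endpoint (receiving $R$); for general $r$ an arch touches $r+1$ points, and the natural convention is that the leftmost of these $r+1$ points is an ``opening'' point while the remaining $r$ are ``closing'' points. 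This produces a word $w\in\{U,R\}^{(r+1)n}$ by scanning the points $1,2,\ldots,(r+1)n$ from left to right and writing $U$ at each opening point and $R$ at each of the $r$ closing points.

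The main work is to verify that the resulting word $w$ is indeed an $r$-Dyck path, i.e.\ that at every prefix the count satisfies the required inequality. The key structural input is the non-crossing condition on the arches: because the arches do not cross, whenever we read an opening $U$ at the leftmost point of an arch, the $r$ matching closing points of that arch all lie strictly to the right, and any arch nested inside must be fully resolved before the outer arch closes. Concretely, I would argue that for each prefix of length $k$, if $u$ is the number of $U$'s and $\rho$ the number of $R$'s read so far, then the non-crossing nesting forces $\rho\le r\cdot u$, which is exactly the condition that the path never goes below $y=x/r$ (an up step contributes $1$ to the height and a right step contributes $1$ to the horizontal coordinate, so $ru-\rho\ge0$ is the statement that the lattice path stays weakly above $y=x/r$). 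Since there are $n$ arches each contributing one $U$ and $r$ copies of $R$, the word has exactly $n$ up steps and $rn$ right steps, so it ends at $(rn,n)$ as required.

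For the reverse direction I would construct the inverse map from an $r$-Dyck path to a generalized chord diagram and check that the two maps are mutually inverse. Given an $r$-Dyck path read as a word, each $U$ opens a new arch and one processes the $R$'s by a standard matching rule that respects nesting: an $R$ is assigned to the most recently opened arch that still lacks one of its $r$ closing points, and the non-crossing property is automatic from this last-in-first-out assignment. The Dyck condition $ru-\rho\ge0$ at every prefix guarantees that whenever an $R$ is read there is always an available open arch to attach it to, so the procedure never fails and terminates with every arch completed by exactly $r+1$ points. I expect the main obstacle to be pinning down and justifying the precise matching convention for the $r$ closing points of a single arch so that non-crossing diagrams correspond bijectively to valid prefixes; once the LIFO (stack-based) bookkeeping is set up, the verification that the composite maps are identities is routine. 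Finally, since both constructions are explicit and inverse to each other, the map is a bijection, which also recovers the numerical identity $|\widetilde{\mathcal{C}}_{n}^{(r)}|=|\mathcal{P}_{n}^{(r)}|=\genfrac{}{}{}{}{1}{nr+1}\genfrac{(}{)}{0pt}{}{n(r+1)}{n}$.
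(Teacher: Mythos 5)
Your proposal is correct and follows essentially the same route as the paper: the forward map is identical (the leftmost point of each arch receives $U$, the remaining $r$ points receive $R$), and your prefix inequality $\rho\le ru$ is a slightly more explicit version of the paper's observation that the word stays above $y=x/r$. Your inverse map, a left-to-right stack (LIFO) completion, produces the same canonical non-crossing diagram as the paper's right-to-left greedy rule of assigning to each $U$-position the next $r$ available integers, so the two constructions coincide.
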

\begin{proof}
We construct a bijection between the two sets.
Let $\widetilde{C}\in\widetilde{\mathcal{C}}_{n}^{(r)}$ be a generalized chord diagram
and $A_{i}$, $1\le i\le n$, be its $n$ non-crossing arches.
Let $a_{i}:=\min A_{i}$ be the minimum integer in the arch $A_{i}$.
We have a word $w:=w(\widetilde{C})$ in $\{U,R\}^{(r+1)n}$ such that 
the $a_{i}$-th letter in $w$ is $U$, and other letters are $R$.
We will show that $w\in\mathcal{P}_{n}^{(r)}$.
First, we have $a_1=1$ by definition of a generalized chord diagram.
Further, each $U$ letter has at least $r$ $R$ letters right to it in $w$.
This implies that $w$ is above $y=x/r$.
From these, $w$ is a generalized Dyck path.

Conversely, let $w$ be a word of $U$ and $R$ corresponding to a generalized Dyck path.
Let $a_{i}$ be the position of the letter $U$ in $w$. By definition of a generalized 
Dyck path, we have $a_1$=1, and $|\{a_{i}\}|=n$.
The integers $a_{i}$, $1\le i\le n$, are strictly increasing.
We construct a generalized chord diagram $\widetilde{C}:=\widetilde{C}(w)$ 
in $\widetilde{\mathcal{C}}_{n}^{(n)}$ from $\{a_{i}:1\le i\le n\}$.
 We recursively give an arch connecting $(r+1)$ points 
as follows:
\begin{enumerate}
\item Set $i=n$ and $S:=\{1,2,\ldots, (r+1)n\}$.
\item We take $r+1$ successive increasing integers, which starts from $a_{i}$, in $S$.
Let $A_{i}$ be the set of such $r+1$ integers.
\item Decrease $i$ by one, and replace $S$ by $S\setminus A_{i}$. Then, go to (2).
The algorithm stops when $i=1$.
\end{enumerate}
The sets $A_{i}$, $1\le i\le n$, contain $r+1$ integers, and these sets give 
$n$ arches consisting of $r+1$ integers.
Recall that a generalized Dyck path is above $y=x/r$. This property insures that we can always take 
$r+1$ integers as in (2).

It is easy to see that the above two maps are inverse of each other. This means that 
the map is a bijection between the two sets $\widetilde{\mathcal{C}}_{n}^{(r)}$ 
and $\mathcal{P}_{n}^{(r)}$. This completes the proof.
\end{proof}

\subsection{Rotation on chord diagrams}
\label{sec:rotchord}
We define a rotation $\sigma: \mathcal{C}_{n}\rightarrow\mathcal{C}_{n}$ 
as follows.
Let $C$ be a chord diagram in $\mathcal{C}_{n}$. 
We relabel the integers $i$ by $i'$ and $j'$ by $j+1$ modulo $n$, 
and move the points labeled $1$ to the left by an isotropic move.
Here, we keep the connectivity of the points, and the operation 
gives a unique chord diagram $\sigma(C)$. 

For example, we have 
\begin{align*}
\sigma: 
\tikzpic{-0.3}{[scale=0.8]
\draw(0,0)to[out=90,in=90](1,0);
\draw(2,0)to[out=90,in=90](5,0);
\draw(3,0)to[out=90,in=90](4,0);
\foreach \x/\y in {0/1,1/1',2/2,3/2',4/3,5/3'}
\draw(\x,-0.8)node[anchor=south]{$\y$};
}\leadsto
\tikzpic{-0.3}{[scale=0.8]
\draw(0,0)to[out=90,in=90](1,0);
\draw(2,0)to[out=90,in=90](5,0);
\draw(3,0)to[out=90,in=90](4,0);
\foreach \x/\y in {0/1',1/2,2/2',3/3,4/3',5/1}
\draw(\x,-0.8)node[anchor=south]{$\y$};
}\leadsto
\tikzpic{-0.3}{[scale=0.8]
\draw(-1,0)to[out=90,in=90](2,0);
\draw(0,0)to[out=90,in=90](1,0);
\draw(3,0)to[out=90,in=90](4,0);
\foreach \x/\y in {-1/1,0/1',1/2,2/2',3/3,4/3'}
\draw(\x,-0.8)node[anchor=south]{$\y$};
}
\end{align*}

We define the rotation on $\mathcal{C}_{n}^{(r)}$ in a similar manner.
We regard $C\in\mathcal{C}_{n}^{(r)}$ as a chord diagram of $2nr$ points.
Then, one can regard $\sigma^{r}$ on $\mathcal{C}_{nr}$ as a rotation 
on $\mathcal{C}_{n}^{(r)}$.
We denote the rotation on $\mathcal{C}_{n}^{(r)}$ by $\sigma^{(r)}:=\sigma^{r}$.

We can also define a rotation 
$\widetilde{\sigma}:\widetilde{\mathcal{C}}_{n}^{(r)}\rightarrow\widetilde{\mathcal{C}}_{n}^{(r)}$ 
on a generalized chord diagram $\widetilde{C}$ in $\widetilde{\mathcal{C}}_{n}^{(r)}$.
In this case, the connectivity of points is preserved under the rotation.
The diagram $\widetilde{C}$ has $(r+1)n$ points and $n$ arches.
We relabel the integers $i$ by $i+1$ modulo $(r+1)n$, and move the point labeled $1$ 
to the left by keeping its connectivity.
For example, we have 
\begin{align*}
\widetilde{\sigma}: \tikzpic{-0.8}{
\draw(0,0)node[anchor=north]{$1$};
\draw(0.8,0)node[anchor=north]{$2$};
\draw(1.6,0)node[anchor=north]{$3$};
\draw(2.4,0)node[anchor=north]{$4$};
\draw(3.2,0)node[anchor=north]{$5$};
\draw(4,0)node[anchor=north]{$6$};
\draw(0,0)..controls(0,0.6)and(0.8,0.6)..(0.8,0)..controls(0.8,0.6)and(1.6,0.6)..(1.6,0);
\draw(2.4,0)..controls(2.4,0.6)and(3.2,0.6)..(3.2,0)..controls(3.2,0.6)and(4,0.6)..(4,0);
}\leadsto
\tikzpic{-0.8}{
\draw(0,0)node[anchor=north]{$2$};
\draw(0.8,0)node[anchor=north]{$3$};
\draw(1.6,0)node[anchor=north]{$4$};
\draw(2.4,0)node[anchor=north]{$5$};
\draw(3.2,0)node[anchor=north]{$6$};
\draw(4,0)node[anchor=north]{$1$};
\draw(0,0)..controls(0,0.6)and(0.8,0.6)..(0.8,0)..controls(0.8,0.6)and(1.6,0.6)..(1.6,0);
\draw(2.4,0)..controls(2.4,0.6)and(3.2,0.6)..(3.2,0)..controls(3.2,0.6)and(4,0.6)..(4,0);
}\leadsto
\tikzpic{-0.5}{
\draw(0,0)node[anchor=north]{$1$};
\draw(0.8,0)node[anchor=north]{$2$};
\draw(1.6,0)node[anchor=north]{$3$};
\draw(2.4,0)node[anchor=north]{$4$};
\draw(3.2,0)node[anchor=north]{$5$};
\draw(4,0)node[anchor=north]{$6$};
\draw(0,0)..controls(0,1.2)and(3.2,1.2)..(3.2,0)..controls(3.2,0.6)and(4,0.6)..(4,0);
\draw(0.8,0)..controls(0.8,0.6)and(1.6,0.6)..(1.6,0)..controls(1.6,0.6)and(2.4,0.6)..(2.4,0);
}
\end{align*}

Let $P\in\mathcal{P}_{n}^{(r)}$, and $\widetilde{C}:=\widetilde{C}(P)$ be a generalized chord 
diagram corresponding to $P$ in $\widetilde{\mathcal{C}}_{n}^{(r)}$.
The next proposition shows that the rotation on $P$ is equivalent to the rotation of $\widetilde{C}$. 
\begin{prop}
\label{prop:xisigma}
We have $\xi(P)=\widetilde{\sigma}(\widetilde{C})$.
\end{prop}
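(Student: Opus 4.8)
The plan is to compute both $\xi(P)$ and $\widetilde{\sigma}(\widetilde{C}(P))$ as explicit transformations of the underlying word $w=w_1\cdots w_{(r+1)n}\in\{U,R\}^{(r+1)n}$ of $P$, reading the left-hand side through the bijection $P\leftrightarrow Y(P)$ and the right-hand side through $P\leftrightarrow\widetilde{C}(P)$, and then to check that the two resulting words coincide letter by letter. Throughout I would single out one distinguished position $a_m$: with the height function $h_k:=r\cdot\#\{i\le k: w_i=U\}-\#\{i\le k: w_i=R\}$, I set $s:=\max\{k<(r+1)n: h_k=0\}$ and $a_m:=s+1$, so that $a_m$ is the starting position of the last irreducible (prime) component of $P$. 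By the recursive arch construction used in the proof of the bijection $\widetilde{\mathcal{C}}_{n}^{(r)}\cong\mathcal{P}_{n}^{(r)}$ above, $a_m$ is exactly the minimum of the unique arch of $\widetilde{C}(P)$ incident to the last point $(r+1)n$.

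First I would analyze $\widetilde{\sigma}$. The last point $(r+1)n$ is the maximum of its arch, hence an $R$, while that arch has minimum $a_m$, an $U$. Since $\widetilde{\sigma}$ only moves this point cyclically to the far left while preserving every arch, each of the other arches is shifted by one position and keeps its minimum, so its letters are unchanged; only the distinguished arch is affected. Tracking its $r+1$ endpoints, the old point $(r+1)n$ becomes the new arch-minimum (a new leading $U$), the old minimum $a_m$ loses minimality and becomes an $R$, and the remaining $r-1$ endpoints stay $R$'s. Hence $\widetilde{\sigma}$ acts on $w$ by deleting the terminal $R$, changing the $U$ at position $a_m$ into an $R$, and prepending a $U$; this is count-preserving, and I would confirm it on the running words $URURRR$ (where $a_m=1$) and $URRURR$ (where $a_m=4$).

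Next I would analyze $\xi$ through the tableau $Y(P)$, whose first-row block entries are the $U$-positions $a_1<\cdots<a_n$ and whose second-row cell entries are the $R$-positions $b_1<\cdots<b_{rn}$. Deleting $(r+1)n=b_{rn}$ empties the rightmost second-row cell, and I would prove that the hole then travels monotonically up and to the left. While the hole is in a second-row cell below block $j$, its left neighbour is compared with the block entry $a_j$ above it; column-strictness of $Y$ and $h_{a_j-1}>0$ for every $j>m$ (positivity of $h$ strictly inside the last prime component) give $b_{(j-1)r}>a_j$, so $l(c_1)>l(c_2)$ and the hole keeps sliding left, staying in the second row throughout the last prime component. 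At the left boundary of block $m$ one has $h_{a_m-1}=0$, equivalently $b_{(m-1)r}<a_m$, so now $l(c_1)<l(c_2)$ and the hole ascends, pushing precisely the entry $a_m$ down into the second row; thereafter it slides left along the first row until the leftmost block is emptied. Thus exactly one $U$-entry, namely $a_m$, descends to become an $R$, and after the global $+1$ and the insertion of a new $1$ into the leftmost block the net effect of $\xi$ on $w$ is again: delete the terminal $R$, flip the $U$ at position $a_m$ to $R$, and prepend a $U$.

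Comparing the two computations, both $\xi$ and $\widetilde{\sigma}$ send $w$ to the word obtained by deleting the final $R$, converting the $U$ at position $a_m$ into an $R$, and prepending a leading $U$; agreeing letter by letter, they give $\xi(P)=\widetilde{\sigma}(\widetilde{C}(P))$. The main obstacle is the promotion step, i.e. controlling the full trajectory of the jeu de taquin hole for general $r$ and proving that it ascends exactly below the block of $a_m$ and nowhere earlier. I expect this to come down to the invariant that, while the hole sits in the second row at a column inside the last prime component, the relevant partial heights $h_k$ remain positive and first vanish exactly at $s=a_m-1$; maintaining this invariant across each comparison of $l(c_1)$ and $l(c_2)$ is where the width-$r$ block structure must be handled with care. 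As a fallback I would keep an induction on $n$ that peels off the last irreducible component, on which both $\xi$ and $\widetilde{\sigma}$ are readily seen to act compatibly.
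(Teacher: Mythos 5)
Your proof is correct and follows essentially the same strategy as the paper's: both arguments observe that every arch not containing the last point $(r+1)n$ is simply shifted by one under either operation, so the whole content lies in the single arch through $(r+1)n$, whose minimum is the start $a_m$ of the last irreducible component. The paper's own proof is only a sketch at this point, whereas you supply the missing detail — the height-function criterion $h_{a_j-1}>0$ for $j>m$ versus $h_{a_m-1}=0$, which pins down exactly where the jeu de taquin hole leaves the second row and shows that precisely the entry $a_m$ descends — so your write-up is a rigorous completion of the paper's argument rather than a different route.
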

\begin{proof}
Recall that an $r$-Dyck path $P$ has its representation by an $r$-Young tableau $Y$. 
The map $\xi$ moves the integers right or down. 
On the other hand, the action of 
$\widetilde{\sigma}$ on $\widetilde{C}$ implies that the arches are moved rightward 
by one unit modulo $(r+1)n$.
The left-most point in an arch corresponds to the integers in the first row of $Y$.
If an arch does not contain the right-most point, which has a label $(r+1)n$, the map $\xi$
also preserves this arch by increasing the integers by one.
If an arch contains the right-most point, and is rotated by $\widetilde{\sigma}$, 
the rotated arch contains the left-most point. 
This corresponds to the addition of $1$ in the first row of $Y$.
From these, we have $\xi(P)=\widetilde{\sigma}(\widetilde{C})$.
\end{proof}

\subsection{Temperley--Lieb algebra on chord diagrams}
\label{sec:TLchord}
The {\it Temperley--Lieb algebra} $\mathbb{TL}_{n}$ is 
the unital associative $\mathbb{C}[q,q^{-1}]$-algebra 
generated by the set $\{e_1,\ldots,e_{n-1}\}$ satisfying the 
following relations \cite{TL71}:
\begin{align}
\label{eq:ei}
\begin{split}
&e_i^{2}=\tau e_i, \qquad 1\le i\le n-1, \\
&e_{i}e_{i+1}e_{i}=e_i, \qquad 1\le i\le n-2,\\
&e_{i+1}e_{i}e_{i+1}=e_{i+1}, \qquad 1\le i\le n-2,\\
&e_{i}e_{j}=e_{j}e_{i}, \qquad |i-j|>1,
\end{split}
\end{align}
where $\tau:=-(q+q^{-1})$.
The algebra $\mathbb{TL}_{n}$ has a diagrammatic representation \cite{Kau87}.
The generator $e_{i}$ is depicted as 
\begin{align*}
e_{i}=\tikzpic{-0.5}{[xscale=0.8]
\draw(0,0)node[anchor=north]{$1$} to (0,1);
\draw(2,0)node[anchor=north]{$i-1$}to (2,1);
\draw(1,0.5)node{$\cdots$};
\draw(3.2,0)node[anchor=north]{$i$}..controls(3.2,0.5)and(4.2,0.5)..(4.2,0)node[anchor=north]{$i+1$};
\draw(3.2,1)..controls(3.2,0.5)and(4.2,0.5)..(4.2,1);
\draw(5.4,0)node[anchor=north]{$i+2$}--(5.4,1);
\draw(7.4,0)node[anchor=north]{$n$}--(7.4,1);
\draw(6.4,0.5)node{$\cdots$};
}
\end{align*}
The unit $\mathbf{1}$ is depicted as the diagram consisting of 
$n$ vertical strands without cap-cup pairs.
The product $XY$ of two elements $X,Y\in\mathbb{TL}_{n}$ is calculated by
placing the diagram of $Y$ on top of the diagram of $X$. 
We regard two diagrams are equivalent if they are isotropic to each other.
If we have a closed loop, we remove it and give a factor $\tau$.

The action of the generator $e_{i}$, $1\le i\le n-1$, on 
a chord diagram $C$ in $\mathcal{C}_{n}$ is given by placing 
the diagram of $C$ on top of the diagram of $e_{i}$.
If closed loops appear in the diagram, we remove each of 
which and give a factor $\tau$.

\begin{example}
The action of $e_{2}$ on a Dyck path $URUURR$ gives 
the new Dyck path $UURURR$.
Diagrammatically, we have
\begin{align*}
\tikzpic{-0.4}{[scale=0.8]
\draw(0,0)to[out=90,in=90](1,0);
\draw(2,0)to[out=90,in=90](5,0);
\draw(3,0)to[out=90,in=90](4,0);
\draw[gray](-0.5,0)--(5.5,0);
\draw(0,0)--(0,-1)(1,0)to[out=-90,in=-90](2,0)
(1,-1)to[out=90,in=90](2,-1)
(3,0)to(3,-1)(4,0)to(4,-1)(5,0)to(5,-1);
\draw(0,-1)node[anchor=north]{$1$}(1,-1)node[anchor=north]{$2$}
(2,-1)node[anchor=north]{$3$}(3,-1)node[anchor=north]{$4$}
(4,-1)node[anchor=north]{$5$}(5,-1)node[anchor=north]{$6$};
}
=
\tikzpic{-0.4}{[scale=0.8]
\draw(0,0)node[anchor=north]{$1$}to[out=60,in=120](5,0)node[anchor=north]{$6$};
\draw(1,0)node[anchor=north]{$2$}to[out=90,in=90](2,0)node[anchor=north]{$3$};
\draw(3,0)node[anchor=north]{$4$}to[out=90,in=90](4,0)node[anchor=north]{$5$};
}
\end{align*}
Therefore, we have $e_{2}(URU^2R^2)=U^2RUR^2$.
\end{example}

\section{Non-crossing partitions}
\label{sec:NCP}
\subsection{Definition}
A {\it non-crossing partition} of the set $[n]:=\{1,2,\ldots,n\}$ 
is a partition $\pi$ of $[n]$ such that 
if four integers satisfy $i<j<k<l$, a block $B_1$ contains $i$ and $k$
and another block $B_2$ contains $j$ and $l$, then two blocks $B_1$ 
and $B_2$ coincide with each other \cite{Kre72,Pou72}.

We denote by $\mathcal{NC}_{n}$ the set of non-crossing partitions 
of the set $[n]$.
It is well-known that the number of non-crossing partitions in $\mathcal{NC}_{n}$
is given by the $n$-th Catalan number $C_{n}:=\genfrac{}{}{}{}{1}{n+1}\genfrac{(}{)}{0pt}{}{2n}{n}$.
Recall that the number of Dyck paths of size $n$ is also given by the $n$-th Catalan number.
We will see a bijection between a non-crossing partition in $\mathcal{NC}_{n}$ 
and a Dyck path in $\mathcal{P}^{(1)}_{n}$.

When a non-crossing partition $\pi$ has $m$ blocks, we write 
$\pi=\pi_1/\pi_2/\ldots/\pi_{m}$ 
where each $\pi_{i}$, $1\le i\le m$, is an increasing integer 
sequence.
Since we consider the partition of $[n]$, the order of $\pi_1,\ldots,\pi_{m}$
is not relevant.

\begin{example}
The non-crossing partition $\pi=134/2/56$ is a non-crossing partition of $[6]$ 
such that $\pi$ consists of three blocks $\{1,3,4\}$, $\{2\}$ and $\{5,6\}$.

The partitions $14/23$ and $12/34$ is in $\mathcal{NC}_{4}$. However,
the partition $13/24$ is not in $\mathcal{NC}_{4}$ since it is crossing.
\end{example}

Suppose $\pi\in\mathcal{NC}_{n}$ has $m$ blocks. 
Then, we define the rank $\mathtt{rk}(\pi)$ of $\pi$ as 
$\mathtt{rk}(\pi):=n-m$.

Let $\pi,\nu\in\mathcal{NC}_{n}$ be two non-crossing partitions.
We say that $\nu$ covers $\pi$ if and only if 
$\mathtt{rk}(\nu)=\mathtt{rk}(\pi)+1$ and there exists a unique 
block $B$ in $\nu$ such that $B=B_{i}\cup B_{j}$ where 
$B_i$ and $B_{j}$ are two blocks in $\pi$.
The other blocks in $\nu$ and $\pi$ coincide with each other.

When $\nu$ covers $\pi$, we write $\pi\lessdot\nu$.
We write $\pi\le\nu$ if there exists a positive integer $r$ 
such that $\pi=x_1\lessdot x_2\lessdot \ldots \lessdot x_{r}=\nu$.
In other words, $\pi\le \nu$ if $\pi$ is a refinement of $\nu$, and 
equivalently $\nu$ is a coarsement of $\pi$.

The Hasse diagram of non-crossing partitions in $\mathcal{NC}_3$ 
is depicted in Figure \ref{fig:NC3}.
Two elements $x,y\in\mathcal{NC}_3$ are connected by an edge if and only if 
$x\lessdot y$ in the Hasse diagram. 
\begin{figure}[ht]
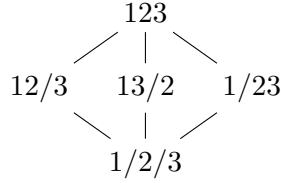

\tikzpic{-0.5}{[xscale=1.4]
\node(0) at (0,0){$123$};
\node(1) at (-1,-1){$12/3$};
\node(2) at (0,-1){$13/2$};
\node(3) at (1,-1){$1/23$};
\node(4) at (0,-2){$1/2/3$};
\draw(0)to(1)(0)to(2)(0)to(3)(1)to(4)(2)to(4)(3)to(4);
 }
\caption{The Hasse diagram of non-crossing partitions in $\mathcal{NC}_{3}$}
\label{fig:NC3}
\end{figure}

\subsection{Kreweras endomorphism}
\label{sec:Kreendo}
To define a rotation on non-crossing partitions, we introduce 
the Kreweras endomorphism on $\mathcal{NC}_{n}$ following \cite{Kre72} (see also \cite{SimUll91}).

Let $\pi\in\mathcal{NC}_{n}$ be a non-crossing partition 
consisting of $m$ blocks $B_{i}$, $1\le i\le m$.
We introduce a pictorial representation of $\pi$ as follows.
Let $S$ be a circle with $n$ points. We label these $n$ points 
from $1$ to $n$ clockwise.
Suppose that the block is $B_{i}=n_1n_2\ldots n_{r}$. 
Then, we connect $r$ points on $S$ by arches. 
Since $\pi$ is non-crossing, the arches in the circle $S$ are also 
non-crossing. 
When the size of the block $B_{i}$ is one, {\it i.e.}, $B_{i}$ consists 
of a single integer, we do not add an arch on $S$.

We append new $n$ points on $S$ by dividing the interval between two points 
labeled $i$ and $i+1$ for $1\le i\le n-1$ or $n$ and $1$.
We put a dashed label $1'$ on the point between $n$ and $1$, and 
put a dashed label $i'$ on the point between $i-1$ and $i$ clockwise.
Since $\pi$ is non-crossing which implies that arches are non-crossing,
arches divide the inside of $S$ into several regions.
We connect the points labeled dashed integers by dotted arches 
if they belong to the same region.
Then, if we focus on the dashed points and dotted arches, we obtain 
another non-crossing partition $\pi'$ from $\pi$.

\begin{defn}
The endomorphism $\rho:\mathcal{NC}_{n}\rightarrow \mathcal{NC}_{n}$, $\pi\mapsto\pi'$, 
is called the Kreweras endomorphism.
\end{defn}
 
\begin{example}
Let $\pi=136/2/4/5/78$. Then, we have $\rho(\pi)=17/23/456/8$.
In Figure \ref{fig:Kre}, we depict $\pi$ by solid lines and 
$\rho(\pi)$ by dotted lines.

\begin{figure}[ht]
\begin{tikzpicture}[scale=0.8]
\draw circle(3cm);
\foreach \a in {0,45,90,135,...,360}
\filldraw [black] (\a:3cm)circle(1.5pt);
\draw (112.5:3cm) node[anchor=south east] {$1'$};
\draw (67.5:3cm) node[anchor=south west]{$2'$};
\draw (22.5:3cm) node[anchor=south west]{$3'$};
\draw (-22.5:3cm) node[anchor=north west]{$4'$};
\draw (-67.5:3cm) node[anchor=north west]{$5'$};
\draw (-112.5:3cm) node[anchor=north east]{$6'$};
\draw (-157.5:3cm) node[anchor=north east]{$7'$};
\draw (157.5:3cm) node[anchor=south east]{$8'$};
\draw (90:3cm) node[anchor=south]{$1$};
\draw (45:3cm) node[anchor=south west]{$2$};
\draw (0:3cm)node[anchor=west]{$3$};
\draw (-45:3cm)node[anchor=north west]{$4$};
\draw (-90:3cm)node[anchor=north]{$5$};
\draw (-135:3cm)node[anchor=north east]{$6$};
\draw (180:3cm)node[anchor=east]{$7$};
\draw (135:3cm)node[anchor=south east]{$8$};
\draw(90:3cm)to[bend right=30](0:3cm)to[bend right=15](-135:3cm)to[bend right=15](90:3cm);
\draw (180:3cm) to [bend right=60] (135:3cm);
\draw[dashed] (67.5:3cm) to[bend right=40] (22.5:3cm);
\draw[dashed] (-22.5:3cm)to[bend right=40](-67.5:3cm)to[bend right=40](-112.5:3cm)to[bend left=40](-22.5:3cm);
\draw[dashed] (-157.5:3cm) to [bend right=40] (112.5:3cm);
\end{tikzpicture}

\caption{An example of the Kreweras endomorphism}
\label{fig:Kre}
\end{figure}
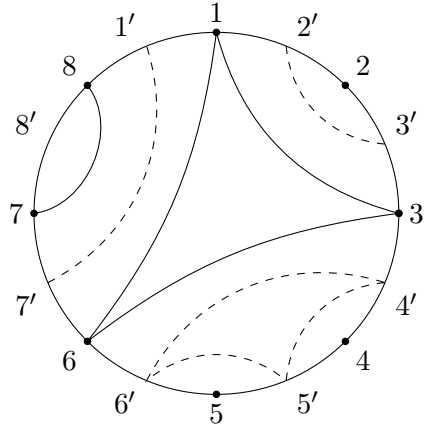
\end{example}

We summarize the properties of the map $\rho$.
\begin{prop}
\label{prop:Kre}
The Kreweras endomorphism $\rho$ satisfies 
\begin{enumerate}
\item $\rho^{2n}$ is the identity.
\item $\rho^2$ is a rotation on $\mathcal{NC}_{n}$. In other words, $\pi'=\rho^{2}(\pi)$ 
is obtained from $\pi$ by replacing $i$ by $i+1$ for $2\le i\le n$ and $n$ by $1$.
\item The rank function $\mathtt{rk}$ satisfies $\mathtt{rk}(\rho(\pi))+\mathtt{rk}(\pi)=n-1$. 
\item If $\pi\lessdot\nu$, then $\rho(\nu)\lessdot\rho(\pi)$.
\end{enumerate}
\end{prop}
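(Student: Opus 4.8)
The plan is to first recast the Kreweras endomorphism in a form that makes all four assertions accessible, namely as a \emph{maximal non-crossing complement}. Drawing $\pi$ together with the dashed points on the circle $S$, I observe that two dashed points lie in the same region if and only if the arch joining them inside $S$ crosses no arch of $\pi$, while dashed points in different regions cannot be joined without crossing some arch of $\pi$. Hence $\rho(\pi)$ is precisely the \emph{coarsest} non-crossing partition of the dashed points such that the superposition of $\pi$ and $\rho(\pi)$ on all $2n$ points of $S$ is non-crossing. I would record this characterization as a lemma, since both (2) and (4) rest on it.

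For (3) I would count blocks. Writing $b(\pi)$ for the number of blocks, so that $\mathtt{rk}(\pi)=n-b(\pi)$, it suffices to prove $b(\pi)+b(\rho(\pi))=n+1$. Draw each block of $\pi$ as a path on its points; the arches then form a forest with $n-b(\pi)$ edges, and together with the $2n$ boundary arcs of $S$ they form a connected planar graph on $2n$ vertices. Euler's formula gives $n-b(\pi)+1$ bounded faces, and because the arches form a forest every bounded face must use at least one boundary arc and therefore is incident to a nonempty set of dashed points; conversely the dashed points of one face constitute exactly one block of $\rho(\pi)$. Thus $b(\rho(\pi))=n-b(\pi)+1$, which rearranges to $\mathtt{rk}(\rho(\pi))+\mathtt{rk}(\pi)=n-1$.

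The heart of the matter is (2), and I expect the bookkeeping here to be the main obstacle. I would fix one master circle with $2n$ positions and place the unprimed point $k$ at position $2k-2$ and the dashed point $i'$ at position $2i-3$ (indices mod $2n$), so that the first application of $\rho$ moves from the even to the odd positions and, after relabelling $i'\mapsto i$, the second application moves back from the odd to the even positions, with its output point labelled $i$ sitting at position $2i-4$. By the lemma, $\rho^2(\pi)$ is the coarsest non-crossing partition of the even positions whose superposition with $\rho(\pi)$ is non-crossing; since $\pi$ itself is such a partition, $\pi\le\rho^2(\pi)$ as partitions of the even positions. Applying (3) twice gives $\mathtt{rk}(\rho^2(\pi))=\mathtt{rk}(\pi)$, and in the graded lattice $\mathcal{NC}_{n}$ a relation $\pi\le\rho^2(\pi)$ of equal rank forces equality; hence $\rho^2(\pi)$ and $\pi$ coincide as partitions of the positions. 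Tracking the distinguished point $1\mapsto 1'\mapsto 1''$, which moves two half-steps counterclockwise and therefore lands on the position formerly occupied by $n$, shows that the labels are shifted by $i\mapsto i+1$ (with $n\mapsto 1$), which is exactly the rotation in (2). Assertion (1) is then immediate, since $\rho^{2n}=(\rho^2)^n$ is the $n$-fold rotation, i.e.\ the identity.

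Finally, for (4) I would use the lemma to show $\rho$ reverses the order, and then upgrade this to covers by a rank count. If $\pi\le\nu$, then every block of $\pi$ is contained in a block of $\nu$, so any crossing in the superposition of $\pi$ with $\rho(\nu)$ would already be a crossing in the superposition of $\nu$ with $\rho(\nu)$, which is excluded; thus $\rho(\nu)$ is a non-crossing complement of $\pi$, and maximality gives $\rho(\nu)\le\rho(\pi)$. Now if $\pi\lessdot\nu$, then by (3) we have $\mathtt{rk}(\rho(\pi))-\mathtt{rk}(\rho(\nu))=\mathtt{rk}(\nu)-\mathtt{rk}(\pi)=1$, and a relation $\rho(\nu)\le\rho(\pi)$ whose ranks differ by one is a cover. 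Hence $\rho(\nu)\lessdot\rho(\pi)$, completing the proof.
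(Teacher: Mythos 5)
Your proof is correct, but it takes a genuinely different route from the paper's. The paper treats (1) and (2) as immediate from the definition, proves (3) by induction on $n$ (decomposing the circle along the block containing $1$ and counting the blocks lying in each sector), and proves (4) by a two-case analysis (non-nesting versus nesting blocks $B_1,B_2$) that explicitly tracks which primed blocks split or merge under $\rho$. You instead isolate the characterization of $\rho(\pi)$ as the \emph{coarsest} non-crossing complement of $\pi$ and make everything follow from it: (3) becomes an Euler-formula count, where drawing each block as a path rather than a closed polygon is the right choice, since it makes the arches a forest, forces every bounded face to use a boundary arc, and hence puts the bounded faces in bijection with the blocks of $\rho(\pi)$; (2) follows from the order-theoretic squeeze $\pi\le\rho^{2}(\pi)$ combined with $\mathtt{rk}(\rho^{2}(\pi))=\mathtt{rk}(\pi)$ and gradedness of $\mathcal{NC}_{n}$, plus the positional bookkeeping for the label shift (your direction $i\mapsto i+1$ agrees with the paper's worked examples); and (4) follows from monotonicity of the crossing relation under coarsening together with a rank count, the paper's notion of cover being recovered because a comparable pair with rank difference one must differ by merging exactly two blocks. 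Your approach buys uniformity, supplies an actual argument for (2) where the paper only asserts it, and avoids the delicate case analysis in (4); the paper's approach is more elementary in that it needs neither the complement characterization nor Euler's formula. In a final write-up you should make explicit that each dashed point is incident to exactly one bounded face (no arch ends at a dashed point), so the faces genuinely partition the dashed points, and that ``non-crossing superposition'' is the combinatorial interleaving condition, so the step ``$B\subseteq B'$ and $B$ crosses $D$ implies $B'$ crosses $D$'' in (4) is literal.
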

\begin{proof}
(1) and (2) are obvious from the definition of $\rho$.

We show (3) by induction on $n$. For $n=1,2$, (3) holds.
Suppose that $\pi$ contains the block consisting of 
$(k_1,k_2,\ldots,k_{p})$ where $k_1=1$ and $p\ge1$.
In the graphical presentation, we have $p$ diagonals 
which connect $k_{i}$ and $k_{i+1}$ for $1\le i\le p-1$, and $k_{p}$ and $k_1$.
Suppose that $b_{i}$ is the number of blocks between $k_{i}$ and $k_{i+1}$ for 
$1\le i\le p-1$ and $b_{p}$ is the number of blocks between $k_p$ and $k_1$.
The number of blocks in $\pi$ is $1+\sum_{i=1}^{p}b_{i}$.
By definition of rank, the rank of $\pi$ is given by $\mathtt{rk}(\pi)=n-1-\sum_{i=1}^{p}b_{i}$.
We consider the action of $\rho$ on $\pi$.
In the region between $[k_i,k_{i+1}]$, we have $k_{i+1}-k_{i}-b_{i}$ blocks 
in $\rho(\pi)$ by induction hypothesis.
Therefore, $\rho(\pi)$ has $\sum_{i=1}^{p}(k_{i+1}-k_{i}-b_{i})=n-\sum_{i=1}^{p}b_{i}$ blocks.
The rank of $\rho(\pi)$ is given by $\mathtt{rk}(\rho(\pi))=\sum_{i=1}^{p}b_{i}$.
From these, we have $\mathtt{rk}(\rho(\pi))+\mathtt{rk}(\pi)=n-1$.

(4) Suppose that $\pi\lessdot\nu$. It is enough to prove that $\rho(\nu)\lessdot\rho(\pi)$.
The cover relation implies that there are two blocks $B_1$ and $B_2$ in $\pi$ such that $B_1$ and $B_2$ 
are distinct blocks in $\pi$ and $B':=B_{1}\cup B_{2}$ is a block in $\nu$.
Without loss of generality, we assume that $\min B_1<\min B_2$.
We have two cases: a) $\max B_1<\min B_2$, and b) $\max B_1>\max B_2$.
\paragraph{Case a)}
Since $\pi$ is non-crossing, there is no diagonal $(i,j)$ in the pictorial representation of $\nu$ 
such that $\max B_1<i<\min B_2<\max B_2<j$.
We consider the circular presentation of $\pi$ and $\nu$.
Let $X(\pi)$ be the set of primed points between $\max B_2$ and $\min B_1$, and $Y(\pi)$ the set of 
primed points between $\max B_1$ and $\min B_{2}$.
Since $B'$ is a single block in $\nu$, there is no diagonal connecting the primed points in $X(\pi)$
and $Y(\pi)$ in $\nu$. In $\pi$, since the points $X(\pi)$ and $Y(\pi)$ belong to the same region, we have 
several diagonals connecting the primed points in $X(\pi)$ and $Y(\pi)$.
Note that we have at least one such diagonal. 
Let $B'$ be the block which consists of the primed points $X(\pi)$ and $Y(\pi)$ in $\pi$.
By the Kreweras endomorphism, the block $B'$ in $\pi$ is divided into two smaller blocks $B'_1$ and 
$B'_{2}$ such that $B'_{1}$ (resp. $B'_2$) consists of primed points in $X(\pi)$ (resp. $Y(\pi)$).
Since $\pi\lessdot\nu$, the other blocks in $\rho(\pi)$ and $\rho(\nu)$ coincide with each other.
As a result, the blocks of $\rho(\pi)$ are obtained from $\rho(\nu)$ by merging the two blocks 
$B'_1$ and $B'_2$ in $\nu$ into a larger block $B'$.
These observations imply that $\rho(\nu)\lessdot\rho(\pi)$.

\paragraph{Case b)} 
Let $i_0$ (resp. $j_0$) be the largest (resp. smallest) integer in $B_1$ which is smaller (resp. larger)
than $\min B_2$ (resp. $\max B_2$).
In $\pi$, the two primed integers $(i_{0}+1)'$ and $j'_0$ belong to the same block.
On the other hand, in $\nu$, these two primed integers belong to distinct blocks since 
we merge the blocks $B_1$ and $B_2$ into a larger block.
This means that two primed blocks containing $(i_0+1)'$ and $j'_0$ are merged into 
a larger block. These imply that $\rho(\nu)\lessdot\rho(\pi)$.

From these considerations, we have $\rho(\nu)\lessdot\rho(\pi)$ if $\pi\lessdot\nu$.
\end{proof}

Let $\pi\in\mathcal{NC}_{n}$ and $\pi':=\rho(\pi)$.
The non-crossing partition $\pi'$ is obtained from $\pi$ by the following operations.
Suppose that $\pi$ consists of $m$ blocks, {\it i.e.}, $\pi=B_1/B_2/\ldots /B_{m}$ 
such that $\min B_i<\min B_{i+1}$ for $1\le i\le m-1$.
By definition $\min B_1=1$. 
Suppose that $j\in B_{i}$ for some $i$ such that $1\le i\le m$, and the integer $j-1$ (modulo $n$) belongs to 
a block $B_{k}$. 
If $k=i$, then we define an integer $t:=j$. 
Otherwise, $t$ is defined as follows.
Let $B_{k}$ be the block consisting of integers $\{b_1,b_2,\ldots,b_s\}$.
If $j-1=b_{r}$ with $1\le r\le s-1$, then we define $t:=b_{r+1}$.
If $j-1=b_{s}$, then we define $t:=b_{1}$.
Further, if $s=1$, then we define $t:=b_1$.

\begin{prop}
\label{prop:jt}
Let $j$ and $t$ be two integers as above. 
Then, $j$ and $t$ belong to the same block in $\pi'$.
\end{prop}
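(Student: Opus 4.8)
The plan is to recast the recipe as a single statement about a cyclic successor and then read off the conclusion from the pictorial definition of $\rho$. Observe first that the three non-trivial clauses defining $t$ (the cases $j-1=b_r$ with $r<s$, $j-1=b_s$, and $s=1$) all say the same thing: $t$ is the cyclic successor of $j-1$ inside its block $B_k$, i.e. the next vertex one meets travelling clockwise along the polygon representing $B_k$ starting from $j-1$. When $k=i$, i.e. $j$ and $j-1$ lie in the same block, the recipe sets $t=j$ and the assertion ``$j$ and $t$ lie in the same block of $\pi'$'' is vacuous, so I only need to treat $k\neq i$. Recall that $\pi'=\rho(\pi)$ is obtained by grouping the primed points $1',\ldots,n'$ according to the regions cut out by the chords of $\pi$, and that after erasing primes two integers lie in the same block of $\pi'$ exactly when the corresponding primed points lie in the same region. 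Hence it suffices to prove that $j'$ and $t'$ occupy the same region.

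Next I would pin down the relevant chord. Writing $j-1=b_r$ and $t=b_{r+1}$ (indices read cyclically modulo $s:=|B_k|$), the segment $c:=(j-1,t)$ is precisely the edge of the polygon of $B_k$ leaving $j-1$ in the clockwise direction, and by the definition of cyclic successor no vertex of $B_k$ lies strictly inside the clockwise arc $\mathcal{A}$ running from $j-1$ to $t$. Let $R$ be the region adjacent to $c$ on the $\mathcal{A}$-side. (If $s=1$ the block $B_k=\{j-1\}$ carries no chord, $t=j-1$, and the two primed slots flanking $j-1$ are trivially unseparated; this degenerate case is immediate, so assume $s\geq 2$.) The cyclic order of all $2n$ points read clockwise is $1,2',2,3',3,\ldots,n,1'$, so $j'$ is the point immediately clockwise of $j-1$ and $t'$ the point immediately counterclockwise of $t$; both sit in the slots flanking the two ends of $c$ on the $\mathcal{A}$-side.

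Finally I would show $j',t'\in R$. Each point of the circle belongs to a single block of $\pi$, so the only chords of $\pi$ incident to $j-1$ are the two polygon edges of $B_k$ at $j-1$; of these only $c$ leaves $j-1$ on the clockwise side, the other running counterclockwise into the complementary arc. Consequently, moving a hair clockwise from $j-1$ into the disk one enters $R$, and the first circle slot met, containing $j'$, lies on $\partial R$, so $j'\in R$. The same argument at $t$, where $c$ is again the only chord of $\pi$ leaving $t$ counterclockwise, gives $t'\in R$; hence $j'$ and $t'$ lie in the same region and thus in the same block of $\pi'$, as claimed. The point demanding care---the heart of the argument---is the justification that $R$ really is one connected region touching the circle at both the $j'$-slot and the $t'$-slot: this rests on the non-crossing property of $\pi$, which forces every chord meeting the open arc $\mathcal{A}$ to have both endpoints in $\overline{\mathcal{A}}$ and so prevents any chord from crossing $c$ and separating $j'$ from $t'$.
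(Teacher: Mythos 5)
Your proof is correct and follows essentially the same route as the paper: identify $t$ as the cyclic successor of $j-1$ in its block, so that $(j-1,t)$ is a polygon edge, and then observe that the slots $j'$ and $t'$ lie in the single region adjacent to that edge on the arc side. You supply the details (non-crossingness preventing any chord from separating $j'$ from $t'$) that the paper dismisses as obvious, which is a welcome elaboration rather than a deviation.
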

\begin{proof}
Consider the pictorial representation of $\pi$ on the circle $S$ with $n$ points. 
Suppose that $k=i$. In this case, the integers $j$ and $j-1$ belong to the same block 
$B_{i}$. By the action of $\rho$, the primed integer $j'$ forms a block by itself.
Since $t=j$, we are done in this case.
Suppose that $k\neq i$.
Then, by definition of $t$, the integers $j-1$ and $t$ belong to the same block $B_{i}$,
and there is no integer $u$ such that $j-1<u<t$ and $u\in B_{i}$. 
Then, it is obvious that the primed integers $j'$ and $t'$ belong to
the same region in $S$. 
This means that $j$ and $t$ belong to the same block in $\pi'$.
\end{proof}

If $j=t$ in $(j,t)$, then a block containing $j$ consists of only $j$. The size of 
this block is one.

For example, take $\pi=12/35/4/69/78$. Then, the set $I(\pi)$ of all pairs $(j,t)$ for $\pi$
is given by $I(\pi)=\{(1,6),(2,2),(3,1),(4,5),(5,4),(6,3),(7,9),(8,8),(9,7)\}$. 
This implies $\pi'=136/2/45/79/8$ since $j$ and $t$ in $(j,t)\in I(\pi)$ belong to 
the same block in $\rho(\pi)$ by Proposition \ref{prop:jt}.
In fact, this $\pi'$ is nothing but the non-crossing partition obtained by the pictorial operation, i.e., 
the Kreweras endomorphism.

\subsection{Temperley--Lieb algebra on non-crossing partitions}
\label{sec:TLNC}
We define a map $f_{i}$, $1\le i\le n-1$ on $\mathcal{NC}_{n}$.
Let $\pi\in\mathcal{NC}_{n}$.
Let $B_{i}$ and $B_{i+1}$ be the blocks in $\pi$ such that 
the integer $i$ (resp. $i+1$) belongs to $B_i$ (resp. $B_{i+1}$).
Then, the action of $f_i$ on $\pi$ is given by
\begin{align}
\label{def:f}
f_{i}\pi:=\begin{cases}
\tau\pi, & \text{ if } B_{i}=B_{i+1}, \\
\pi', & \text{ if } B_{i}\neq B_{i+1},
\end{cases}
\end{align}
where $\tau=-(q+q^{-1})$ and  $\pi'$ is a non-crossing partition obtained from $\pi$ by merging the 
two blocks $B_i$ and $B_{i+1}$ into a larger block.

For example, let $\pi=13/2/456/78$. Then, we have $f_{1}\pi=f_2\pi=123/456/78$, 
$f_{4}\pi=f_{5}\pi=f_7\pi=\tau \pi$ and $f_{6}\pi=13/2/45678$.

We have $n-1$ operators $f_{i}$, $1\le i\le n-1$. 
In Section \ref{sec:TLchord}, we define the action of Temperley--Lieb algebra 
$\mathbb{TL}_{2n}$ on Dyck paths of size $n$.
The algebra $\mathbb{TL}_{2n}$ has $2n-1$ generators.
To define the action of $\mathbb{TL}_{2n}$ on $\mathcal{NC}_{n}$, we need to 
define $2n-1$ operators which act on a non-crossing partition.
This is achieved by making use of the generators $f_{i}$ and the 
Kreweras endomorphism $\rho$.
Recall that $\rho$ satisfies  $\rho^{2n}=1$ by Proposition \ref{prop:Kre}.

We define $2n-1$ generators $\{F_{i}: 1\le i\le 2n-1\}$ which act on $\mathcal{NC}_{n}$ by 
\begin{align}
\label{def:Fi}
F_{i+1}=\rho F_{i} \rho^{-1},
\end{align}
where $\rho$ is the Kreweras endomorphism and $F_1=f_1$.

\begin{remark}
Two remarks are in order.
\begin{enumerate}
\item
Since $\rho$ is the Kreweras endomorphism, we have $f_2\neq F_2$. 
However, $\rho^2$ is the rotation on $\mathcal{NC}_{n}$ by Proposition \ref{prop:Kre}, we have 
$F_3=f_{2}$ on $\mathcal{NC}_{n}$.
\item 
By defining $F_{2n}=\rho F_{2n-1}\rho^{-1}$, we have an affine 
Temperley--Lieb algebra. 
This is well-defined since we have $\rho^{2n}=1$.
The set of generators $\{F_1,\ldots, F_{2n}\}$ satisfies 
the relations $F_{i}F_{i\pm1}F_{i}=F_{i}$ and $F_{i}F_{j}=0$ if $|i-j|\ge2$,
where we set $F_{2n+1}=F_{1}$.
Thus, we have the affine Termperley--Lieb algebra of type $A$ acting 
on non-crossing partitions.
\end{enumerate}
\end{remark}

One of the main results in this paper is the following theorem.
\begin{theorem}
\label{thrm:TLF}
The set of generators $\{F_i: 1\le i\le 2n-1 \}$ generates the 
Temperley--Lieb algebra $\mathbb{TL}_{2n}$ on $\mathcal{NC}_{n}$. 
\end{theorem}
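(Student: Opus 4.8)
The plan is to verify that the generators $\{F_i : 1 \le i \le 2n-1\}$ satisfy the defining relations \eqref{eq:ei} of $\mathbb{TL}_{2n}$, and then to argue that they generate a faithful representation on $\mathcal{NC}_n$. Since $F_{i+1} = \rho F_i \rho^{-1}$ with $F_1 = f_1$, conjugation by the Kreweras endomorphism $\rho$ is an automorphism that shifts the index, so many of the relations for arbitrary $i$ will follow from the corresponding relations at $i=1$ together with the conjugation structure. Concretely, I would reduce each family of relations to a statement about a fixed small index and then transport it along $\rho$.

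First I would establish the local relations for the $f_i$ themselves: the idempotent-up-to-scalar relation $f_i^2 = \tau f_i$ and the far-commutativity $f_i f_j = f_j f_i$ for $|i-j|>1$. The first is immediate from the definition \eqref{def:f}: if $i$ and $i+1$ already lie in the same block, $f_i$ acts by $\tau$, and if they lie in different blocks, $f_i$ merges them so that a second application multiplies by $\tau$; either way $f_i^2 = \tau f_i$. Far-commutativity is equally direct, since merging the block pair at positions $(i,i+1)$ and at $(j,j+1)$ are operations on disjoint pairs of consecutive positions when $|i-j|>1$, and hence commute. The braid-like relations $e_i e_{i\pm1} e_i = e_i$ require more care: I would check, case by case on the block structure of $\pi$ near positions $i-1,i,i+1,i+2$, that applying $f_i$, then $f_{i+1}$ (equivalently $F_3 = \rho^2 f_1 \rho^{-2}$, using Proposition \ref{prop:Kre}(2) that $\rho^2$ is the plain rotation), then $f_i$ again returns the original merge, up to the scalar bookkeeping. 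The key identity to exploit is Proposition \ref{prop:Kre}(4): since $\rho$ reverses cover relations, it intertwines the merge operations in a controlled way, so the relation $F_{i+1}F_i F_{i+1} = F_{i+1}$ at general $i$ follows from the $i=1$ instance by conjugating with the appropriate power of $\rho$. Because $\rho^{2n} = 1$ by Proposition \ref{prop:Kre}(1), the conjugation $F_{i+1} = \rho F_i \rho^{-1}$ is consistent and never wraps around inconsistently within the range $1 \le i \le 2n-1$.

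The cleaner route, which I expect to be the backbone of the argument, is to transport everything through the bijection $\Psi$ between non-crossing partitions and chord diagrams. Since $\Psi$ is a bijection $\mathcal{NC}_n \to \mathcal{C}_n$, and since Proposition \ref{prop:Fe} (the rotation compatibility $\sigma \circ \Psi = \Psi \circ \rho$) identifies the Kreweras rotation $\rho$ with the chord-diagram rotation $\sigma$, it suffices to show that $F_i$ on $\mathcal{NC}_n$ corresponds under $\Psi$ exactly to the action of the Temperley--Lieb generator $e_i$ on $\mathcal{C}_n$ defined in Section \ref{sec:TLchord}. The base case is to verify that $\Psi \circ f_1 = e_1 \circ \Psi$, i.e. that merging the blocks of positions $1$ and $2$ (or multiplying by $\tau$) matches the cap-cup action of $e_1$ on the corresponding chord diagram; this is a direct check using the explicit form of $\Psi$. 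Granting this base case, the inductive step is formal: $\Psi \circ F_{i+1} = \Psi \circ \rho F_i \rho^{-1} = \sigma \circ \Psi \circ F_i \circ \rho^{-1} = \sigma \circ e_i \circ \Psi \circ \rho^{-1} = \sigma e_i \sigma^{-1} \circ \Psi$, and one checks that $\sigma e_i \sigma^{-1} = e_{i+1}$ as operators on chord diagrams (the rotation conjugates one generator into the next). Because the $e_i$ already satisfy the $\mathbb{TL}_{2n}$ relations by the diagrammatic calculus of Section \ref{sec:TLchord}, the $F_i$ inherit them through the bijection, and faithfulness of the representation follows from the fact that $\Psi$ is a bijection intertwining the two actions.

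The main obstacle, I expect, is the base-case verification $\Psi \circ f_1 = e_1 \circ \Psi$ together with the conjugation identity $\sigma e_i \sigma^{-1} = e_{i+1}$. The former demands unwinding the precise definition of $\Psi$ and confirming that the combinatorial merge of blocks in $\mathcal{NC}_n$ is exactly what the cap-cup of $e_1$ produces on the matched chord diagram, including the correct emission of the scalar $\tau$ when a closed loop is formed (which should correspond precisely to the case $B_1 = B_2$ in \eqref{def:f}). The latter requires being careful about the $2n$-periodicity of the rotation and confirming that conjugating $e_i$ by the single-step rotation $\sigma$ shifts the cap-cup position by one; at the boundary $i = 2n-1$ this must not spill over into an affine generator, which is safe precisely because we restrict to $1 \le i \le 2n-1$. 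Once these two facts are in hand, the rest of the proof is the formal transport described above.
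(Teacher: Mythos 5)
Your preferred route is genuinely different from the paper's. The paper proves Theorem \ref{thrm:TLF} by direct case analysis on non-crossing partitions: it checks $F_1^2=\tau F_1$ from the definition of $f_1$, verifies $F_1F_2F_1=F_1$ by tracking blocks through the two cases ($1$ and $2$ in the same block, or in different blocks) and following what $\rho^{-1}$, $f_1$, and $\rho$ do to them, and argues commutativity for $|i-j|\ge 2$ by locality of the merges. Your backbone — transporting the relations through $\Psi$ so that the $F_i$ inherit them from the diagrammatic generators $e_i$ — is exactly the content of the paper's Proposition \ref{prop:Fe}, which is stated and proved \emph{after} this theorem and whose proof does not rely on it, so there is no circularity; your route is valid and arguably cleaner, at the cost of front-loading the base-case verification $\Psi\circ f_1=e_1\circ\Psi$ and $\Psi\circ\rho=\sigma\circ\Psi$, which you correctly identify as the real work. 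The paper's route buys a self-contained argument on partitions alone; yours buys uniformity (the braid and commutation relations for all $i$ come for free from the diagram calculus once the $i=1$ intertwining is checked).

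Two small cautions. First, in your direct-route sketch you slide between $f_{i+1}$ and $F_{i+1}$: these differ for even indices (the paper notes $F_2\neq f_2$ while $F_3=f_2$), so "apply $f_i$, then $f_{i+1}$, then $f_i$" does not test the relation $F_iF_{i+1}F_i=F_i$; the reduction must go through $F_1F_2F_1=F_1$ and conjugation by $\rho$, as you say later. Second, the claim that the representation is \emph{faithful} is both unnecessary for the statement and false in general (the span of $\mathcal{C}_n$ has dimension $C_n$ while $\dim\mathbb{TL}_{2n}=C_{2n}$); what the bijection $\Psi$ gives you is that the two representations are isomorphic, which is all you need to conclude that the $F_i$ satisfy the $\mathbb{TL}_{2n}$ relations.
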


\begin{proof}
Since we have $F_1=f_1$, $F_1^2=\tau F_1$ is equivalent to 
$f_1^2=\tau f_{1}$. 
However, it is obvious that the action of $f_1$ in Eq. (\ref{def:f}) implies that 
$f_1^2=\tau f_1$, i.e., $F_{1}^{2}=\tau F_{1}$.

We show that $F_1F_2F_1=F_1$.
Let $\pi\in\mathcal{NC}_{n}$. We consider the two cases: 1) the two integer $1$ and $2$ 
belong to the same block in $\pi$, and 2) otherwise.

Case 1) Let $B_{1\cup 2}$ be a block of $\pi$ where $1$ and $2$ belong to.
Since $1,2\in B_{1\cup2}$, we have $F_{1}\pi=\tau\pi$.
We consider the action of $F_2$ on $\pi$. Since $F_{2}=\rho F_1\rho^{-1}$,
we consider the action of $F_1$ on $\pi':=\rho^{-1}\pi$.
By definition of $\rho$, the two integers $1$ and $2$ belong to different blocks in $\pi'$.
The action of $F_1$ on $\pi'$ yields a non-crossing partition $\pi''$ where the two integers $1$ and 
$2$ belong to the same block.
Then, the action of $\rho$ on $\pi''$ gives a non-crossing partition such that  
$B_{1\cup 2}$ is divided into two blocks $\{2\}$ and $B_{1\cup2}\setminus\{2\}$, and all other 
blocks are the same as those of $\pi$.
Finally, the action of $F_1$ on $\rho\pi''$ gives $\pi$. 
As a summary, we have $F_1F_2F_1\pi=\tau\pi=F_1\pi$.

Case 2) Let $B_{1}$ (resp. $B_{2}$) be a block in $\pi$ such that the integer $1$ (resp. $2$)
belongs to $B_{1}$ (resp. $B_2$). We have $B_1\neq B_{2}$.
The action of $F_1$ merges $B_1$ and $B_2$ into a larger new block $B_{1\cup2}:=B_{1}\cup B_{2}$.
By the action of $\rho^{-1}$ on $B_{1\cup2}$, we have at least two blocks $B'_1$ and $B'_{2}$ where 
$B'_{i}$ contain $i$ in $\rho^{-1}F_{1}\pi$.  
Further action of $F_1$ merges the two blocks $B'_1$ and $B'_2$ into a larger block, and other blocks
remain the same.
As a consequence, $F_2F_1\pi$ has two blocks $\{2\}$ and $B_{1\cup2}\setminus\{2\}$, and other blocks 
are the same as those of $\pi$. Then, $F_{1}F_{2}F_{1}\pi$ has a large block $B_{1\cup2}$.
From these, we have $F_{1}F_2F_1\pi=F_1\pi$.

By combining the cases 1) and 2), we have $F_1F_2F_1=F_1$ on $\mathcal{NC}_{n}$.
One can also show in a similar manner that $F_2F_1F_2=F_2$. 

We show that $F_{i}F_{j}=F_{j}F_{i}$ for $|i-j|\ge2$.
By definition, we have $F_{j}=\rho^{j-i}F_{i}\rho^{-(j-i)}$.
The $F_i$ merges the two blocks $B_{i}$ and $B_{i+1}$ where $B_i$ contains the integer $i$.
Since $|i-j|\ge2$, it is obvious that the actions of $F_{i}$ and $F_{j}$ commute with each other.
Thus, we have $F_{i}F_{j}=F_{j}F_{i}$.

By combining the above observations together, the set $\{F_1,\ldots,F_{2n-1}\}$ generates 
the Temperley--Lieb algebra $\mathbb{TL}_{2n}$ on acting on $\mathcal{NC}_{n}$.
\end{proof}

In what follows, 
we will give a bijection $\Psi$ between a non-crossing partition in $\mathcal{NC}_{n}$ 
and a chord diagram in $\mathcal{C}_{n}$.
The bijection $\Psi$ plays a central role when we study the relations between the various Fuss--Catalan 
algebras on non-crossing partitions and those on generalized chord diagrams in the later section.

We will construct a chord diagram $C(\pi)\in\mathcal{C}_{n}$ from 
a non-crossing partitions $\pi\in\mathcal{NC}_{n}$.
Recall that a chord diagram consists of $n$ arches. To fix the positions 
of arches, we need to extract $n$ directed edges from $\pi$.
Suppose that $\pi$ consists of $m$ blocks $B_{i}$, $1\le i\le m$.
Since each block $B_{i}$ is an increasing sequence, 
we denote it by $B_{i}=(b_1,\ldots,b_{p})$ with some $p\ge1$.  
Let 
\begin{align*}
\mathcal{E}(B_{i}):=\{(b_i,b_{i+1}) :1\le i\le p-1 \}\cup\{(b_{p},b_{1})\},
\end{align*}
be the set of pairs of integers. When $p=1$, $\mathcal{E}(B_{i})=\{(b_1,b_1)\}$ 
by definition.
We define the set of pairs of integers by
\begin{align}
\label{eq:calE}
\mathcal{E}(\pi):=\bigcup_{1\le i\le m}\mathcal{E}(B_{i}).
\end{align}
Note that we have $|\mathcal{E}(\pi)|=n$, which means that we have $n$ elements 
which will be identified with $n$ arches in $C(\pi)$.
We associate an arch in $C(\pi)$  to an element in $\mathcal{E}(\pi)$ as follows.
Recall that we have $2n$ points labeled $1,1',2,2',\ldots,n,n'$ in $C(\pi)$.
Let $(i,j)\in\mathcal{E}(\pi)$ be a pair of integers. 
We connect the integer $i$ and primed integer $(j-1)'$ modulo $n$ by an arch. Here, we consider 
the label modulo $n$.

For example, we consider $\pi=12/3/4$. Then, we have 
\begin{align*}
\mathcal{E}(\pi)=\{(1,2),(2,1),(3,3),(4,4)\}.
\end{align*}
In the corresponding chord diagram, we have four arches 
connecting $1$ and $1'$, $2$ and $4'$, $3$ and $2'$, and $4$ and $3'$.
As a result, we have 
\begin{align*}
12/3/4 \qquad\leftrightarrow\qquad 
\tikzpic{-0.5}{[scale=0.5]
\draw(0,-1)node[anchor=south]{$1$}(0,0)..controls(0,1)and(1,1)..(1,0)(1,-1)node[anchor=south]{$1'$};
\draw(2,-1)node[anchor=south]{$2$}(2,0)..controls(2,2.5)and(7,2.5)..(7,0)(7,-1)node[anchor=south]{$4'$};
\draw(3,-1)node[anchor=south]{$2'$}(3,0)..controls(3,1)and(4,1)..(4,0)(4,-1)node[anchor=south]{$3$};
\draw(5,-1)node[anchor=south]{$3'$}(5,0)..controls(5,1)and(6,1)..(6,0)(6,-1)node[anchor=south]{$4$};
}
\qquad\leftrightarrow\qquad URUURURR.
\end{align*}

It is obvious that the construction of $C(\pi)$ from $\pi$ is invertible.
Therefore, we briefly explain the construction of a non-crossing partition $\pi(C)$ 
from $C$.
An arch connecting $i$ and $j'$ implies that the integers $i$ and $j+1$ are 
in the same block, and there is no integer $k$ such that $i<k<j+1$.
Here, we consider the integers modulo $n$.
We have $n$ arches in $C(\pi)$ and these uniquely determine the elements of
a block in $\pi$.
In this way, we have a unique non-crossing partition $\pi(C)$ from a chord diagram $C$.

\begin{defn}
\label{def:Psi}
We define the bijection $\Psi$ from $\mathcal{NC}_{n}$ to $\mathcal{C}_{n}$ given 
above.
\end{defn}

\begin{prop}
\label{prop:Fe}
The bijection $\Psi$ is compatible with the action of $\mathbb{TL}_{2n}$ on $\mathcal{NC}_{n}$.
In other words, we have 
\begin{align}
\label{rel:Fe}
F_{i}=\Psi^{-1} e_{i}\Psi, \qquad \rho=\Psi^{-1}\sigma\Psi,
\end{align}
where $F_{i}$ is defined in Eq. (\ref{def:Fi}) and $e_i$ is defined in Eq. (\ref{eq:ei}).
\end{prop}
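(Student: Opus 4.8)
The plan is to prove the two conjugation relations in Eq. (\ref{rel:Fe}) separately, since the first relation $F_i = \Psi^{-1} e_i \Psi$ reduces, via the defining relation $F_{i+1} = \rho F_i \rho^{-1}$ and its diagrammatic analogue $\sigma$, to the combination of the base case $i=1$ with the second relation $\rho = \Psi^{-1}\sigma\Psi$. So I would first establish $\rho = \Psi^{-1}\sigma\Psi$, i.e. that $\Psi$ intertwines the Kreweras endomorphism on $\mathcal{NC}_n$ with the chord-diagram rotation $\sigma$, and then handle the single generator $f_1 = F_1$ against $e_1$. Once both of these are in hand, the general relation $F_i = \Psi^{-1} e_i \Psi$ follows formally by induction: assuming $F_i = \Psi^{-1} e_i \Psi$, one computes
\begin{align*}
F_{i+1} = \rho F_i \rho^{-1}
= (\Psi^{-1}\sigma\Psi)(\Psi^{-1} e_i \Psi)(\Psi^{-1}\sigma^{-1}\Psi)
= \Psi^{-1}(\sigma e_i \sigma^{-1})\Psi,
\end{align*}
and it remains only to check that conjugating the diagrammatic generator $e_i$ by the rotation $\sigma$ produces $e_{i+1}$, which is a standard and purely pictorial fact about the Kauffman diagrams: rotating the cap-cup pair at positions $(i,i+1)$ by one unit moves it to $(i+1,i+2)$.

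For the rotation relation $\rho = \Psi^{-1}\sigma\Psi$, the key is to track what $\Psi$ does to the combinatorial data on both sides. Recall that $\Psi$ sends a pair $(i,j)\in\mathcal{E}(\pi)$ to the arch joining the point $i$ to the primed point $(j-1)'$, and that $\sigma$ relabels $i\mapsto i'$ and $j'\mapsto (j+1)$ modulo $n$ while preserving connectivity. I would compute the image under $\Psi$ of an arbitrary arch of $C(\pi)$, apply $\sigma$, read off the resulting pair of (un)primed labels, and then verify that the collection of pairs so obtained is exactly $\mathcal{E}(\rho(\pi))$ — this is where Proposition \ref{prop:jt}, which describes membership in the blocks of $\rho(\pi)$ via the pairs $(j,t)$, does the essential work. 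Concretely, the structure of $\sigma$ cyclically exchanges the roles of primed and unprimed points, which mirrors precisely the way the Kreweras map interchanges the solid and dotted arches in the circular picture; matching these two descriptions via $\Psi$ is the heart of the argument.

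For the base case, I would simply compare the action of $f_1$ on $\pi$ with the action of $e_1$ on $C(\pi)=\Psi(\pi)$. There are two cases according to whether $1$ and $2$ lie in the same block of $\pi$. When they do, $f_1$ multiplies by $\tau$, and diagrammatically $e_1$ closes a loop in $C(\pi)$ (because the arch pattern near positions $1,1',2$ forces a nested cap-cup that caps off on itself), contributing the same factor $\tau$; when they lie in different blocks, $f_1$ merges those blocks, and one checks that placing $C(\pi)$ atop the $e_1$-diagram reconnects exactly the two arches corresponding to the merged blocks, yielding $\Psi(f_1\pi)$. Both subcases are finite pictorial verifications of how the cap-cup at positions $1,2$ of $e_1$ interacts with the two arches of $C(\pi)$ incident to those points.

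\emph{The main obstacle} I anticipate is the rotation relation rather than the generator relation: aligning the index conventions is delicate, since $\sigma$ acts by the substitution $i\mapsto i'$, $j'\mapsto j+1$ together with a physical isotopy moving the point labeled $1$ to the far left, whereas $\rho$ is defined through the dual (dotted-arch) partition in the circular picture. Getting the modular shift $(j-1)'$ in the definition of $\Psi$ to interact correctly with the $+1$ shift in $\sigma$ — and confirming that after one application of $\sigma$ the arch data reassembles into $\mathcal{E}(\rho(\pi))$ exactly via the pairing in Proposition \ref{prop:jt} — is the one step that genuinely requires care rather than routine bookkeeping; I would do it by a small worked example in parallel with the general index-chasing to keep the conventions honest.
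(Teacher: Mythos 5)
Your proposal follows essentially the same route as the paper's proof: reduce to the single relations $F_1=\Psi^{-1}e_1\Psi$ and $\rho=\Psi^{-1}\sigma\Psi$ via the conjugation identities $F_{i+1}=\rho F_i\rho^{-1}$ and $e_{i+1}=\sigma e_i\sigma^{-1}$, handle the base case by splitting on whether $1$ and $2$ share a block and tracking the two distinguished arches, and prove the rotation relation by chasing a single arch of $\Psi(\pi)$ through $\sigma$ and matching it against the block structure of $\rho(\pi)$ (the content of Proposition \ref{prop:jt}). The decomposition, case analysis, and key computations all coincide with the paper's argument.
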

\begin{proof}
To show Eq. (\ref{rel:Fe}), it is enough to show that 
$F_1=\Psi^{-1}e_1\Psi$ and $\rho=\Psi^{-1}\sigma\Psi$ 
since we have $F_{i+1}=\rho F_{i}\rho^{-1}$, and $e_{i+1}=\sigma e_{i}\sigma^{-1}$ for $i\ge1$.
Let $\pi\in\mathcal{NC}_{n}$ and $C:=\Psi(\pi)\in\mathcal{C}_{n}$.

We prove $F_1=\Psi^{-1}e_1\Psi$.
First, suppose that $1$ and $2$ belong to the same block in $\pi$.
In $C$, we have an arch connecting the two points $1$ and $1'$.
By definition of $F_1$, we have $F_{1}\pi=\tau\pi$.
Similarly, $e_1C=\tau C$. We have $F_1=\Psi^{-1}e_1\Psi$ in this case.
Secondly, suppose that $1$ and $2$ belong to distinct blocks $B_1$ and $B_2$ in $\pi$, 
where the block $B_i$ contains the integer $i$.
Let $n_1>1$ be the minimal integer in the block $B_1\setminus\{1\}$ if $|B_1|\ge2$, and 
$n_1:=1$ if $|B_{1}|=1$. 
Similarly, let $n_2$ be the maximal integer in the block $B_2$ if $|B_2|\ge2$, and 
$n_2:=2$ if $|B_2|=1$.
The chord diagram $C$ contains two distinguished arches.
One is the arch connecting $1$ and $(n_1-1)'$, and the other is the arch 
connecting $1'$ and $n_2$.
The action of $F_1$ on $\pi$ gives a new block $B_{1\cup 2}:=B_1\cup B_{2}$.
Note that $n_2$ is the maximal integer which is smaller than $n_1$ 
in $B_{1\cup 2}$ if $n_1\neq1$.
Note that if $n_1\neq1$, then there is no $k$ such that $k\in B_{1\cup 2}$ and $n_2<k<n_1$.
The action of $e_1$ on $C$ transforms the two distinguished arches 
into the two arch connecting $1$ and $1'$, and $n_2$ and $(n_1-1)'$.
Other arches remain the same after the actions of $e_1$ on $C$.
From these, it is obvious that we have $\Psi(F_{1}\pi)=e_1C$.
Therefore, we have $F_1=\Psi^{-1}e_1\Psi$.

We will show that $\rho=\Psi^{-1}\sigma\Psi$.
Suppose that the two integers $i$ and $j$ belong to the same block such that $j$ is the 
minimum larger than $i$. 
This means that we have an arch $A_1$ connecting $i$ and $(j-1)'$ in $C$.
By the definition of Kreweras endomorphism, the two integers $i+1$ and $j$ belong 
to the same block in $\rho(\pi)$. Further, $j$ is the largest integer in this block.
In terms of the chord diagram, we have an arch $A_2$ connecting the two points $j$ and $i'$ 
in $\Psi(\rho(\pi))$.
It is easy to see that $\sigma A_1=A_2$ by the definition of $\sigma$. 
From these, we have $\rho=\Psi^{-1}\sigma\Psi$.
This completes the proof.
\end{proof}

Proposition \ref{prop:Fe} can be summarized as in the following commutative 
diagrams:
\begin{center}
\begin{tikzcd}
\pi \arrow[r,"F_{i}"] \arrow[d,"\Psi"] 
& \pi' \arrow["\Psi",d] \\
C \arrow[r,"e_{i}"] & C'
\end{tikzcd}\qquad
\begin{tikzcd}
\pi \arrow[r,"\rho"] \arrow[d,"\Psi"] 
& \pi' \arrow["\Psi",d] \\
C \arrow[r,"\sigma"] & C'
\end{tikzcd}
\end{center}
where $\pi,\pi'\in\mathcal{NC}_{n}$ and $C,C'\in\mathcal{C}_{n}$.

\begin{example}
The actions of $F_i$, $1\le i\le 5$, on the 
non-crossing partitions of size $3$ are given in Figure \ref{fig:NCtoChord}. 
Each chord diagram corresponds to the non-crossing partition 
depicted in Figure \ref{fig:NC3}. Therefore, a chord diagram is obtained 
from a non-crossing partition by $\Psi$.
\begin{figure}[ht]
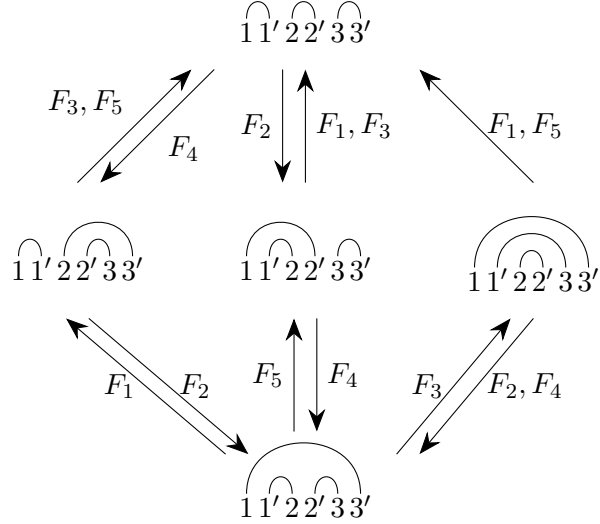

\tikzpic{-0.5}{[xscale=3.0,yscale=3]
\node(0) at (0,0){
\tikzpic{-0.5}{[scale=0.3]
\draw(0,0)..controls(0,1)and(1,1)..(1,0);
\draw(2,0)..controls(2,1)and(3,1)..(3,0);
\draw(4,0)..controls(4,1)and(5,1)..(5,0);
\foreach \x/\y in{0/1,2/2,4/3}
\draw(\x,-1.5)node[anchor=south]{$\y$};
\foreach \x/\y in {1/{1'},3/{2'},5/{3'}}
\draw(\x,-1.5)node[anchor=south]{$\y$};
}};
\node(1) at (-1,-1){
\tikzpic{-0.5}{[scale=0.3]
\draw(0,0)..controls(0,1)and(1,1)..(1,0);
\draw(2,0)..controls(2,2)and(5,2)..(5,0);
\draw(3,0)..controls(3,1)and(4,1)..(4,0);
\foreach \x/\y in{0/1,2/2,4/3}
\draw(\x,-1.5)node[anchor=south]{$\y$};
\foreach \x/\y in {1/{1'},3/{2'},5/{3'}}
\draw(\x,-1.5)node[anchor=south]{$\y$};
}
};
\node(2) at (0,-1){
\tikzpic{-0.5}{[scale=0.3]
\draw(0,0)..controls(0,2)and(3,2)..(3,0);
\draw(1,0)..controls(1,1)and(2,1)..(2,0);
\draw(4,0)..controls(4,1)and(5,1)..(5,0);
\foreach \x/\y in{0/1,2/2,4/3}
\draw(\x,-1.5)node[anchor=south]{$\y$};
\foreach \x/\y in {1/{1'},3/{2'},5/{3'}}
\draw(\x,-1.5)node[anchor=south]{$\y$};
}
};
\node(3) at (1,-1){
\tikzpic{-0.5}{[scale=0.3]
\draw(0,0)..controls(0,3)and(5,3)..(5,0);
\draw(1,0)..controls(1,2)and(4,2)..(4,0);
\draw(2,0)..controls(2,1)and(3,1)..(3,0);
\foreach \x/\y in{0/1,2/2,4/3}
\draw(\x,-1.5)node[anchor=south]{$\y$};
\foreach \x/\y in {1/{1'},3/{2'},5/{3'}}
\draw(\x,-1.5)node[anchor=south]{$\y$};
}
};
\node(4) at (0,-2){
\tikzpic{-0.5}{[scale=0.3]
\draw(0,0)..controls(0,3)and(5,3)..(5,0);
\draw(1,0)..controls(1,1)and(2,1)..(2,0);
\draw(3,0)..controls(3,1)and(4,1)..(4,0);
\foreach \x/\y in{0/1,2/2,4/3}
\draw(\x,-1.5)node[anchor=south]{$\y$};
\foreach \x/\y in {1/{1'},3/{2'},5/{3'}}
\draw(\x,-1.5)node[anchor=south]{$\y$};
}
};
\draw[{Stealth[length=3mm]}-] (-0.5,-0.2)--node[anchor=south east]{$F_3,F_5$}(-1,-0.7);
\draw[-{Stealth[length=3mm]}] (-0.4,-0.2)--node[anchor=north west]{$F_4$}(-0.9,-0.7);
\draw[-{Stealth[length=3mm]}] (-0.1,-0.2)--node[anchor=east]{$F_2$}(-0.1,-0.7);
\draw[{Stealth[length=3mm]}-] (0,-0.2)--node[anchor=west]{$F_1,F_3	$}(0,-0.7);
\draw[-{Stealth[length=3mm]}] (1,-0.7)--node[anchor=west]{$F_1,F_5$}(0.5,-0.2);
\draw[-{Stealth[length=3mm]}] (1,-1.3)--node[anchor=west]{$F_2,F_4$}(0.5,-1.9);
\draw[{Stealth[length=3mm]}-] (0.9,-1.3)--node[anchor=east]{$F_3$}(0.4,-1.9);
\draw[-{Stealth[length=3mm]}] (0.05,-1.3)--node[anchor=west]{$F_4$}(0.05,-1.8);
\draw[{Stealth[length=3mm]}-] (-0.05,-1.3)--node[anchor=east]{$F_5$}(-0.05,-1.8);
\draw[-{Stealth[length=3mm]}] (-0.95,-1.3)--node[anchor=west]{$F_2$}(-0.25,-1.9);
\draw[{Stealth[length=3mm]}-] (-1.05,-1.3)--node[anchor=east]{$F_1$}(-0.35,-1.9);
}
\caption{The actions of the generators $F_{i}$ on non-crossing partitions, or equivalently chord diagrams in $\mathcal{NC}_{3}$}
\label{fig:NCtoChord}
\end{figure}
Here, we do not depict the action of $F_{i}$ if it is obvious. 
Note that the actions of $F_{i}$ non-crossing partitions are the same as 
the actions of $e_{i}$ on chord diagrams.
For example, we have $F_{5}(1/2/3)=13/2$.
There is no $F_{i}$ such that $1/23=F_{i}(123)$.
\end{example}

The next proposition explains the relation between the cover relation $\pi\lessdot \nu$ in $\mathcal{NC}_{n}$
and the action of a generator $F_{i}$ for some $i$.

\begin{prop}
\label{prop:NCchord}
Let $C_1$ and $C_2$ be two distinct chord diagrams, and $\pi_i:=\Psi^{-1}(C_i)$ for $i=1$ and $2$.
We have $C_2=e_{i}C_{1}$ for some $1\le i\le 2n-1$
if and only if two non-crossing partitions $\pi_1$ and $\pi_2$ satisfy $\pi_{1}\lessdot\pi_{2}$
for $i\equiv1\pmod2$ or $\pi_{2}\lessdot\pi_{1}$ for $i\equiv0\pmod2$. 
\end{prop}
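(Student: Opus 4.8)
The plan is to transport the statement to non-crossing partitions through $\Psi$ and then read off everything from the explicit shape of the generators $F_i$. By Proposition~\ref{prop:Fe} we have $e_i=\Psi F_i\Psi^{-1}$, so $C_2=e_iC_1$ is equivalent to $\pi_2=F_i\pi_1$; since $C_1$ and $C_2$ are distinct \emph{diagrams}, the scalar outcome $\tau\pi_1$ of $f_1$ is excluded, and $F_i$ must act nontrivially. Writing $F_i=\rho^{\,i-1}f_1\rho^{-(i-1)}$ from (\ref{def:Fi}) and setting $\mu:=\rho^{-(i-1)}\pi_1$, we get $\pi_2=\rho^{\,i-1}f_1\mu$, and the nontriviality forces, via (\ref{def:f}), that $1$ and $2$ lie in different blocks of $\mu$ and that $f_1$ merges them. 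Thus $\mu\lessdot f_1\mu$ is a cover, and the whole problem reduces to tracking how $\rho^{\,i-1}$ moves this single cover.

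For the forward implication I would invoke Proposition~\ref{prop:Kre}. When $i$ is odd, $i-1$ is even, so $\rho^{\,i-1}$ is a power of the rotation $\rho^2$ and preserves cover relations by part~(2); applying it to $\mu\lessdot f_1\mu$ gives $\pi_1=\rho^{\,i-1}\mu\lessdot\rho^{\,i-1}f_1\mu=\pi_2$. When $i$ is even, $i-1$ is odd, and one extra use of the cover-reversing property in part~(4) flips the direction, so that $\pi_2\lessdot\pi_1$. This proves that $C_2=e_iC_1$ forces $\pi_1\lessdot\pi_2$ for odd $i$ and $\pi_2\lessdot\pi_1$ for even $i$; in particular the parity of $i$ is always dictated by the direction of the cover, since $\rho$ cannot both preserve and reverse one and the same cover.

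For the converse I would run this backwards: starting from a cover in the prescribed direction, rotate so that the two blocks involved become the blocks containing $1$ and $2$, realise the move by a single application of $f_1$, and recover $i$ from the number of rotations used. Concretely, for an up-cover $\pi_1\lessdot\pi_2$ whose merged blocks contain a pair of cyclically consecutive integers $j,j+1$, the odd generator $F_{2j-1}=f_j$ performs exactly this merge, whence $C_2=e_{2j-1}C_1$; the even (down-cover) case is the same argument applied to the Kreweras image under $\rho$. Equivalently, and perhaps more transparently, I would argue diagrammatically: locate the adjacent pair of points of $C_2$ whose arch is created by the merge, take $i$ to be that position, and verify $e_iC_1=C_2$ directly from the cap--cup reconnection, the parity then being automatic from the forward step.

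I expect the converse to be the delicate point. The odd generators realise only \emph{cyclically consecutive} merges, so an up-cover whose two blocks share no neighbouring integers is simply not of the form $C_2=e_iC_1$ for any $i$, even though the abstract cover relation still holds; diagrammatically this is exactly the failure of the reconnection to be local at a single adjacent pair of points of $C_1$. This is the ``no $i$'' phenomenon already flagged in the introduction, and the careful part of the proof is to delineate precisely when the required $i$ exists while checking that, whenever it does, its parity matches the direction of the cover as guaranteed by the forward implication.
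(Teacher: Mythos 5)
Your forward direction is essentially the paper's own argument: conjugate by $\Psi$ to get $\pi_2=F_i\pi_1$, unwind $F_i=\rho^{i-1}F_1\rho^{-(i-1)}$ to produce a single cover $\rho^{-(i-1)}\pi_1\lessdot\rho^{-(i-1)}\pi_2$, and then use Proposition \ref{prop:Kre} to track the direction of that cover through $\rho^{i-1}$ according to the parity of $i$. The one place you diverge is the converse, and there your caution is justified: the paper's printed proof in fact stops after the forward implication and never addresses the ``if'' direction, while you correctly observe that, read literally, it fails --- an odd generator can only merge blocks containing a cyclically consecutive pair, so a cover such as $1/23\lessdot 123$ in $\mathcal{NC}_3$ (for which the paper itself notes there is no $F_i$ with $1/23=F_i(123)$) is not realized by any $e_i$. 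So your treatment is, if anything, more careful than the source: the caveat you flag is real, is acknowledged in the paper's introduction, and means the proposition should be read as determining the parity of $i$ from the direction of the cover whenever such an $i$ exists, rather than as a genuine biconditional.
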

\begin{proof}
Suppose we have $C_{2}=e_{i}C_{1}$. By applying $\Psi^{-1}$, we have 
$\pi_2=F_{i}\pi_{1}$ from Proposition \ref{prop:Fe}.
Since $C_1\neq C_{2}$, we have $\pi_1\neq \pi_2$.
From Eq. (\ref{def:Fi}), we have 
\begin{align*}
\rho^{-i+1}\pi_2&=\rho^{-i+1}F_{i}\rho^{i-1}\rho^{-i+1}\pi_{1}, \\
&=F_{1}\rho^{-i+1}\pi_{1}.
\end{align*}
By definition of $F_1$, $\rho^{-i+1}\pi_2$ is obtained from $\rho^{-i+1}\pi_1$
by merging the two blocks $B_1$ and $B_{2}$ such that the block $B_{i}$ contains 
the integer $i$ and $B_1\neq B_{2}$.
Otherwise, we have $\pi_2=\pi_1$ since $B_1=B_2$. 
This implies that $\mathtt{rk}(\rho^{-i+1}\pi_2)=\mathtt{rk}(\rho^{-i+1}\pi_{1})$+1, 
and $\rho^{-i+1}\pi_1\lessdot\rho^{-i+1}\pi_2$.
From Proposition \ref{prop:Kre}, 
we have $\rho^{-i+1}\pi_1\lessdot\rho^{-i+1}\pi_2$ if and only if the two 
non-crossing partitions satisfy
$\pi_1\lessdot\pi_2$ for $i\equiv1\pmod2$ or $\pi_2\lessdot\pi_1$ for $i\equiv0\pmod2$.
This completes the proof.
\end{proof}

\section{Fuss--Catalan algebra on generalized Dyck paths}
\label{sec:FCGDP}
\subsection{An increasing chain in \texorpdfstring{$\mathcal{NC}_{n}$}{NCn}}
To define the action of the Fuss--Catalan algebra, which is a generalization of the Temperley--Lieb algebra, 
on generalized Dyck paths, we introduce an increasing $r$-chain in the poset of $\mathcal{NC}_{n}$ and 
define the set of generators $\{F_{i}^{(s)}: 1\le i\le 2n-1, 1\le s\le r\}$.
Since an increasing $r$-chain will be identified with a generalized Dyck path by a bijection, we will first define 
the Fuss--Catalan algebra on increasing $r$-chains.

Fix a positive integer $r$.
An {\it increasing $r$-chain} $\pi^{(r)}:=(\pi_1,\ldots,\pi_{r})$ in the poset of $\mathcal{NC}_{n}$ is 
a sequence of non-crossing partitions such that 
$\pi_1\le \pi_2\le\ldots\le\pi_{r}$.
We define the set of increasing $r$-chains of $\mathcal{NC}_{n}$ by $\mathcal{NC}_{n}^{(r)}$ for $r\ge1$.
Note that if $r=1$,  then $\mathcal{NC}_{n}^{(r)}=\mathcal{NC}_{n}$.

\begin{example}
When $(n,r)=(3,2)$, we have twelve elements in $\mathcal{NC}_{3}^{(2)}$:
\begin{align*}
&(1/2/3,1/2/3) && (1/2/3,12/3) && (1/2/3,13/2) && (1/2/3,1/23) \\
&(1/2/3,123) && (12/3,12/3)  && (12/3,123) && (13/2,13/2) \\
&(13/2,123) && (1/23,1/23) && (1/23,123) && (123,123)
\end{align*}
\end{example}

As we will prove later in Proposition \ref{prop:NCP}, 
the number of elements in $\mathcal{NC}_{n}^{(r)}$ is given by the 
Fuss--Catalan number.
Therefore, we have $|\mathcal{NC}_{n}^{(r)}|=|\mathcal{P}^{(r)}_{n}|$.
To define a Fuss--Catalan algebra, which is a generalized Temperley--Lieb algebra, on $\mathcal{P}^{(r)}_{n}$, 
it is enough to define the algebra on $\mathcal{NC}_{n}^{(r)}$.
One translates the results on $\mathcal{NC}_{n}^{(r)}$ into the ones on $\mathcal{P}_{n}^{(r)}$
by a bijection between the two sets.
Below, we introduce and study such a bijection.

\subsection{A bijection between a generalized Dyck path and an increasing chain}
\label{sec:GDPic}
We study a bijection $\kappa^{(r)}$ between an $r$-Dyck path in $\mathcal{P}_{n}^{(r)}$ 
and an increasing $r$-chain in $\mathcal{NC}_{n}$.
Let $\pi:=(\pi_1,\ldots,\pi_{r})\in\mathcal{NC}_{n}^{(r)}$ be an increasing $r$-chain. 
Each non-crossing partition $\pi_i$ consists of $h_i\ge1$ blocks, that is, 
$\pi_{i}=B^{(i)}_1/B^{(i)}_2/\ldots/B^{(i)}_{h_{i}}$ such that 
$\min B^{(i)}_{h}<\min B^{(i)}_{h+1}$ for all $1\le h\le h_i-1$.
We first assign $r$-Dyck paths to each block $B^{(1)}_{h}$ by
\begin{align*}
B^{(1)}_{h} \leftrightarrow UR^{r-1}(UR^{r})^{l}R,
\end{align*}
where $1\le h\le h_1$, and $l$ is the number of integers in $B_{h}^{(1)}$ minus one, {\it i.e.}, 
$l:=|B^{(1)}_{h}|-1$.

We construct $h_{i}$ $r$-Dyck paths corresponding to each block $B^{(i)}_{h}$ 
from $h_{i-1}$ $r$-Dyck paths for the blocks $B^{(i-1)}_{h}$.
Note that we always have $h_{i}\le h_{i-1}$ since we have $\pi_{i-1}\le\pi_{i}$.
Suppose that $B^{(i)}_{h}$ is obtained from some blocks $B^{(i-1)}_{j}$ 
by merging them into a larger block.
In other words, $B^{(i)}_{h}$ can be written as 
\begin{align*}
B^{(i)}_{h}=B^{(i-1)}_{j_1}\cup B^{(i-1)}_{j_2}\cup\ldots\cup B^{(i-1)}_{j_m},
\end{align*}
as a set of integers where $j_1<j_2<\ldots<j_{m}$. 
This implies that we have $\min B^{(i-1)}_{j_p}<\min B^{(i-1)}_{j_{p+1}}$.
Let $P_{j_k}$ be an $r$-Dyck path corresponding to $B^{(i-1)}_{j_{k}}$, $1\le k\le m$.
Let $a:=\{q\in B^{(i-1)}_{j_{1}} : q<\min B^{(i-1)}_{j_2}\}$.
We merge $B^{(i-1)}_{j_1}$ and $B^{(i-1)}_{j_2}$ as a larger block 
$B_{j_1\cup j_2}:=B^{(i-1)}_{j_1}\cup B^{(i-1)}_{j_2}$.
An new $r$-Dyck path $P'_{j_1\cup j_2}$ corresponding to $B_{j_1\cup j_2}$ is obtained from 
$P_{j_1}$ and $P_{j_2}$ by inserting $P_{j_2}$ at the $a(r+1)$-th position of $P_{j_1}$ from left. 
Then, we continue this process until we obtain an $r$-Dyck path. 
Namely, we obtain an $r$-Dyck path $P'_{j_1\cup j_2\cup j_3}$ corresponding to 
$B_{j_1\cup j_2\cup j_3}:=B_{j_1\cup j_2}\cup B^{(i-1)}_{j_3}$ by 
inserting $P_{j_3}$ at the $b(r+1)$-th position of $P'_{j_1\cup j_2}$ from left, 
where $b:=\{q\in B_{j_1\cup j_2} : q<\min B^{(i-1)}_{j_3}\}$.
In this way, we obtain an $r$-Dyck path $P'_{j_1\cup j_2\cup\ldots \cup j_{m}}$ corresponding 
to the merged blocks $B_{h}^{(i)}$.
Let $\mathcal{I}$ be the set of positions of $U$ from left in $P'_{j_1\cup\ldots\cup j_{m}}$.
By definition of $r$-Dyck path, the first step is always $U$, which implies that $1\in \mathcal{I}$.
Finally, we define an $r$-Dyck path $P_{j_1\cup\ldots\cup j_m}$ 
such that the set of the positions of $U$ in $P_{j_1\cup\ldots\cup j_{m}}$ 
is given by 
\begin{align*}
\{1\} \cup \{i-1: i\in\mathcal{I}\setminus\{1\}\}.
\end{align*}
In this way we have a correspondence 
\begin{align*}
B^{(i)}_{h}\leftrightarrow P_{j_1\cup\ldots\cup j_{m}}.
\end{align*}
By the algorithm above, we obtain $h_r$ $r$-Dyck paths corresponding 
to the blocks $B^{(r)}_{h}$, $1\le h\le h_r$.
To obtain an $r$-Dyck path for an increasing chain $\pi$, we merge $h_r$ $r$-Dyck paths 
in the same way as we obtain $P'_{j_1\cup\ldots\cup j_{m}}$ from 
the $r$-Dyck paths $P_{j_{k}}$.
The newly obtained $r$-Dyck path corresponds to 
the $r$-chain $\pi$.

The above map from $\pi\in\mathcal{NC}_{n}^{(r)}$ to $P\in\mathcal{P}_{n}^{(r)}$ 
is invertible.
We briefly explain the inverse.
Given an $r$-Dyck path $P$, we decompose $P$ into several smaller $r$-Dyck paths $P_{j_k}$, 
$1\le k\le m$. Here, $m$ stands for the number of decomposed $r$-Dyck paths.
From this decomposition, we have a non-crossing partition $\pi_{r}$.
Then, we push right the $U$ steps except the first one in $P_{j_k}$ and obtain 
a new $r$-Dyck path $P'_{j_k}$.
Again, we decompose $P'_{j_k}$ into smaller $r$-Dyck paths, and obtain a 
non-crossing partition $\pi_{r-1}$.
We continue until we have $r$ non-crossing partitions.
Then, we define the $r$-chain $\pi$ by $\pi:=(\pi_1,\ldots,\pi_r)$. 
By construction, we always have $\pi_{i-1}\le \pi_{i}$ for $2\le i\le r$.
 
\begin{defn}
We define the above bijection from $\mathcal{NC}_{n}^{(r)}$ to $\mathcal{P}_{n}^{(r)}$
by $\kappa^{(r)}$.
\end{defn}

The following proposition is a direct consequence of the existence of a bijection 
between $\mathcal{NC}_{n}^{(r)}$ and $\mathcal{P}_{n}^{(r)}$.
\begin{prop}
\label{prop:NCP}
The number of $r$-chains in $\mathcal{NC}_{n}^{(r)}$ is the Fuss--Catalan number,
{\it i.e.}, 
\begin{align*}
|\mathcal{NC}_n^{(r)}|=\genfrac{}{}{}{}{1}{rn+1}\genfrac{(}{)}{0pt}{}{(r+1)n}{n}.
\end{align*}
\end{prop}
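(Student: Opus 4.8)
The plan is to read off the cardinality of $\mathcal{NC}_n^{(r)}$ directly from the bijection $\kappa^{(r)}$ constructed in Section~\ref{sec:GDPic}. Since $\kappa^{(r)}$ is a bijection between $\mathcal{NC}_n^{(r)}$ and $\mathcal{P}_n^{(r)}$, the two sets are equinumerous, so that $|\mathcal{NC}_n^{(r)}| = |\mathcal{P}_n^{(r)}|$. No independent enumeration of the chains is needed: the counting has already been transported to the side of the lattice paths.

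Next I would invoke the formula for the number of $r$-Dyck paths recorded in Section~\ref{sec:GDPTL}, namely $|\mathcal{P}_n^{(r)}| = \frac{1}{nr+1}\binom{n(r+1)}{n}$, the Fuss--Catalan number. Since $nr = rn$ and $n(r+1) = (r+1)n$, this is literally $\frac{1}{rn+1}\binom{(r+1)n}{n}$, which is the desired right-hand side. The chain of equalities $|\mathcal{NC}_n^{(r)}| = |\mathcal{P}_n^{(r)}| = \frac{1}{rn+1}\binom{(r+1)n}{n}$ then completes the argument, and this is exactly why the statement is advertised as a direct consequence of the bijection.

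The only genuine substance lies in the bijectivity of $\kappa^{(r)}$, which is supplied by the explicit forward map together with its inverse in the preceding subsection. If one wished to verify that bijectivity from scratch, the main obstacle would be \emph{well-definedness} rather than invertibility: one must check that the word assigned to an increasing chain never dips below $y = x/r$, so that it genuinely lies in $\mathcal{P}_n^{(r)}$, and that the inverse procedure of pushing up-steps rightward always returns a refinement, so that the recovered sequence satisfies $\pi_{i-1} \le \pi_i$ and is a legitimate $r$-chain. Granting the construction of the previous subsection, the present proposition requires no further computation. As a consistency check one could also combine Edelman's bijection between increasing $r$-chains and $(r+1)$-ary trees with the classical Fuss--Catalan enumeration of $(r+1)$-ary trees, but the route through $\kappa^{(r)}$ is self-contained within this paper and is the one I would take.
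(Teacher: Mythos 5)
Your argument is exactly the paper's: Proposition \ref{prop:NCP} is stated there as a direct consequence of the bijection $\kappa^{(r)}$ between $\mathcal{NC}_{n}^{(r)}$ and $\mathcal{P}_{n}^{(r)}$ together with the known Fuss--Catalan count of $r$-Dyck paths from Section \ref{sec:GDPTL}. Your additional remarks on where the real work lies (well-definedness of $\kappa^{(r)}$ and its inverse) are accurate but not needed beyond what the preceding subsection already supplies.
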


\begin{remark}
It is well-known that an $r+1$-ary tree is bijective to an $r$-Dyck path.
In \cite{Edel80}, Edelman constructed a bijection between $r+1$-ary trees 
and $r$-chains in the lattice of non-crossing partitions.
The bijection $\kappa^{(r)}$ given in Section \ref{sec:GDPic} is different from the bijection 
given by Edelman. 
The bijection $\kappa^{(r)}$ is a generalization of the bijection between a Dyck path and 
a non-crossing partition given by Stump in \cite{Stu13}.
\end{remark}

\begin{example}
Consider the $3$-chain $\pi=(1/2/3/4,14/23, 1234)$.
Let $\pi_1=B^{(1)}_1/B^{(1)}_2/B^{(1)}_{3}/B^{(1)}_{4}=1/2/3/4$, 
$\pi_2=B^{(2)}_1/B^{(2)}_2=14/23$ and $\pi_3=B^{(3)}_1=1234$.
For $\pi_1$, we have four Dyck paths $URRR$ for $B_{i}$, $1\le i\le 4$.
For $\pi_2$, we merge $B^{(1)}_1$ and $B^{(1)}_4$, and $B^{(1)}_2$ and $B^{(1)}_3$
into larger blocks. Therefore, we have two $3$-Dyck paths $UR^2UR^4$ for 
$B^{(2)}_{1}$ and $B^{(2)}_{2}$.
To obtain $3$-Dyck path for $\pi_3$, we merge $B^{(2)}_1$ and $B^{(2)}_2$ into 
a large block $B^{(3)}_1$.
We have $\{i\in B^{(2)}_1 : i<\min B^{(2)}_1\}=\{1\}$ since $B^{(2)}_1=\{1,4\}$ 
and $B^{(2)}_{2}=\{2,3\}$.
We insert the $3$-Dyck path for $B^{(2)}_2$ into the fourth position of the $3$-Dyck 
path for $B^{(2)}_1$, which gives the $3$-Dyck path 
$UR^2U^2R^2UR^8$. Then, the set $\mathcal{I}$ of positions of $U$ is given
by $\mathcal{I}=\{1,4,5,8\}$.
Finally, the set of the positions of $U$ steps in $3$-Dyck path for $\pi$ is
given by $\{1,3,4,7\}$. 
From this, we have the $3$-Dyck path $URU^2R^2UR^9$.
\end{example}

\begin{example}
Consider the $3$-Dyck path $URU^2R^8$ of size $3$. We will construct an increase $3$-chain
$\pi=(\pi_1,\pi_2,\pi_3)$.
First, $URU^2R^8$ can not be decomposed into smaller $3$-Dyck paths, we have 
$\pi_3=123$.
By moving the up steps rightward by one unit, we have $UR^2U^2R^7$.
This $3$-Dyck path can be decomposed into $UR^2UR^4$ and $UR^3$, therefore, we have 
$\pi_2=13/2$. By moving up steps rightward by one unit, we have two $3$-Dyck paths 
$UR^3UR^3$ and $UR^3$. The path $UR^3UR^3$ can be decomposed into two Dyck paths $UR^3$ of size $1$.
We have $\pi_1=1/2/3$.
As a result, we have an increasing chain $(1/2/3,13/2,123)$  for $URU^2R^8$.
\end{example}

\subsection{Extended Kreweras endomorphism}
We extend the action of the Kreweras endomorphism $\rho$ on $\mathcal{NC}_{n}$ to that of $\mathcal{NC}_{n}^{(r)}$ 
as follows.
Let $\pi^{(r)}:=(\pi_1,\ldots,\pi_{r})\in\mathcal{NC}_{n}^{(r)}$. 
We define the Kreweras endomorphism $\rho:\mathcal{NC}_{n}^{(r)}\rightarrow\mathcal{NC}_{n}^{(r)}$ 
by
\begin{align}
\label{eq:FKre}
\rho(\pi^{(r)}):=(\rho(\pi_{r}),\rho(\pi_{r-1}),\ldots,\rho(\pi_1)),
\end{align} 
where $\rho$ in the right hand side of Eq. (\ref{eq:FKre}) is the 
Kreweras endomorphism for $\mathcal{NC}_{n}$.
Recall that the Kreweras endomorphism $\rho$ reverses the rank and the cover relation
by Proposition \ref{prop:Kre}.
This implies that $\rho(\pi^{(r)})$ is also an increasing $r$-chain in $\mathcal{NC}_{n}^{(r)}$, i.e., 
we have $\rho(\pi_{i+1})\le\rho(\pi_{i})$ for $1\le i\le r-1$.

For example, we have 
\begin{align*}
\rho(1/23/4,14/23,1234)=(1/2/3/4,1/24/3,124/3).
\end{align*}
Note that $1/23/4\le 14/23\le 1234$ implies 
$1/2/3/4\le 1/24/3\le 124/3$.

The next proposition is a generalization of Proposition \ref{prop:xisigma}, and
it relates the rotation $\xi$ and $\widetilde{\sigma}$ with the extended Kreweras endomorphism.
\begin{prop}
We have $\xi^{r+1}=\widetilde{\sigma}^{r+1}=\rho^{2}$.
\end{prop}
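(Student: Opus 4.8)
The plan is to prove the two equalities separately, treating $\xi^{r+1}=\widetilde{\sigma}^{r+1}$ as a formal consequence of an already-established intertwining and reserving the real work for $\widetilde{\sigma}^{r+1}=\rho^{2}$. For the first equality, recall that Proposition~\ref{prop:xisigma} gives $\widetilde{C}\circ\xi=\widetilde{\sigma}\circ\widetilde{C}$ along the bijection $\widetilde{C}\colon\mathcal{P}_{n}^{(r)}\to\widetilde{\mathcal{C}}_{n}^{(r)}$; iterating this $r+1$ times yields $\widetilde{C}\circ\xi^{r+1}=\widetilde{\sigma}^{r+1}\circ\widetilde{C}$, so $\xi^{r+1}$ and $\widetilde{\sigma}^{r+1}$ are conjugate under $\widetilde{C}$. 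It therefore remains to show that, transported to $\mathcal{NC}_{n}^{(r)}$ via the composite $\widetilde{C}\circ\kappa^{(r)}$, the one-bundle rotation $\widetilde{\sigma}^{r+1}$ coincides with the extended Kreweras endomorphism squared, $\rho^{2}$.

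First I would pin down $\rho^{2}$ explicitly on chains. Applying the definition~(\ref{eq:FKre}) twice, the order-reversal of the components cancels, giving $\rho^{2}(\pi^{(r)})=(\rho^{2}(\pi_{1}),\ldots,\rho^{2}(\pi_{r}))$; and by Proposition~\ref{prop:Kre}(2) each $\rho^{2}$ acts on $\mathcal{NC}_{n}$ as the cyclic relabeling $i\mapsto i+1\pmod{n}$ of the ground set $[n]$. Thus $\rho^{2}$ is nothing but a uniform cyclic shift of the ground set, applied simultaneously in every layer of the chain.

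Next I would identify the diagrammatic counterpart. Since $\widetilde{\sigma}$ shifts all $(r+1)n$ point-labels by one, $\widetilde{\sigma}^{r+1}$ shifts them by $r+1$, that is, by exactly one bundle of $r+1$ points. The bijection $\widetilde{C}\circ\kappa^{(r)}$ is built so that the $n$ bundles of a generalized chord diagram index the $n$ ground-set elements, the $i$-th up-step group being governed by the element $i$ throughout the layered decoding of $\kappa^{(r)}$; hence a one-bundle shift realizes precisely the ground-set cyclic shift of the previous paragraph. To turn this into a proof I would induct on $r$. The base case $r=1$ reduces to a single-layer statement: one checks that $\widetilde{C}\circ\kappa^{(1)}$ differs from the bijection $\Psi$ of Definition~\ref{def:Psi} only by a fixed power of the rotation $\widetilde{\sigma}$, after which Proposition~\ref{prop:Fe} (which gives $\Psi\rho=\sigma\Psi$, with $\sigma$ identified with $\widetilde{\sigma}$ for $r=1$) lets one conjugate $\rho^{2}$ into $\widetilde{\sigma}^{2}$, the extra power of $\widetilde{\sigma}$ cancelling because powers of $\widetilde{\sigma}$ commute. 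For the inductive step I would peel off one layer, using the increasing condition $\pi_{i}\le\pi_{i+1}$ to guarantee that the decompositions in the decode of $\kappa^{(r)}$ nest correctly, so that the bundle shift commutes with the passage from layer $\pi_{i}$ to layer $\pi_{i-1}$.

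The main obstacle is the bookkeeping of $\kappa^{(r)}$ under the rotation. Concretely, the decode produces successive layers by repeatedly pushing all but the first up-step of each irreducible factor to the right and re-decomposing, and I must show that this operation commutes with the one-bundle shift $\widetilde{\sigma}^{r+1}$. The delicate point is the bundle that wraps from the right end to the left end under the shift, which corresponds exactly to the wrap-around $n\mapsto1$ in the cyclic shift of Proposition~\ref{prop:Kre}(2); I would handle it by tracking a single arch (equivalently a single up-step group) through both the decode and the shift and checking that the wrap-around is absorbed consistently in every layer. Once this commutation is verified layer by layer, the componentwise description of $\rho^{2}$ from the second paragraph completes the identification $\widetilde{\sigma}^{r+1}=\rho^{2}$, and together with the first equality this proves $\xi^{r+1}=\widetilde{\sigma}^{r+1}=\rho^{2}$.
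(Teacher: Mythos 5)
Your proposal follows essentially the same route as the paper's proof: both reduce the claim to the observations that $\rho^{2}$ acts componentwise on an $r$-chain as the ground-set shift $i\mapsto i+1$ (Proposition \ref{prop:Kre}(2)) and that $\widetilde{\sigma}^{r+1}$ is the one-bundle (i.e.\ $r+1$-point) shift of a generalized chord diagram, which are then matched through the inverse of $\kappa^{(r)}$. Your version is in fact somewhat more detailed than the paper's, which simply asserts this correspondence where you propose an induction on $r$ with the base case handled via $\kappa^{(1)}=\sigma\circ\Psi$ and Proposition \ref{prop:Fe}; that extra scaffolding is sound and consistent with the paper's argument.
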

\begin{proof}
From Proposition \ref{prop:Kre}, the square of $\rho$ is a rotation of an $r$-chain.
The maps $\xi$ and $\widetilde{\sigma}$ are also rotations of an $r$-Dyck path.	

A generalized chord diagram $\widetilde{C}$ in $\widetilde{\mathcal{C}}_{n}^{(r)}$ has $n$ arches 
and an arch connects $r+1$ points.
The map $\widetilde{\sigma}^{r+1}$ moves the position of an arch in $\widetilde{C}$ rightward 
by $r+1$ units. We consider the inverse map of $\kappa^{(r)}$. 
The move by $r+1$ units implies that we replace the integers $i$ by $i+1$ in 
the corresponding non-crossing partition.
Then, this increment of the integers can be realized by the rotation $\rho^{2}$.
From these, $\xi^{r+1}=\widetilde{\sigma}^{r+1}$ gives the rotation equivalent to $\rho^{2}$.
\end{proof}

\subsection{The first bijection between \texorpdfstring{$\mathcal{NC}_{n}^{(r)}$}{NC} and 
\texorpdfstring{$\mathcal{C}_{n}^{(r)}$}{C}}
\label{sec:fbijNCC}
In Section \ref{sec:TLNC}, we have a bijection $\Psi$ between a chord diagram 
in $\mathcal{C}_{n}$ and a non-crossing partition in $\mathcal{NC}_{n}$.
In this section, we generalize this correspondence. Namely, we will have 
a bijection $\Psi:=\Psi^{(r)}$ between a generalized chord diagram in $\mathcal{C}_{n}^{(r)}$
and $\mathcal{NC}_{n}^{(r)}$ by generalizing the bijection $\Psi$ (introduced in
Definition \ref{def:Psi}) in a natural way.

Let $\pi^{(r)}:=(\pi_{1},\ldots,\pi_{r})\in\mathcal{NC}_{n}^{(r)}$, and 
$C_{s}:=\Psi(\pi_{s})$, $1\le s\le r$, be a chord diagram corresponding to $\pi_{i}\in\mathcal{NC}_{n}$.
We construct a generalized chord diagram $C(\pi^{(r)})$ from the set of 
chord diagrams $\{C_{s} : 1\le s\le r\}$.
Recall that each point $i$ or $i'$ in a generalized chord diagram has $r$ points.
Suppose that the two points $i$ and $j'$ are connected by an arch in $C_{s}$.
We connect the $s$-th point from left in $i$ and the $s$-th point from right in $j'$
by an arch. 
This is compatible with the condition (\ref{eq:condA}).
In this way, the set $\{C_{s} : 1\le s\le r\}$ determines $rn$ arches in a generalized 
chord diagram.
We denote the map from $\mathcal{NC}_{n}^{(r)}$ to $\mathcal{C}_{n}^{(r)}$ 
by $\Psi^{(r)}$ where we have used a natural inclusion 
$\mathcal{C}_{n}^{(r)}\hookrightarrow\mathcal{C}_{rn}$.

To show $\Psi^{(r)}$ is a bijection, we construct the inverse of $\Psi^{(r)}$ as follows.
Let $C$ be a generalized chord diagram in $\mathcal{C}_{n}^{(r)}$. 
Recall that a generalized chord diagram has $2n$ bundled points.
Suppose that the $s$-th point from left in $i$ and the $s$-th point from right in $j'$
are connected by an arch in $C$. 
Then, $C_{s}$ has an arch between the points $i$ and $j'$.
In this way, we have the set of $r$ chord diagrams $\{C_s : 1\le s\le r\}$.
We define the non-crossing partitions $\pi_{s}=\Psi^{-1}C_{s}$  for $1\le s\le r$, 
and obtain $\pi^{(r)}=(\pi_1,\ldots,\pi_{r})\in\mathcal{NC}_{n}^{(r)}$.

We first show that $\Psi^{(r)}$ is well-defined.
\begin{prop}
\label{prop:chainadd}
Let $\pi_1,\pi_2,\ldots,\pi_r\in\mathcal{NC}_{n}$ such that $\pi_1\le \pi_{2}\le\ldots\le\pi_r$.
Then, the superposition of the $r$ chord diagrams $\Psi(\pi_s)$ for $1\le s\le r$
gives a generalized chord diagram $(\Psi(\pi_1),\Psi(\pi_2),\ldots,\Psi(\pi_r))$ 
is in $\mathcal{C}_{n}^{(r)}$.
\end{prop}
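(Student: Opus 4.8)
The plan is to check directly that the superposed diagram $D:=(\Psi(\pi_1),\ldots,\Psi(\pi_r))$ meets the three requirements for membership in $\mathcal{C}_{n}^{(r)}\subseteq\mathcal{C}_{rn}$: that it matches each of the $2rn$ sub-points exactly once, that it satisfies condition (\ref{eq:condA}), and that it is non-crossing. The first two are immediate. For each color $s$ the arch of $\Psi(\pi_s)$ joining bundled points $i$ and $j'$ uses the $s$-th sub-point of $i$ from the left and the $s$-th sub-point of $j'$ from the right, so as $s$ ranges over $1,\ldots,r$ every sub-point is used exactly once, and the compatibility of this rule with (\ref{eq:condA}) was already noted in the construction of $\Psi^{(r)}$. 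Hence the whole content of the proposition is the non-crossing property.

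Since a diagram is non-crossing exactly when no two of its arches cross, I would split the pairs of arches by color. For a fixed color $s$, the assignment carrying a bundled point to its $s$-th sub-point (counted from the left on unprimed bundles and from the right on primed ones) is strictly order-preserving along the row of $2rn$ sub-points; it therefore transports the non-crossing diagram $\Psi(\pi_s)\in\mathcal{C}_{n}$ to a non-crossing family, so two arches of the same color never cross. It then remains to treat a pair consisting of an arch of color $s$ (from $\Psi(\pi_s)$) and an arch of color $t$ (from $\Psi(\pi_t)$) with $s<t$, in which case $\pi_s\le\pi_t$. I would encode these arches by the pairs $(i,j)\in\mathcal{E}(\pi_s)$ and $(k,l)\in\mathcal{E}(\pi_t)$ producing them, where $j$ (resp. $l$) is the cyclic successor of $i$ (resp. $k$) in its block.

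First I would dispose of the generic case in which the two arches meet four distinct bundles, so that the sub-point offsets play no role and a crossing is visible already at the level of bundles. A crossing would force the cyclic order $i,k,j,l$ (or its mirror $k,i,l,j$). Now $i$ and $j$ lie in one block of $\pi_t$ because $\pi_s\le\pi_t$, while $k$ and $l$ lie in a block of $\pi_t$ by construction. If these two blocks of $\pi_t$ are distinct, the order $i,k,j,l$ is a crossing of $\pi_t$, contradicting that $\pi_t$ is non-crossing; if they coincide, then $j$ lies strictly between $k$ and $l$ in a single block, contradicting that $l$ is the successor of $k$. The mirror order is excluded identically, so no crossing occurs in this case.

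The main obstacle is the tie-breaking required when the two arches share a bundle, for then the verdict is decided exactly by the inequality $s<t$. If they share their unprimed bundle, so $i=k$, then since the block of $i$ in $\pi_t$ contains the block of $i$ in $\pi_s$, the $\pi_t$-successor $l$ of $i$ occurs cyclically at or before $j$; thus the primed endpoint $(l-1)'$ of the color-$t$ arch lies within the arc spanned by the color-$s$ arch, while on the shared bundle $i$ the offset places the color-$s$ endpoint to the left of the color-$t$ one, nesting the color-$t$ arch inside the color-$s$ arch. The case of a shared primed bundle ($j=l$) is symmetric, with $i$ now occurring cyclically at or before $k$ and the offset on the shared primed bundle pushing the color-$s$ endpoint to the right. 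The crux is precisely this compatibility: coarsening a block makes the cyclic successor occur sooner and so shrinks the corresponding arch, and this shrinking agrees with the outward displacement dictated by the smaller color, so that the finer partition always supplies the outer arch of a nested pair. Once this is confirmed in the two shared-bundle cases (including the degenerate overlap $(i,j)=(k,l)$, resolved by the offsets at both ends), every pair of arches is non-crossing, $D$ lies in $\mathcal{C}_{n}^{(r)}$, and $\Psi^{(r)}$ is well defined.
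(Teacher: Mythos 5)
Your proof is correct, but it takes a genuinely different route from the paper's. The paper reduces to $r=2$, first treats the cover relation $\pi_1\lessdot\pi_2$ by locating the (at most four) arches on which $\Psi(\pi_1)$ and $\Psi(\pi_2)$ differ and checking, in two geometric sub-cases (merged blocks nesting or not), that these are mutually non-crossing; it then passes to arbitrary $\pi_1\le\pi_2$ by iterating along a saturated chain $\pi_1=\nu_0\lessdot\cdots\lessdot\nu_s=\pi_2$, and finally superposes. You instead argue directly from the refinement relation: for colors $s<t$ you compare an arch of $\Psi(\pi_s)$, encoded by a successor pair $(i,j)\in\mathcal{E}(\pi_s)$, with one of $\Psi(\pi_t)$, and exclude a crossing either because it would exhibit a crossing of the single partition $\pi_t$, or because it would contradict the cyclic-successor property inside a block of $\pi_t$. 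This buys two things: it bypasses the chain induction entirely (the paper's passage from the cover case to the general case is only sketched, ``by repeating a similar argument''), and it makes explicit where the hypothesis $s<t$ actually enters --- namely in the shared-bundle cases, where the sub-point offsets must agree with the fact that coarsening a block makes the cyclic successor occur sooner, so the finer partition supplies the outer arch. The paper's argument, conversely, isolates exactly which arches change under a single merge, which is information it reuses elsewhere. One small caution: your phrase ``a crossing would force the cyclic order $i,k,j,l$'' should be read as a statement about the four chord endpoints $i$, $k$, $(j-1)'$, $(l-1)'$; the coincidences $j=k$ or $i=l$ are not excluded by the four bundles being distinct, and they make the displayed cyclic order degenerate. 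They are still harmless --- they force the two blocks of $\pi_t$ to coincide and are then killed by the same successor argument (the successor of $k$ in a block of $\pi_t$ containing $i$ and $j$ cannot skip past them) --- but a sentence acknowledging them would close the argument completely.
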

\begin{proof}
To prove the proposition, it is enough to prove that $(\Psi(\pi_1),\Psi(\pi_2))\in\mathcal{C}_{n}^{(2)}$ for $\pi_1\le \pi_2$.
Since $\pi_{1}\le\pi_{2}$, a block in $B_{j}^{(2)}$ in $\pi_{2}$
is written as 
\begin{align*}
B_{j}^{(2)}=B^{(1)}_{j_1}\cup B_{j_2}^{(1)}\cup\ldots\cup B_{j_m}^{(1)},
\end{align*}
where $B^{(1)}_{j}$ is a block in $\pi_1$.
We first prove the case of $\pi_1\lessdot\pi_2$. 	
Since $\pi_1\lessdot\pi_2$, there exists a unique block $B_{j}^{(2)}$ in $\pi_2$ such 
that $B_{j}^{(2)}=B_{j_1}^{(1)}\cup B_{j_2}^{(1)}$ with $B_{j}^{(1)}\neq B_{j_1}^{(1)}, B_{j_2}^{(1)}$ 
where $B_{i}^{(t)}$ is a block in $\pi_t$ for $t=1$ and $2$.
We consider the two cases: 1) $B_{j_1}^{(1)}$ and $B_{j_2}^{(1)}$ are not nesting, 
and 2) $B_{j_1}^{(1)}$ and $B_{j_2}^{(1)}$ are nesting.

Case 1). Since $B_{j_1}^{(1)}$ and $B_{j_2}^{(1)}$ are not nesting, we have 
\begin{align*}
i_1=\min B_{j_1}^{(1)}\le i_{2}=\max B_{j_1}^{(1)}<i_3=\min B_{j_{2}}^{(1)}\le i_4=\max B_{j_2}^{(1)}.
\end{align*}
Further, since $\pi_1$ and $\pi_2$ are non-crossing and one can obtain $\pi_2$ from $\pi_1$ by merging 
$B_{j_1}^{(1)}$ and $B_{j_2}^{(1)}$, there exist no arch $(k,l)$ such that 
$i_2<k<i_3$ and $l<i_1$ or $l>i_4$.
In $\Psi(\pi_{1})$, we have  two distinguished arches. 
One is an arch which connects $i_2$ and $(i_1-1)'$, and the other is an arch 
which connects $i_4$ and $(i_3-1)'$.
By merging the two blocks in $\pi_1$, the four points $i_1,i_2,i_3$ and $i_4$ belong 
to the same block in $\pi_2$.
The chord diagram $\Psi(\pi_2)$ also has two distinguished arches.
One is an arch which connects $i_2$ and $(i_3-1)'$, and 
the other is an arch which connects $i_{4}$ and $(i_1-1)'$.
Note that $\Psi(\pi_1)$ and $\Psi(\pi_2)$ have the same arches except 
these distinguished arches.
Then, the property that $\pi_i$'s are non-crossing implies that 
the distinguished four arches are non-crossing, and a superposition of the 
two chord diagrams $\Psi(\pi_1)$ and $\Psi(\pi_2)$ is admissible 
as $\mathcal{C}_{n}^{(2)}$.

Case 2). Since $B_{j_1}^{(1)}$ and $B_{j_2}^{(1)}$ are nesting, we define 
six integers 
\begin{align*}
\min B_{j_1}^{(1)}=i_1\le i_2<i_{3}=\min B_{j_2}^{(1)}\le i_{4}=\max B_{j_2}^{(1)}
<i_{5}\le i_6=\max B_{j_1}^{(1)},
\end{align*}
where $i_2,i_5\in B_{j_1}^{(1)}$, and $i_2$ is the maximal integer which is smaller than $i_{3}$
and $i_5$ is the minimal integer which is larger than $i_4$.
Since $\pi_1\lessdot\pi_2$, there is no pair of integers $(j,k)$ such that 
$i_2<j<i_3\le i_4<k<i_{5}$ and $j$ and $k$ belong to the same block in $\pi_1$.
We have two distinguished arches in $\Psi(\pi_1)$.
The first one is an arch which connects $i_2$ and $(i_5-1)'$, and 
the other is an arch which connects $i_4$ and $(i_3-1)'$.
Similarly, $i_2$ and $(i_3-1)'$, and $i_4$ and $(i_5-1)'$ are two 
distinguished arches in $\Psi(\pi_2)$. 
Note that all the arches except these distinguished arches are the same 
in $\Psi(\pi_1)$ and $\Psi(\pi_2)$.
Again, the fact that $\pi_i$ is non-crossing implies that 
$\Psi(\pi_1)$ and $\Psi(\pi_2)$ are non-crossing and admissible
as a chord diagram in $C_{n}^{(2)}$.

From these two cases, we have $(\Psi(\pi_1),\Psi(\pi_2))\in\mathcal{C}_{n}^{(2)}$ for $\pi_1\lessdot\pi_2$.

If $\pi_1\le \pi_2$, we have a sequence of non-crossing partitions
\begin{align}
\label{eq:chain12}
\pi_1=\nu_0\lessdot \nu_1\lessdot \ldots \lessdot \nu_{s}=\pi_{2}.
\end{align}
By the above argument, the chord diagrams for $\nu_{i}$ and $\nu_{i+1}$ are 
non-crossing in $\mathcal{C}_{n}^{(2)}$.
We will show that the chord diagrams for $\pi_{1}$ and $\nu_{i}$, $1\le i\le s$,
are non-crossing in $\mathcal{C}_{n}^{(2)}$.
In the proof of the case $\pi_1\lessdot\pi_2$, we have seen that we change the connectivity of a chord diagram 
when we merge two blocks into a larger block. 
The chord diagram $\nu_2$ is obtained from $\pi_1$ by two changes of the connectivity. 
The property that blocks are non-crossing implies that the chord diagrams for $\nu_2$ and $\pi_1$
are non-crossing in $\mathcal{C}_{n}^{(2)}$.
By repeating a similar argument, one can show that the chord diagrams for $\pi_1$ and $\pi_2$ are 
admissible in $\mathcal{C}_{n}^{(2)}$.

Consider the $r$-chain $\pi_1\le \pi_2\le\ldots\le\pi_{r}$.
Since we have $\pi_{i}\le \pi_{j}$, $i<j$, the chord diagrams for $\pi_i$ and $\pi_{j}$ are 
non-crossing in $\mathcal{C}_{n}^{(2)}$.
Then, it is easy to see that the superposition of $\Psi(\pi_s)$, $1\le s\le r$, gives 
a generalized chord diagram in $\mathcal{C}_{n}^{(r)}$.
Especially, the arches are non-crossing.
This completes the proof.
\end{proof}

\begin{example}
Let $\pi^{(4)}=(1/23/4,1/23/4,14/23,1234)\in\mathcal{NC}_{4}^{(4)}$. 
The generalized chord diagram corresponding to $\pi^{(4)}$ is depicted 
in Figure \ref{fig:pi44}.
\begin{figure}[ht]
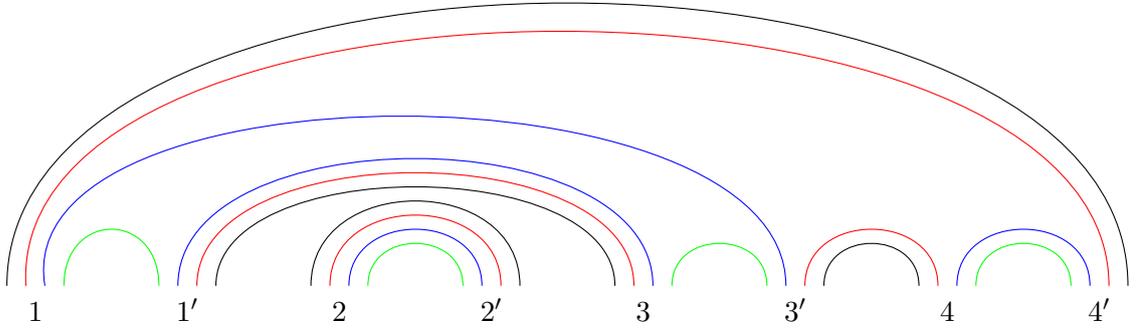

\tikzpic{-0.5}{[scale=0.5]
\draw(0,0)..controls(0,10)and(29.5,10)..(29.5,0);
\draw[red](0.5,0)..controls(0,9)and(29,9)..(29,0);
\draw[blue](1,0)..controls(0,6)and(20.5,6)..(20.5,0);
\draw[green](1.5,0)..controls(1.5,2)and(4,2)..(4,0);
\draw[blue](4.5,0)..controls(4.5,4.5)and(17,4.5)..(17,0);
\draw[red](5,0)..controls(5,4)and(16.5,4)..(16.5,0);
\draw(5.5,0)..controls(5.5,3.5)and(16,3.5)..(16,0);
\draw(8,0)..controls(8,3)and(13.5,3)..(13.5,0);
\draw[red](8.5,0)..controls(8.5,2.5)and(13,2.5)..(13,0);
\draw[blue](9,0)..controls(9,2)and(12.5,2)..(12.5,0);
\draw[green](9.5,0)..controls(9.5,1.5)and(12,1.5)..(12,0);
\draw[green](17.5,0)..controls(17.5,1.5)and(20,1.5)..(20,0);
\draw[red](21,0)..controls(21,2)and(24.5,2)..(24.5,0);
\draw(21.5,0)..controls(21.5,1.5)and(24,1.5)..(24,0);
\draw[blue](25,0)..controls(25,2)and(28.5,2)..(28.5,0);
\draw[green](25.5,0)..controls(25.5,1.5)and(28,1.5)..(28,0);
\draw(0.75,-1.2)node[anchor=south]{$1$};
\draw(4.75,-1.2)node[anchor=south]{$1'$};
\draw(8.75,-1.2)node[anchor=south]{$2$};
\draw(12.75,-1.2)node[anchor=south]{$2'$};
\draw(16.75,-1.2)node[anchor=south]{$3$};
\draw(20.75,-1.2)node[anchor=south]{$3'$};
\draw(24.75,-1.2)node[anchor=south]{$4$};
\draw(28.75,-1.2)node[anchor=south]{$4'$};
}
\caption{A chord diagram corresponding to $\pi^{(4)}=(1/23/4,1/23/4,14/23,1234)$}
\label{fig:pi44}
\end{figure}
The chord diagrams in black and red correspond to $1/23/4$. 
Similarly, the blue and green diagrams correspond to $14/23$ and $1234$ respectively. 
Note that the four chord diagrams are non-crossing.

The action of $\rho^{-1}$ is given by $\rho^{-1}(\pi^{(4)})=(1/2/3/4,13/2/4,134/2,134/2)$.
\end{example}

The following proposition is a direct consequence of the bijection between 
$\mathcal{NC}_{n}^{(r)}$ and $\mathcal{C}_{n}^{(r)}$ and Proposition \ref{prop:NCP}.
\begin{prop}
\label{prop:NCC}
We have 
\begin{align*}
|\mathcal{P}_{n}^{(r)}|=|\mathcal{NC}_n^{(r)}|=|\mathcal{C}_{n}^{(r)}|.
\end{align*}
\end{prop}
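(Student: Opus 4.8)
The plan is to read the proposition off from the two bijections constructed earlier together with the Fuss--Catalan count of Proposition \ref{prop:NCP}. Since three sets are being compared, I would verify the two links $|\mathcal{P}_n^{(r)}| = |\mathcal{NC}_n^{(r)}|$ and $|\mathcal{NC}_n^{(r)}| = |\mathcal{C}_n^{(r)}|$ separately and then chain them by transitivity.

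The first equality is essentially already in hand. The bijection $\kappa^{(r)}$ of Section \ref{sec:GDPic} pairs each $r$-Dyck path with an increasing $r$-chain, so $|\mathcal{P}_n^{(r)}| = |\mathcal{NC}_n^{(r)}|$ is immediate; equivalently, one matches the formula $|\mathcal{P}_n^{(r)}| = \frac{1}{nr+1}\binom{n(r+1)}{n}$ recorded in Section \ref{sec:GDPTL} against the value $\frac{1}{rn+1}\binom{(r+1)n}{n}$ supplied by Proposition \ref{prop:NCP}. Either route needs no further work.

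For the second equality I would invoke the map $\Psi^{(r)}$ of Section \ref{sec:fbijNCC}. Proposition \ref{prop:chainadd} already shows that $\Psi^{(r)}$ is well-defined, i.e.\ that the superposition of the $r$ chord diagrams $\Psi(\pi_s)$ of an increasing chain lands in $\mathcal{C}_n^{(r)}$, its arches being non-crossing and respecting the parity condition (\ref{eq:condA}). Injectivity is clear, since the $s$-th layer $C_s$ is recovered by isolating the $s$-th strands and $\Psi$ then returns $\pi_s$; surjectivity comes from the inverse construction at the end of Section \ref{sec:fbijNCC}. Thus $\Psi^{(r)}$ is a bijection, whence $|\mathcal{NC}_n^{(r)}| = |\mathcal{C}_n^{(r)}|$, and combining the two links gives all three equalities.

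The one place I expect to need genuine care is confirming that the inverse of $\Psi^{(r)}$ really lands in $\mathcal{NC}_n^{(r)}$: given an arbitrary $C \in \mathcal{C}_n^{(r)}$, one decomposes it into layers $C_1,\ldots,C_r$, sets $\pi_s = \Psi^{-1}(C_s)$, and must check that the non-crossing hypothesis on $C$ forces the chain $\pi_1 \le \cdots \le \pi_r$. This is the converse direction of Proposition \ref{prop:chainadd}; once it is secured, nothing further is needed and the proposition follows at once.
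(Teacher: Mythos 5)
Your proposal is correct and follows essentially the same route as the paper, which simply reads the proposition off from the bijection $\kappa^{(r)}$ (via Proposition \ref{prop:NCP}) and the bijection $\Psi^{(r)}$. The one point you rightly flag as needing care --- that the inverse of $\Psi^{(r)}$ sends an arbitrary $C\in\mathcal{C}_n^{(r)}$ to a genuinely \emph{increasing} chain, i.e.\ the converse of Proposition \ref{prop:chainadd} --- is likewise left implicit in the paper's own treatment in Section \ref{sec:fbijNCC}.
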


The bijections $\kappa^{(1)}$ (introduced in Section \ref{sec:GDPic}), 
$\Psi^{(1)}$ (defined in Definition \ref{def:Psi}) and the Kreweras endomorphism $\rho$ are related as follows.
\begin{prop}
\label{prop:PC}
Let $P_{\pi}:=\kappa^{(1)}(\pi)\in\mathcal{P}_{n}^{(1)}$ and $C_{\pi}:=\Psi^{(1)}(\pi)\in\mathcal{C}_{n}^{(1)}$.
Then, we have $P_{\pi}=C_{\rho(\pi)}$ as words of $\{U,R\}^{\ast}$.
\end{prop}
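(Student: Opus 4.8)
The plan is to recast the identity and then induct on the recursive construction of $\kappa^{(1)}$. First I would invoke Proposition \ref{prop:Fe}: since $\rho=\Psi^{-1}\sigma\Psi$, we have $C_{\rho(\pi)}=\Psi(\rho(\pi))=\sigma(\Psi(\pi))=\sigma(C_{\pi})$, so the claim $P_{\pi}=C_{\rho(\pi)}$ is equivalent to $\kappa^{(1)}(\pi)=\sigma(C_{\pi})$, i.e. the Dyck word produced by $\kappa^{(1)}$ is the one-step rotation of the chord diagram $C_{\pi}=\Psi^{(1)}(\pi)$. Equivalently, writing $\mu:=(\Psi^{(1)})^{-1}(\kappa^{(1)}(\pi))$ for the non-crossing partition read off from the arches of $\kappa^{(1)}(\pi)$, it suffices to show $\mu=\rho(\pi)$; by Proposition \ref{prop:jt} the block structure of $\rho(\pi)$ is recorded by the pairs $(j,t)\in I(\pi)$, so the concrete target is that the $n$ arches of $\kappa^{(1)}(\pi)$ encode exactly these cyclic-adjacency pairs.

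The induction runs over the number $m$ of blocks of $\pi$, mirroring the merge of Section \ref{sec:GDPic}. Here I would use that the partial word obtained after inserting the first $k$ block-paths is itself $\kappa^{(1)}$ applied to the order-isomorphic image on $[\,|S_k|\,]$ of $B_1/\cdots/B_k$ (the first $k$ blocks in min-order), since for $r=1$ the block-path is $U(UR)^{|B|-1}R$ and the insertion index $a=|\{q\in S:q<\min B\}|$ depends only on relative order. For the base case $\pi=12\cdots s$ (one block), $\kappa^{(1)}$ returns $U(UR)^{s-1}R$ while $\Psi^{(1)}$ returns $(UR)^{s}$, and one checks directly that $U(UR)^{s-1}R=\sigma((UR)^{s})$; equivalently $\rho(12\cdots s)=1/2/\cdots/s$ and $\Psi^{(1)}(1/2/\cdots/s)=U(UR)^{s-1}R$. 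This settles the base case and pins down that the relating rotation is exactly one step of $\sigma$.

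In the inductive step one inserts the block-path $P_{B}=U(UR)^{|B|-1}R$ after position $a(r+1)=2a$ of the accumulated word. I would match this, on the chord-diagram side, with the corresponding modification of the arches of $\sigma(C_{\pi})$, distinguishing the non-nesting and nesting configurations exactly as in the two cases of Proposition \ref{prop:chainadd}: in each case the merge reroutes precisely two distinguished arches, and I would verify that the rerouted endpoints are those forced by the cyclic-successor map of Proposition \ref{prop:jt}. The main obstacle is the position bookkeeping: one must check that the counting index $a=|\{q<\min B\}|$ lands the inserted sub-path at exactly the position dictated by the labeling of $\Psi^{(1)}$ (point $i$ at position $2i-1$, point $i'$ at position $2i$) together with the Kreweras relabeling $j\mapsto(j-1)'$, with no off-by-one error under the cyclic wrap identifying element $n$ with position $2n$ and element $1$ with the leftmost point. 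Keeping these two modular shifts, the one inside $\rho$ and the one inside $\Psi$, consistent through each insertion so that a single global $\sigma$ continues to relate the two sides is the delicate point; once the bookkeeping is set up, each insertion is a routine verification.
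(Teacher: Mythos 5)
Your proposal is correct and follows essentially the same route as the paper's proof: both reduce the claim to showing that the arches of $\kappa^{(1)}(\pi)$ realize the cyclic-successor pairs of Proposition \ref{prop:jt} (hence the blocks of $\rho(\pi)$), both verify the single-block base case $12\cdots n$ directly, and both handle general $\pi$ by tracking the block-by-block insertion of the paths $U(UR)^{|B_i|-1}R$. The only cosmetic difference is your preliminary recasting of $C_{\rho(\pi)}$ as $\sigma(C_{\pi})$ via Proposition \ref{prop:Fe}, which the paper does not need here.
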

\begin{proof}
Suppose that $\pi$ consists of $m$ block $B_1,\ldots,B_{m}$ such that $\min B_i<\min B_{i+1}$ 
for $1\le i\le m-1$.
Recall that a block $B_{i}$ is a sequence of increasing integers, and we denote 
the number of integers in $B_i$ by $l_{i}:=|B_{i}|$. 
We assign a Dyck path to $B_{i}$: 
\begin{align}
\label{eq:BDyck}
B_{i} \leftrightarrow U(UR)^{l_i-1}R.
\end{align}
When $\pi=12\ldots n$, it is obvious that we have $P_{\pi}=C_{\rho(\pi)}$ by 
Eq. (\ref{eq:BDyck}) under the identification given in Section \ref{sec:chord}.
To obtain $P_{\pi}$, we insert Dyck paths corresponding to the blocks $B_{i}$, $2\le i\le m$, 
into the Dyck path corresponding to $B_{1}$ one-by-one.
Let $P_{1\cup 2\cup\ldots\cup i}$ be the Dyck path obtained from the blocks $B_{j}$, $1\le j\le i$.
We consider the case where we insert the Dyck path corresponding to $B_{i}$ into 
the Dyck path $P_{1\cup2\cup\ldots \cup (i-1)}$.
The correspondence (\ref{eq:BDyck}) implies that we have an arch connecting to 
the integer $i_{\min}:=\min B_{i}$ and the minimal integer $t$ such that 
$t>i$ and $t$ is in the merged block $B_{1}\cup\ldots\cup B_{i-1}$.
Then, Proposition \ref{prop:jt} implies that the integers $t$ and $j$ belong 
to the same block in $\rho(\pi)$.
From this, we have $P_{\pi}=C_{\rho(\pi)}$.
\end{proof}

Proposition \ref{prop:PC} can be visualized by the following commutative diagram:
\begin{center}
\begin{tikzcd}
\pi \ar[r,"\Psi"] \ar[d,"\kappa"] & C_{\pi} \ar[d,"\rho"] \\
P_{\pi} \ar[r,"\sim"] & C_{\pi'} 
\end{tikzcd}
\end{center}
where $\Psi=\Psi^{(1)}$ and $\kappa=\kappa^{(1)}$.
We identify $P_{\pi}$ and $C_{\pi'}$ by the bijection given in Section \ref{sec:chord}.

The following lemma is a direct consequence of Proposition \ref{prop:Fe}.
\begin{lemma}
\label{lemma:rhosigam}
We have the following commutative diagram: 
\begin{center}
\begin{tikzcd}
\pi \ar[r,"\rho"] \arrow[d,"\Psi"'] & \pi' \ar[d,"\Psi"] \\
C \ar[r,"\sigma"] & C'
\end{tikzcd}
\end{center}
\end{lemma}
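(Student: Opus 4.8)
The plan is to recognize Lemma \ref{lemma:rhosigam} as essentially a restatement of the second equation in Proposition \ref{prop:Fe}, so the work consists of unpacking what the commutative diagram asserts and matching it to the already-proven identity. The diagram says precisely that for $\pi\in\mathcal{NC}_{n}$ with $C:=\Psi(\pi)$, we have $\Psi(\rho(\pi))=\sigma(\Psi(\pi))$, i.e. the two composites $\Psi\circ\rho$ and $\sigma\circ\Psi$ from $\mathcal{NC}_{n}$ to $\mathcal{C}_{n}$ agree. But Proposition \ref{prop:Fe} already established $\rho=\Psi^{-1}\sigma\Psi$ as maps on $\mathcal{NC}_{n}$.

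First I would observe that $\rho=\Psi^{-1}\sigma\Psi$ holds by Proposition \ref{prop:Fe}, Eq. (\ref{rel:Fe}). Since $\Psi$ is a bijection (Definition \ref{def:Psi}), we may compose both sides on the left with $\Psi$, obtaining $\Psi\rho=\sigma\Psi$ as maps $\mathcal{NC}_{n}\rightarrow\mathcal{C}_{n}$. Evaluating at an arbitrary $\pi\in\mathcal{NC}_{n}$ and writing $\pi':=\rho(\pi)$, $C:=\Psi(\pi)$, $C':=\Psi(\pi')$, this reads $C'=\sigma(C)$, which is exactly the commutativity of the square: the top edge sends $\pi\mapsto\pi'=\rho(\pi)$, the two vertical edges apply $\Psi$, and the identity $\Psi(\rho(\pi))=\sigma(\Psi(\pi))$ says the bottom edge $C\mapsto C'$ is $\sigma$. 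Hence the diagram commutes.

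The proof is therefore a one-line deduction, and there is no genuine obstacle: the content was proven in Proposition \ref{prop:Fe}, where the compatibility of $\rho$ with $\sigma$ under $\Psi$ was verified arch-by-arch. The only thing to be careful about is the direction of the equation --- one must apply $\Psi$ on the correct side of $\rho=\Psi^{-1}\sigma\Psi$ to land on the stated diagram rather than its transpose --- but this is routine. I would simply write out the short chain $\Psi\rho=\Psi(\Psi^{-1}\sigma\Psi)=\sigma\Psi$ and note that this is the assertion of the diagram.

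\begin{proof}
This is a direct consequence of the second relation in Proposition \ref{prop:Fe}.
By Eq. (\ref{rel:Fe}), we have $\rho=\Psi^{-1}\sigma\Psi$ as maps on $\mathcal{NC}_{n}$.
Since $\Psi$ is a bijection, composing on the left with $\Psi$ gives
$\Psi\rho=\sigma\Psi$.
Evaluating at $\pi\in\mathcal{NC}_{n}$ and writing $\pi':=\rho(\pi)$, $C:=\Psi(\pi)$ and
$C':=\Psi(\pi')$, this reads $C'=\sigma(C)$, which is precisely the commutativity of the
stated square.
\end{proof}
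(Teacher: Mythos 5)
Your proof is correct and matches the paper exactly: the paper introduces this lemma with the single remark that it is ``a direct consequence of Proposition \ref{prop:Fe},'' which is precisely the deduction $\Psi\rho=\sigma\Psi$ from $\rho=\Psi^{-1}\sigma\Psi$ that you spell out. Nothing further is needed.
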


The next proposition gives the relation among $\kappa^{(1)}$, $\sigma$ and $\rho$.
\begin{prop}
We have $\sigma\circ\kappa^{(1)}=\kappa^{(1)}\circ\rho$.
\end{prop}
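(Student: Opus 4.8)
The plan is to reduce the identity to the two results already established for $r=1$, namely Proposition~\ref{prop:PC} and Lemma~\ref{lemma:rhosigam}, by a short diagram chase. Throughout I identify a Dyck path in $\mathcal{P}_{n}^{(1)}$ with the corresponding chord diagram in $\mathcal{C}_{n}$ via the bijection of Section~\ref{sec:chord}, so that the rotation $\sigma$, which is defined on $\mathcal{C}_{n}$, may be applied directly to $\kappa^{(1)}(\pi)$. As in the statement I abbreviate $P_{\pi}:=\kappa^{(1)}(\pi)$ and $C_{\mu}:=\Psi^{(1)}(\mu)$.

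First I would unwind the left-hand side $\sigma\circ\kappa^{(1)}$. For $\pi\in\mathcal{NC}_{n}$, Proposition~\ref{prop:PC} gives $P_{\pi}=C_{\rho(\pi)}$ as words in $\{U,R\}^{\ast}$, hence under the identification above $\sigma(\kappa^{(1)}(\pi))=\sigma(C_{\rho(\pi)})$. Lemma~\ref{lemma:rhosigam} states $\sigma(C_{\mu})=C_{\rho(\mu)}$ for every $\mu\in\mathcal{NC}_{n}$; applying it with $\mu=\rho(\pi)$ yields $\sigma(C_{\rho(\pi)})=C_{\rho^{2}(\pi)}$. Therefore $\sigma\circ\kappa^{(1)}(\pi)=C_{\rho^{2}(\pi)}$.

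Next I would unwind the right-hand side $\kappa^{(1)}\circ\rho$. Applying Proposition~\ref{prop:PC} to the partition $\rho(\pi)$ in place of $\pi$ gives $\kappa^{(1)}(\rho(\pi))=P_{\rho(\pi)}=C_{\rho(\rho(\pi))}=C_{\rho^{2}(\pi)}$. Comparing the two computations, both composites send $\pi$ to $C_{\rho^{2}(\pi)}$, so $\sigma\circ\kappa^{(1)}=\kappa^{(1)}\circ\rho$ on $\mathcal{NC}_{n}$.

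The argument is essentially a diagram chase, so I do not expect a serious obstacle once the ingredients are assembled. The one point that genuinely requires care is the identification of Dyck paths with chord diagrams: I must verify that $\sigma$ acts on $P_{\pi}$ through exactly this canonical identification and that the equality of Proposition~\ref{prop:PC} is read as an equality of chord diagrams and not merely of $\{U,R\}$-words. Since Section~\ref{sec:chord} fixes this identification canonically and Proposition~\ref{prop:PC} is already stated at the level of words, these compatibilities are immediate, and the chase closes without further calculation.
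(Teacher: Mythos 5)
Your proof is correct and is essentially the paper's own argument: both rewrite $\kappa^{(1)}=\Psi^{(1)}\circ\rho$ via Proposition~\ref{prop:PC} and then use Lemma~\ref{lemma:rhosigam} ($\sigma\circ\Psi^{(1)}=\Psi^{(1)}\circ\rho$) to conclude, the only difference being that you carry out the composition pointwise on $\pi$ while the paper writes it as an equality of maps. Your extra remark about checking that $\sigma$ acts through the canonical Dyck-path/chord-diagram identification is a reasonable bit of care, but adds nothing beyond what the paper already assumes.
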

\begin{proof}
From Proposition \ref{prop:PC}, we have $\kappa^{(1)}=\Psi^{(1)}\circ\rho$.
Then, we have 
\begin{align*}
\sigma\circ\kappa^{(1)}&=\sigma\circ\Psi^{(1)}\circ\rho, \\
&=(\Psi^{(1)}\circ\rho)\circ\rho, \\
&=\kappa^{(1)}\circ\rho,
\end{align*}
where we have used Lemma \ref{lemma:rhosigam} to obtain the second equality.
\end{proof}

\subsection{The second bijection between \texorpdfstring{$\mathcal{NC}_{n}^{(r)}$}{NC} and 
\texorpdfstring{$\mathcal{C}_{n}^{(r)}$}{C}}
In this section, we consider another bijection $\Phi$ from $\mathcal{NC}_{n}^{(r)}$ 
to $\mathcal{C}_{n}^{(r)}$.
Recall that an element $\pi^{(r)}:=(\pi_1,\ldots,\pi_{r})\in\mathcal{NC}_{n}^{(r)}$
is an $r$-chain in $\mathcal{NC}_{n}$.
In Section \ref{sec:GDPic}, we have introduced the bijection $\kappa^{(r)}$ from 
$\mathcal{NC}_{n}^{(r)}$ to $\mathcal{P}_{n}^{(r)}$. 
Since the bijection $\kappa^{(1)}$ acts on a non-crossing partition, it has a natural 
action on $\pi^{(r)}$, {\it i.e.}, we define the map 
$\Phi':\mathcal{NC}_{n}^{(r)}\rightarrow(\mathcal{P}_{n}^{(1)})^{r}$ by
\begin{align*}
\Phi'(\pi^{(r)}):=(\kappa^{(1)}(\pi_1),\ldots, \kappa^{(1)}(\pi_{r})).
\end{align*}
By Section \ref{sec:chord}, each Dyck path $\kappa^{(1)}(\pi_{s})$, $1\le s\le r$, can be 
identified with a chord diagram.
Then, we define $\Phi$ as the compositions of the maps 
$\Phi:\mathcal{NC}_{n}^{(r)}\rightarrow(\mathcal{P}_{n}^{(1)})^{n}\rightarrow\mathcal{C}_{n}^{(r)}$, 
$\pi^{(r)}\mapsto\Phi'(\pi^{(r)})\mapsto C^{(r)}$, 
where $C^{(r)}$ is obtained from $\pi^{(r)}$
by the superposition of the $r$ chord diagrams in $\Phi'(\pi^{(r)})$.

The two maps $\Phi$ and $\Psi$ are related as follows.
\begin{prop}
\label{prop:PhiPsi}
We have $\Phi=\sigma^{(r)}\circ\Psi$ where $\sigma^{(r)}$ is the rotation of a chord 
diagram in $\mathcal{C}_{n}^{(r)}$ defined in Section \ref{sec:rotchord}.
\end{prop}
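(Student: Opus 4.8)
\emph{The plan} is to show that both $\Phi$ and $\sigma^{(r)}\circ\Psi$ coincide with $\Psi^{(r)}\circ\rho$, where $\rho$ is the extended Kreweras endomorphism of Eq.~(\ref{eq:FKre}); the asserted identity then follows at once. I would argue layer by layer, exploiting the fact that a generalized chord diagram in $\mathcal{C}_n^{(r)}$ decomposes into $r$ ordinary chord diagrams, one for each ``color'' $s$.

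First I would identify $\Phi$ with $\Psi^{(r)}\circ\rho$. By Proposition~\ref{prop:PC} we have $\kappa^{(1)}(\pi)=C_{\rho(\pi)}=\Psi(\rho(\pi))$ for every $\pi\in\mathcal{NC}_n$, so componentwise $\Phi'(\pi^{(r)})=(\Psi(\rho(\pi_1)),\ldots,\Psi(\rho(\pi_r)))$. Since $\rho$ reverses the cover relation (Proposition~\ref{prop:Kre}(4)), the chain $\pi_1\le\cdots\le\pi_r$ is sent to $\rho(\pi_1)\ge\cdots\ge\rho(\pi_r)$, whose increasing reordering is exactly the extended Kreweras image $\rho(\pi^{(r)})=(\rho(\pi_r),\ldots,\rho(\pi_1))$. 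Because the superposition defining $\Phi$ is the very operation that defines $\Psi^{(r)}$ on an increasing chain, overlaying these $r$ chord diagrams reconstructs $\Psi^{(r)}(\rho(\pi^{(r)}))$; admissibility (non-crossing) of the result is guaranteed by Proposition~\ref{prop:chainadd} applied to the increasing chain $\rho(\pi^{(r)})$. Hence $\Phi=\Psi^{(r)}\circ\rho$.

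Next I would establish the generalized commutation $\sigma^{(r)}\circ\Psi^{(r)}=\Psi^{(r)}\circ\rho$, the analogue of Lemma~\ref{lemma:rhosigam} for $r$-chains. Recall that $\sigma^{(r)}=\sigma^{r}$ acts on the $2rn$ points of $\mathcal{C}_{nr}$ by rotating them through exactly one bundle of $r$ points. Writing a generic arch of $\Psi^{(r)}(\pi^{(r)})$ as the color-$s$ embedding of an arch $(i,j')$ of $C_s=\Psi(\pi_s)$, I would track its two endpoints through this one-bundle rotation. Two effects occur at once: on each layer the rotation reproduces the rotation $\sigma$ on $\mathcal{C}_n$ (this is $\sigma\Psi=\Psi\rho$, Lemma~\ref{lemma:rhosigam} at level one), and the color index is reversed, $s\mapsto r-s+1$. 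The reversal is forced by the bundle structure: a one-bundle shift interchanges unprimed and primed bundles, so the ``$s$-th point from the left'' used for an unprimed bundle in Section~\ref{sec:fbijNCC} becomes the ``$s$-th point from the left'' of a primed bundle, i.e.\ the color $r-s+1$. Thus $\sigma^{(r)}$ carries the color-$s$ sublayer $\Psi(\pi_s)$ to the color-$(r-s+1)$ sublayer $\sigma(\Psi(\pi_s))=\Psi(\rho(\pi_s))$, which is precisely the color-$(r-s+1)$ sublayer of $\Psi^{(r)}(\rho(\pi^{(r)}))$, since the $(r-s+1)$-th entry of $\rho(\pi^{(r)})=(\rho(\pi_r),\ldots,\rho(\pi_1))$ is $\rho(\pi_s)$. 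Ranging over all $s$ gives $\sigma^{(r)}\circ\Psi^{(r)}=\Psi^{(r)}\circ\rho$.

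Combining the two equalities yields $\Phi=\Psi^{(r)}\circ\rho=\sigma^{(r)}\circ\Psi^{(r)}=\sigma^{(r)}\circ\Psi$, as claimed. I expect the main obstacle to be the second step, and within it the bookkeeping of the color index: one must check carefully that the single-bundle rotation $\sigma^{(r)}$ interacts with the asymmetric ``from the left / from the right'' convention for unprimed and primed bundles so as to produce exactly the reversal $s\mapsto r-s+1$, and that this reversal of colors matches the order-reversal built into the extended Kreweras endomorphism $\rho$ on $r$-chains. Verifying a small non-symmetric case, for instance $(n,r)=(3,2)$ with $\pi^{(2)}=(1/2/3,\,12/3)$, is a useful check that the two reversals cancel and that the overlaid diagram is admissible.
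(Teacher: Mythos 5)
Your proof is correct, but it is organized quite differently from the paper's. The paper reduces immediately to $r=1$ (asserting that the superposition structure makes this sufficient) and then verifies $\kappa^{(1)}=\sigma\circ\Psi$ by hand on the block decomposition: each block $B$ of size $l$ contributes $U(UR)^{l-1}R$ under $\kappa^{(1)}$ versus $(UR)^{l}$ under $\Psi$, and the insertion procedure propagates the discrepancy $\sigma$ to a general partition. You instead factor both sides through the extended Kreweras endomorphism of Eq.~(\ref{eq:FKre}): the identity $\Phi=\Psi^{(r)}\circ\rho$ follows componentwise from Proposition~\ref{prop:PC}, and $\sigma^{(r)}\circ\Psi^{(r)}=\Psi^{(r)}\circ\rho$ follows from Lemma~\ref{lemma:rhosigam} together with the observation that the one-bundle shift sends the colour-$s$ sublayer to the colour-$(r-s+1)$ sublayer, which matches the order reversal built into $\rho$ on $r$-chains. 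Your route has a genuine advantage: the colour bookkeeping under $\sigma^{(r)}$ is exactly the point the paper's ``it is enough to prove the case $r=1$'' glosses over, and your position computation (the $s$-th point from the left of an unprimed bundle lands on the $s$-th point from the left of a primed bundle, i.e.\ its $(r-s+1)$-th from the right) makes that reduction honest. One small caveat: the paper's definition of $\Phi$ does not explicitly state which colour index each $\kappa^{(1)}(\pi_s)$ receives in the superposition; your step~1 tacitly uses the increasing reordering, which is the only assignment for which $\Phi$ lands in $\mathcal{C}_n^{(r)}$ (by Proposition~\ref{prop:chainadd} and Proposition~\ref{prop:Kre}(4)), so this is a forced and harmless reading, but it deserves the one-sentence justification you give it.
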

\begin{proof}
Since a generalized chord diagram $C(\pi^{(r)})$ in $\mathcal{C}_{n}^{(r)}$ is obtained by a 
superposition of the chord diagrams for $\pi_{i}$, $1\le i\le r$,
it is enough to prove that $\Phi=\kappa=\sigma\circ\Psi$ for $r=1$.
Let $\pi\in\mathcal{NC}_{n}$, and $B_{i}$, $1\le i\le m$, be the blocks in $\pi$ such 
that $\min B_{i}<\min B_{i+1}$ for $1\le i\le m-1$.
We consider the action $\kappa$ on $\pi$.

Suppose that $\pi=12\ldots n$ be the maximal element in $\mathcal{NC}_{n}$, {\it i.e.}, 
$\pi$ consists of a single block.
We have 
$\kappa(\pi)=U(UR)^{n-1}R$.
On the other hand, we have 
$\Psi(\pi)=(UR)^{n}$.
Then, it is obvious that $\Phi=\sigma\Psi$.

We consider the case where $\pi$ has at least two blocks.
By construction of $\kappa$, each block $B_{i}$ gives a Dyck path 
$U(UR)^{l-1}R$ where $l=|B_{i}|$.
We insert the Dyck path $D_1$ corresponding to $B_{i+1}$ into 
the Dyck path $D_2$ corresponding to $B_{1\cup\cdots\cup i}:=B_{1}/B_{2}/\ldots/B_{i}$.
Recall that we insert $D_1$ right to the $2(\min B_{i+1}-1)$-th letter in $D_{2}$. 
This implies that the Dyck path $\Phi(\pi)$ is a superposition of the Dyck paths 
for the block $B_{i}$.
Similarly, the chord diagram $\Psi(\pi)$ is a superposition of chord diagrams 
corresponding to blocks $B_{i}$ in $\Psi(\pi)$.
As for a block $B_{i}$, we have $\Phi(B_{i})=\sigma\circ\Psi(B_{i})$.
Then, the superposition of the Dyck paths insures that we have 
$\Phi(\pi)=\sigma\circ\Psi(\pi)$ for general $\pi$.
This completes the proof.
\end{proof}

Let $\pi\in\mathcal{NC}_{n}^{(r)}$ be an increasing $r$-chain.
Below, we study the relation between the map $\Phi(\pi)$ and 
a cover-exclusive Dyck tiling.
We first introduce the notion of cover-exclusive Dyck tilings following \cite{Bre02,SZJ12}.
Let $\lambda_{0}$ be the Dyck path $(U^rR^r)^{n}$, and $\lambda$ be
a Dyck path above $\lambda_{0}$.
We consider a cover-exclusive Dyck tiling above $\lambda_0$ and below $\lambda$.

A {\it ribbon} is connected skew shape which does not contain a $2$-by-$2$ 
rectangle. 
A {\it Dyck tile} is a ribbon such that the centers of the unit squares in
the ribbon form a Dyck path. The size of a Dyck tile is defined to be the size 
of the Dyck path which characterizes the Dyck tile.
In pictorial representation of Dyck paths, the region $R(\lambda)$ above $\lambda_{0}$ and 
below $\lambda$ is a skew shape.
We consider a tiling of $R(\lambda)$ by Dyck tiles.
A box $(x,y)$ means that the coordinate of the box is $(x,y)$.
We say that the box $(x+1,y+1)$ (resp. $(x-1,y+1)$) is northeast (resp. northwest) of 
the box $(x,y)$. The box $(x,y+2)$ are said to be just above the box $(x,y)$.
In this paper, we consider only cover-exclusive Dyck tilings which are 
studied as Dyck tilings of type $II$ in \cite{SZJ12}.
Let $D$ and $D'$ be two Dyck tiles. 
We say that a Dyck tiling is {\it cover-exclusive} if 
the pair $(D,D')$ satisfies the following condition:
\begin{enumerate}
\item[($\diamondsuit$)]
If there exists a box of $D'$ just above, northwest, or northeast of a box of $D$, 
then all boxes just above, northwest, and northeast of a box of $D$ belong to $D$
or $D'$.
\end{enumerate}
 
\begin{example}
In Figure \ref{fig:cexDyck}, we depict twelve Dyck tilings above $(U^2R^2)^3$.
Note that the number of cover-exclusive Dyck tilings above $(U^2R^2)^3$ is 
equal to the number of $2$-Dyck paths of size $3$.
\begin{figure}[ht]
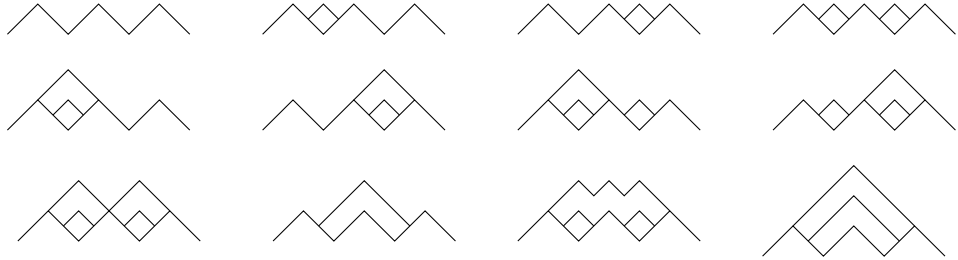

\tikzpic{-0.5}{[scale=0.2]
\draw(0,0)--(2,2)--(4,0)--(6,2)--(8,0)--(10,2)--(12,0);
}
\quad
\tikzpic{-0.5}{[scale=0.2]
\draw(0,0)--(2,2)--(4,0)--(6,2)--(8,0)--(10,2)--(12,0);
\draw(3,1)--(4,2)--(5,1);
}
\quad
\tikzpic{-0.5}{[scale=0.2]
\draw(0,0)--(2,2)--(4,0)--(6,2)--(8,0)--(10,2)--(12,0);
\draw(7,1)--(8,2)--(9,1);
}
\quad
\tikzpic{-0.5}{[scale=0.2]
\draw(0,0)--(2,2)--(4,0)--(6,2)--(8,0)--(10,2)--(12,0);
\draw(7,1)--(8,2)--(9,1)(3,1)--(4,2)--(5,1);
} \\[12pt]
\tikzpic{-0.5}{[scale=0.2]
\draw(0,0)--(2,2)--(4,0)--(6,2)--(8,0)--(10,2)--(12,0);
\draw(3,1)--(4,2)--(5,1)(2,2)--(4,4)--(6,2);
}
\quad
\tikzpic{-0.5}{[scale=0.2]
\draw(0,0)--(2,2)--(4,0)--(6,2)--(8,0)--(10,2)--(12,0);
\draw(7,1)--(8,2)--(9,1)(6,2)--(8,4)--(10,2);
}
\quad
\tikzpic{-0.5}{[scale=0.2]
\draw(0,0)--(2,2)--(4,0)--(6,2)--(8,0)--(10,2)--(12,0);
\draw(3,1)--(4,2)--(5,1);
\draw(2,2)--(4,4)--(6,2)(7,1)--(8,2)--(9,1);
}
\quad
\tikzpic{-0.5}{[scale=0.2]
\draw(0,0)--(2,2)--(4,0)--(6,2)--(8,0)--(10,2)--(12,0);
\draw(3,1)--(4,2)--(5,1);
\draw(6,2)--(8,4)--(10,2)(7,1)--(8,2)--(9,1);
} \\[12pt]
\tikzpic{-0.5}{[scale=0.2]
\draw(0,0)--(2,2)--(4,0)--(6,2)--(8,0)--(10,2)--(12,0);
\draw(3,1)--(4,2)--(5,1)(2,2)--(4,4)--(6,2);
\draw(6,2)--(8,4)--(10,2)(7,1)--(8,2)--(9,1);
}
\quad
\tikzpic{-0.5}{[scale=0.2]
\draw(0,0)--(2,2)--(4,0)--(6,2)--(8,0)--(10,2)--(12,0);
\draw(3,1)--(6,4)--(9,1);
}\quad
\tikzpic{-0.5}{[scale=0.2]
\draw(0,0)--(2,2)--(4,0)--(6,2)--(8,0)--(10,2)--(12,0);
\draw(3,1)--(4,2)--(5,1)(7,1)--(8,2)--(9,1);
\draw(2,2)--(4,4)--(5,3)--(6,4)--(7,3)--(8,4)--(10,2);
}\quad
\tikzpic{-0.5}{[scale=0.2]
\draw(0,0)--(2,2)--(4,0)--(6,2)--(8,0)--(10,2)--(12,0);
\draw(3,1)--(6,4)--(9,1);
\draw(2,2)--(6,6)--(10,2);
}
\caption{Twelve cover-exclusive Dyck tilings above $(U^2R^2)^3$}
\label{fig:cexDyck}
\end{figure}
However, the Dyck tiling above $(U^2R^2)^2$ in Figure \ref{fig:naDyck}
\begin{figure}[ht]
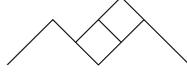

\tikzpic{-0.5}{[scale=0.3]
\draw(0,0)--(2,2)--(4,0)--(6,2)--(8,0);
\draw(3,1)--(4,2)--(5,1);
\draw(4,2)--(5,3)--(6,2);
}
\caption{Non-admissible Dyck tiling}
\label{fig:naDyck}
\end{figure}
is not cover-exclusive since it violates the condition ($\diamondsuit$).
\end{example}

Let $\pi:=(\pi_1,\ldots,\pi_r)\in\mathcal{NC}_{n}^{(r)}$ be an increasing $r$-chain.
We recursively construct a sequence of cover-exclusive Dyck tilings $D_{i}$, $1\le i\le r$:
\begin{align*}
D_{r+1}\xrightarrow{\pi_r}D_{r}\xrightarrow{\pi_{r-1}}D_{r-1}\xrightarrow{\pi_{r-2}}
\cdots \xrightarrow{\pi_{1}}D_{1},
\end{align*}
where $D_{r+1}$ is the cover-exclusive Dyck tiling above $\lambda_{0}$ without Dyck tiles.
Here, $D_{i+1}\xrightarrow{\pi_{i}}D_{i}$ means that $D_{i}$ is obtained from 
$D_{i+1}$ and $\pi_{i}$.

Suppose that $\pi_{i}$ consists of $m$ blocks $B_{t}$, $1\le t\le m$, such that 
$\max B_{t}<\max B_{t+1}$ for $1\le t\le m-1$.
Further, suppose that a block $B_{t}$ consists of $p$ positive increasing integers 
$(q_{1},\ldots,q_{p})$.
By decomposing $\pi_i$ into $m$ blocks $B_{t}$, $1\le t\le m$, we obtain a refined 
sequence of cover-exclusive Dyck tilings
\begin{align*}
D_{i+1}=D'_{0}\xrightarrow{B_1}D'_{1}\xrightarrow{B_2}D'_{2}\cdots \xrightarrow{B_{m}}D'_{m}=D_{i}.
\end{align*}
Given a Dyck tiling $D'_{t-1}$ and a block $B_{t}$ in $\pi_{i}$, $D'_{t}$ is obtained from 
$D'_{t-1}$ by adding Dyck tiles which connect 
$r(q_s-1)+r-i+1$-th down step and $r(q_{s+1}-1)+i$-th up step in $\lambda_{0}$ for all $1\le s\le p-1$.
Since $D'_{t-1}$ and $D'_{t}$ are cover-exclusive Dyck tilings, the added Dyck tiles are uniquely 
fixed by giving two up and down steps which are the end steps of a Dyck tile.

\begin{example}
Let $\pi:=(14/23,1234)$ be a $2$-chain in $\mathcal{NC}_{4}$.
The non-crossing partition $1234$ consists of a single block, and $14/23$ consists of 
two blocks $23$ and $14$.
The cover-exclusive Dyck tiling for $\pi$ is given as follows.
\begin{align*}
\tikzpic{-0.5}{[scale=0.25]
\draw(0,0)--(2,2)--(4,0)--(6,2)--(8,0)--(10,2)--(12,0)--(14,2)--(16,0);
}&\xrightarrow{1234}
\tikzpic{-0.5}{[scale=0.25]
\draw(0,0)--(2,2)--(4,0)--(6,2)--(8,0)--(10,2)--(12,0)--(14,2)--(16,0);
\draw(3,1)--(4,2)--(5,1)(7,1)--(8,2)--(9,1)(11,1)--(12,2)--(13,1);
} \\[12pt]
&\xrightarrow{23}
\tikzpic{-0.5}{[scale=0.25]
\draw(0,0)--(2,2)--(4,0)--(6,2)--(8,0)--(10,2)--(12,0)--(14,2)--(16,0);
\draw(3,1)--(4,2)--(5,1)(7,1)--(8,2)--(9,1)(11,1)--(12,2)--(13,1);
\draw(6,2)--(8,4)--(10,2);
}\xrightarrow{14}
\tikzpic{-0.5}{[scale=0.25]
\draw(0,0)--(2,2)--(4,0)--(6,2)--(8,0)--(10,2)--(12,0)--(14,2)--(16,0);
\draw(3,1)--(4,2)--(5,1)(7,1)--(8,2)--(9,1)(11,1)--(12,2)--(13,1);
\draw(6,2)--(8,4)--(10,2);
\draw(2,2)--(4,4)--(5,3)--(8,6)--(11,3)--(12,4)--(14,2);
}
\end{align*}
\end{example}

Let $\pi\in\mathcal{NC}_{n}^{(r)}$ and $D$ be a cover-exclusive Dyck tiling for $\pi$. 
\begin{defn}
We denote by $\lambda(\pi)$ the top Dyck path of the cover-exclusive Dyck tiling $D$ 
for $\pi\in\mathcal{NC}_{n}^{(r)}$.
\end{defn}

By Section \ref{sec:chord}, we have a correspondence between a chord diagram 
and a Dyck path. Further, we can identify a chord diagram $\mathcal{C}_{n}^{(r)}$ with a chord 
diagram $\mathcal{C}_{rn}$. 
By use of these two correspondences, we identify a Dyck path $\lambda(\pi)$ with a chord 
diagram in $\mathcal{C}_{n}^{(r)}$.
We first show that this identification is well-defined.
\begin{lemma}
The chord diagram $\lambda(\pi)$ in $\mathcal{C}_{rn}$ gives a chord diagram 
in $\mathcal{C}_{n}^{(r)}$ by the inclusion $\mathcal{C}_{n}^{(r)}\hookrightarrow\mathcal{C}_{rn}$.
In other words, the chord diagram $\lambda(\pi)$ satisfies the condition (\ref{eq:condA}).
\end{lemma}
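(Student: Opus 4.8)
The plan is to prove the congruence in (\ref{eq:condA}) by induction along the recursive construction $D_{r+1}\xrightarrow{\pi_r}\cdots\xrightarrow{\pi_1}D_1$, showing that \emph{every} intermediate top Dyck path — not only $\lambda(\pi)$ itself — already yields a chord diagram in $\mathcal{C}_n^{(r)}$. Recall that, read as a chord diagram in $\mathcal{C}_{rn}$, an arch $(a,b)$ of a Dyck path pairs an up step at position $a$ with the down step at position $b$ that first returns to the height preceding $a$, and that (\ref{eq:condA}) demands $a+b\equiv 1\pmod{2r}$ for every arch. First I would dispose of the base case $\lambda_0=(U^rR^r)^n$: the $s$-th up step of the $k$-th block sits at position $2r(k-1)+s$ and is matched to the $(r{+}1{-}s)$-th down step of the same block, at position $2rk+1-s$; their sum is $2r(2k-1)+1\equiv 1\pmod{2r}$, so $\lambda_0$ satisfies (\ref{eq:condA}).

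For the inductive step I would first isolate the purely arithmetic content. Adding one Dyck tile lifts a valley of the current top path and hence alters the matching by a single \emph{recombination}: two arches $(a_1,b_1)$ and $(a_2,b_2)$ with $a_1<b_1<a_2<b_2$ flanking the capped valley are replaced by the outer arch $(a_1,b_2)$ and the cap $(b_1,a_2)$, while every arch nested strictly between $b_1$ and $a_2$ is untouched. Assuming the inductive hypothesis that the old arches satisfy (\ref{eq:condA}), one has $a_1\equiv 1-b_1$ and $b_2\equiv 1-a_2\pmod{2r}$, whence $a_1+b_2\equiv 2-(b_1+a_2)$ and $b_1+a_2\equiv b_1+a_2$; therefore \emph{both} new arches satisfy (\ref{eq:condA}) precisely when the two steps immediately flanking the capped valley satisfy $b_1+a_2\equiv 1\pmod{2r}$. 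Thus the whole induction reduces to the single claim that each tile caps a valley whose flanking down step $b_1$ and up step $a_2$ sum to $1$ modulo $2r$.

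To establish that claim I would invoke the explicit endpoints of the tiles. The tile built for a consecutive pair $(q_s,q_{s+1})$ inside a block of $\pi_i$ connects the down step of $\lambda_0$ at position $P_d=2rq_s-i+1$ and the up step at position $P_u=2r(q_{s+1}-1)+i$, so that $P_d\equiv 1-i$ and $P_u\equiv i$, and in particular $P_d+P_u\equiv 1\pmod{2r}$. Since the tile caps exactly the valley created between these two $\lambda_0$-steps, its flanking feet are placed symmetrically about them, giving $b_1+a_2\equiv P_d+P_u\equiv 1\pmod{2r}$, which is the condition required above. Combined with the recombination computation, this shows that (\ref{eq:condA}) is preserved at every stage, and in particular that $\lambda(\pi)\in\mathcal{C}_n^{(r)}$.

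The step I expect to be the main obstacle is the identification $b_1+a_2\equiv P_d+P_u\pmod{2r}$ for a tile of arbitrary size: for a single box the valley and its two flanking arches are visible immediately, but a larger cover-exclusive tile caps a valley that may enclose previously created arches, so one must verify that its net effect is still a single recombination of the outer pair and that the flanking feet remain symmetric about $P_d$ and $P_u$. I would handle this either by decomposing each tile addition into an admissible sequence of box additions, checking that every intermediate valley keeps its flanking sum $\equiv 1\pmod{2r}$, or by arguing directly from the recursive rule $D_{i+1}\xrightarrow{\pi_i}D_i$ that the steps bounding the capped region occupy $\lambda_0$-positions symmetric to $P_d$ and $P_u$. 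As a consistency check, the same conclusion also follows a posteriori from the identification of $\lambda$ with $\Phi$ together with $\Phi=\sigma^{(r)}\circ\Psi$ (Proposition \ref{prop:PhiPsi}) and Proposition \ref{prop:chainadd}, since $\sigma^{(r)}$ preserves $\mathcal{C}_n^{(r)}$; but the inductive argument above is self-contained and does not rely on that identification.
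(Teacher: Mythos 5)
Your proof is correct and follows essentially the same route as the paper's: verify the congruence for the base path $\lambda_0=(U^rR^r)^n$, then observe that each added Dyck tile joins a down step of $\lambda_0$ at position $\equiv 1-i$ to an up step at position $\equiv i \pmod{2r}$, so the resulting arch (and the recombined outer arch) still satisfies (\ref{eq:condA}). The only difference is that you make explicit the recombination of the matching under a tile addition, which the paper's much terser proof leaves implicit; your worry about larger tiles is resolved exactly as you suggest, since a size-$k$ tile replaces a window $RwU$ by $UwR$ and hence performs a single outer recombination.
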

\begin{proof}
Let $\lambda_{0}$ be the Dyck path $(U^rR^r)^{n}$, that is, $\lambda_{0}$ is the lowest Dyck path.
The chord diagram for $\lambda_{0}$ obviously satisfies the 
condition (\ref{eq:condA}). 
By recursive construction of a cover-exclusive Dyck tiling, a Dyck tile connects a down step with 
an up step in $\lambda_{0}$. Note that a Dyck tile corresponds to an arch connecting a primed 
integer and an integer. This means that $\lambda(\pi)$ also satisfies the condition (\ref{eq:condA}). 
This completes the proof.
\end{proof}

\begin{prop}
\label{prop:Philambda}
Let $\pi:=(\pi_1,\ldots,\pi_{r})$ and $\lambda(\pi)$ be as above.
Then, we have $\Phi(\pi)=\lambda(\pi)$ as a chord diagram in $\mathcal{C}_{n}^{(r)}$.
\end{prop}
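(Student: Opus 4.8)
The plan is to show that both $\Phi(\pi)$ and $\lambda(\pi)$ are superpositions of the same $r$ single-colour chord diagrams, one attached to each $\pi_s$, and then to match them arch-by-arch under the inclusion $\mathcal{C}_{n}^{(r)}\hookrightarrow\mathcal{C}_{rn}$. On the $\Phi$ side this decomposition is already available: by definition $\Phi(\pi)$ is the superposition of the chord diagrams $\kappa^{(1)}(\pi_s)$ for $1\le s\le r$, and by Proposition \ref{prop:PC} each such diagram equals $\Psi(\rho(\pi_s))$. Thus the $s$-th layer of $\Phi(\pi)$ consists of the arches of $\Psi(\rho(\pi_s))$ placed in the $s$-th points of the bundles. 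First I would record, using Proposition \ref{prop:jt}, an explicit description of these arches: for each pair of consecutive integers $(q_s,q_{s+1})$ lying in a common block of $\pi_i$, the diagram $\kappa^{(1)}(\pi_i)$ carries exactly one arch, and it is these arches that must be reproduced on the tiling side.

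Next I would analyse the tiling side level by level, following the recursion $D_{r+1}\xrightarrow{\pi_r}\cdots\xrightarrow{\pi_1}D_1$. The point is that the tiles added while processing $\pi_i$ are, by construction, indexed by the consecutive pairs $(q_s,q_{s+1})$ inside the blocks of $\pi_i$, each such pair contributing the single Dyck tile whose two feet rest on the $r(q_s-1)+r-i+1$-th down step and the $r(q_{s+1}-1)+i$-th up step of $\lambda_0$. I would translate these step indices into labels of the $2rn$ points of a chord diagram in $\mathcal{C}_{rn}$ and verify that, once the bundles are formed, the down foot lands on the $i$-th point (from the appropriate end) of the bundle carrying $q_s$ and the up foot on the $i$-th point of the bundle carrying $q_{s+1}$. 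This identifies the tile with precisely the arch of $\kappa^{(1)}(\pi_i)$ sitting in the $i$-th layer, so that the set of tiles added for $\pi_i$ coincides with the $i$-th layer of $\Phi(\pi)$.

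The comparison would then be completed by an induction mirroring the two recursive constructions: the insertion procedure defining $\kappa^{(1)}$ on each $\pi_s$ (inserting the sub-path of a block at the position dictated by its minimum, as in the proof of Proposition \ref{prop:PC}) runs in exact parallel with the block-by-block refinement $D'_{0}\xrightarrow{B_1}\cdots\xrightarrow{B_m}D'_{m}$ of the tiling, and in both cases the extra arch, respectively tile, attached when a block is enlarged is governed by the same pair $(j,t)$ of Proposition \ref{prop:jt}. That the resulting superposition is admissible, i.e.\ the arches do not cross and condition (\ref{eq:condA}) holds, is supplied on one side by Proposition \ref{prop:chainadd} and on the other by the cover-exclusivity condition ($\diamondsuit$) together with the preceding lemma; I would check that these two admissibility constraints coincide, so that uniqueness of the tiles (they are fixed by their two feet) matches uniqueness of the arches.

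The main obstacle I expect is the bookkeeping in the middle step: correctly converting the down-step/up-step indices of the tiling formula into the bundled labels $1,1',2,2',\ldots$ and confirming that the level index $i$ of the tiling matches the colour index $s$ of the superposition defining $\Phi$, including the reflection that sends the down feet to one end of a bundle and the up feet to the other so that condition (\ref{eq:condA}) is respected. Once this dictionary between tile feet and bundled points is pinned down, the agreement of the two layer-by-layer constructions, and hence $\Phi(\pi)=\lambda(\pi)$, follows.
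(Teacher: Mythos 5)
Your proposal is correct and follows essentially the same route as the paper: both identify the Dyck tile added for a consecutive pair $(q_s,q_{s+1})$ in a block of $\pi_i$ with the arch joining the $i$-th points of the bundles $q_s'$ and $q_{s+1}$, and then recognize these arches as the rotation of the arches of $\Psi(\pi_i)$. The only cosmetic difference is that you route through Propositions \ref{prop:PC} and \ref{prop:jt} (i.e.\ $\kappa^{(1)}=\Psi\circ\rho$) where the paper invokes Proposition \ref{prop:PhiPsi} ($\Phi=\sigma^{(r)}\circ\Psi$) directly; these are equivalent by Lemma \ref{lemma:rhosigam}.
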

\begin{proof}
Suppose that $\pi_i$ consists of $m$ blocks $B_1,\ldots, B_{m}$, and a block $B_{t}$,
$1\le t\le m$ consists of $p$ positive integers $(q_1,\ldots,q_{p})$.
By the recursive construction of a Dyck tiling, we have a Dyck tile connecting 
the $r(q_{s}-1)+r-i+1$-th down step with the $r(q_{s+1}-1)+i$-th up step in $\lambda_{0}$.
In terms of a chord diagram, this Dyck tile corresponds to an arch connecting a primed integer 
$q_{s}'$ with an integer $q_{s+1}$.
If we rotate the chord diagram by $(\sigma^{(r)})^{-1}$, the new arch connects the integer 
$q_s$ with a primed integer $(q_{s+1}-1)'$.
We can obtain this new arch by the action of $\Psi$ on $\pi_{i}$.
From Proposition \ref{prop:PhiPsi}, we have $\Phi(\pi)=\lambda(\pi)$.
\end{proof}

\begin{cor}
The number of cover-exclusive Dyck tilings above $(U^rR^r)^{n}$ is given by 
the Fuss--Catalan number.
\end{cor}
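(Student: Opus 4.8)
The plan is to realize the Fuss--Catalan number as the cardinality of the set of cover-exclusive Dyck tilings above $\lambda_0:=(U^rR^r)^n$ by exhibiting a bijection between this set and the set $\mathcal{NC}_n^{(r)}$ of increasing $r$-chains, and then to invoke Proposition~\ref{prop:NCP}, which identifies $|\mathcal{NC}_n^{(r)}|$ with the Fuss--Catalan number. The candidate bijection is the recursive construction described above, which sends an increasing $r$-chain $\pi=(\pi_1,\ldots,\pi_r)$ to the cover-exclusive Dyck tiling $D$ whose layers are built from $\pi_r,\pi_{r-1},\ldots,\pi_1$ in turn. Thus the whole argument reduces to checking that this assignment $\pi\mapsto D$ is both injective and surjective onto the set of all cover-exclusive Dyck tilings above $\lambda_0$.

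Injectivity is essentially free from what has already been proved. By Proposition~\ref{prop:Philambda}, the top Dyck path of $D$ equals $\lambda(\pi)=\Phi(\pi)$, and by Proposition~\ref{prop:PhiPsi} the map $\Phi=\sigma^{(r)}\circ\Psi$ is a bijection from $\mathcal{NC}_n^{(r)}$ onto $\mathcal{C}_n^{(r)}$. Hence the top path of $D$ already determines $\pi$ uniquely, so two distinct chains produce tilings with distinct tops and, in particular, distinct tilings. This shows that $\pi\mapsto D$ is injective.

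For surjectivity I would construct the inverse by peeling off tiles layer by layer. Given an arbitrary cover-exclusive Dyck tiling $D$ above $\lambda_0$, the cover-exclusive condition~($\diamondsuit$) should force a canonical decomposition $D_{r+1}\to D_r\to\cdots\to D_1=D$ into successive cover-exclusive tilings differing by one layer of tiles; reading each layer as a collection of arches recovers, via the inverse of $\Psi$, a non-crossing partition $\pi_i$, and the nesting of successive layers yields the refinement relations $\pi_1\le\cdots\le\pi_r$. One then checks that applying the forward construction to $\pi=(\pi_1,\ldots,\pi_r)$ returns $D$. The main obstacle lies exactly here: one must verify that condition~($\diamondsuit$) makes the layer decomposition of an arbitrary cover-exclusive tiling unambiguous, and that each extracted layer is a legitimate non-crossing partition whose blocks merge to give the next, so that the recovered sequence is a genuine increasing $r$-chain. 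Once surjectivity is secured, the bijection gives $|\{\text{cover-exclusive Dyck tilings above }\lambda_0\}|=|\mathcal{NC}_n^{(r)}|$, which is the Fuss--Catalan number by Proposition~\ref{prop:NCP}.
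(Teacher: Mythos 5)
Your proposal follows essentially the same route as the paper: identify the tilings built from increasing $r$-chains with generalized chord diagrams via Proposition \ref{prop:Philambda}, and then count using Propositions \ref{prop:NCP} and \ref{prop:NCC}. The surjectivity issue you flag --- that every cover-exclusive Dyck tiling above $(U^rR^r)^n$ actually arises from some chain --- is a genuine point, but the paper's own proof is no more explicit about it (it simply asserts that the number of cover-exclusive Dyck tilings equals the number of generalized chord diagrams), so your layer-peeling sketch for the inverse is a cautious refinement of the same argument rather than a different approach.
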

\begin{proof}
From Proposition \ref{prop:Philambda}, we have $\Phi(\pi)=\lambda(\pi)$.
The number of generalized chord diagrams in $\mathcal{C}_{n}^{(r)}$ is given by 
the Fuss--Catalan number by Proposition \ref{prop:NCC}. The number of cover-exclusive 
Dyck tilings $\lambda(\pi)$ is equal to the number of generalized chord diagrams.
Thus, it is given by the Fuss--Catalan number.
\end{proof}

\subsection{Definition of the generators \texorpdfstring{$F_{i}^{(s)}$}{Fi}}
Recall that we have an operator $f_1$ on $\mathcal{NC}_{n}$.
We construct the generators $F_{i}^{(s)}$, $1\le i\le 2n-1$, $1\le s\le r$, 
by use of $f_{1}$.

Let $\pi^{(r)}=(\pi_1,\ldots,\pi_{r})\in\mathcal{NC}_{n}^{(r)}$.
We first define $f_{1}^{(s)}$, $1\le s\le r$, on $\mathcal{NC}_{n}^{(r)}$ by 
\begin{align}
\label{def:fis}
f_{1}^{(s)}\pi^{(r)}&:=
(\pi_1,\ldots,\pi_{r-s},f_1(\pi_{r-s+1}),f_1(\pi_{r-s+2}),\ldots,f_{1}(\pi_{r})), 
\end{align}
where the operator $f_1$ in the right hand side of Eq. (\ref{def:fis}) is defined 
in Eq. (\ref{def:f}) on $\mathcal{NC}_{n}$.
Then, we define the generators $F_{i}^{(s)}$, $1\le i\le 2n-1$, $1\le s\le r$, by 
\begin{align}
\label{def:F}
\begin{aligned}
F_{i}^{(s)}&:=\rho^{i-1} F_{1}^{(s)}\rho^{-(i-1)},
\end{aligned}
\end{align}
where $F_{1}^{(s)}:=f_{1}^{(s)}$ and $\rho$ is the extended Kreweras endomorphism
defined by Eq. (\ref{eq:FKre}).

\begin{defn}
The unital associative $\mathbb{C}[q,q^{-1}]$-algebra $\mathbb{NC}_{n}^{(r)}$ acting on $\mathcal{NC}_{n}^{(r)}$ 
is generated by $\{F_{i}^{(s)}: 1\le i\le 2n-1, 1\le s\le r\}$.
We call $\mathbb{NC}_{n}^{(r)}$ the Fuss--Catalan algebra.
\end{defn}

Since we have a bijection $\kappa^{(r)}$ from $\mathcal{NC}_{n}^{(r)}$ to 
$\mathcal{P}_{n}^{(r)}$, 
one can consider the representations of the algebra $\mathbb{NC}_{n}^{(r)}$ on generalized 
Dyck paths.

\section{Diagrammatic representation}
\label{sec:Pic}
In Section \ref{sec:TLchord}, a diagrammatic representation of the Temperley--Lieb 
algebra $\mathbb{TL}_{n}$ is given.
In this section, we introduce a diagrammatic representation of the 
algebra $\mathbb{TL}_{n}^{(r)}$ with $r\ge1$.
This algebra $\mathbb{TL}_{n}^{(r)}$ is called {\it Fuss--Catalan algebra} 
in \cite{BisJon97,Dif98}.
The algebra $\mathbb{TL}_{n}^{(r)}$ acts on the set $\mathcal{C}_{n}^{(r)}$ 
of generalized chord diagrams.

We introduce the following diagrammatic representation of $s$ strands:
\begin{align}
\label{eq:strand}
\tikzpic{-0.5}{
\node(1)at(0,-1){\framebox{$s$}};
\draw(0,0.3)--(0,-0.8)(0,-1.2)--(0,-2.3);
}:=
\tikzpic{-0.42}{
\draw(0,0.3)--(0,-2.3)(0.7,0.3)--(0.7,-2.3);
\draw(0.35,-1)node{$\cdots$};
\draw[decoration={brace},decorate](0,0.35)to[anchor=south]node{$s$}(0.7,0.35);
}
\end{align}
In other words, the left hand side of Eq. (\ref{eq:strand}) represents  
$s$ vertical strands bundled into one.

We define the diagrammatic representation of the generator $E^{(s)}_{i}$ by 
\begin{align}
\label{eq:picFis}
E_{i}^{(s)}:=
\tikzpic{-0.5}{
\node(1)at(0,-1){\framebox{$s'$}};
\draw(0,0.3)--(0,-0.74)(0,-1.26)--(0,-2.3);
\node(2)at(1.2,-0.7){\framebox{$s$}};
\draw(0.45,0.3)..controls(0.45,-0.2)and(0.69,-0.75)..(0.99,-0.75);
\draw(1.4,-0.75)..controls(1.7,-0.75)and(1.95,-0.2)..(1.95,0.3);
\node(3)at(1.2,-1.3){\framebox{$s$}};
\draw(0.45,-2.3)..controls(0.45,-1.8)and(0.7,-1.3)..(1,-1.3);
\draw(1.4,-1.3)..controls(1.7,-1.3)and(1.95,-1.8)..(1.95,-2.3);
\node(4)at(2.4,-1){\framebox{$s'$}};
\draw(2.4,0.3)--(2.4,-0.74)(2.4,-1.26)--(2.4,-2.3);
\draw(0.225,-2.3)node[anchor=north]{$i$};
\draw(2.2,-2.3)node[anchor=north]{$i+1$};
}
\end{align}
where $s'=r-s$. The action of $E_{i}^{(s)}$ is local, we have vertical bundled strands 
at the positions except $i$ and $i+1$.

Let $X$ and $Y$ be diagrams. Then, the product $XY$ is calculated by placing 
the diagram $Y$ on top of the diagram $X$.
If we have a closed loop, we remove it and give a factor $\tau:=-(q+q^{-1})$.
We regard two diagrams equivalent if they are isotropic to each other. 

\begin{defn}
The algebra $\mathbb{TL}_{n}^{(r)}$ is a unital associative $\mathbb{C}[q,q^{-1}]$-algebra
generated by the set of generators $\{E_{i}^{(s)} : 1\le i\le 2n-1, 1\le s\le r\}$. 
\end{defn}

The algebra $\mathbb{TL}_{n}^{(r)}$ acts on the generalized chord diagrams 
in $\mathcal{C}_{n}^{(r)}$.

\begin{prop}
Let $C\in\mathcal{C}_{n}^{(r)}$, and $X$ be an element in $\mathbb{TL}_{n}^{(r)}$.
Then, the action of $X$ on $\mathcal{C}_{n}^{(r)}$ is well-defined, {\it i.e.}, 
$XC\in\mathcal{C}_{n}^{(r)}$.
\end{prop}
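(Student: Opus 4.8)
The plan is to reduce the statement to a single generator and then verify that each generator preserves the defining congruence (\ref{eq:condA}) of $\mathcal{C}_{n}^{(r)}$. Since $\mathbb{TL}_{n}^{(r)}$ is generated by the $E_{i}^{(s)}$ and every element is a $\mathbb{C}[q,q^{-1}]$-linear combination of words in these generators, and since the action is defined by stacking diagrams (with closed loops removed at the cost of a factor $\tau$), it suffices to show that for each generator $E_{i}^{(s)}$ and each $C\in\mathcal{C}_{n}^{(r)}$ the diagram $E_{i}^{(s)}C$ equals $\tau^{k}C'$ for some $k\ge0$ and some $C'$ that is again a chord diagram satisfying (\ref{eq:condA}). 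Planarity and the completeness of the matching are automatic from composition of Temperley--Lieb diagrams, so the only point to check is the congruence (\ref{eq:condA}).

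First I would reformulate (\ref{eq:condA}) in terms of residues. Writing each of the $2rn$ points by its global position $p$ and setting $\phi(p):=p\bmod 2r$, condition (\ref{eq:condA}) says precisely that every arch $(a,b)$ satisfies $\phi(a)+\phi(b)\equiv1\pmod{2r}$; equivalently, an arch always joins a residue $x$ to $\iota(x):=1-x\bmod 2r$, where $\iota$ is an involution. The key local computation is that the cap and the cup of $E_{i}^{(s)}$ have exactly this property. Indeed, the $k$-th cap strand joins local position $s'+k$ of bundle $i$ to local position $s+1-k$ of bundle $i+1$; using $s'+s=r$ and that bundles $i$ and $i+1$ occupy global positions $r(i-1)+1,\dots,ri$ and $ri+1,\dots,r(i+1)$, the two endpoints sum to
\begin{align*}
\bigl(r(i-1)+s'+k\bigr)+\bigl(ri+s+1-k\bigr)=2ri+1\equiv1\pmod{2r},
\end{align*}
independently of $k$ and $s$; the cup satisfies the same identity. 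Thus, on the gluing row where $C$ is stacked on $E_{i}^{(s)}$, both the arches of $C$ and the caps of $E_{i}^{(s)}$ join points whose positions sum to $1$ modulo $2r$.

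Next I would trace the arches of the composite. Every output arch is of one of two kinds: a cup of $E_{i}^{(s)}$, which directly satisfies the congruence by the computation above; or a path that leaves an unaffected (straight-strand) output point $b_{\alpha}$, enters $C$, and then alternates between arches of $C$ and caps of $E_{i}^{(s)}$ until it returns through a straight strand to another unaffected output point $b_{\beta}$. Along such a path each consecutive pair of endpoints satisfies $\phi(\cdot)+\phi(\cdot)\equiv1$, so the residues alternate under $\iota$; since the path begins and ends with an arch of $C$ it has an odd number of links, whence $\phi(\beta)=\iota(\phi(\alpha))$ and the output arch $(b_{\alpha},b_{\beta})$ satisfies (\ref{eq:condA}). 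Because a straight strand joins $b_{p}$ to the glue point of the same index, the residues of $\alpha$ and $\beta$ are unchanged when passing to the output row, and closed loops formed entirely inside the gluing region carry no arch and only contribute factors of $\tau$. I expect the main obstacle to be the careful bookkeeping in this tracing step, namely identifying which bundle points are affected versus straight and checking the parity (odd length) of each traced path so that $\iota$ lands on the correct residue; once the local cap/cup identity and this parity are in place, the congruence propagates and the conclusion that $E_{i}^{(s)}C$ is a scalar multiple of an element of $\mathcal{C}_{n}^{(r)}$ follows.
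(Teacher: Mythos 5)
Your proof is correct and takes essentially the same approach as the paper: reduce to a single generator $E_{i}^{(s)}$ and observe that its strands are either vertical lines or caps/cups whose endpoints satisfy the congruence (\ref{eq:condA}), so the composite diagram does too. The paper's proof is just a terser version of yours; your explicit residue computation $2ri+1\equiv 1\pmod{2r}$ and the odd-length path-tracing argument supply the details the paper leaves implicit.
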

\begin{proof}
It is enough to show that $E_{i}^{(s)}C\in\mathcal{C}_{n}^{(r)}$.
Recall that $C$ satisfies the condition (\ref{eq:condA}).
We act $E_{i}^{(s)}$ from the bottom of $C$, and we obtain a new chord 
diagram $C'$.
The diagrammatic representation (\ref{eq:picFis}) of $E_{i}^{(s)}$ insures that 
$C'$ also satisfies the condition (\ref{eq:condA}).
To see this, note that the strands in $E_{i}^{(s)}$ are vertical lines or arches of size one, 
and the arches also satisfy the condition (\ref{eq:condA}).
From these, we have $C'\in\mathcal{C}_{n}^{(r)}$.
\end{proof}

The generators $\{E_{i}^{(s)}\}$ satisfy the following relations.
\begin{theorem}
\label{thrm:relF}
The set of operators $\{E_{i}^{(s)}: 1\le i\le 2n-1, 1\le s\le r\}$ 
generates the following relations of order up to three:
\begin{align*}
&E_{i}^{(s)}E_{i}^{(s')}=\tau^{\min\{s,s'\}}E_{i}^{\max\{s,s'\}}, \qquad 1\le i\le 2n-1,\  1\le s,s'\le r, \\
&E_{i}^{(s)}E_{j}^{(s')}=E_{j}^{(s')}E_{i}^{(s)}, \qquad |i-j|>1, \ 1\le s,s'\le r, \text{ or } |i-j|=1,\ s+s'\le r,\\
&E_{i}^{(s)}E_{i+1}^{(s')}E_{i}^{(s'')}=
\begin{cases}
\tau^{r-s'}E_{i}^{(r)}, & \text{ if } s=s''=r, \\
\tau^{r-s'}E_{i+1}^{(r-s)}E_{i}^{(s'')}, & \text{ if } s\le s''<r, \text{ and  } s+s'\ge r,\\
\tau^{r-s'}E_{i}^{(s)}E_{i+1}^{(r-s'')}, & \text{ if } r>s\ge s'', \text{ and  } s''+s\ge r,
\end{cases}\\
&E_{i+1}^{(s)}E_{i}^{(s')}E_{i+1}^{(s'')}=
\begin{cases}
\tau^{r-s'}E_{i+1}^{(r)}, & \text{ if } s=s''=r,\\
\tau^{r-s'}E_{i}^{(r-s)}E_{i+1}^{(s'')}, & \text{ if } s\le s''<r, \text{ and } s+s'\ge r, \\
\tau^{r-s'}E_{i+1}^{(s)}E_{i}^{(r-s'')}, & \text{ if } r>s\ge s'', \text{ and } s''+s'\ge r,
\end{cases}
\end{align*}
where $E_{i}^{(0)}:=\mathbf{1}$.
\end{theorem}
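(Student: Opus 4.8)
The plan is to prove all four families of relations by direct diagrammatic computation, since each generator $E_i^{(s)}$ is the explicit picture of (\ref{eq:picFis}) and products are evaluated by vertical stacking followed by removal of closed loops, each loop contributing a factor $\tau$. Throughout I would fix the convention read off from (\ref{eq:picFis}) and (\ref{eq:strand}): within the bundle at position $i$ the $r$ strands are numbered $1,\dots,r$ from left to right, and $E_i^{(s)}$ is the diagram whose only nontrivial feature joins the rightmost $s$ strands of bundle $i$ to the leftmost $s$ strands of bundle $i+1$ by a nested cap at the top and a nested cup at the bottom, while the remaining $r-s$ strands of each of these two bundles, together with all strands of every other bundle, run straight through. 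With this picture fixed, each relation becomes a finite matching-and-loop-counting problem, and one checks along the way that the output still satisfies the admissibility condition (\ref{eq:condA}).

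For the first relation, stacking $E_i^{(s')}$ on top of $E_i^{(s)}$ brings the cap of the lower generator against the cup of the upper one on a common block of positions in bundles $i$ and $i+1$. Assuming without loss of generality $s\le s'$, exactly the $\min\{s,s'\}=s$ innermost cap--cup pairs close into disjoint loops, each giving a factor $\tau$, while the surviving turned-back strands reassemble into the cap and cup of $E_i^{(s')}$; hence $E_i^{(s)}E_i^{(s')}=\tau^{s}E_i^{(s')}$. For the commutation relation the case $|i-j|>1$ is immediate, as the two diagrams are supported on disjoint bundles. When $|i-j|=1$, say $j=i+1$, the generator $E_i^{(s)}$ touches only the leftmost $s$ strands of the shared bundle $i+1$ while $E_{i+1}^{(s')}$ touches only its rightmost $s'$ strands; the hypothesis $s+s'\le r$ is precisely the condition that these two ranges be disjoint, so the two pictures occupy independent strands and slide past each other, giving $E_i^{(s)}E_{i+1}^{(s')}=E_{i+1}^{(s')}E_i^{(s)}$.

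The substantive work is the third family. Here I would stack $E_i^{(s)}$, $E_{i+1}^{(s')}$, $E_i^{(s'')}$ from bottom to top across the three bundles $i,i+1,i+2$ and trace the connectivity through the two internal interfaces. The key structural fact is that the middle generator caps only $s'$ strands of bundle $i+1$, so the remaining $r-s'$ strands of that bundle pass straight through the middle layer; trapped between the cap of the lower $E_i$ and the cup of the upper $E_i$, these $r-s'$ strands close into exactly $r-s'$ loops, which accounts for the uniform prefactor $\tau^{r-s'}$ in every subcase. The three subcases are then distinguished by how the capped regions overlap at bundle $i+1$: the inequalities $s+s'\ge r$ and $s''+s\ge r$ force enough overlap that some strands thread all the way from bundle $i$ to bundle $i+2$, and following them identifies the surviving diagram as $E_i^{(r)}$ (when $s=s''=r$), or as the two-generator products $E_{i+1}^{(r-s)}E_i^{(s'')}$ and $E_i^{(s)}E_{i+1}^{(r-s'')}$ respectively. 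I expect this bookkeeping---simultaneously reading off the loop count and the residual matching in each regime---to be the main obstacle. To control it I would first settle the clean case $s=s''=r$, where the surviving picture is visibly the thick generator $e_i=E_i^{(r)}$ and the $r-s'$ inner strands of bundle $i+1$ manifestly form $r-s'$ loops; this calibrates the loop count before the partial-threading cases are addressed.

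Finally, the fourth family, for products $E_{i+1}^{(s)}E_i^{(s')}E_{i+1}^{(s'')}$, is handled by an entirely analogous stacking computation; alternatively one may derive it from the third family by applying the left--right reflection of diagrams, which interchanges bundles $i$ and $i+2$ while fixing bundle $i+1$ and thereby exchanges the roles of the two positions. In either case no genuinely new loop-counting argument is required once the third family has been established.
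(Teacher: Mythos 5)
Your proposal is correct and follows essentially the same route as the paper: the paper's own proof simply states that it is ``a routine to check'' that the diagrams (\ref{eq:picFis}) satisfy these relations, with each closed loop contributing a factor $\tau$, and your stacking-and-loop-counting argument is precisely that routine check carried out explicitly. In particular, your identification of the uniform prefactor $\tau^{r-s'}$ with the $r-s'$ strands of bundle $i+1$ that pass straight through the middle layer and are trapped between the lower cap and the upper cup is the correct bookkeeping that the paper leaves implicit.
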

\begin{proof}
It is a routine to check that the diagram (\ref{eq:picFis}) satisfies the relations 
of order up to three.
Note that if we have closed loops in a diagram, we remove it and 
give the overall factor $\tau$ for each closed loop.
\end{proof}

In general, we have relations which involve many generators in $\mathbb{TL}_{n}^{(r)}$.
For example, we consider the case $(n,r)=(3,2)$.
Then, we have 
\begin{align*}
E_2^{(2)}E_{3}^{(1)}E_{1}^{(2)}E_{2}^{(2)}
=E_2^{(2)}E_{3}^{(2)}E_{1}^{(1)}E_{2}^{(2)}
=\tau^{-1}E_2^{(2)}E_{3}^{(1)}E_{1}^{(1)}E_{2}^{(2)}.
\end{align*}
They are equivalent to the following diagram:
\begin{align}
\label{eq:2312}
\tikzpic{-0.5}{[xscale=0.6]
\draw(0,0)--(0,-3);
\draw(0.5,0)..controls(0.5,-1.5)and(6,-1.5)..(6,0);
\draw(2,0)..controls(2,-1)and(4.5,-1)..(4.5,0);
\draw(2.5,0)..controls(2.5,-0.8)and(4,-0.8)..(4,0);
\draw(6.5,0)--(6.5,-3);
\draw(0.5,-3)..controls(0.5,-1.5)and(6,-1.5)..(6,-3);
\draw(2,-3)..controls(2,-2)and(4.5,-2)..(4.5,-3);
\draw(2.5,-3)..controls(2.5,-2.2)and(4,-2.2)..(4,-3);
\draw(0.25,-3)node[anchor=north]{$1$}(2.25,-3)node[anchor=north]{$2$}
(4.25,-3)node[anchor=north]{$3$}(6.25,-3)node[anchor=north]{$4$};
\draw(0.25,0)node[anchor=south]{$1$}(2.25,0)node[anchor=south]{$2$}
(4.25,0)node[anchor=south]{$3$}(6.25,0)node[anchor=south]{$4$};
}
\end{align}
Any element in $\mathbb{TL}_{n}^{(r)}$ can be calculated and visualized through 
the diagrammatic representation (\ref{eq:picFis}).

An element of $\mathbb{TL}_{n}^{(r)}$ is depicted as a diagram 
consisting of $rn$ strands connecting $n$ top points and $n$ bottom points.
Each top or bottom point has $r$ strands which are non-crossing and satisfies 
the condition (\ref{eq:condA}) if we fold the diagram down to the right.  
Therefore, the dimension of $\mathbb{TL}_{n}^{(r)}$ is equal to the 
number of such diagrams.

\begin{prop}
\label{prop:TLP}
We have 
\begin{align}
\label{eq:TLP}
\dim(\mathbb{TL}_{n}^{(r)})=|\mathcal{P}_{n+1}^{(r)}|=\genfrac{}{}{}{}{1}{r(n+1)+1}\genfrac{(}{)}{0pt}{}{(r+1)(n+1)}{n+1}.
\end{align}
\end{prop}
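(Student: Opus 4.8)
The plan is to take for granted, as asserted in the discussion preceding the statement, that $\dim(\mathbb{TL}_{n}^{(r)})$ equals the number of reduced diagrams, that is, planar diagrams whose top and bottom points are $r$-bundled, whose strands are non-crossing, and which satisfy the condition (\ref{eq:condA}) after folding down to the right. Granting this, the proposition becomes purely enumerative, and I would prove it by exhibiting a bijection between these reduced diagrams and the generalized chord diagrams in $\mathcal{C}_{n+1}^{(r)}$, and then invoking Proposition \ref{prop:NCC} (with $n$ replaced by $n+1$) together with Proposition \ref{prop:NCP} to identify the count with the Fuss--Catalan number $|\mathcal{P}_{n+1}^{(r)}|$.

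First I would make the folding operation precise. A reduced diagram carries $n+1$ bundled points on each of its top and bottom edges, every bundle consisting of $r$ strands. Folding it ``down to the right'', exactly as in the passage from Temperley--Lieb diagrams to chord diagrams, places all $2(n+1)$ bundles on a single horizontal line (the top bundles read left-to-right, then the bottom bundles right-to-left), and the strands become $r(n+1)$ non-crossing arches; the resulting object is a generalized chord diagram on $2(n+1)$ bundled points, i.e.\ an element of $\mathcal{C}_{n+1}^{(r)}\hookrightarrow\mathcal{C}_{r(n+1)}$. The two points to verify are: (i) planarity of the original diagram is equivalent to the non-crossing property of the folded arches, which is immediate from the isotopy invariance used to define the diagram calculus; and (ii) the folded arches satisfy the congruence (\ref{eq:condA}). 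For (ii) I would argue that each strand of a generator $E_{i}^{(s)}$ in (\ref{eq:picFis}) is either a vertical bundled strand or a small cap-cup, both of which respect (\ref{eq:condA}); since, by the relations of Theorem \ref{thrm:relF}, every reduced diagram is a product of such generators, the folded diagram inherits (\ref{eq:condA}). Unfolding supplies the inverse map, so folding is a bijection onto $\mathcal{C}_{n+1}^{(r)}$.

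With the bijection in hand the conclusion is formal: $\dim(\mathbb{TL}_{n}^{(r)})=|\mathcal{C}_{n+1}^{(r)}|$, Proposition \ref{prop:NCC} applied to $n+1$ gives $|\mathcal{C}_{n+1}^{(r)}|=|\mathcal{P}_{n+1}^{(r)}|$, and this equals the displayed Fuss--Catalan number by the definition recalled in Section \ref{sec:GDPTL} together with Proposition \ref{prop:NCP}. I expect the main obstacle to be the honest verification that folding lands in $\mathcal{C}_{n+1}^{(r)}$ and nowhere else---in particular, pinning down the labeling so that the $2(n+1)$ folded bundles are precisely the points $1,1',\ldots,(n+1),(n+1)'$ and that (\ref{eq:condA}) neither over- nor under-counts the admissible matchings. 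If one is unwilling to presuppose that the reduced diagrams form a basis, the genuinely hard step instead becomes their linear independence; I would address this either by treating the diagrammatic representation (\ref{eq:picFis}) as the defining representation, so that distinct diagrams are independent by construction, or by using Theorem \ref{thrm:relF} to show that every product of generators reduces to a single diagram (spanning) and a nondegenerate diagram trace to separate them (independence).
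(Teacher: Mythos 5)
Your proposal follows essentially the same route as the paper: the paper likewise takes the diagram basis for granted, folds an element down to the right so that the top points sit to the right of the bottom points, observes that the folded diagram satisfies condition (\ref{eq:condA}) and hence lies in $\mathcal{C}_{n+1}^{(r)}$, and concludes via Propositions \ref{prop:NCP} and \ref{prop:NCC}. Your additional remarks on verifying the bijection carefully and on the linear independence of the diagrams are reasonable refinements of points the paper leaves implicit, but the argument is the same.
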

\begin{proof}
Recall that an element $X$ in $\mathbb{TL}_{n}^{(r)}$ is a diagram of $r(n+1)$ strands connecting 
$n+1$ bottom points and $n+1$ top points. Each bottom or top point has $r$ strands. 
By folding the diagram for $X$ down to the right 
such that the $n+1$ top points are right to the $n+1$ bottom points. 
For example, the diagram (\ref{eq:2312}) corresponds to the following diagram:
\begin{align*}
\tikzpic{-0.5}{[xscale=0.6]
\draw(0,-3)..controls(0,0)and(14.5,0)..(14.5,-3);
\draw(0.5,-3)..controls(0.5,-1.5)and(6,-1.5)..(6,-3);
\draw(2,-3)..controls(2,-2)and(4.5,-2)..(4.5,-3);
\draw(2.5,-3)..controls(2.5,-2.2)and(4,-2.2)..(4,-3);
\draw(8.5,-3)..controls(8.5,-1.5)and(14,-1.5)..(14,-3);
\draw(10,-3)..controls(10,-2)and(12.5,-2)..(12.5,-3);
\draw(10.5,-3)..controls(10.5,-2.2)and(12,-2.2)..(12,-3);
\draw(6.5,-3)..controls(6.5,-2.2)and(8,-2.2)..(8,-3);
\draw(0.25,-3)node[anchor=north]{$1$}(2.25,-3)node[anchor=north]{$2$}
(4.25,-3)node[anchor=north]{$3$}(6.25,-3)node[anchor=north]{$4$}
(8.25,-3)node[anchor=north]{$5$}(10.25,-3)node[anchor=north]{$6$}
(12.25,-3)node[anchor=north]{$7$}(14.25,-3)node[anchor=north]{$8$};
}
\end{align*}
The folded diagram for $X$ satisfies the property (\ref{eq:condA}).
The number of such folded diagrams is equal to the number of chord 
diagrams in $\mathcal{C}_{n+1}^{(r)}$.
From Proposition \ref{prop:NCP} and Proposition \ref{prop:NCC}, 
Eq. (\ref{eq:TLP}) follows.
\end{proof}

\begin{prop}
\label{prop:isoNCTL}
We have an isomorphism between $\mathbb{NC}_{n}^{(r)}$ and $\mathbb{TL}_{n}^{(r)}$, 
{\it i.e.}, $F_{i}^{(s)}\mapsto E_{i}^{(s)}$.
\end{prop}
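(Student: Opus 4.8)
The plan is to prove that the bijection $\Psi^{(r)}\colon\mathcal{NC}_n^{(r)}\to\mathcal{C}_n^{(r)}$ of Section \ref{sec:fbijNCC} intertwines the two sets of generators, that is
\[
\Psi^{(r)}F_i^{(s)}=E_i^{(s)}\Psi^{(r)},\qquad 1\le i\le 2n-1,\ 1\le s\le r,
\]
as operators, with the scalar factors $\tau$ included. Granting this, conjugation by $\Psi^{(r)}$ sends any word in the $F_i^{(s)}$ to the corresponding word in the $E_i^{(s)}$, so that an identity holds among the $F$'s precisely when it holds among the $E$'s; hence $F_i^{(s)}\mapsto E_i^{(s)}$ is a well-defined algebra isomorphism and the proposition follows. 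Because $F_i^{(s)}=\rho^{\,i-1}F_1^{(s)}\rho^{-(i-1)}$ by \eqref{def:F} and the diagram $E_i^{(s)}$ satisfies the analogous $E_i^{(s)}=(\sigma^{(r)})^{i-1}E_1^{(s)}(\sigma^{(r)})^{-(i-1)}$ (a purely diagrammatic translation, just as $e_{i+1}=\sigma e_i\sigma^{-1}$ in the case $r=1$), it suffices to prove the intertwining for $i=1$ together with the single rotation identity $\Psi^{(r)}\rho=\sigma^{(r)}\Psi^{(r)}$; the general $i$ then follows by the direct computation $\Psi^{(r)}F_i^{(s)}=\Psi^{(r)}\rho^{\,i-1}F_1^{(s)}\rho^{-(i-1)}=(\sigma^{(r)})^{i-1}E_1^{(s)}(\sigma^{(r)})^{-(i-1)}\Psi^{(r)}=E_i^{(s)}\Psi^{(r)}$.

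For $i=1$ I would reduce to the case $r=1$ of Proposition \ref{prop:Fe}. Recall that $\Psi^{(r)}$ is assembled color by color: the $s$-th color of the generalized chord diagram is the ordinary chord diagram $\Psi(\pi_s)$, drawn on the $s$-th strand within each bundle, and by Proposition \ref{prop:chainadd} the $r$ colors are mutually non-crossing, color $r$ being innermost and color $1$ outermost. The generator $E_1^{(s)}$ caps and cups exactly the $s$ innermost strands at bundles $1$ and $2$, namely the colors $r,r-1,\dots,r-s+1$, and leaves the outer $r-s$ strands vertical. Using the nesting of the colors I would check that the action is color-wise: on each of the colors $r-s+1,\dots,r$ it reproduces the Temperley--Lieb generator $e_1$ acting on that color's sub-chord-diagram, while on colors $1,\dots,r-s$ it is the identity, and every closed loop that appears is confined to a single color so that the powers of $\tau$ agree. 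Since $e_1\Psi=\Psi f_1$ on a single color by Proposition \ref{prop:Fe}, and $F_1^{(s)}$ applies $f_1$ to precisely the colors $r-s+1,\dots,r$ by its definition \eqref{def:fis}, this gives $E_1^{(s)}\Psi^{(r)}=\Psi^{(r)}F_1^{(s)}$.

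It remains to establish $\Psi^{(r)}\rho=\sigma^{(r)}\Psi^{(r)}$ for the extended Kreweras endomorphism $\rho$ of \eqref{eq:FKre} and the one-bundle rotation $\sigma^{(r)}=\sigma^r$. The key observation is that $\sigma^{(r)}$ shifts each bundle by one position, exchanging unprimed and primed bundles; since a color is read from the left in an unprimed bundle but from the right in a primed one, this reverses the color order $s\mapsto r+1-s$, matching the reversal $(\pi_1,\dots,\pi_r)\mapsto(\rho\pi_r,\dots,\rho\pi_1)$ built into $\rho$. Within a fixed color the one-bundle shift is exactly the single-point rotation $\sigma$ acting on that color's $2n$-point sub-diagram, which equals $\Psi\rho\Psi^{-1}$ by Proposition \ref{prop:Fe}. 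Combining the color reversal with this per-color Kreweras step yields the desired identity, and completes the chain of reductions above.

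The step I expect to be the main obstacle is the careful verification that the diagrammatic action of $E_i^{(s)}$ really decouples into independent single-color Temperley--Lieb actions carrying the correct powers of $\tau$; this is exactly where the nesting of the colors (Proposition \ref{prop:chainadd}) and the loop-removal convention must be combined, and it is the place most prone to scalar errors. A secondary issue is upgrading this intertwining of representations to an isomorphism of the abstract algebras: for this one needs a faithful representation, which can be supplied by letting both algebras act on the ``folded'' diagram space underlying Proposition \ref{prop:TLP} of dimension $|\mathcal{P}_{n+1}^{(r)}|$ and verifying that $\Psi^{(r)}$ extends compatibly, so that the conjugation argument applies there verbatim.
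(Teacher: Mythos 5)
Your proposal is correct and follows essentially the same route as the paper: reduce to the $r=1$ case via Proposition \ref{prop:Fe}, observe that $F_i^{(s)}$ and $E_i^{(s)}$ both decompose color-wise (identity on $r-s$ colors, the Temperley--Lieb action on the remaining $s$), and handle general $i$ by conjugating with the Kreweras endomorphism on one side and the rotation $\sigma^{(r)}=\sigma^r$ on the other. Your treatment is in fact more explicit than the paper's on two points it leaves implicit — the color reversal matching $\rho(\pi^{(r)})=(\rho(\pi_r),\dots,\rho(\pi_1))$ against the primed/unprimed reading convention, and the confinement of closed loops to a single color — but these are refinements of the same argument, not a different one.
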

\begin{proof}
From Proposition \ref{prop:Fe}, we have an isomorphism between 
$\mathbb{NC}_{n}^{(1)}$ and $\mathbb{TL}_{n}^{(1)}$.
Especially, we have $F_i=\Psi^{-1}e_{i}\Psi$ by Eq. (\ref{rel:Fe}).
For $r\ge2$, let $\pi^{(r)}=(\pi_1,\ldots,\pi_r)\in\mathcal{NC}_{n}^{(r)}$.
The generator $F_{i}^{(s)}$ acts on $\pi_1,\ldots,\pi_{r-s}$ as an identity, 
and acts on $\pi_{r-s+1},\ldots,\pi_r$ as $f_{i}$.
On the other hand, the generator $E_i^{(s)}$ also acts on $r-s$ strands 
as an identity.
By the construction of the bijection $\Psi^{(r)}$ between $\mathcal{NC}_{n}^{(r)}$ 
and $\mathcal{C}_{n}^{(r)}$, the action of $F_{i}^{(s)}$ on 
each non-crossing partition $\pi_{i}$, $1\le i\le r$, in an $r$-chain is equivalent
to the action of $F_{i}\in\mathbb{NC}_{n}^{(1)}$ or the identity.
Recall that if we regard an element in $\mathcal{C}_{n}^{(r)}$ as an element in $\mathcal{C}_{rn}$,
the rotation $\rho^{(r)}$ of $\mathcal{C}_{n}^{(r)}$ is given by $\rho^{r}$ where $\rho$ is the rotation
of $\mathcal{C}_{rn}$.
From these, we have $F_{i}^{(s)}={\Psi^{(r)}}^{-1}E_{i}^{(s)}\Psi^{(r)}$.
Note that $\Psi^{(r)}$ is a bijection between $\mathcal{NC}_{n}^{(r)}$ and $\mathcal{C}_{n}^{(r)}$.
This completes the proof.
\end{proof}

\section{Symmetric non-crossing partitions}
\label{sec:SNC}
\subsection{Symmetric non-crossing partitions}
A {\it symmetric Dyck path} of size $n$ is a Dyck path of size $n$ which 
is symmetric along the line $y=-x+n$.
In the case of $r=1$, we have an identification between a Dyck path 
and a chord diagram. 
This identification implies that the chord diagram of a symmetric Dyck path is
symmetric along the vertical line in the middle.
For $r\ge2$, the numbers of up and right steps in an $r$-Dyck path are different.
This prevents us to naively define symmetric $r$-Dyck paths.
There are two ways to define symmetric $r$-Dyck paths.
The first one is to define a symmetric $r$-Dyck path as a symmetric 
generalized chord diagram.
The second one is to define a symmetric $r$-Dyck path as a symmetric 
non-crossing partition.
In what follows, we introduce the notions of symmetric generalized chord diagrams
and symmetric non-crossing partitions, and 
study the relation between them.

A {\it symmetric generalized chord diagram} in $\mathcal{C}_{n}^{(r)}$ is a chord diagram 
in $\mathcal{C}_{n}^{(n)}$ such that it is symmetric along the vertical line in the middle.

\begin{defn}
\label{defn:Bn}
We denote by $\mathcal{SC}_{n}^{(r)}$ the set of symmetric generalized chord diagrams in 
$\mathcal{C}_{n}^{(r)}$.
We define $B_{n}^{(r)}:=|\mathcal{SC}_{n}^{(r)}|$.
\end{defn}

For example, we have $B_{2}^{(2)}=3$, $B_{3}^{(2)}=6$ and $B_{4}^{(2)}=17$.

Since we have a bijection between a generalized chord diagram and an $r$-Dyck path via 
non-crossing partitions, one can identify a symmetric $r$-Dyck path with a symmetric 
non-crossing partition which is bijective to a symmetric generalized chord diagram.
For this purpose, we first define a symmetric non-crossing partition.

We introduce the notion of symmetric non-crossing partitions in $\mathcal{NC}_{n}$.
Let $\epsilon\in\{0,1\}$.
A non-crossing partition $\pi\in\mathcal{NC}_{n}$ is said to be symmetric 
if the two integers $i$ and $j$ belong to a block $B_1$, then the integers 
$n+2-\epsilon-i$ and $n+2-\epsilon-j$ also belong to a block $B_2$.
Here, we identify the integer $n+1$ with the integer $1$, and the two blocks 
$B_1$ and $B_2$ may be the same.

\begin{defn}
We denote by $\mathcal{SNC}_{n}$ the set of symmetric non-crossing partitions of 
size $n$.
\end{defn}

In the case where $n$ is even, we have two sets of symmetric non-crossing partitions according 
to the choice of $\epsilon\in\{0,1\}$. 
Since we have a bijection between a non-crossing partition and a Dyck path,
this bijection defines a symmetric Dyck path induced from a symmetric 
non-crossing partition.
If $\epsilon=0$, we have the following bijection.

\begin{prop}
\label{prop:SNCtoSC}
Let $\epsilon=0$ and $\Psi:\mathcal{NC}_{n}\xrightarrow{\sim}\mathcal{C}_{n}$ be the bijection introduced 
in Section \ref{sec:TLNC}.
The restriction map $\Psi:\mathcal{SNC}_{n}\rightarrow\mathcal{C}_{n}$ is 
a bijection between $\mathcal{SNC}_{n}$ and $\mathcal{SC}_{n}$.
\end{prop}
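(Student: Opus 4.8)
The plan is to prove the sharper statement that, for $\epsilon=0$, a partition $\pi\in\mathcal{NC}_{n}$ lies in $\mathcal{SNC}_{n}$ \emph{if and only if} its image $\Psi(\pi)$ is symmetric along the central vertical line. Since $\Psi$ is already a bijection $\mathcal{NC}_{n}\to\mathcal{C}_{n}$ by Definition~\ref{def:Psi}, this equivalence immediately forces its restriction to be a bijection $\mathcal{SNC}_{n}\to\mathcal{SC}_{n}$. To make both symmetries act on the same bookkeeping data, I would track everything through the edge set $\mathcal{E}(\pi)$ of Eq.~(\ref{eq:calE}), recalling that the arches of $\Psi(\pi)$ are in bijection with the directed edges $(i,j)\in\mathcal{E}(\pi)$, where the edge $(i,j)$ produces the arch joining the point $i$ to the primed point $(j-1)'$ (labels read modulo $n$).

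Let $s$ denote the reflection on $\{1,\dots,n\}$ defined for $\epsilon=0$ by $s(i)=n+2-i$ with $n+1$ identified with $1$, so that $s$ fixes $1$ and reverses the cyclic order of $2,\dots,n$. By the definition of $\mathcal{SNC}_{n}$, $\pi$ is symmetric exactly when its block structure is invariant under $s$, i.e.\ $s(\pi)=\pi$, where $s(\pi)$ denotes the partition with blocks $\{s(B):B\in\pi\}$. On the diagram side, let $\phi$ be the reflection sending the $k$-th point from the left to the $k$-th point from the right; reading off the labels $1,1',\dots,n,n'$ gives $\phi(i)=(n+1-i)'$ and $\phi(i')=(n+1-i)$, so $\mathcal{SC}_{n}$ consists precisely of the diagrams whose arch set is $\phi$-invariant.

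First I would compute the effect of $\phi$ on a single arch: applying $\phi$ to the arch of the edge $(i,j)$, which joins $i$ and $(j-1)'$, yields the arch joining $(n+1-i)'$ and $n+2-j$, and this is exactly the arch of the edge $(s(j),s(i))$. Hence $\Psi(\pi)$ is $\phi$-symmetric if and only if $\mathcal{E}(\pi)$ is invariant under the involution $(i,j)\mapsto(s(j),s(i))$. The crux is then the identity
\begin{align*}
\mathcal{E}(s(\pi))=\{(s(j),s(i)):(i,j)\in\mathcal{E}(\pi)\},
\end{align*}
which I would verify block by block: for a block $B=(b_1<\dots<b_p)$, the order-reversing map $s$ turns this increasing listing into the sorted listing of $s(B)$ read backwards, so the cyclic-successor edges of $\mathcal{E}(B)$ map bijectively onto those of $\mathcal{E}(s(B))$, with the wrap-around edge $(b_p,b_1)$ matching the wrap-around edge of $s(B)$. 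Because the assignment $\pi\mapsto\mathcal{E}(\pi)$ is injective (each element has a unique outgoing edge, so the blocks are recovered as the cycles of the induced successor permutation), the identity shows that $\mathcal{E}(s(\pi))=\mathcal{E}(\pi)$ is equivalent to $s(\pi)=\pi$.

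Chaining these equivalences gives $\pi\in\mathcal{SNC}_{n}\iff s(\pi)=\pi\iff \mathcal{E}(\pi)\text{ is }(i,j)\mapsto(s(j),s(i))\text{-invariant}\iff \Psi(\pi)\in\mathcal{SC}_{n}$, which completes the argument. The main obstacle I anticipate is the block-by-block identity for $\mathcal{E}(s(\pi))$: one must handle the cyclic wrap-around edge with care and separate the case $1\in B$, where $s$ fixes $1$, from the generic case, while keeping the label-modulo-$n$ convention of $\Psi$ consistent throughout. Once the two reflections $s$ and $\phi$ are correctly matched by the single-arch computation, the remainder is routine bookkeeping.
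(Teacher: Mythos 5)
Your proposal is correct and follows essentially the same route as the paper: both rest on the single computation that the arch of the edge $(i,j)$, joining $i$ to $(j-1)'$, reflects across the central vertical line to the arch of the edge $(n+2-j,\,n+2-i)$, matched block by block. You merely package this more systematically as an involution on $\mathcal{E}(\pi)$ proving the two-way equivalence at once, whereas the paper splits on the parity of $n$ and reverses the argument for the converse; the content is the same.
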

\begin{proof}
We fist prove that $\Psi(\pi)\in\mathcal{SC}_{n}$ with $\pi\in\mathcal{SNC}_{n}$.
We consider the two cases: 1) $n$ even, and 2) $n$ odd.

\paragraph{Case 1)}
Since $n$ is even, the middle point of a chord diagram is between $(n/2)'$ and $(n/2+1)$. 
Suppose that a block $B$ of $\pi$ consists of integers $(p_1,p_2,\ldots,p_{m})$ with $m\ge1$. 
Since $\epsilon=0$ and $\pi$ is symmetric, $\Psi(\pi)$ has arches connecting $p_{i}$ and $(p_{i+1}-1)'$, and 
$n+2-p_{i+1}$ and $(n+2-p_{i}-1)'$. These two arches are symmetric under the vertical line 
in the middle. Since all arches are obtained in this way, $\Psi(\pi)$ is symmetric.

\paragraph{Case 2)}
Since $n$ is odd, the middle point of a chord diagram is between $(n+1)/2$ and $((n+1)/2)'$.
By the similar argument to Case 1), one can show that $\Psi(\pi)$ is symmetric.
 
From these, we have $\Psi(\pi)\in\mathcal{SC}_{n}$ if $\pi\in\mathcal{SNC}_{n}$.
Conversely, suppose that $C\in\mathcal{SC}_{n}$. By reversing the arguments in Case 1) and 2),
it is easy to see that $\pi=\Psi^{-1}(C)$ is a symmetric non-crossing partition.
Therefore, the restriction of the map $\Psi$ is a bijection between $\mathcal{SNC}_{n}$
and $\mathcal{SC}_{n}$. 
\end{proof}

The set $\mathcal{SNC}_{n}$ forms a subposet in $\mathcal{NC}_{n}$.
In the set $\mathcal{SNC}_{n}$, $\pi_2$ covers $\pi_1$ if we have
\begin{align*}
\pi_1=\nu_0\lessdot \nu_1\lessdot \cdots \lessdot\nu_{r}\lessdot \nu_{r+1}=\pi_{2},
\end{align*}
such that $\nu_{i}\in\mathcal{NC}_{n}\setminus \mathcal{SNC}_{n}$ for all $i\in[1,r]$.
Therefore, $\pi_1\lessdot\pi_2$ in $\mathcal{SNC}_{n}$ does not mean 
$\pi_1\lessdot\pi_2$ in $\mathcal{NC}_{n}$ in general.
Nevertheless, we have $\pi_1<\pi_2$ in $\mathcal{NC}_{n}$ if $\pi_1\lessdot\pi_2$ in $\mathcal{SNC}_{n}$.

We show the Hasse diagrams of the two posets $\mathcal{SNC}_{n}$ with $n=4$ ($\epsilon=1$) 
and $n=5$ ($\epsilon=0$) in Figure \ref{fig:symNC}.
\begin{figure}[ht]
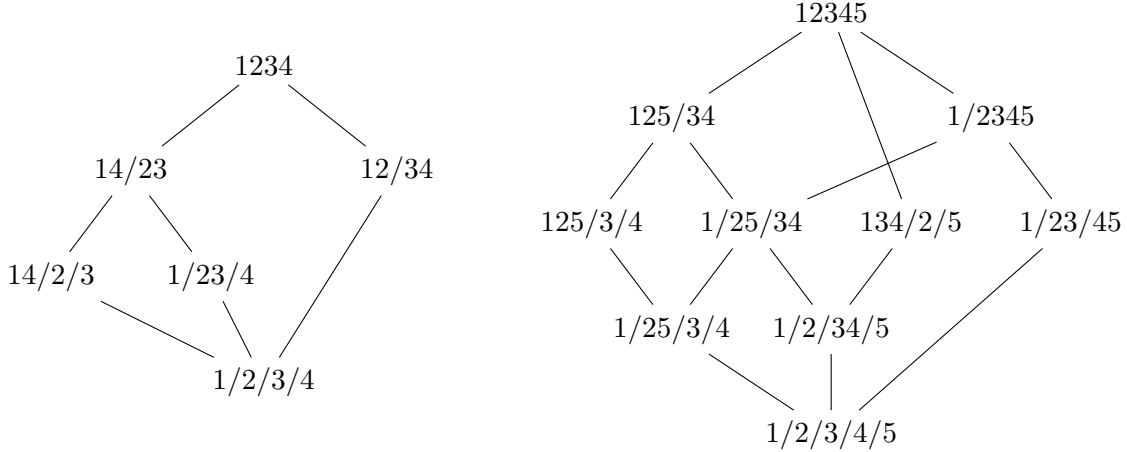

\tikzpic{-0.5}{[scale=0.7]
\node(0)at(0,0){$1/2/3/4$};
\node(1)at(-4,2){$14/2/3$};
\node(2)at(-1,2){$1/23/4$};
\node(3)at(-2.5,4){$14/23$};
\node(4)at(2.5,4){$12/34$};
\node(5)at(0,6){$1234$};
\draw(0)to(1)to(3)(0)to(2)to(3)to(5)(0)to(4)to(5);
}\qquad
\tikzpic{-0.5}{[scale=0.7]
\node(0)at(0,0){$1/2/3/4/5$};
\node(1)at(-3,2){$1/25/3/4$};
\node(2)at(0,2){$1/2/34/5$};
\node(3)at(-4.5,4){$125/3/4$};
\node(4)at(-1.5,4){$1/25/34$};
\node(5)at(1.5,4){$134/2/5$};
\node(6)at(4.5,4){$1/23/45$};
\node(7)at(-3,6){$125/34$};
\node(8)at(3,6){$1/2345$};
\node(9)at(0,8){$12345$};
\draw(0)to(1)to(3)to(7)to(9)(0)to(2)to(4)to(7)(2)to(5)
(0)to(6)to(8)to(9)(1)to(4)to(8)(5)to(9);
}
\caption{The Hasse diagrams of symmetric non-crossing partitions for $n=4$ ($\epsilon=1$) 
and $n=5$ ($\epsilon=0$).}
\label{fig:symNC}
\end{figure}
The non-crossing partition $\pi_1:=12/34$ covers $\pi_2:=1/2/3/4$ in $\mathcal{SNC}_{4}$, however,
$\pi_1$ does not cover $\pi_2$ in $\mathcal{NC}_{4}$.

\begin{prop}
The number $|\mathcal{SNC}_{n}|$ of symmetric non-crossing partitions 
is given by 
\begin{align}
\label{eq:SNCn}
|\mathcal{SNC}_{n}|=\genfrac{(}{)}{0pt}{}{n}{\lfloor n/2\rfloor},
\end{align}
where $\lfloor x \rfloor$ is the floor function.
\end{prop}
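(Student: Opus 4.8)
The plan is to reduce the count to that of symmetric chord diagrams, and then to non-negative lattice paths, using the bijection $\Psi$ together with the chord-diagram/Dyck-path dictionary of Section~\ref{sec:chord}. Throughout I take $\epsilon=0$; the remaining case $\epsilon=1$, which occurs only for even $n$, is addressed at the end. First, by Proposition~\ref{prop:SNCtoSC} the restriction of $\Psi$ is a bijection $\mathcal{SNC}_{n}\xrightarrow{\sim}\mathcal{SC}_{n}$, so it suffices to count the symmetric chord diagrams of size $2n$. Recall that reading an arch $(i,j)$ as a letter $U$ at position $i$ and $R$ at position $j$ identifies $\mathcal{C}_{n}$ with the Dyck paths of size $n$, viewed as words in $\{U,R\}^{2n}$. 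Under this identification the reflection of a chord diagram about its central vertical line, which sends position $k$ to $2n+1-k$ and carries an arch $(i,j)$ to $(2n+1-j,2n+1-i)$, becomes precisely the map reversing a word and exchanging $U\leftrightarrow R$. Hence $\mathcal{SC}_{n}$ is identified with the set of symmetric Dyck paths of size $n$, i.e.\ the words fixed by this reverse-complementation.

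Next I would describe the symmetric Dyck paths by their first halves. Writing $h(i)$ for the height (the number of $U$'s minus the number of $R$'s among the first $i$ letters of $w$), reverse-complement invariance of $w$ is equivalent to $h(2n-i)=h(i)$ for all $i$; in particular $w$ is non-negative on all of $[0,2n]$ if and only if its first $n$ letters are. I therefore send $w$ to its prefix $u:=w_{1}\cdots w_{n}$, which is an arbitrary word in $\{U,R\}^{n}$ all of whose prefixes contain at least as many $U$'s as $R$'s, and I recover $w=u\bar u$ by appending the reverse-complement $\bar u$ of $u$. The identity $h(2n-i)=h(i)$ shows this is a bijection onto the set of such non-negative paths of length $n$.

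It then remains to count the non-negative paths of length $n$. Grouping them by terminal height $n-2j$ for $0\le j\le\lfloor n/2\rfloor$, the reflection (ballot) principle gives $\binom{n}{j}-\binom{n}{j-1}$ paths ending at height $n-2j$, and summing over $j$ telescopes:
\begin{align*}
\sum_{j=0}^{\lfloor n/2\rfloor}\left(\binom{n}{j}-\binom{n}{j-1}\right)=\binom{n}{\lfloor n/2\rfloor},
\end{align*}
which is the asserted formula. Note that the $\epsilon=0$ argument above is uniform in the parity of $n$, since Proposition~\ref{prop:SNCtoSC} covers both cases.

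For even $n$ with $\epsilon=1$ there is no vertex fixed by the symmetry $i\mapsto n+1-i$, and one checks that $\Psi$ carries $\mathcal{SNC}_{n}$ onto the chord diagrams symmetric about the axis through the two central labelled points, which are consequently joined by an arch; the identical first-half decomposition applies and yields the same count $\binom{n}{\lfloor n/2\rfloor}$. The step I expect to carry the real content is establishing this dictionary precisely, namely that the geometric reflection of chord diagrams corresponds exactly to reverse-complementation of the Dyck word and that this is compatible with $\Psi$ for both parities of $n$ and both values of $\epsilon$; once that correspondence is pinned down, the enumeration is the routine telescoping displayed above.
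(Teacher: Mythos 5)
Your proof is correct, but it takes a genuinely different route from the paper's. The paper conditions on the block containing $1$ to derive the recurrences $A(2m+1)=A(2m)+\sum_{j}C(j-2)A(2m-2j+3)$ and $A(2m)=A(2m-1)+A(2m-2)+\sum_{j}C(j-2)A(2m-2j+1)$, extracts $A(2m)=2A(2m-1)$ and $A(2m+1)=2A(2m)-C(m)$, and finishes by induction; you instead transfer the count through $\Psi$ to symmetric chord diagrams, identify the mirror symmetry of a chord diagram with reverse-complementation of the associated Dyck word, cut the symmetric word at its midpoint to get an arbitrary non-negative path of length $n$, and finish with the ballot-problem telescoping $\sum_{j}\bigl(\binom{n}{j}-\binom{n}{j-1}\bigr)=\binom{n}{\lfloor n/2\rfloor}$. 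Your argument is bijective and avoids the recurrences entirely, at the price of having to pin down the symmetry dictionary; the paper's argument stays entirely inside non-crossing partitions but needs the two auxiliary identities to make the induction close. One caveat: your treatment of $\epsilon=1$ (even $n$) is the weak point --- the paper explicitly notes that for $\epsilon=1$ the image under $\Psi$ is \emph{not} symmetric along the middle vertical line, so your claim about symmetry "about the axis through the two central labelled points" would need care; the cleanest fix is simply to invoke Remark \ref{remark:epsilon}, which already gives a bijection (via the Kreweras endomorphism) between the $\epsilon=0$ and $\epsilon=1$ sets, so the $\epsilon=0$ count you established suffices.
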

\begin{proof}
We consider the case where $\epsilon=1$ for $n$ even and $\epsilon=0$ for $n$ odd.
Let $A(n):=|\mathcal{SNC}_{n}|$, and $C(n)$ be the $n$-th Catalan number, 
{\it i.e.}, $C(n)=\genfrac{}{}{}{}{1}{n+1}\genfrac{(}{)}{0pt}{}{2n}{n}$.

We first consider the case where $n=2m+1$ with $m\ge0$.
If the integer $1$ forms a block consisting of only $1$, the total number of 
such non-crossing partitions are $A(2m)$.
Suppose that the integer $1$ is contained in a block $B$ such that $|B|\ge2$.
Let $j\ge2$ be the second smallest integer in $B$. Since we consider 
a symmetric non-crossing partition, the integer $n+2-j$ is also contained in $B$.
Since the integers in $[2,j-1]$ can form a non-crossing partitions, we have 
$C(j-2)$ such non-crossing partitions. Since the set $[j+1,n+1-j]\cup\{1\}$ forms 
a symmetric non-crossing partition, the number of such symmetric non-crossing partition 
is $A(n+2-2j)=A(2m-2j+3)$.
Combining these observations, we arrive at the following recurrence relation: 
\begin{align}
\label{eq:Aodd}
A(2m+1)=A(2m)+\sum_{j=2}^{m+1}C(j-2)A(2m-2j+3).
\end{align}

Secondly, we consider the case where $n=2m$ with $m\ge1$.
Suppose that the integer $1$ forms a block consisting of only $1$.
Then, the symmetry implies that the integer $n$ also forms a block 
consisting of only $n$. 
The number of such symmetric non-crossing partitions is given by $A(2m-2)$.
Suppose that $1$ and $n$ belongs to the same block $B$ such that $|B|\ge2$.
Let $j\ge2$ be the second smallest integer in $|B|$.
Since $1$ and $j$ belong to the same block, the integers in $[2,j-1]$ can 
form a non-crossing partition. The number of such partitions is given by 
$C(j-2)$. The set $[j+1,n-j]$ and $B$ form a symmetric non-crossing partition, and 
the number of such non-crossing partitions is $A(n-2j+1)=A(2m-2j+1)$.
From these, we have 
\begin{align}
\label{eq:Aev}
A(2m)=A(2m-1)+A(2m-2)+\sum_{j=2}^{m}C(j-2)A(2m-2j+1).
\end{align}
From Eqs. (\ref{eq:Aodd}) and (\ref{eq:Aev}), we have 
\begin{align}
\label{eq:Aev2}
A(2m)=2A(2m-1).
\end{align}

We will express $A(2m+1)$ in terms of $A(2m)$.
If $1$ forms a block consisting of only $1$, we have $A(2m)$ symmetric
non-crossing partitions.
Suppose $1$ is contained a block $B$ such that $|B|\ge2$. Then, if we delete $1$ 
from such a symmetric non-crossing partition, we have $A(2m)-C(m)$ symmetric 
non-crossing partitions. The $C(m)$ non-crossing partitions come from the fact 
that if $[2,m+1]$ and $[m+2,2m+1]$ form a non-crossing partitions of size $m$, then 
there is no arch connecting an integer in $[2,m+1]$ with an integer $[m+2,2m+1]$.
From these, we have 
\begin{align}
\label{eq:Aodd2}
A(2m+1)=2A(2m)-C(m).
\end{align}

One can easily show Eq. (\ref{eq:SNCn}) by use of Eqs. (\ref{eq:Aev2}) and (\ref{eq:Aodd2}),
and induction on $n$.
\end{proof}

\begin{remark}
\label{remark:epsilon}
When we define $\mathcal{SNC}_{n}$, we chose $\epsilon=1$ if $n$ is even.
We can choose $\epsilon=0$ as well. 
The number of symmetric non-crossing partitions for $\epsilon=0$ is equal 
to that for $\epsilon=1$ since the Kreweras endomorphism $\rho$ maps 
from a symmetric non-crossing partition for $\epsilon=0$ to 
that for $\epsilon=1$, and vice versa.
Therefore, $\rho$ is a bijection between the two sets.
\end{remark}

In what follows, we mainly consider the case of $\epsilon=0$.
In this case, a symmetric non-crossing partition corresponds to a symmetric chord diagram
through the bijection $\Psi$.
In the case of $\epsilon=1$, a chord diagram corresponding to a symmetric non-crossing 
partition is not symmetric along the vertical line. 
However, by Remark \ref{remark:epsilon}, the Kreweras endomorphism $\rho$ gives
a bijection between the sets of chord diagrams for $\epsilon=0$ and $\epsilon=1$.
One can translate the results for $\epsilon=0$ to the case of $\epsilon=1$.
Therefore, we also call a chord diagram for a symmetric non-crossing partition in the 
case of $\epsilon=1$ a symmetric chord diagram.

An increasing $r$-chain $\pi^{(r)}:=(\pi_1,\ldots,\pi_r)$ of $\mathcal{SNC}_{n}$ is 
a sequence of symmetric non-crossing partitions such that 
$\pi_1\le \pi_2\le\ldots\le\pi_{r}$ in $\mathcal{SNC}_{n}$.
We denote by $\mathcal{SNC}_{n}^{(r)}$ the set of $r$-chains of $\mathcal{SNC}_{n}$.

\begin{prop}
\label{prop:SNCBn}
We have $|\mathcal{SNC}_{n}^{(r)}|=B_{n}^{(r)}$.
\end{prop}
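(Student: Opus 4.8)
The plan is to prove the equivalent statement that the bijection $\Psi^{(r)}\colon\mathcal{NC}_{n}^{(r)}\to\mathcal{C}_{n}^{(r)}$ of Section \ref{sec:fbijNCC} restricts to a bijection $\mathcal{SNC}_{n}^{(r)}\to\mathcal{SC}_{n}^{(r)}$; since $B_{n}^{(r)}=|\mathcal{SC}_{n}^{(r)}|$ by Definition \ref{defn:Bn}, this yields $|\mathcal{SNC}_{n}^{(r)}|=B_{n}^{(r)}$. Throughout I take $\epsilon=0$, as in Proposition \ref{prop:SNCtoSC}. An increasing $r$-chain of symmetric non-crossing partitions is in particular an increasing $r$-chain in $\mathcal{NC}_{n}$, so $\mathcal{SNC}_{n}^{(r)}\subseteq\mathcal{NC}_{n}^{(r)}$ and $\Psi^{(r)}$ is already defined on $\mathcal{SNC}_{n}^{(r)}$; only the identification of its image requires work.

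The heart of the argument is to describe how the reflection $\mathcal{R}$ along the vertical middle line of a diagram in $\mathcal{C}_{n}^{(r)}$ interacts with the layered construction of $\Psi^{(r)}$. Recall that $\Psi^{(r)}(\pi_{1},\dots,\pi_{r})$ is the superposition of the chord diagrams $C_{s}=\Psi(\pi_{s})$, where layer $s$ uses the $s$-th sub-point from the left in each unprimed bundle and the $s$-th sub-point from the right in each primed bundle. First I would record that, by the congruence (\ref{eq:condA}), every arch of a diagram in $\mathcal{C}_{n}^{(r)}$ joins the $s$-th-from-left sub-point of an unprimed bundle to the $s$-th-from-right sub-point of a primed bundle for one and the same $s$; hence the decomposition of any such diagram into layers $C_{1},\dots,C_{r}$ is canonical. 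A direct index computation then shows that $\mathcal{R}$ sends the $s$-th-from-left sub-point of bundle $i$ to the $s$-th-from-right sub-point of bundle $(n+1-i)'$, and conversely. The key consequence is that $\mathcal{R}$ is layer-preserving, and that its restriction to layer $s$ coincides, under the identification with $\mathcal{C}_{n}$, with the middle-line reflection used in Proposition \ref{prop:SNCtoSC}.

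Granting this, the two inclusions are immediate. A superposition $C=\Psi^{(r)}(\pi^{(r)})$ satisfies $\mathcal{R}(C)=C$ if and only if $\mathcal{R}(C_{s})=C_{s}$ for every $s$, because $\mathcal{R}$ permutes the arches within each layer separately. If $\pi^{(r)}\in\mathcal{SNC}_{n}^{(r)}$ then each $\pi_{s}\in\mathcal{SNC}_{n}$, so each $C_{s}\in\mathcal{SC}_{n}$ by Proposition \ref{prop:SNCtoSC}, whence $C$ is symmetric and lies in $\mathcal{SC}_{n}^{(r)}$. Conversely, given $C\in\mathcal{SC}_{n}^{(r)}$, write $C=\Psi^{(r)}(\pi^{(r)})$ with $\pi^{(r)}\in\mathcal{NC}_{n}^{(r)}$ uniquely determined; layer-preservation forces each $C_{s}$ to be symmetric, so $\pi_{s}=\Psi^{-1}(C_{s})\in\mathcal{SNC}_{n}$ again by Proposition \ref{prop:SNCtoSC}, giving $\pi^{(r)}\in\mathcal{SNC}_{n}^{(r)}$. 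Thus $\Psi^{(r)}$ restricts to the desired bijection.

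I expect the one genuinely fiddly point to be the index computation establishing that $\mathcal{R}$ is layer-preserving: one must check that the embedding convention of $\Psi^{(r)}$ (\emph{$s$-th from the left} on unprimed bundles but \emph{$s$-th from the right} on primed bundles) is exactly the convention under which the global middle-line reflection decouples into the $r$ individual reflections, which is precisely what makes (\ref{eq:condA}) compatible with the symmetry. Once this compatibility is verified, the decoupling of the symmetry and the two appeals to Proposition \ref{prop:SNCtoSC} are routine.
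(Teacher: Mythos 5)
Your argument is correct, but it takes a genuinely different route from the paper's. The paper proves this proposition through the second bijection $\Phi$ and cover-exclusive Dyck tilings: it invokes the correspondence between increasing $r$-chains and cover-exclusive Dyck tilings above $(U^rR^r)^n$ (Proposition \ref{prop:Philambda}) and asserts in one line that symmetric tilings correspond to increasing $r$-chains of symmetric non-crossing partitions, whence the count is $B_n^{(r)}=|\mathcal{SC}_n^{(r)}|$. You instead work with the first bijection $\Psi^{(r)}$ and show that the middle-line reflection of a diagram in $\mathcal{C}_n^{(r)}$ is layer-preserving and restricts, on each layer, to the middle-line reflection of $\mathcal{C}_n$, reducing everything to $r$ applications of Proposition \ref{prop:SNCtoSC}. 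Your ``fiddly point'' does check out: condition (\ref{eq:condA}) forces the within-bundle offsets $a,b$ of the two endpoints of any arch to satisfy $a+b=r+1$, so the layer decomposition is canonical, and the reflection $p\mapsto 2rn+1-p$ sends the $s$-th-from-left point of bundle $i$ to the $s$-th-from-right point of bundle $(n+1-i)'$ (and the $s$-th-from-right point of bundle $j'$ to the $s$-th-from-left point of bundle $n+1-j$), which is exactly the convention of $\Psi^{(r)}$; hence the global symmetry decouples into the $r$ layer symmetries. What your approach buys is that it makes this compatibility explicit and avoids the Dyck-tiling machinery altogether --- the paper's unargued claim that ``a symmetric cover-exclusive Dyck tiling corresponds to a symmetric non-crossing partition'' is precisely the step your index computation substantiates --- while the paper's route is shorter on the page at the cost of leaving that step to the reader. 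One sentence worth adding in your converse direction: you need that $\pi_1\le\cdots\le\pi_r$ in $\mathcal{NC}_n$ with every $\pi_s$ symmetric is already an increasing chain in $\mathcal{SNC}_n$; this holds because the order on $\mathcal{SNC}_n$ is the restriction of that on $\mathcal{NC}_n$ (take a saturated $\mathcal{NC}_n$-chain between consecutive terms and extract its symmetric elements, which realizes the paper's cover relation in $\mathcal{SNC}_n$), but it should be said.
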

\begin{proof}
Recall that we have a bijection between a cover-exclusive Dyck tiling and a generalized 
chord diagram in $\mathcal{C}_{n}^{(r)}$.
By construction of a cover-exclusive Dyck tiling, a symmetric cover-exclusive Dyck 
tiling corresponds to a symmetric non-crossing partition.
Therefore, an increasing $r$-chain of $\mathcal{SNC}_{n}$ is bijective to a symmetric cover-exclusive
Dyck tiling, whose total number is $B_{n}^{(r)}$ by Definition \ref{defn:Bn}.
This completes the proof. 
\end{proof}

\begin{remark}
Suppose we have an increasing $2$-chain $\pi=(1/2/3/4,124/3)$. 
Since both $1/2/3/4$ and $124/3$ are in $\mathcal{SNC}_{4}$ with $\epsilon=0$, $\pi$ is a chain of symmetric non-crossing partitions.
The bijection $\kappa^{(2)}$ defined in Section \ref{sec:GDPic} maps $\pi$ to the $2$-Dyck path
$P:=URUR^2U^2R^5$.
Since the numbers of up and down steps are different, we have no obvious pictorial symmetry in $P$ as 
a symmetric Dyck path.
However, since $\pi$ and the corresponding generalized chord diagram are symmetric, 
we regard the path $P$ as a symmetric $2$-Dyck path.
\end{remark}

Let $C$ be a symmetric generalized chord diagram in $\mathcal{SC}_{n}^{(r)}$.
We cut $C$ by a vertical line in the middle and obtain a new diagram $\widetilde{C}$.
The diagram $\widetilde{C}$ consists of arches and half-arches.
We put a bullet ``$\bullet$"on the right end of a half-arch, and call it a right-end point.
We call a diagram $\widetilde{C}$ a reduced symmetric chord diagram. 

\begin{defn}
We denote by red-$\mathcal{SC}_{n}^{(r)}$ the set of reduced symmetric chord diagrams.
\end{defn}

For example, we have $\mathcal{SC}_{2}^{(2)}=\mathcal{C}_{2}^{(2)}$ and 
three reduced symmetric chord diagrams.
In Figure \ref{fig:SCtoredSC}, we give all reduced symmetric chord diagrams for 
$(n,r)=(2,2)$.
\begin{figure}[ht]
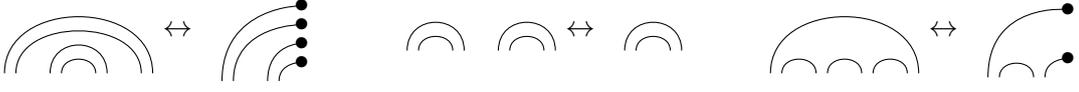

\tikzpic{-0.5}{[yscale=0.5,xscale=0.3]
\draw(0,0)..controls(0,2)and(6.5,2)..(6.5,0);
\draw(0.5,0)..controls(0.5,1.5)and(6,1.5)..(6,0);
\draw(2,0)..controls(2,1)and(4.5,1)..(4.5,0);
\draw(2.5,0)..controls(2.5,0.5)and(4,0.5)..(4,0);
}$\leftrightarrow$
\tikzpic{-0.5}{[yscale=0.5,xscale=0.3]
\draw(0,0)..controls(0,2)and(3.5,2)..(3.5,2)node{$\bullet$};
\draw(0.5,0)..controls(0.5,1.5)and(3.5,1.5)..(3.5,1.5)node{$\bullet$};
\draw(2,0)..controls(2,1)and(3.5,1)..(3.5,1)node{$\bullet$};
\draw(2.5,0)..controls(2.5,0.5)and(3.5,0.5)..(3.5,0.5)node{$\bullet$};
}\qquad
\tikzpic{-0.4}{[yscale=0.5,xscale=0.3]
\draw(0,0)..controls(0,1)and(2.5,1)..(2.5,0);
\draw(0.5,0)..controls(0.5,0.5)and(2,0.5)..(2,0);
\draw(4,0)..controls(4,1)and(6.5,1)..(6.5,0);
\draw(4.5,0)..controls(4.5,0.5)and(6,0.5)..(6,0);
}$\leftrightarrow$
\tikzpic{-0.4}{[yscale=0.5,xscale=0.3]
\draw(0,0)..controls(0,1)and(2.5,1)..(2.5,0);
\draw(0.5,0)..controls(0.5,0.5)and(2,0.5)..(2,0);
}\qquad
\tikzpic{-0.5}{[yscale=0.5,xscale=0.3]
\draw(0.5,0)..controls(0.5,0.5)and(2,0.5)..(2,0);
\draw(2.5,0)..controls(2.5,0.5)and(4,0.5)..(4,0);
\draw(4.5,0)..controls(4.5,0.5)and(6,0.5)..(6,0);
\draw(0,0)..controls(0,2)and(6.5,2)..(6.5,0);
}$\leftrightarrow$
\tikzpic{-0.5}{[yscale=0.5,xscale=0.3]
\draw(0,0)..controls(0,2)and(3.5,1.8)..(3.5,1.8)node{$\bullet$};
\draw(0.5,0)..controls(0.5,0.5)and(2,0.5)..(2,0);
\draw(2.5,0)..controls(2.5,0.5)and(3.5,0.5)..(3.5,0.5)node{$\bullet$};
}
\caption{Symmetric chord diagrams and reduced symmetric chord diagrams for $(n,r)=(2,2)$.}
\label{fig:SCtoredSC}
\end{figure}

The next lemma directly follows from the definitions of $\mathcal{SC}_{n}^{(r)}$ 
and red-$\mathcal{SC}_{n}^{(r)}$.
\begin{lemma}
\label{lemma:SCredSC}
We have a natural bijection between $\mathcal{SC}_{n}^{(r)}$ and red-$\mathcal{SC}_{n}^{(r)}$.
\end{lemma}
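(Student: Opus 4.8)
The plan is to make the informal ``cut along the middle line'' operation into an explicit map and produce an explicit inverse, the only real content being a structural observation about where arches can meet the cutting line. Write $\ell$ for the vertical line in the middle, which (as in the proof of Proposition \ref{prop:SNCtoSC}) lies strictly between two adjacent bundled points. Define $\Theta\colon\mathcal{SC}_{n}^{(r)}\to\text{red-}\mathcal{SC}_{n}^{(r)}$ by intersecting a symmetric generalized chord diagram $C$ with the closed left half-plane bounded by $\ell$: arches lying entirely left of $\ell$ are retained unchanged, and each arch meeting $\ell$ is replaced by its left portion, whose endpoint on $\ell$ is decorated with a right-end point ``$\bullet$''. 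To see that this is well defined I would first establish the key fact that every arch of $C$ meeting $\ell$ is symmetric about $\ell$, i.e.\ of the form $(i,2rn+1-i)$. \textbf{This structural claim is the main obstacle}, and it is exactly where the non-crossing hypothesis is used.

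I would prove the claim by contradiction. Regard $C$ as a diagram on $2rn$ points, so the reflection fixing $\ell$ sends an arch $(i,j)$ to $(2rn+1-j,\,2rn+1-i)$. Suppose an arch $(i,j)$ meets $\ell$, i.e.\ $i\le rn<j$, but is not symmetric, so $i+j\neq 2rn+1$. By symmetry of $C$ its mirror image $(i',j'):=(2rn+1-j,\,2rn+1-i)$ is also an arch of $C$, and it too meets $\ell$. A one-line inequality check then forces a crossing: if $i+j<2rn+1$ one gets $i<i'<j<j'$, while if $i+j>2rn+1$ one gets $i'<i<j'<j$, and in either case the two arches interleave, contradicting that $C$ is non-crossing. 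Hence $i+j=2rn+1$, the arch is symmetric, and it crosses $\ell$ in a single point; cutting there yields a genuine half-arch ending on $\ell$, so $\Theta(C)$ is a legitimate reduced symmetric chord diagram.

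Next I would define the inverse $\Theta^{-1}$ by reflection and gluing: given $\widetilde{C}\in\text{red-}\mathcal{SC}_{n}^{(r)}$, reflect it across $\ell$ to produce a mirror copy on the right, join each right-end point to its mirror image into one arch crossing $\ell$, and keep every ordinary arch together with its reflection. The result is symmetric about $\ell$ by construction, it is non-crossing because gluing symmetric pairs at $\ell$ introduces only nested central arches, and it still satisfies condition (\ref{eq:condA}): since $\ell$ separates two bundles, reflection exchanges primed and unprimed points and therefore preserves the parity constraint $i+j-1\equiv 0\pmod{2r}$. Finally I would check that the two maps are mutually inverse. The composite $\Theta^{-1}\circ\Theta$ is the identity because, by the structural claim, a symmetric $C$ is determined by its left half together with the forced symmetric central arches, which are precisely what the gluing restores; and $\Theta\circ\Theta^{-1}$ is the identity because re-cutting a reflected-and-glued diagram returns exactly the original left half with the central arches re-opened into right-end points. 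All of this beyond the structural claim is bookkeeping that follows directly from the definitions of $\mathcal{SC}_{n}^{(r)}$ and $\text{red-}\mathcal{SC}_{n}^{(r)}$, in keeping with the assertion that the lemma follows directly from those definitions.
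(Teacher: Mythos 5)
Your proof is correct and follows the route the paper intends: the paper offers no argument beyond asserting that the lemma ``directly follows from the definitions,'' and your cut-and-glue maps are exactly that assertion made precise. The one piece of genuine content you supply --- that the mirror symmetry of $C$ together with the non-crossing condition forces every arch meeting the middle line to be of the form $(i,2rn+1-i)$, so that cutting produces well-defined half-arches and gluing recovers $C$ --- is exactly the fact the paper leaves implicit, and your interleaving argument for it is sound.
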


\subsection{An algebra \texorpdfstring{$\mathbb{SNC}_{n}$}{SNCn} on symmetric non-crossing partitions}
We first introduce an algebra $\mathbb{SNC}_{n}$ on symmetric non-crossing partitions.
Here, symmetric non-crossing partitions are characterized by $\epsilon=0$, that is, the integer $1$
is on the vertical line in the middle.
Then, we show that this algebra is isomorphic to the one-boundary Temperley--Lieb algebra
acting on symmetric chord diagrams.
In fact, we give the set of generators which act on reduced symmetric chord diagrams.
As in the case of Temperley--Lieb algebra, we construct one-boundary Fuss--Catalan algebra 
acting on increasing $r$-chains of symmetric non-crossing partitions 
by generalizing the case of increasing $r$-chains in $\mathcal{NC}_{n}^{(r)}$.
Let $\pi$ be a symmetric non-crossing partition in $\mathcal{SNC}_{n}$ 
corresponding to a symmetric chord diagram $C$.
We consider the map $\Psi$ rather than $\Phi$. Therefore, by Proposition \ref{prop:SNCtoSC}, we have $C=\Psi(\pi)$.
Let $B(i)$ be a block in $\pi$ such that the integer $i$ belongs to $B(i)$.
By definition, we may have $B(i)=B(j)$ for $i\neq j$ if $|B(i)|\ge2$.
We define a map $g_{p}:\mathcal{SNC}_{n}\rightarrow\mathcal{SNC}_{n}$, 
$1\le p\le n-1$, as follows.
We define 
\begin{align}
\label{eq:taup}
\tau_{p}:=
\begin{cases}
-(q_n+q_n), & \text{ if } p\equiv n \mod 2, \\
qq_n^{-1}+q^{-1}q_{n}, & \text{ otherwise }.
\end{cases}
\end{align}
We also define $B(n+1):=B(1)$.
Then, the action of $g_p$ on a symmetric non-crossing partition $\pi$ is defined to be 
\begin{align}
\label{eq:defgp}
g_{p}\pi:=\begin{cases}
\tau \pi, & \text{ if } B(1)=B(2) \text{ and } B(n+2-p)=B(n+1-p), \\
\tau_{p}\pi', & \text{ if } B(1)=B(n+2-p)\neq B(2)=B(n+1-p),  \\ 
\pi', & \text{ otherwise },
\end{cases}
\end{align}
where  $\tau=-(q+q^{-1})$, and $\pi'$ is a symmetric non-crossing partition obtained from $\pi$
by merging the four blocks into larger blocks $B(1)\cup B(2)$ and $B(n+1-p)\cup B(n+2-p)$.
Note that two blocks $B(1)\cup B(2)$ and $B(n+1-p)\cup B(n+2-p)$ may coincide with each other.

Similarly, for $\epsilon=0$, we define the action of $g_{n}$ on $\pi$ by 
\begin{align}
\label{eq:gn}
g_{n}\pi:=
\begin{cases}
\pi'', & \text{ if  } n\equiv 0 \pmod{2} \text{ and } |B(n/2+1)|\neq1, \\
\pi''', & \text{ if } n\equiv1\pmod{2} \text{ and } B((n+1)/2)\neq B((n+3)/2), \\
-(q_n+q_n^{-1})\pi, & \text{ otherwise },
\end{cases}
\end{align}
where $\pi''$ is a symmetric non-crossing partition obtained from $\pi$ in such a way that
we divide the block $B(n/2+1)$ into two smaller blocks 
$B_1=\{n/2+1\}$ and $B(n/2+1)\setminus B_1$, and 
$\pi'''$ is a symmetric non-crossing partition obtained from $\pi$ by merging two 
blocks $B((n+1)/2)$ and $B((n+3)/2)$.

The action of $g_{n}$ on $\mathcal{SNC}_{n}$ in the case of $\epsilon=1$ for $n$ even 
can be similarly defined by using the action of the Kreweras endomorphism $\rho$ on $\mathcal{SNC}_{n}$
in the case of $\epsilon=0$ as in Remark \ref{remark:epsilon}.

We define the generators $G_{i}$, $1\le i\le n$, by
\begin{align}
\label{eq:gp}
G_{i}:=\begin{cases}
\rho^{i-1}g_{i}\rho^{-(i-1)}, & \text{ if } 1\le i\le n-1, \\
g_{n}, & \text{ if } i=n,
\end{cases}
\end{align}
where $\rho$ is the Kreweras endomorphism defined in Section \ref{sec:Kreendo}.

As a consequence, we define the set of generators $\{G_{i}: 1\le i\le n\}$ acting
on $\mathcal{SNC}_{n}$.

Since we have a natural inclusion $\mathcal{SNC}_{n}\hookrightarrow\mathcal{NC}_{n}$,
one can define the action of $F_{i}$, $1\le i\le 2n-1$, on $\mathcal{SNC}_{n}$
by Eq. (\ref{def:Fi}).
Then, the generator $G_{i}$, $1\le i\le n-1$, can be expressed in terms of $F_{i}$
by $G_{i}=F_{i}F_{2n-i}$.

\begin{lemma}
The action of the generator $G_{i}$ is well-defined, {\it i.e.}, we have 
$G_{i}\pi\in\mathcal{SNC}_{n}$ if $\pi\in\mathcal{SNC}_{n}$.
\end{lemma}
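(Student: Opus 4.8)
The plan is to split the argument according to whether $i<n$ or $i=n$, and to reduce the case $1\le i\le n-1$ to a statement about symmetric chord diagrams through the bijection $\Psi$ of Proposition \ref{prop:SNCtoSC}. Recall that a chord diagram on the $2n$ points lies in $\mathcal{SC}_{n}$ exactly when it is fixed by the reflection $R$ sending the point in position $k$ to the point in position $2n+1-k$; since $\Psi$ restricts to a bijection $\mathcal{SNC}_{n}\to\mathcal{SC}_{n}$, it suffices to show that the operator corresponding to $G_{i}$ maps $\mathcal{SC}_{n}$ into itself.

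First I would treat $1\le i\le n-1$. Using the identity $G_{i}=F_{i}F_{2n-i}$ recorded above together with $F_{j}=\Psi^{-1}e_{j}\Psi$ from Proposition \ref{prop:Fe}, the conjugated operator $\Psi G_{i}\Psi^{-1}$ equals $e_{i}e_{2n-i}$. The key geometric observation is that $R$ carries the cup--cap of $e_{i}$ at positions $i,i+1$ to the one at positions $2n-i,2n+1-i$, i.e. $Re_{i}R^{-1}=e_{2n-i}$; moreover $|i-(2n-i)|=2(n-i)\ge 2$ for $i\le n-1$, so $e_{i}$ and $e_{2n-i}$ commute by the Temperley--Lieb relations of Theorem \ref{thrm:TLF}. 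Hence $E:=e_{i}e_{2n-i}$ is $R$-invariant: $RER^{-1}=e_{2n-i}e_{i}=E$. Consequently, for a symmetric chord diagram $C$ (so $RC=C$) one has $R(EC)=(RER^{-1})(RC)=EC$, so $EC\in\mathcal{SC}_{n}$; the scalar weights produced by closed loops do not affect membership in $\mathcal{SC}_{n}$. Pulling back through $\Psi$ gives $G_{i}\pi\in\mathcal{SNC}_{n}$.

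For $i=n$ I would argue directly from the definition in Eq. (\ref{eq:gn}), using that the symmetry of $\mathcal{SNC}_{n}$ (with $\epsilon=0$) is the involution $i\mapsto n+2-i$ on $[n]$ (with $n+1\equiv 1$), whose fixed points are $1$ and, when $n$ is even, also $n/2+1$. In the ``otherwise'' case $g_{n}$ merely rescales $\pi$, so there is nothing to prove. When $n$ is even and $|B(n/2+1)|\ge 2$, the fixed point $n/2+1$ lies in a mirror-invariant block, and removing it splits that block into the singleton $\{n/2+1\}$ and a set that remains mirror-invariant (deleting a fixed point from a symmetric set preserves symmetry); both pieces are symmetric and the result is still non-crossing. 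When $n$ is odd and $B((n+1)/2)\ne B((n+3)/2)$, the integers $(n+1)/2$ and $(n+3)/2$ form a mirror pair, so the two blocks are interchanged by the symmetry; merging them yields a single mirror-invariant block while preserving the non-crossing property. In every case the underlying partition of $G_{n}\pi=g_{n}\pi$ is symmetric.

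The routine parts are the loop-weight bookkeeping and the verification that splitting or merging a block keeps the partition non-crossing; these I would state but not belabor. The step most worth care is the identification $\Psi G_{i}\Psi^{-1}=e_{i}e_{2n-i}$ together with the commutation $e_{i}e_{2n-i}=e_{2n-i}e_{i}$, since this is precisely what forces the image to be reflection-invariant; once that is in place the preservation of symmetry is immediate. I expect no serious obstacle, the main subtlety being to handle the fixed-point cases at $i=n$ cleanly for both parities of $n$.
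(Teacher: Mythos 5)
Your proof is correct, but for the main case $1\le i\le n-1$ it takes a genuinely different route from the paper. The paper works entirely inside the chord-diagram picture and verifies symmetry by hand: it writes out the three cases of Eq.~(\ref{eq:defgp}) for $i=1$, checks in each case that the two reconnected arches are mirror images of each other, and then asserts that $2\le i\le n-1$ is analogous; the case $i=n$ is likewise a case-by-case inspection of arches. You instead use the factorization $G_i=F_iF_{2n-i}$ (stated in the paper just before the lemma) together with Proposition~\ref{prop:Fe} to conjugate to $e_ie_{2n-i}$, and then derive symmetry once and for all from the equivariance $Re_iR^{-1}=e_{2n-i}$ plus the far-commutation $e_ie_{2n-i}=e_{2n-i}e_i$ (valid since $2n-2i\ge 2$), so that the operator is literally $R$-invariant and must preserve the $R$-fixed set $\mathcal{SC}_n$. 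This buys uniformity in $i$ and replaces the paper's ``other cases are similar'' with an actual argument; the paper's version buys explicitness about which arches move, which it then reuses in later sections. Two minor caveats: the identity $G_i=F_iF_{2n-i}$ holds only up to the scalar normalizations ($\tau_p$ versus products of $\tau$), which you do flag as irrelevant to membership but should state as ``up to a nonzero scalar''; and the far-commutation relation is Eq.~(\ref{eq:ei}), not Theorem~\ref{thrm:TLF}. Your treatment of $i=n$ via the fixed points $1$ and (for $n$ even) $n/2+1$ of the involution $j\mapsto n+2-j$ is a partition-level version of the paper's chord-diagram check and is equally valid, since splitting off the singleton fixed point or merging the mirror pair $\{(n+1)/2,(n+3)/2\}$ manifestly preserves both symmetry and the non-crossing property.
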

\begin{proof}
We prove only the case $i=1$ since other cases $2\le i\le n-1$ can be shown by a similar argument.
We first show that $G_{i}\pi\in\mathcal{SNC}_{n}$ for $\pi\in\mathcal{SNC}_{n}$ and $i=1$.
We have three cases as in Eq. (\ref{eq:defgp}).
We make use of the bijection $\Psi:\mathcal{SNC}_{n}\rightarrow\mathcal{SC}_{n}$.
Let $C=\Psi(\pi)$ for $\pi\in\mathcal{SNC}_{n}$.
Recall that the bottom points of $C\in\mathcal{SC}_{n}$ has labels $1,1',\ldots,n,n'$. 
First, if $B(1)=B(2)$ and $B(n+1)=B(n)$, then $C$ has two arches  connecting $1$ and $1'$, and 
$n$ and $n'$. In this case, since $G_{1}\pi=\tau\pi$, $G_{1}\pi$ is a symmetric non-crossing partition.
Secondly, if $B(1)=B(n+1)\neq B(2)=B(n)$, $\pi$ has two arches connecting $1$ and $n'$, and 
$1'$ and $n$. Suppose that the action of $G_{1}$ on $\pi$ gives $\pi':=\Psi^{-1}(C')$. 
Then, $C'$ has two arches connecting $1$ and $1'$, and $n$ and $n'$. 
Other arches in $C'$ are the same as $C$.
From these, $C'$ is also symmetric if $C$ is.
In the third case, $\pi'$ is obtained by merging four blocks into larger blocks.
By the same argument as the second case, it is clear that $\pi'$ is symmetric if $\pi$ is.

We show that $G_{n}\pi\in\mathcal{SNC}_{n}$.
We have three cases in Eq. (\ref{eq:gn}). 
We first consider the third case. If $n\equiv1\pmod2$, $\pi\in\mathcal{SNC}_{n}$ implies that 
$C$ has an arch connecting $\lfloor(n+1)/2\rfloor$ and $(\lfloor(n+1)/2\rfloor)'$.
Similarly, if $n\equiv0\pmod2$ and $|B(n/2+1)|=1$, then $C$ contains an arch connecting 
$(n/2)'$ and $n/2+1$. 
These arches are symmetric along the vertical line in the middle.
This means that the third case is trivial.
Secondly, we consider the first case. 
The non-crossing partition $\pi''=\Psi(C'')$ is obtained from $\pi$ by dividing 
the block $B(n/2+1)$ into two smaller blocks. 
Since $\pi''$ has a block consisting of a single element $\{n/2+1\}$, $C''$ has an arch connecting 
$n/2+1$ and $(n/2)'$. This arch is symmetric along the vertical line in the middle.
Suppose that $C$ has two arches connecting $i$ and $(n/2)'$, and $n/2+1$ and $(n+1-i)'$.
The action of $G_{n}$ on $\pi$ implies that $C''$ has two arches connecting $i$ and $(n+1-i)'$, 
and $(n/2)'$ and $n/2+1$. 
These two arches are symmetric. All the other arches in $C$ and $C''$ coincide with each other.
Therefore, $\pi''$ is symmetric.  
Finally, we consider the second case.
The non-crossing partition $\pi\in\mathcal{SNC}_{n}$ has two distinguished 
arches in $C$. They are arches connecting $(n+1)/2$ and $(\min B((n+1)/2)-1)'$, and 
$\max B((n+3)/2)$ and $((n+1)/2)'$.
Since $B((n+1)/2)\neq B((n+3)/2)$ and $\pi\in\mathcal{SNC}_{n}$, if we merge $B((n+1)/2)$ and $B((n+3)/2)$ into a larger 
block, we replace the above mentioned two arches with two arches connecting $(n+1)/2$ and $((n+1)/2)'$, and 
$n_1'$ and $n_2$ where $n_1=\min B((n+1)/2)-1$ and $n_2=\max B((n+3)/2)$.
It is clear that these two new arches are symmetric, and other arches remain the same.
This implies that $\pi'''$ is symmetric.

From these, the non-crossing partition $G_{i}\pi$ is symmetric along the vertical line in the middle. 
This completes the proof.
\end{proof}

\begin{defn}
The algebra $\mathbb{SNC}_{n}$ is a unital associative $\mathbb{C}[q,q^{-1},q_n,q_{n}^{-1}]$-algebra
generated by the set of generators $\{G_{i}: 1\le i\le n\}$.
We call the algebra $\mathbb{SNC}_{n}$ the one-boundary Temperley--Lieb algebra on symmetric 
non-crossing partitions.
\end{defn}

\subsection{The algebra \texorpdfstring{$\mathbb{SNC}_{n}^{(r)}$}{SNCnr}}
Let $\pi^{(r)}:=(\pi_1,\ldots,\pi_r)$ be an increasing $r$-chain in $\mathcal{SNC}_{n}$.
We first define $G_i^{(s)}$, $1\le i\le n$, $1\le s\le r$, on 
$\mathcal{SNC}_{n}^{(r)}$ by 
\begin{align*}
G_i^{(s)}\pi^{(r)}
:=
\begin{cases}
(\pi_1,\ldots,\pi_{r-s},G_{i}(\pi_{r-s+1}),\dots,G_{i}(\pi_{r})), & 
\text{ if } i\equiv 1\pmod2, \\
(G_i(\pi_1),\ldots,G_i(\pi_s),\pi_{s+1},\ldots,\pi_{r}), & 
\text{ if } i\equiv 0\pmod2,
\end{cases}
\end{align*}
where the operator $G_{i}$, $1\le i\le n$, is defined in Eq. (\ref{eq:gp}).

As a consequence, we have the set of generators $\{G_{i}^{(s)}: 1\le i\le n, 1\le s\le r\}$
acting on an increasing $r$-chain in $\mathcal{SNC}_{n}^{(r)}$.

The algebra $\mathbb{SNC}_{n}$ is generalized to an algebra acting on $r$-chains
in $\mathcal{SNC}_{n}^{(r)}$.
\begin{defn}
The algebra $\mathbb{SNC}_{n}^{(r)}$ is a unital associative algebra over 
$\mathbb{C}[q,q^{-1},q_n,q_n^{-1}]$ generated by 
$\{G_{i}^{(s)} : 1\le i\le n, 1\le s\le r\}$.
We call the algebra $\mathbb{SNC}_{n}^{(r)}$ the one-boundary Fuss--Catalan
algebra on an increasing $r$-chain in $\mathcal{SNC}_{n}^{(r)}$.
\end{defn}

\subsection{An algebra \texorpdfstring{$2$-$\mathbb{SNC}_{n}$}{2SNCn} on symmetric crossing partitions}
\label{sec:2SNCn}
We introduce an algebra $2$-$\mathbb{SNC}_{n}$ on symmetric non-crossing partitions.
Let $\pi$ be a symmetric non-crossing partition $\mathcal{SNC}_{n}$. 
We generalize the notion of a symmetric non-crossing partition by adding a notion
of a primed integer. A symmetric non-crossing partition with primed integers carries the information 
about a partition, and some extra information on it.

Let $\pi\in\mathcal{SNC}_{n}$ and $B_i$ be a block of $\pi$.
Recall $\mathcal{E}(\pi)$ in Eq. (\ref{eq:calE}) is the set of pairs of integers.
We regard the pair of integers $(1,1)$ as the pair $(1,n+1)$.
Let $\mathcal{E}^{sym}(\pi)$ be the set of integers such that 
\begin{align*}
\mathcal{E}^{sym}(\pi)
&:=\{
b\in(b,c): (b,c)\in\mathcal{E}(\pi), b+c=n+2\},
\end{align*}
We introduce a linear order $\le_{1}$ for the set $[1,n]$:
\begin{align}
\label{eq:lo1}
1>_{1}n>_{1}2>_{1}n-1>_{1}\ldots >_{1}\lfloor n/2+1\rfloor.
\end{align}
We say that the integers $1,2,3\ldots,\lfloor n/2\rfloor$ have an odd parity and 
$n,n-1,\ldots,\lfloor n/2+1\rfloor$ have an even parity.
We denote by $S^{sym}(\pi)$  a sequence of decreasing 
integers obtained from the set $\mathcal{E}^{sym}(\pi)$ by 
the linear order $>_1$.

Suppose we have $S^{sym}(\pi):=(s_1,\ldots,s_{m})$ such that $s_1>_{1}s_2>_{1}\ldots>_{1}s_{m}$. 
We construct $m+1$ symmetric non-crossing partitions with primed integers as follows.
We start from a symmetric non-crossing partition $\pi$ which has no primed integers. 
We put a prime on integers $s_1,s_2,\ldots, s_{k}$ in $\pi$ for $0\le k\le m$. 
For example, if $s_1=1$, then we put a prime on the integer $1$ in a symmetric 
non-crossing partition. 

For example, consider the symmetric non-crossing partition $\pi=1/2/34/5$.
We have the linear order $1>_{1}5>_{1}2>_{1}4>_{1}3$.
Then, we have $\mathcal{E}(\pi)=\{(1,6),(2,2),(3,4),(4,3),(5,5)\}$, 
$\mathcal{E}^{sym}(\pi)=\{1,3,4\}$, and $S^{sym}(\pi)=(1,4,3)$.
We have four elements from $\pi$:
\begin{align*}
1/2/34/5, \qquad 1'/2/34/5, \qquad 1'/2/34'/5 \qquad 1'/2/3'4'/5.
\end{align*}
Note that a partition $1/2/3'4'/5$ is not admissible since the integer $1$ is the maximal element 
in the linear order and we have no prime on the integer $1$.

\begin{defn}
We denote by $\mathcal{SNC}'_{n}$ the set of symmetric non-crossing partitions consisting 
of unprimed integers and primed integers obtained as above	.
\end{defn}

Let $\pi\in\mathcal{SNC'}_{n}$ be a symmetric non-crossing partition with primed integers.
Suppose we have a symmetric chord diagram $C$ corresponding to $\pi$ by forgetting 
the primes.
When an integer $i$ is primed in $\pi$, we put a dot on a chord connecting $i$ and $j'$
with some $j$.
The linear order $\ge_1$ insures that if there is a dotted symmetric chord $c_1$ in $C$, 
then any symmetric chord $c_2$ which is outer than $c_1$ has a dot.

\begin{example}
We consider $\pi=1'/2/34'/5\in\mathcal{SNC'}_{5}$.
Then, the chord diagram for $\pi$ is depicted as 
\begin{center}
\tikzpic{-0.5}{[scale=0.6]
\draw(0,0)..controls(0,3)and(9,3)..(9,0);
\draw(1,0)..controls(1,1)and(2,1)..(2,0);
\draw(3,0)..controls(3,2)and(6,2)..(6,0);
\draw(4,0)..controls(4,1)and(5,1)..(5,0);
\draw(7,0)..controls(7,1)and(8,1)..(8,0);
\foreach \x/\y in {0/1,2/2,4/3,6/4,8/5}
\draw(\x,-0.8)node[anchor=south]{$\y$};
\foreach \x/\y in {1/1',3/2',5/3',7/4',9/5'}
\draw(\x,-0.8)node[anchor=south]{$\y$};	
\draw(4.5,2.27)node{$\bullet$}(4.5,1.49)node{$\bullet$};
}
\end{center} 
The chords consisting of $1$ and $5'$, and $4$ and $2'$ have dots.
The dots in the chord diagram correspond to primes in $\pi$.
\end{example}

\begin{example}
Set $n=3$. We have three elements in $\mathcal{SNC}_3$: $1/23$, $1/2/3$, and $123$.
We have eight elements in $\mathcal{SNC}'_{3}$:
\begin{align*}
1/23 \quad 1'/23 \quad 1'/23' \quad 1'/2'3' \quad 1/2/3 \quad 1'/2/3 \quad 123 \quad 12'3
\end{align*}
since we have a linear order $1>_1 3>_1 2$.
\end{example}

\begin{remark}
The number of elements in $\mathcal{SNC}'_{n}$ is given by $2^n$. An element $C$ in $\mathcal{SNC}'_n$
is bijective to a sequence of $\{\pm\}^{n}$. This is realized by cutting $C$ along the vertical line 
in the middle, and consider the left-half of $C$. 
We put $+$ (resp. $-$) on a bottom point $p$ if $p$ or $p'$ is connected to $q'$ or $q$ 
by an arch with $q>p$ (resp. $q<p$), or $p$ has a right-end (resp. left-end) point.
For example, we have 
\begin{align*}
\tikzpic{-0.5}{[scale=0.5]
\draw(0,0)..controls(0,2)and(5,2)..(5,0)(1,0)..controls(1,1)and(2,1)..(2,0)
(3,0)..controls(3,1)and(4,1)..(4,0);
\draw[gray,dashed](2.5,-0.5)--(2.5,2);
}\leftrightarrow ++-
\qquad
\tikzpic{-0.5}{[scale=0.5]
\draw(0,0)..controls(0,2)and(5,2)..(5,0)(1,0)..controls(1,1)and(2,1)..(2,0)
(3,0)..controls(3,1)and(4,1)..(4,0);
\draw[gray,dashed](2.5,-0.5)--(2.5,2);
\draw(2.5,1.5)node{$\bullet$};
}\leftrightarrow -+-
\end{align*} 
\end{remark}

\begin{remark}
The linear order $>_{1}$ is compatible with the order of arches in a symmetric chord diagram.
For example, consider the symmetric chord diagram $1/24/3$:
\begin{center}
\tikzpic{-0.5}{[scale=0.5]
\draw(0,0)..controls(0,4)and(7,4)..(7,0);
\draw(1,0)..controls(1,3)and(6,3)..(6,0);
\draw(2,0)..controls(2,2)and(5,2)..(5,0);
\draw(3,0)..controls(3,1)and(4,1)..(4,0);
\foreach \x/\y in {0/1,2/2,4/3,6/4}
\draw(\x,-1)node[anchor=south]{$\y$};
\foreach \x/\y in {1/1',3/2',5/3',7/4'}
\draw(\x,-1)node[anchor=south]{$\y$};
}
\end{center}
Let $a,b\in\mathcal{E}^{sym}(\pi)$. Then, we have two symmetric arches 
containing $a$ and $b$ respectively.
We define $a>b$ if the symmetric arch containing $a$ is outer than 
that containing $b$.
In the case of $1/24/3$, we have four symmetric arches, and the order 
of the arches is given by $1>4>2>3$ from top to bottom.
This order is nothing but the order by $>_{1}$ on the set $[1,4]$.
\end{remark}

We study the action of the Kreweras endomorphism $\rho$ on $\mathcal{SNC}'_{n}$.
The action of $\rho$ on $\pi\in\mathcal{SNC}_{n}$ is given by $\rho(\pi)$.
Suppose that $\pi'\in\mathcal{SNC}'_{n}$ has primed integers, and $\pi\in\mathcal{SNC}_{n}$ 
is obtained from $\pi'$ by forgetting primes.
Further, suppose that the decreasing sequence of possible primed integers in $\pi'$ is 
given by $Q(\pi'):=(q_1,q_2,\ldots,q_{r})$ with $q_1>_1q_2>_{1}\ldots >_{1}q_r$. We define a sequence  
$\overline{Q(\pi')}$ by 
\begin{align*}
\overline{Q(\pi')}:=(\overline{q_1},\ldots,\overline{q_r}),
\end{align*}
where $\overline{i}=n+1-i$.
The non-crossing partition $\rho^{-1}(\pi')$ can have a primed integer $i$ if  
$i\in\overline{Q(\pi')}$.
The sequence $Q(\pi')$ allows us to trace the primed integers in $\pi'$ after the 
action of $\rho^{-1}$.
Note that $\pi'$ is symmetric, however, $\rho^{-1}(\pi')$ is not symmetric in general.
More generally, we define 
\begin{align*}
Q(\pi';t)&:=(p_{1}-t,p_2-t,\ldots,p_{r}-t), \\
\overline{Q(\pi';t)}&:=(\overline{p_1}-t,\overline{p_2}-t,\ldots,\overline{p_r}-t),
\end{align*}
where an element in $Q(\pi';t)$ or $\overline{Q(\pi';t)}$ is modulo $n$.
Then, $\rho^{-2t}(\pi')$ is $\rho^{-2t}(\pi)$ as a partition, and the integers 
$i\in Q(\pi';t)$ are primed.
Similarly, $\rho^{-2t-1}(\pi')$ is $\rho^{-2t-1}(\pi)$ as a partition, and the integers
$i\in \overline{Q(\pi';t)}$ can be primed.

In what follows, we introduce the set of generators $G_{i}$, $0\le i\le n$, acting 
on $\mathcal{SNC}'_{n}$.
Recall $\tau_{p}$ is defined in Eq. (\ref{eq:taup}). 
Similarly, we define
\begin{align*}
\tau'_{p}:=
\begin{cases}
-(q_0+q_0^{-1}), & \text{ if } p\equiv0\pmod2, \\
qq_{0}^{-1}+q^{-1}q_{0}, & \text{ otherwise }.
\end{cases}
\end{align*}
We introduce a new variable $\theta$ to relate the new algebra (given in Definition \ref{defn:2SNC}) with 
the finite dimensional two-boundary Temperley--Lieb algebra.

We introduce the $n+1$ generators $g_{i}$, $0\le i\le n$, on a non-crossing partition 
with primed integers.
The action of $g_{p}$, $1\le p\le n-1$, on $\pi$ is given by 
\begin{align}
\label{eq:gi2}
g_{p}\pi:=
\begin{cases}
\tau \pi, & \text{ if } B(1)=B(2), B(n+2-p)=B(n+1-p), \\
\tau_{p}\pi^{(1)}, &
\begin{aligned}
&\text{ if } B(1)=B(n+2-p)\neq B(2)=B(n+1-p), \\ 
&\quad\text{ and } p, n+1-p \text{ are unprimed},
\end{aligned}\\
\theta \pi^{(1)}, & 
\begin{aligned}
&\text{ if } B(1)=B(n+2-p)\neq B(2)=B(n+1-p), \\ 
&\quad p \text{ is primed and } n+1-p \text{ is unprimed},
\end{aligned}\\ 
\tau'_{p}\pi^{(1)}, &
\begin{aligned}
&\text{ if } B(1)=B(n+2-p)\neq B(2)=B(n+1-p), \\ 
&\quad\text{and }p, n+1-p \text{ are primed},
\end{aligned}\\
\pi^{(1)}, & \text{ otherwise},	
\end{cases}
\end{align}
where $\pi^{(1)}$ is a non-crossing partition obtained from $\pi$ by merging 
the four blocks into larger blocks $B(1)\cup B(2)$ and $B(n+1-p)\cup B(n+2-p)$.

The action of $g_{n}$ on $\pi$ is given by 
\begin{align}
\label{eq:gn2}
g_{n}\pi:=
\begin{cases}
\pi^{(2)}, & 
\text{ if } n\equiv0\pmod2 \text{ and } |B(n/2+1)|\neq1,  \\
\pi^{(3)}, & \text{ if } n\equiv1\pmod2 \text{ and } B((n+1)/2)\neq B((n+3)/2), \\
\theta \pi^{(4)}, & \text{ if } \lfloor n/2+1\rfloor \text{ is primed}, \\
-(q_n+q_n^{-1})\pi, & \text{ otherwise }.
\end{cases}
\end{align}
Here, $\pi^{(2)}$ is the non-crossing partition obtained from $\pi$ by dividing 
the block $B(n/2+1)$ into two smaller blocks $B_1=\{n/2+1\}$ and $B(n/2+1)\setminus B_1$.
The non-crossing partition $\pi^{(3)}$ is obtained from $\pi$ by merging the two blocks
$B((n+1)/2)$ and $B((n+3)/2)$.
The non-crossing partition $\pi^{(4)}$ is obtained from $\pi$ by deleting the prime on 
the integer $\lfloor n/2+1\rfloor$.

Suppose $(1,n_1)\in\mathcal{E}(\pi)$.
The action of $g_0$ on $\pi$ is given by 
\begin{align}
\label{eq:g0}
g_{0}\pi:=
\begin{cases}
-(q_0+q_0^{-1})\pi, & \text{ if } n_1=1 \text{ and } 1 \text{ is primed}, \\ 
\theta\pi^{(5)}, & \text{ if } n_1=1 \text{ and } 1 \text{ is not primed}, \\ 
\pi^{(6)}, & \text{ if } n_1\neq1.
\end{cases}
\end{align}
Here, $\pi^{(5)}$ is obtained from $\pi$ by adding a prime on $1$.
In the third case, since $(1,n_1)\in\mathcal{E}(\pi)$, the integer $1$ is not primed in $\pi$, and 
$B(1)=B(n_1)=B(n+2-n_1)$.
The non-crossing partition $\pi^{(6)}$ is obtained from $\pi$ by dividing the block $B(1)$
into smaller blocks $B_1=\{1\}$ and $B(1)\setminus B_1$, and further by adding primes on $1$ and $n+2-n_1$.

\begin{example}
Let $\pi=13456/2/7$. Then the action of $g_0$ on $\pi$ gives 
$1'/2/3456'/7$.
This corresponds to the third case in Eq. (\ref{eq:g0}).
In terms of chord diagrams, we have
\begin{align*}
\tikzpic{-0.5}{[xscale=0.4,yscale=0.6]
\draw(0,0)..controls(0,2)and(3,2)..(3,0);
\draw(10,0)..controls(10,2)and(13,2)..(13,0);
\foreach \x in {1,4,6,8,11}
\draw(\x,0)..controls(\x,1)and(\x+1,1)..(\x+1,0);
\foreach \x/\y in {0/1,2/2,4/3,6/4,8/5,10/6,12/7}
\draw(\x,-0.8)node[anchor=south]{$\y$};
\foreach \x/\y in {1/1',3/2',5/3',7/4',9/5',11/6',13/7'}
\draw(\x,-0.8)node[anchor=south]{$\y$};
}
\xrightarrow{g_0}
\tikzpic{-0.5}{[xscale=0.4,yscale=0.6]
\draw(0,0)..controls(0,3)and(13,3)..(13,0);
\draw(3,0)..controls(3,2)and(10,2)..(10,0);
\foreach \x in {1,4,6,8,11}
\draw(\x,0)..controls(\x,1)and(\x+1,1)..(\x+1,0);
\foreach \x/\y in {0/1,2/2,4/3,6/4,8/5,10/6,12/7}
\draw(\x,-0.8)node[anchor=south]{$\y$};
\foreach \x/\y in {1/1',3/2',5/3',7/4',9/5',11/6',13/7'}
\draw(\x,-0.8)node[anchor=south]{$\y$};
\draw(6.5,2.24)node{$\bullet$}(6.5,1.48)node{$\bullet$};
}
\end{align*}
The two dotted symmetric chords correspond to the primed 
integers $1$ and $6$ in $\pi$.
\end{example}

We define the generators $G_{i}$, $0\le i\le n$, by 
\begin{align}
\label{eq:defnGis}
G_{i}:=\begin{cases}
g_{i}, & \text{ if } i=0 \text{ or } n, \\
\rho^{i-1}g_{i}\rho^{-(i-1)}, & \text{ if } 1\le i\le n-1.
\end{cases}
\end{align}

As a consequence, we define the set of generators $\{G_{i}: 0\le i\le n\}$
acting on a symmetric non-crossing partition in $\mathcal{SNC}'_{n}$.

\begin{defn}
\label{defn:2SNC}
An algebra $2$-$\mathbb{SNC}_{n}$ is a unital associative algebra over 
$\mathbb{C}[q,q^{-1},q_n,q_n^{-1},q_0,q_0^{-1},\theta]$ generated by the set of 
generators $\{G_{i}: 0\le i\le n\}$.
We call the algebra $2$-$\mathbb{SNC}_{n}$ the two-boundary Temperley--Lieb 
algebra on $\mathcal{SNC}'_{n}$.
\end{defn}

\subsection{The algebra \texorpdfstring{$2$-$\mathbb{SNC}_{n}^{(r)}$}{2SNCnr}}
Let $\pi^{(r)}:=(\pi_1,\ldots,\pi_r)\in\mathcal{SNC}_{n}^{(r)}$.
We consider an increasing $r$-chain with primed integers in the poset of $\mathcal{SNC}'_{n}$.
The $r$-chain $\pi^{(r)}$ satisfies the following conditions:
\begin{enumerate}[(a)]
\item $\pi_{i}\le \pi_{j}$ as a symmetric non-crossing partition if we forget primes on integers.
\item 
Let $m$ be a maximal integer in the linear order (\ref{eq:lo1}) such that $m$ has no prime in $\pi_{i}$
but can have a prime in $\pi_{i}$.
If the parity of $m$ is odd, then the integers $t>_1m$ have a prime in $\pi_{i+1}$ if possible.
If the parity of $m$ is even, then the integers $t>_1m$ have a prime if possible and the integer $m$ may have 
a prime in $\pi_{i+1}$.
\end{enumerate}

\begin{remark}
The condition (b) can be rephrased in terms of a generalized chord diagram as follows.
The integer $m$ and the primed integer $(n+1-m)'$ form an outer-most symmetric chord $c$ without a dot 
in a cord diagram corresponding to $\pi_{i}$.
Then, the condition (b) simply implies that a generalized chord diagram corresponding to 
$\pi^{(r)}$ has dots on the symmetric chords above $c$.
\end{remark}

\begin{defn}
\label{defn:SNCdash}
We denote by $\mathcal{SNC'}_{n}^{(r)}$ the set of $r$-chains of symmetric non-crossing 
partitions with primed integers which satisfy the conditions (a) and (b).
\end{defn}

\begin{example}
Consider the $3$-chain $\pi=(1/2,1/2,1/2)$. We have the linear order $1>_{1}2$. 
Then, we obtain seven $3$-chains with primed integers from $\pi$:
\begin{align*}
&(1/2,1/2,1/2)\ \  (1'/2,1/2,1/2)\ \  (1'/2,1'/2,1/2) \ \ (1'/2,1'/2,1'/2)\\
&\qquad(1'/2,1'/2,1'/2')\ \  (1'/2,1'/2',1'/2') \ \ (1'/2',1'/2',1'/2')
\end{align*}
Other $3$-chains with primed integers such as $(1'/2',1'/2,1'/2)$ are not allowed
by the condition (b). 
\end{example}

We define $G_{i}^{(s)}$, $0\le i\le n$, $1\le s\le r$, on $\mathcal{SNC'}_{n}^{(r)}$ by
\begin{align}
\label{eq:defngis}
G_{i}^{(s)}\pi^{(r)}:=
\begin{cases}
(\pi_1,\ldots,\pi_{r-s},G_i(\pi_{r-s+1}),\ldots, G_{i}(\pi_{r})), & 
i\equiv 1\pmod2,\\
(G_i(\pi_1),\ldots,G_i(\pi_s),\pi_{s+1},\ldots,\pi_{r}), & 
i\equiv 0\pmod2,
\end{cases}
\end{align}
where the operator $G_i$ is defined in Eq. (\ref{eq:gi2}), (\ref{eq:gn2}), or(\ref{eq:g0}).

As a consequence, 
we define the set of generators $\{G_{i}^{(s)}: 0\le i\le n, 1\le s\le r\}$
acting on $\mathcal{SNC'}_{n}^{(r)}$.

The algebra $2$-$\mathbb{SNC}_{n}$ is generalized to an algebra 
acting on $r$-chains in $\mathcal{SNC'}_{n}^{(r)}$.

\begin{defn}
The algebra $2$-$\mathbb{SNC}_{n}^{(r)}$ is a unital associative algebra over 
$\mathbb{C}[q^{\pm1},q_n^{\pm1},q_0^{\pm1},\theta]$ generated by the set 
$\{G_{i}^{(s)}: 0\le i\le n, 1\le s\le r\}$.
\end{defn}

\section{One-boundary Fuss--Catalan algebra and symmetric non-crossing partitions}
\label{sec:1bFC}
\subsection{One-boundary Fuss-Catalan algebra}
In this section, we introduce the notion of one-boundary Fuss--Catalan algebra on 
a symmetric generalized chord diagram in $\mathcal{SC}_{n}^{(r)}$ by using 
diagrams. This diagram algebra is a natural generalization of 
the one-boundary Temperley--Lieb algebra.

Let $E_{i}^{(s)}$, $1\le i\le n-1$, $1\le s\le r$, be a diagram depicted 
in Eq. (\ref{eq:picFis}).
Similarly, we depict the generator $E_{n}^{(s)}$, $1\le s\le r$, by
\begin{align}
\label{eq:picFn}
E_{n}^{(s)}:=
\tikzpic{-0.5}{
\node(1)at(0,-1){\framebox{$s'$}};
\draw(0,0.3)--(0,-0.74)(0,-1.26)--(0,-2.3);
\node(2)at(1.2,-0.7){\framebox{$s$}};
\draw(0.45,0.3)..controls(0.45,-0.2)and(0.69,-0.75)..(0.99,-0.75);
\draw(1.4,-0.75)--(1.8,-0.75)node{$\bullet$};
\node(3)at(1.2,-1.3){\framebox{$s$}};
\draw(0.45,-2.3)..controls(0.45,-1.8)and(0.7,-1.3)..(1,-1.3);
\draw(1.4,-1.3)--(1.8,-1.3)node{$\bullet$};
\draw(0.225,-2.3)node[anchor=north]{$n$};
}
\end{align}
where $s'=r-s$. The action of $E_{n}^{(s)}$ is local, that is, we have 
vertical bundled strands at the positions except $n$.
We call a bullet $\bullet$ a right-end point. 
We have $2s$ right-end points in $E_{n}^{(s)}$. We call the bottom (resp. top) $s$ right-end points 
odd (resp. even) right-end points.

\begin{defn}
\label{defn:1BTL}
The one-boundary Fuss--Catalan algebra $1$-$\mathbb{BFC}_{n}^{(r)}$ is a unital associative algebra
over $\mathbb{C}[q,q^{-1},q_n,q_n^{-1}]$ 
generated by the set of generators $\{E_{i}^{(s)}: 1\le i\le n, 1\le s\le r\}$.
The product of $X,Y\in1\text{-}\mathbb{BFC}_{n}^{(r)}$ is calculated by putting the diagram of $Y$ on top 
of the diagram of $X$. If we have a closed loop, we remove it and give a factor $-(q+q^{-1})$.
If we have a strand from a lower even right-end point to an upper odd right-end point, 
we remove it and give a factor $-(q_n+q_n^{-1})$.
Similarly, if we have a strand from a lower odd right-end point to an upper right-end point, we remove 
it and give a factor $(qq_{n}^{-1}+q^{-1}q_{n})$.
\end{defn}

\begin{remark}
When $r=1$, $1$-$\mathbb{BFC}_{n}^{(1)}$ is the well-studied generalization of the Temperley--Lieb
algebra called one-boundary Temperley--Lieb algebra 
(or sometimes called the blob algebra) \cite{deGNicPyaRit05,MarSal93,MarSal94,MarWoo00,MarWoo03}.
Then, $1$-$\mathbb{BFC}_{n}^{(1)}$ is generated by the set of generators $\{e_i: 1\le i\le n\}$.
The set $\{e_i: 1\le i\le n-1\}$ generates the Temperley--Lieb algebra $\mathbb{TL}_{n}$, 
and we have 
\begin{align*}
&e_n^{2}=-(q_n+q_{n}^{-1})e_{n}, \\
&e_{n-1}e_{n}e_{n-1}=\tau' e_{n-1}, \\
&e_{i}e_{n}=e_{n}e_{i}, \qquad i\neq n-1,
\end{align*}
where $\tau'=(qq_{n}^{-1}+q^{-1}q_{n})$.
\end{remark}

Recall that we have several relations of order up to three and many relations of order larger than four
for $\mathbb{TL}_{n}^{(r)}$.
In the case of $1$-$\mathbb{BFC}_{n}^{(r)}$ with $r\ge2$, we have many relations as in the case of 
$\mathbb{TL}_{n}^{(r)}$.
However, any relation can be calculated by use of the diagram representations (\ref{eq:picFis}) and 
(\ref{eq:picFn}) of the generators $E_{i}^{(s)}$.

The next proposition is the boundary analogue of Proposition \ref{prop:TLP}.
Recall that $B_{n}^{(r)}$ is the number of elements in $\mathcal{SNC}_{n}^{(r)}$ 
by Definition \ref{defn:Bn}.
\begin{prop}
We have 
\begin{align}
\label{eq:dimBTL}
\dim(1\text{-}\mathbb{BFC}_{n}^{(r)})=|B_{2n}^{(r)}|.
\end{align}
\end{prop}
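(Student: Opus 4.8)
The plan is to establish a bijection between a linear basis of $1\text{-}\mathbb{BFC}_{n}^{(r)}$ and the set $\mathcal{SC}_{2n}^{(r)}$ of symmetric generalized chord diagrams, mirroring the folding argument used in the proof of Proposition~\ref{prop:TLP}. First I would describe a basis of $1\text{-}\mathbb{BFC}_{n}^{(r)}$: by the diagrammatic representations (\ref{eq:picFis}) and (\ref{eq:picFn}) together with the loop-removal rules of Definition~\ref{defn:1BTL}, every product of generators equals a scalar (a monomial in $q,q^{-1},q_{n},q_{n}^{-1}$) times a reduced planar diagram. Such a reduced diagram consists of $n$ bundled bottom points and $n$ bundled top points, each carrying $r$ strands, drawn in a rectangle whose right edge is the boundary; the strands are non-crossing and either join two of the $2n$ bundled points or terminate at a right-end point on the boundary. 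Since two such diagrams agree in the algebra only if they coincide, these diagrams form a basis, and $\dim(1\text{-}\mathbb{BFC}_{n}^{(r)})$ equals their number.

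Next I would fold each such diagram down to the right, exactly as in the proof of Proposition~\ref{prop:TLP}, so that the $n$ top bundled points are carried onto the same horizontal line as the $n$ bottom bundled points, producing $2n$ bundled points in a row. Under this planar isotopy the right boundary becomes the rightmost vertical edge, each internal strand becomes an arch, and each right-end point becomes a bullet terminating a half-arch. The resulting object is a reduced symmetric chord diagram: non-crossingness is preserved by the isotopy, and the condition (\ref{eq:condA}) is verified exactly as it is for the bulk generators $E_{i}^{(s)}$. Thus folding gives a well-defined map from the basis into red-$\mathcal{SC}_{2n}^{(r)}$, and unfolding (cutting the row of $2n$ bundled points at its midpoint and lifting the second half back up) furnishes the inverse, so the map is a bijection.

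Finally, by Lemma~\ref{lemma:SCredSC} we have a bijection between red-$\mathcal{SC}_{2n}^{(r)}$ and $\mathcal{SC}_{2n}^{(r)}$, whose cardinality is $B_{2n}^{(r)}$ by Definition~\ref{defn:Bn}. Composing the two bijections yields $\dim(1\text{-}\mathbb{BFC}_{n}^{(r)})=|\text{red-}\mathcal{SC}_{2n}^{(r)}|=|\mathcal{SC}_{2n}^{(r)}|=B_{2n}^{(r)}$, which is Eq.~(\ref{eq:dimBTL}). (One may sanity-check the case $r=1$, where $1\text{-}\mathbb{BFC}_{n}^{(1)}$ is the blob algebra of dimension $\binom{2n}{n}$ and $B_{2n}^{(1)}=|\mathcal{SNC}_{2n}|=\binom{2n}{n}$ by Eq.~(\ref{eq:SNCn}).)

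The main obstacle I expect is the bookkeeping in the folding step: one must check that the bundled-point and right-end-point data produced by folding match precisely the combinatorial data defining red-$\mathcal{SC}_{2n}^{(r)}$ — in particular that the odd and even right-end points arising from the generators $E_{n}^{(s)}$ fold to bullets consistently with the condition (\ref{eq:condA}), and that no two distinct reduced diagrams fold to the same reduced symmetric chord diagram. The linear independence of the reduced diagrams, needed to call them a basis, is the other point requiring care, although it follows from the standard observation that composition in a planar diagram algebra returns a scalar multiple of a single diagram, so that distinct diagram shapes are linearly independent.
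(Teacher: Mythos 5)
Your proposal is correct and follows essentially the same route as the paper: the paper likewise folds the diagram of an element so that the $2n$ bundled points lie on one line and then identifies the result with a symmetric generalized chord diagram in $\mathcal{SC}_{2n}^{(r)}$, the only cosmetic difference being that it performs the doubling explicitly by gluing on a mirror image, whereas you stop at the half-diagram in red-$\mathcal{SC}_{2n}^{(r)}$ and invoke Lemma \ref{lemma:SCredSC}. Your extra care about why the reduced diagrams form a linear basis, and the $r=1$ sanity check against the blob algebra, go slightly beyond what the paper writes but are consistent with it.
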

\begin{proof}
Let $X$ be an element in $1\text{-}\mathbb{BFC}_{n}^{(r)}$. 
Graphically, $X$ has $2p$ right-end points and $rn-p$ strands in 
its diagram where $0\le p\le rn$.
More precisely, $X$ has $n$ points on the top and the bottom respectively, and 
each points are connected to another points or a right-end point. 
We number the top $n$ points from right to left from $1$ to $n$, and 
the bottom $n$ points from left to right from $n+1$ to $2n$.
We fold the diagram of $X$ down to the left such that the $n$ top points are left 
to the $n$ bottom points.  
We denote by $\widetilde{X}$ this new diagram, and by $\widetilde{X}^{'}$ the mirror
image of $\widetilde{X}$ by a vertical line.
We call a bullet in $\widetilde{X}^{'}$ a left-end point.
We put the diagram $\widetilde{X}^{'}$ right to $\widetilde{X}$, and 
connect the strands connected to a right-end point of $\widetilde{X}$ 
with the strands connected to a left-end point of $\widetilde{X}^{'}$ 
from bottom to top. We denote by $D(X)$ the generalized chord diagram with $4n$ points.
By construction, $D(X)$ is symmetric along the vertical line in the middle since 
$\widetilde{X}^{'}$ is the mirror image of $\widetilde{X}$.
Further, the chord diagram $D(X)$ satisfies the condition (\ref{eq:condA}).
It is obvious that an element $X$ gives a symmetric generalized chord diagram $D(X)$, 
and $D(X)$ gives a unique element $X$ in $1$-$\mathbb{BFC}_{n}^{(r)}$.
Therefore, we have a bijection between $X$ and $D(X)$, which implies Eq. (\ref{eq:dimBTL}).
\end{proof}

\begin{example}
Let $(n,r)=(3,2)$.
We consider the element $X:=E_{2}^{(2)}E_{3}^{(2)}E_{1}^{(2)}E_{2}^{(1)}\in1\text{-}\mathbb{BFC}_{3}^{(2)}$. 
\begin{figure}[ht]
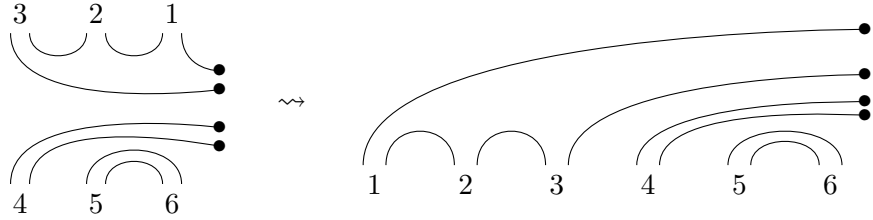

\tikzpic{-0.5}{[scale=0.5]
\draw(2,0)..controls(2,1.2)and(4.5,1.2)..(4.5,0);
\draw(2.5,0)..controls(2.5,0.8)and(4,0.8)..(4,0);
\draw(0,0)..controls(0,2.2)and(5.5,1.5)..(5.5,1.5)node{$\bullet$};
\draw(0.5,0)..controls(0.5,2)and(5.5,1)..(5.5,1)node{$\bullet$};
\draw(0,4)..controls(0,1.8)and(5.5,2.5)..(5.5,2.5)node{$\bullet$};
\draw(0.5,4)..controls(0.5,3.2)and(2,3.2)..(2,4);
\draw(2.5,4)..controls(2.5,3.2)and(4,3.2)..(4,4);
\draw(4.5,4)..controls(4.5,3)and(5.5,3)..(5.5,3)node{$\bullet$};
\draw(0.25,0)node[anchor=north]{$4$}(2.25,0)node[anchor=north]{$5$}
(4.25,0)node[anchor=north]{$6$};
\draw(0.25,4)node[anchor=south]{$3$}(2.25,4)node[anchor=south]{$2$}
(4.25,4)node[anchor=south]{$1$};
}\quad$\leadsto$\quad
\tikzpic{-0.5}{[scale=0.6]
\draw(0,0)..controls(0,3)and(11,3)..(11,3)node{$\bullet$};
\draw(0.5,0)..controls(0.5,1)and(2,1)..(2,0);
\draw(2.5,0)..controls(2.5,1)and(4,1)..(4,0);
\draw(4.5,0)..controls(4.5,2)and(11,2)..(11,2)node{$\bullet$};
\draw(6,0)..controls(6,1.5)and(11,1.4)..(11,1.4)node{$\bullet$};
\draw(6.5,0)..controls(6.5,1.4)and(11,1.1)..(11,1.1)node{$\bullet$};
\draw(8,0)..controls(8,1)and(10.5,1)..(10.5,0);
\draw(8.5,0)..controls(8.5,0.7)and(10,0.7)..(10,0);
\draw(0.25,0)node[anchor=north]{$1$}(2.25,0)node[anchor=north]{$2$}
(4.25,0)node[anchor=north]{$3$}(6.25,0)node[anchor=north]{$4$}
(8.25,0)node[anchor=north]{$5$}(10.25,0)node[anchor=north]{$6$};
}

\caption{A folding of an element in $1$-$\mathbb{BFC}_{3}^{(2)}$.}
\label{fig:dimBTLtoSCD}
\end{figure}
The element $X$ is depicted as the left picture in Figure \ref{fig:dimBTLtoSCD}.
The diagram $\widetilde{X}$ obtained from $X$ is depicted in the right picture.
\end{example}

The number $B_{n+1}^{(r)}$ also counts the number of diagrams defined below.
Let $\Gamma_{n}^{(r)}$ be the set of elements $X$ in $1$-$\mathbb{BFC}_{n}^{(r)}$
such that the diagram presentation $D(X)$ of an element $X$ satisfies the following conditions:
\begin{enumerate}
\item The diagram $D(X)$ is symmetric along the horizontal line in the middle.
\item The diagram $D(X)$ has at most $2r$ right-end points.
\item A right-end point in $D(X)$ is connected to the $i$-th top or bottom point from left 
where $i\equiv n \mod 2$.
\end{enumerate}

The next proposition gives the number of elements in $\Gamma_{n}^{(r)}$ in terms of $B_{n+1}^{(r)}$.
\begin{prop}
We have $|\Gamma_{n}^{(r)}|=B_{n+1}^{(r)}$.
\end{prop}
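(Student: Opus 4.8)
The plan is to prove the identity by exhibiting an explicit bijection between $\Gamma_{n}^{(r)}$ and the set $\mathcal{SC}_{n+1}^{(r)}$ of symmetric generalized chord diagrams of size $n+1$, whose cardinality is $B_{n+1}^{(r)}$ by Definition \ref{defn:Bn}. This is the boundary analogue of the folding arguments already used for $\dim(1\text{-}\mathbb{BFC}_{n}^{(r)})$ and for Proposition \ref{prop:TLP}: an unrestricted diagram of $1\text{-}\mathbb{BFC}_{n}^{(r)}$ folds to a symmetric chord diagram of size $2n$, but the first defining condition of $\Gamma_{n}^{(r)}$, invariance of the diagram under the reflection exchanging its top and bottom rows, lets us work with only half of the diagram. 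Cutting along the horizontal symmetry axis replaces the $2n$ interface points by $n$, while the axis itself contributes one additional bundled point; this is the source of the shift $n\mapsto n+1$ in the size of the target chord diagram.

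Concretely, I would first use the first condition to identify $X\in\Gamma_{n}^{(r)}$ with its lower half, a one-boundary half-diagram on the $n$ bottom bundled points in which each strand end either pairs with another bottom end (an arch), crosses the horizontal axis (a defect line), or terminates at a right-end point of $X$. I would then repackage this half-diagram as a reduced symmetric chord diagram in red-$\mathcal{SC}_{n+1}^{(r)}$ (Lemma \ref{lemma:SCredSC}): the arches remain arches, the defect lines together with the right-end points are recorded along the merged axis/boundary corner, which supplies the extra $(n+1)$-th bundled point and the central self-symmetric chords. Here the second and third conditions are exactly what make the output admissible: the bound of $2r$ right-end points forces at most $r$ per half and hence a legitimate number of central chords, while the requirement that right-end points occur only at positions $i\equiv n\pmod 2$ guarantees the congruence (\ref{eq:condA}), so that the unfolded diagram indeed lies in $\mathcal{C}_{n+1}^{(r)}$; its construction is manifestly invariant under the vertical reflection, hence it lies in $\mathcal{SC}_{n+1}^{(r)}$. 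The inverse map cuts a symmetric chord diagram of size $n+1$ along its vertical middle line to a reduced diagram and then unfolds it into a top-bottom symmetric element of $1\text{-}\mathbb{BFC}_{n}^{(r)}$, reinstating the horizontal axis; this shows the two constructions are mutually inverse.

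I expect the main obstacle to be the bookkeeping in the repackaging step, and in particular the rigorous justification of the size shift. One must verify that the horizontal axis contributes precisely one extra bundled point (equivalently $r$ strand slots), that the defect lines and boundary right-end points of the half-diagram correspond bijectively to the central chords and right-end points of the reduced symmetric chord diagram, and that the three defining conditions cut out exactly the image of the fold, so that nothing is over- or undercounted. A useful consistency check along the way is the case $r=1$, where one can directly enumerate the one-boundary half-diagrams on $n$ points subject to the parity restriction and compare with $B_{n+1}^{(1)}=\binom{n+1}{\lfloor(n+1)/2\rfloor}$. Once the bijection is established, the conclusion is immediate: by Lemma \ref{lemma:SCredSC}, Definition \ref{defn:Bn}, and Proposition \ref{prop:SNCBn} we have $|\Gamma_{n}^{(r)}|=|\text{red-}\mathcal{SC}_{n+1}^{(r)}|=|\mathcal{SC}_{n+1}^{(r)}|=|\mathcal{SNC}_{n+1}^{(r)}|=B_{n+1}^{(r)}$.
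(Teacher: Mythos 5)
Your proof is correct and follows essentially the same route as the paper: both establish a folding bijection between $\Gamma_{n}^{(r)}$ and $\mathcal{SC}_{n+1}^{(r)}$, with condition (1) supplying the vertical symmetry of the resulting chord diagram, condition (2) bounding the strands absorbed by the extra bundled point(s), and condition (3) guaranteeing the congruence (\ref{eq:condA}). The only cosmetic difference is that you cut along the horizontal axis and build the reduced half-diagram, invoking Lemma \ref{lemma:SCredSC}, whereas the paper folds the whole diagram down to the right and inserts two bundled points in the middle to produce the full symmetric chord diagram in $\mathcal{SC}_{n+1}^{(r)}$ directly.
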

\begin{proof}
Suppose that $X$ is in $\Gamma_{n}^{(r)}$, and let $D(X)$ be its diagram representation.
Denote by $s\le 2r$ the number of right-end points in $D(X)$.
We construct a bijection between $D(X)$ and an element in $B_{n+1}^{(r)}$.
We fold the diagram $D(X)$ down to the right such that the $n$ top points are right to 
the $n$ bottom points. 
We insert two points between the top and bottom points.
From the condition (2) of $\Gamma_{n}^{(r)}$, we connect the $s$ strands with right-end points
with the inserted two points. 
We connect the inserted two points by $r-s$ strands. 
Note that we have a unique way of connecting points by strands.
In this way, we have a generalized chord diagram $C(X)$ in $\mathcal{C}_{r(n+1)}$.
We will show that it lies in $\mathcal{SC}_{n+1}^{(r)}$.
The condition (1) implies that the chord diagram $C(X)$ is symmetric along the vertical 
line in the middle since we fold $X$ to the right.
Further, the condition (3) implies that the chord diagram $C(X)$ satisfies 
the condition (\ref{eq:condA}).
From these, we have $C(X)\in\mathcal{SC}_{n+1}^{(r)}$.
Note that the construction of $C(X)\in\mathcal{SC}_{n+1}^{(r)}$ from $X\in\Gamma_{n}^{(r)}$ 
is invertible.
By Definition \ref{defn:Bn}, we have $|\Gamma_{n}^{(r)}|=B_{n+1}^{(r)}$.
\end{proof}

\begin{example}
\begin{figure}[ht]
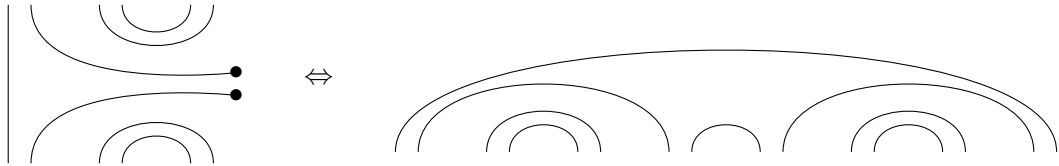

\tikzpic{-0.5}{[scale=0.6]
\draw(0.5,0)..controls(0.5,2)and(5,1.5)..(5,1.5)node{$\bullet$};
\draw(0.5,3.5)..controls(0.5,1.5)and(5,2)..(5,2)node{$\bullet$};
\draw(0,0)--(0,3.5);
\draw(2,0)..controls(2,1.2)and(4.5,1.2)..(4.5,0);
\draw(2.5,0)..controls(2.5,0.8)and(4,0.8)..(4,0);
\draw(2,3.5)..controls(2,2.3)and(4.5,2.3)..(4.5,3.5);
\draw(2.5,3.5)..controls(2.5,2.7)and(4,2.7)..(4,3.5);
}
\quad $\Leftrightarrow$ \quad
\tikzpic{-0.5}{[scale=0.6]
\draw(0,0)..controls(0,3)and(14.5,3)..(14.5,0);
\draw(0.5,0)..controls(0.5,2)and(6,2)..(6,0);
\draw(2,0)..controls(2,1.2)and(4.5,1.2)..(4.5,0);
\draw(2.5,0)..controls(2.5,0.8)and(4,0.8)..(4,0);
\draw(10,0)..controls(10,1.2)and(12.5,1.2)..(12.5,0);
\draw(10.5,0)..controls(10.5,0.8)and(12,0.8)..(12,0);
\draw(6.5,0)..controls(6.5,0.8)and(8,0.8)..(8,0);
\draw(8.5,0)..controls(8.5,2)and(14,2)..(14,0);
}
\caption{A bijection between an element in $\Gamma_{3}^{(2)}$ and an element in $\mathcal{SC}_{4}^{(2)}$.}
\label{fig:GammaBn}
\end{figure}
We consider an element in $\Gamma_3^{(2)}$ depicted as the left picture in Figure \ref{fig:GammaBn}.
The corresponding element in $\mathcal{SC}_{4}^{(2)}$ is depicted right in Figure \ref{fig:GammaBn}.

For $n=2$, the following diagram violates the condition (3) since the first bundled point from left 
is connected to a right-end point:
\begin{center}
\tikzpic{-0.5}{[yscale=0.8]
\draw(0,0)..controls(0,1)and(1.8,0.7)..(1.8,0.7)node{$\bullet$};
\draw(0.3,0)..controls(0.3,0.7)and(1.3,0.7)..(1.3,0);
\draw(1.6,0)..controls(1.6,0.4)and(1.7,0.3)..(1.8,0.3)node{$\bullet$};
\draw(0,2)..controls(0,1)and(1.8,1.3)..(1.8,1.3)node{$\bullet$};
\draw(0.3,2)..controls(0.3,1.3)and(1.3,1.3)..(1.3,2);
\draw(1.6,2)..controls(1.6,1.6)and(1.7,1.7)..(1.8,1.7)node{$\bullet$};
}
\end{center}
This diagram corresponds to the diagram in Eq.(\ref{eq:dnadm}), and 
it is not contained in the set $\Gamma_{2}^{(2)}$.
\end{example}

\subsection{Two algebras \texorpdfstring{$\mathbb{SNC}_{n}$}{SNCn} 
and \texorpdfstring{$1$}{1}-\texorpdfstring{$\mathbb{BFC}_{n}^{(1)}$}{BFCn}}
\label{sec:isoSNC1BTL}
In this section, we study the relation between the two algebras $\mathbb{SNC}_{n}$
and $\mathbb{BFC}_{n}^{(1)}$. 
We will see that the action of $G_{i}$ on a symmetric non-crossing partition 
is compatible with that of $E_{i}$ on a symmetric chord diagram.

The algebra $\mathbb{SNC}_{n}$ acts on a symmetric non-crossing partition $\pi(C)$ 
corresponding to an element $C$ in $\mathcal{SC}_{n}$, and the algebra 
$1$-$\mathbb{BFC}_{n}^{(1)}$ acts on an element $\widetilde{C}$ in red-$\mathcal{SC}_{n}$.
From Lemma \ref{lemma:SCredSC}, we have a bijection between $\mathcal{SC}_{n}$ and 
red-$\mathcal{SC}_{n}$.
We will show that the action of $G_{i}$ on $\pi(C)$ coincides with the action of
$E_{i}$ on $\widetilde{C}$.
We have used the convention that the bottom points of $C$ are labeled by $1,1',2,2',\ldots,n,n'$,
however, in this section, we label the bottom points of $C$  (resp. $\widetilde{C}$) from left to right 
from $1$ to $2n$ (resp. $n$).
We denote $\pi':=G_{i}\pi$ for $\pi\in\mathcal{SNC}_{n}$.

We consider the action of $G_1$ on a symmetric non-crossing partition and 
that of $E_1$ on the corresponding chord diagram.
We have three cases in Eq. (\ref{eq:defgp}). The first case on $\pi(C)$ implies that 
we have an arch connecting the bottom points $1$ and $2$ in $\widetilde{C}$.
The action of $E_1$ on $\widetilde{C}$ gives a closed loop, whose weight is $\tau$.
In the second case, $C$ contains two arches connecting $1$ and $2n$, and $2$ and $2n-1$.
In terms of $\widetilde{C}$, we have two half-arches from the two points $1$ and $2$.
The action of $E_1$ on $\widetilde{C}$ gives an arch connecting $1$ and $2$, and 
a strand which starts from a lower odd right-end point to an upper even right-end point.
From Definition \ref{defn:1BTL}, this strand gives a weight $\tau_{1}$.
The existence of an arch connecting $1$ and $2$ corresponds to the merged blocks 
$B(1)\cup B(2)$ and $B(n)\cup B(n+1)$ in $\pi'$.
In the third case, we have two cases:
\begin{enumerate}
\item A symmetric non-crossing partition $\pi$ has a single block $\{1\}$.
Let $B(2)=\{2,\ldots, n_1\}$ and $B(n)=\{n+2-n_1,\ldots, n\}$ be two blocks.
Note that these two blocks do not coincide with each other.
Therefore, we have $n_1<n/2+1$.
The action of $G_1$ on $\pi$ gives a large block $\{1,2,\ldots,n_1,n+2-n_1,\ldots,n\}$.
On the other hand, $\widetilde{C}(\pi)$ contains a strand connecting the point $1$ to a righ-end
point, and an arch connecting $2$ and $n_1\le n$.
The action of $E_1$ on $\widetilde{C}(\pi)$ gives an arch connecting $1$ and $2$, and a strand 
connecting the point $n_1$ to a right-end point. This strand means that $n_1$ and $n+2-n_1$ belong 
to the same block. The chord diagram $\widetilde{C}(\pi')$ is compatible with $\pi'$. 
\item A symmetric non-crossing partition $\pi$ contains a block $B(1)$ such that $|B(1)|\ge2$.
We have $B(1)\neq B(2)$.
Let $n_1$ be the maximal integer in $B(2)$, and $n_2$ be the minimal integer in $B(1)\setminus\{1\}$.
The integers $n_1$ and $n_2$ are well-defined since we have $|B(1)|\ge2$ and $|B(2)|\ge1$.
In terms of $\widetilde{C}$, we have two distinguished arches connecting $1$ and $2n_2-2$, and $2$ and $2n_1-1$ with 
$n_1<n_2\le n$. These two arches are not connected to right-end points.
The action of $E_1$ on $\widetilde{C}$ gives the reconnection of arches. 
We connect the bottom points $1$ and $2$, and $2n_1-1$ and $2n_2-2$.
This reconnection does not involve a strand 
connecting a point to a right-end point. 
It is a routine to check that the action of $E_1$ gives the diagram which is compatible with $\pi'$.
\end{enumerate}
Form these, the action of $G_1$ on $\pi$ is compatible with the action of $E_1$ on $C(\pi)$.
One can show that we have the compatibility for $G_i$ and $E_i$ for $2\le i\le n$ in a similar 
manner.
The next theorem is a direct consequence of the above.
\begin{theorem}
\label{thrm:isoSNC1BTL}
The two algebras $\mathbb{SNC}_n$ and $1$-$\mathbb{BFC}_{n}^{(1)}$ are isomorphic. 
We have $G_{i}\mapsto E_{i}$ for $1\le i\le n$.
\end{theorem}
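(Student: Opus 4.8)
The plan is to transport both algebra actions through a single bijection of the underlying sets and read off the isomorphism from the generator-by-generator compatibility already verified. First I would fix $\Theta := \iota\circ\Psi\colon \mathcal{SNC}_{n}\to \text{red-}\mathcal{SC}_{n}$, where $\Psi$ is the restriction bijection of Proposition~\ref{prop:SNCtoSC} (taken with $\epsilon=0$, so that the integer $1$ lies on the axis of symmetry) and $\iota$ is the bijection $\mathcal{SC}_{n}\to\text{red-}\mathcal{SC}_{n}$ of Lemma~\ref{lemma:SCredSC}. Extending $\Theta$ linearly identifies the free $\mathbb{C}[q,q^{-1},q_{n},q_{n}^{-1}]$-module on $\mathcal{SNC}_{n}$, which carries the action of $\mathbb{SNC}_{n}$, with the free module on $\text{red-}\mathcal{SC}_{n}$, which carries the action of $1\text{-}\mathbb{BFC}_{n}^{(1)}$.

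Next I would record that the case analysis preceding the statement is precisely the assertion $\Theta\circ G_{i}=E_{i}\circ\Theta$ as operators, for every $1\le i\le n$. The substance is not merely that the arch connectivities agree but that the scalar weights agree in every branch of the piecewise rules: the loop weight $\tau$ in the first lines of (\ref{eq:defgp}) and (\ref{eq:gn}) matches the closed-loop removal rule of Definition~\ref{defn:1BTL}, while the boundary weights $\tau_{p}$ of (\ref{eq:taup}) match the two right-end-point removal rules $-(q_{n}+q_{n}^{-1})$ and $qq_{n}^{-1}+q^{-1}q_{n}$. For the bulk generators $1\le i\le n-1$ this reduces to the already-settled case of the Temperley--Lieb algebra: using the identity $G_{i}=F_{i}F_{2n-i}$ recorded above and the fact that $E_{i}$ unfolds on a symmetric chord diagram to the pair of mirror cap-cups realizing $e_{i}e_{2n-i}$, the intertwining follows from $F_{j}=\Psi^{-1}e_{j}\Psi$ of Proposition~\ref{prop:Fe} applied to the two commuting factors. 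Thus only the genuinely new boundary generator $G_{n}$ versus $E_{n}$, with diagram (\ref{eq:picFn}), requires a dedicated check.

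Once $\Theta\circ G_{i}=E_{i}\circ\Theta$ is established on the nose for all $i$, the relation $w(G_{1},\dots,G_{n})=0$ holds as an operator identity if and only if $\Theta\,w(G)\,\Theta^{-1}=w(E)=0$ holds, so the assignment $G_{i}\mapsto E_{i}$ and its inverse preserve every relation among the generators; since each algebra is generated by its operators, the correspondence extends to an algebra isomorphism $\mathbb{SNC}_{n}\xrightarrow{\sim}1\text{-}\mathbb{BFC}_{n}^{(1)}$. I expect the main obstacle to be the boundary bookkeeping in the check for $G_{n}$ and $E_{n}$: one must reconcile the odd/even parity convention for right-end points in Definition~\ref{defn:1BTL}, the relabeling of the bottom points by $1,\dots,2n$ adopted in this section, and the three cases of (\ref{eq:gn}) (splitting off the central block, merging the two central blocks, and the fixed arch on the axis), so that no spurious factor or orientation mismatch survives. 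A secondary point to state explicitly is that $\Theta$ uses $\epsilon=0$; for $\epsilon=1$ one first applies the Kreweras endomorphism $\rho$ as in Remark~\ref{remark:epsilon} to return to this case, which leaves the isomorphism class unchanged.
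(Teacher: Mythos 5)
Your overall strategy—transport both actions through $\iota\circ\Psi$, verify $\Theta\circ G_{i}=E_{i}\circ\Theta$ generator by generator, and conclude the isomorphism—is the same as the paper's. But there is a genuine gap in the step you use to dispose of the bulk generators $1\le i\le n-1$. You reduce them to the Temperley--Lieb case via $G_{i}=F_{i}F_{2n-i}$ and Proposition~\ref{prop:Fe}, claiming $E_{i}$ ``unfolds'' to $e_{i}e_{2n-i}$ on the full symmetric chord diagram. That unfolding is correct only at the level of arch connectivity, not of scalars, and the scalars are exactly what you yourself identify as the substance of the check. Concretely: in the first case of Eq.~(\ref{eq:defgp}) the operator $F_{1}F_{2n-1}$ applied to $\pi$ produces $\tau^{2}\pi$ (one closed loop for each mirror-image cap-cup), whereas $G_{1}\pi=\tau\pi$ — folding identifies the mirror pair, so only one loop survives in the reduced diagram. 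Worse, in the second case $G_{1}\pi=\tau_{1}\pi'$ with $\tau_{1}$ a \emph{boundary} weight involving $q_{n}$; no composition of the $e_{j}$ acting on the unfolded diagram can ever produce a scalar outside $\mathbb{C}[q^{\pm1}]$, so Proposition~\ref{prop:Fe} cannot deliver this case. The factor $\tau_{p}$ arises only in the reduced picture, where the two symmetric arches crossing the axis become strands to right-end points and the product rule of Definition~\ref{defn:1BTL} removes the resulting strand between a lower and an upper right-end point with weight $-(q_{n}+q_{n}^{-1})$ or $qq_{n}^{-1}+q^{-1}q_{n}$. So the identity $G_{i}=F_{i}F_{2n-i}$ holds only on underlying partitions (generically), not as an operator identity over $\mathbb{C}[q^{\pm1},q_{n}^{\pm1}]$, and it cannot carry the weight-matching.

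The consequence is that your claim that ``only $G_{n}$ versus $E_{n}$ requires a dedicated check'' is false: every bulk generator has a branch (the second case of Eq.~(\ref{eq:defgp})) in which it acts on a diagram with strands ending at the boundary and produces the weight $\tau_{p}$, and this branch must be checked directly against the right-end-point removal rules, exactly as the paper does in its three-case analysis of $G_{1}$ versus $E_{1}$ (the case $B(1)=B(n+1)\neq B(2)=B(n)$, where $E_{1}$ creates a strand from a lower odd right-end point to an upper even right-end point). To repair your argument, replace the reduction through $F_{i}F_{2n-i}$ by a direct verification, for $G_{1}$ and $E_{1}$ on the reduced diagram, of all three branches of Eq.~(\ref{eq:defgp}) including the scalars, and then conjugate by $\rho$ and $\sigma$ to obtain the remaining $i$; the conjugation step is where Proposition~\ref{prop:Fe} legitimately enters.
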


\subsection{Realization of \texorpdfstring{$1$}{1}-\texorpdfstring{$\mathbb{BFC}_{n}^{(r)}$}{BFCnr} 
by \texorpdfstring{$\mathbb{SNC}_{n}^{(r)}$}{SNCnr}}
In Section \ref{sec:isoSNC1BTL}, we have the isomorphism between the two algebras 
$\mathbb{SNC}_{n}^{(r)}$ and $1$-$\mathbb{BFC}_{n}^{(1)}$.
Below, we see the isomorphism between $\mathbb{SNC}_{n}^{(r)}$ and $1$-$\mathbb{BFC}_{n}^{(r)}$.

\begin{theorem}
The two algebras $\mathbb{SNC}_{n}^{(r)}$ and $1$-$\mathbb{BFC}_{n}^{(r)}$ are isomorphic.
Especially, we have $G_{i}^{(s)}\mapsto E_{i}^{(s)}$. 
\end{theorem}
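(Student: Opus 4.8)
The plan is to imitate the proof of Proposition~\ref{prop:isoNCTL}, bootstrapping from the $r=1$ case already established in Theorem~\ref{thrm:isoSNC1BTL} by means of the color-decomposition bijection $\Psi^{(r)}$ restricted to symmetric objects. Concretely, I would prove the conjugation identity
\[
G_{i}^{(s)}=(\Psi^{(r)})^{-1}\,E_{i}^{(s)}\,\Psi^{(r)},
\qquad 1\le i\le n,\ 1\le s\le r,
\]
and then read off the algebra isomorphism $G_{i}^{(s)}\mapsto E_{i}^{(s)}$ from it, exactly as in the bulk case.

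First I would verify that $\Psi^{(r)}$ restricts to a bijection $\mathcal{SNC}_{n}^{(r)}\xrightarrow{\sim}\mathcal{SC}_{n}^{(r)}$. For each layer $\pi_{s}$ of an increasing $r$-chain, Proposition~\ref{prop:SNCtoSC} shows that $\Psi(\pi_{s})$ is a symmetric chord diagram; Proposition~\ref{prop:chainadd} shows that the superposition of the $r$ layers is an admissible generalized chord diagram; and since each layer is symmetric about the central vertical line, so is the superposition. Hence $\Psi^{(r)}$ maps $\mathcal{SNC}_{n}^{(r)}$ into $\mathcal{SC}_{n}^{(r)}$, and bijectivity follows from the bijectivity of $\Psi^{(r)}$ on $\mathcal{NC}_{n}^{(r)}$ together with the equality $|\mathcal{SNC}_{n}^{(r)}|=B_{n}^{(r)}=|\mathcal{SC}_{n}^{(r)}|$ from Proposition~\ref{prop:SNCBn} and Definition~\ref{defn:Bn}.

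The heart of the proof is the conjugation identity. As in Proposition~\ref{prop:isoNCTL}, $\Psi^{(r)}$ decomposes a symmetric generalized chord diagram into $r$ independent color layers, the $s$-th layer being the symmetric chord diagram $\Psi(\pi_{s})$. The diagrammatic generator $E_{i}^{(s)}$ acts on exactly $s$ of these layers and as the identity on the remaining $r-s$; the parity of $i$ fixes whether it is the top or the bottom $s$ layers, which is precisely the top/bottom split in the definition of $G_{i}^{(s)}$ reflecting the rotation $\rho$ in $G_{i}=\rho^{i-1}g_{i}\rho^{-(i-1)}$ (Eq.~(\ref{eq:gp})). On each affected layer the action reduces to the $r=1$ situation: for $1\le i\le n-1$ it is the bulk reconnection of Proposition~\ref{prop:isoNCTL}, while for $i=n$ it is the boundary reconnection of Theorem~\ref{thrm:isoSNC1BTL}. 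Assembling the layers yields $\Psi^{(r)}(G_{i}^{(s)}\pi^{(r)})=E_{i}^{(s)}\Psi^{(r)}(\pi^{(r)})$, so the two generating families act by operators conjugate under $\Psi^{(r)}$; consequently $G_{i}^{(s)}\mapsto E_{i}^{(s)}$ respects all relations and extends to the desired isomorphism.

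The step I expect to be the main obstacle is the boundary generator $E_{n}^{(s)}$. Here one must check that the scalar weights dictated by the diagram rules of Definition~\ref{defn:1BTL}---the bulk loop factor $-(q+q^{-1})$, the even--odd right-end factor $-(q_{n}+q_{n}^{-1})$, and the odd--odd factor $(qq_{n}^{-1}+q^{-1}q_{n})$---agree layer-by-layer with the weights $\tau$, $\tau_{p}$ and the case weights appearing in Eqs.~(\ref{eq:defgp}) and~(\ref{eq:gn}), and that the nesting order of symmetric chords produced by $\Psi$ is compatible with the classification of right-end points as odd or even. Once this weight bookkeeping is matched on a single layer, the multi-color statement follows from the independence of layers guaranteed by $\Psi^{(r)}$.
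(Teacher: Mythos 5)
Your proposal is correct and follows essentially the same route as the paper: bootstrap from the $r=1$ isomorphism of Theorem \ref{thrm:isoSNC1BTL} and observe that $G_{i}^{(s)}$ acts as the identity on $r-s$ layers and as $G_{i}$ on the remaining $s$ layers, matching the strand-by-strand action of $E_{i}^{(s)}$ under the layer decomposition given by $\Psi^{(r)}$. You are somewhat more explicit than the paper in checking that $\Psi^{(r)}$ restricts to a bijection onto symmetric objects and in flagging the boundary-weight bookkeeping, but these are refinements of the same argument rather than a different approach.
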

\begin{proof}
From Theorem \ref{thrm:isoSNC1BTL}, we have an isomorphism between $\mathbb{SNC}_{n}$ and 
$1$-$\mathbb{BFC}_{n}^{(1)}$. Especially, we have $G_{i}\mapsto E_{i}^{(1)}$.
Suppose $r\ge2$.
The generators $G_{i}^{(s)}$ acts on $\pi^{(r)}:=(\pi_1,\ldots,\pi_r)\in\mathcal{SCN}_{n}^{(r)}$
as the identity on the $r-s$ symmetric non-crossing partitions, and as $G_{i}$ on the $s$ 
symmetric non-crossing partitions. 
This corresponds to the diagram of $E_{i}^{(s)}$, that is, $E_{i}^{(s)}$ acts on $r-s$ strands 
as the identity, and on $s$ strands as the cap-cup operator.
Therefore, the isomorphism for the $r=1$ case insures that  we have an isomorphism between 
$G_{i}^{(s)}$ and $E_{i}^{(s)}$.
This completes the proof.
\end{proof}

\section{Two-boundary Fuss--Catalan algebra}
\label{sec:2BFC}
\subsection{Definition}
\label{sec:defn2BTL}
We depict the generator $E_{0}^{(s)}$, $1\le s\le r$, as
\begin{align*}
E_{0}^{(s)}:=
\tikzpic{-0.5}{
\node(1)at(0,-1){\framebox{$s'$}};
\draw(0,0.3)--(0,-0.74)(0,-1.26)--(0,-2.3);
\node(2)at(-1.2,-0.7){\framebox{$s$}};
\draw(-0.45,0.3)..controls(-0.45,-0.2)and(-0.69,-0.75)..(-0.99,-0.75);
\draw(-1.4,-0.75)--(-1.8,-0.75)node{$\bullet$};
\node(3)at(-1.2,-1.3){\framebox{$s$}};
\draw(-0.45,-2.3)..controls(-0.45,-1.8)and(-0.7,-1.3)..(-1,-1.3);
\draw(-1.4,-1.3)--(-1.8,-1.3)node{$\bullet$};
\draw(-0.225,-2.3)node[anchor=north]{$1$};
}
\end{align*}
where $s'=r-s$. The action of $E_{0}^{(s)}$ is local, which means that 
we have vertical bundled strands at the positions except $1$.
We call a bullet $\bullet$ a left-end point.
The diagram  $E_{0}^{(s)}$ contains $2s$ left-end points. 

We call the $i$-th left-end point from the bottom an odd (resp. even)
left-end point if $i\equiv1\pmod2$ (resp. $i\equiv0\pmod2$).

\begin{defn}
The two-boundary Fuss--Catalan algebra $2$-$\mathbb{BFC}_{n}^{(r)}$ is a unital 
associative algebra over $\mathbb{C}[q,q^{-1},q_n,q_n^{-1},q_0,q_0^{-1}]$
generated by the set of generators $\{E_{i}^{(s)}: 0\le i\le n, 1\le s\le r\}$.
The set $\{E_i^{(s)}: 1\le i\le n, 1\le s\le r\}$ generates $1$-$\mathbb{BFC}_{n}^{(r)}$.
The product of $X,Y\in2\text{-}\mathbb{BFC}_{n}^{(r)}$ is calculated by putting 
the diagram of $Y$ on top of the diagram of $X$.
If we have a strand from an even (resp. odd) left-end point to an odd (resp. even) 
left-end point, we remove it and give a factor $-(q_0+q_{0}^{-1})$ (resp. $qq_{0}^{-1}+q^{-1}q_0$).
\end{defn}

The algebras $\mathbb{TL}_{n}^{(r)}$ and $1$-$\mathbb{BFC}_{n}^{(r)}$ are finite dimensional.
In contrast, the algebra $2$-$\mathbb{BFC}_{n}^{(r)}$ is infinite dimensional.
For example, for $n=2$, the elements $(E_{1}^{(r)}E_{0}^{(r)}E_{2}^{(r)})^{m}$, $m\ge2$ contain 
several strands which connect left-end points with right-end points.
Due to the existence of such strands, these elements cannot be reduced.
To have a finite dimensional representation of two-boundary Temperley--Lieb algebra, 
an additional relation is introduced in \cite{deGNic09}.

Following \cite{deGNic09}, we introduce an additional relations in $2$-$\mathbb{BFC}_{n}^{(r)}$.
\begin{enumerate}[({$\star$}1)]
\item
If we have a strand from a left-end point to a right-end point, we remove it and give 
a factor $\theta$.
\end{enumerate}
Below, we consider the finite dimensional $2$-$\mathbb{BFC}_{n}^{(r)}$ which satisfies 
the condition ($\star$1).

\begin{remark}
When $r=1$, $2$-$\mathbb{BFC}_{n}^{(1)}$ is a generalization of the Temperley--Lieb 
algebra called two-boundary Temperley--Lieb algebra \cite{deGNic09}.
Then, $2$-$\mathbb{BFC}_{n}^{(1)}$ is generated by the set of generators 
$\{e_i: 0\le i\le n\}$. The subset $\{e_{i}:1\le i\le n\}$ generate the 
one-boundary Temperley--Lieb algebra $1$-$\mathbb{BFC}_{n}^{(1)}$, and 
we have 
\begin{align*}
&e_0^{2}=-(q_0+q_0^{-1})e_{0}, \\
&e_1e_0e_1=\tau''e_{1}, \\
&e_0e_i=e_ie_0, \qquad i\neq1,
\end{align*}
where $\tau''=(qq_0^{-1}+q^{-1}q_{0})$.
\end{remark}

Let $2$-$\mathcal{SC}_{n}^{(r)}$ be the set of generalized chord diagram with left-end 
and right-end points satisfying the condition (\ref{eq:condA}).

\begin{example}
We have nine diagrams in $2$-$\mathcal{SC}_{2}^{(2)}$ as shown in Figure \ref{fig:2SNC}.

\begin{figure}[ht]
\tikzpic{-0.5}{[yscale=0.4,xscale=0.5]
\draw(0,0)..controls(0,2)and(2.5,2)..(2.5,0);
\draw(0.5,0)..controls(0.5,1.5)and(2,1.5)..(2,0);
\draw(0.25,0)node[anchor=north]{$1$}(2.25,0)node[anchor=north]{$2$};
}\qquad
\tikzpic{-0.5}{[yscale=0.4,xscale=0.5]
\draw(0,0)..controls(0,2.2)and(3,1.5)..(3,1.5)node{$\bullet$};
\draw(2.5,0)..controls(2.5,1)and(3,0.8)..(3,0.8)node{$\bullet$};
\draw(0.5,0)..controls(0.5,1.5)and(2,1.5)..(2,0);
\draw(0.25,0)node[anchor=north]{$1$}(2.25,0)node[anchor=north]{$2$};
}\qquad
\tikzpic{-0.5}{[yscale=0.4,xscale=0.5]
\draw(0,0)..controls(0,1)and(-0.5,0.8)..(-0.5,0.8)node{$\bullet$};
\draw(2.5,0)..controls(2.5,1)and(3,0.8)..(3,0.8)node{$\bullet$};
\draw(0.5,0)..controls(0.5,1.5)and(2,1.5)..(2,0);
\draw(0.25,0)node[anchor=north]{$1$}(2.25,0)node[anchor=north]{$2$};
}\qquad
\tikzpic{-0.5}{[yscale=0.4,xscale=0.5]
\draw(0,0)..controls(0,1)and(-0.5,0.8)..(-0.5,0.8)node{$\bullet$};
\draw(2.5,0)..controls(2.5,2.2)and(-0.5,1.5)..(-0.5,1.5)node{$\bullet$};
\draw(0.5,0)..controls(0.5,1.5)and(2,1.5)..(2,0);
\draw(0.25,0)node[anchor=north]{$1$}(2.25,0)node[anchor=north]{$2$};
}
\\[12pt]
\tikzpic{-0.5}{[yscale=0.4,xscale=0.5]
\draw(0,0)..controls(0,3)and(3,2.5)..(3,2.5)node{$\bullet$};
\draw(0.5,0)..controls(0.5,2.5)and(3,2)..(3,2)node{$\bullet$};
\draw(2,0)..controls(2,1.4)and(3,1.3)..(3,1.3)node{$\bullet$};
\draw(2.5,0)..controls(2.5,1)and(3,0.8)..(3,0.8)node{$\bullet$};
\draw(0.25,0)node[anchor=north]{$1$}(2.25,0)node[anchor=north]{$2$};
}\qquad
\tikzpic{-0.5}{[yscale=0.4,xscale=0.5]
\draw(0,0)..controls(0,1)and(-0.5,0.8)..(-0.5,0.8)node{$\bullet$};
\draw(0.5,0)..controls(0.5,2.5)and(3,2)..(3,2)node{$\bullet$};
\draw(2,0)..controls(2,1.4)and(3,1.3)..(3,1.3)node{$\bullet$};
\draw(2.5,0)..controls(2.5,1)and(3,0.8)..(3,0.8)node{$\bullet$};
\draw(0.25,0)node[anchor=north]{$1$}(2.25,0)node[anchor=north]{$2$};
}\qquad
\tikzpic{-0.5}{[yscale=0.4,xscale=0.5]
\draw(0,0)..controls(0,1)and(-0.5,0.8)..(-0.5,0.8)node{$\bullet$};
\draw(0.5,0)..controls(0.5,1.4)and(-0.5,1.3)..(-0.5,1.3)node{$\bullet$};
\draw(2,0)..controls(2,1.4)and(3,1.3)..(3,1.3)node{$\bullet$};
\draw(2.5,0)..controls(2.5,1)and(3,0.8)..(3,0.8)node{$\bullet$};
\draw(0.25,0)node[anchor=north]{$1$}(2.25,0)node[anchor=north]{$2$};
}\qquad
\tikzpic{-0.5}{[yscale=0.4,xscale=0.5]
\draw(0,0)..controls(0,1)and(-0.5,0.8)..(-0.5,0.8)node{$\bullet$};
\draw(0.5,0)..controls(0.5,1.4)and(-0.5,1.3)..(-0.5,1.3)node{$\bullet$};
\draw(2,0)..controls(2,2.5)and(-0.5,2)..(-0.5,2)node{$\bullet$};
\draw(2.5,0)..controls(2.5,1)and(3,0.8)..(3,0.8)node{$\bullet$};
\draw(0.25,0)node[anchor=north]{$1$}(2.25,0)node[anchor=north]{$2$};
}\qquad
\tikzpic{-0.5}{[yscale=0.4,xscale=0.5]
\draw(0,0)..controls(0,1)and(-0.5,0.8)..(-0.5,0.8)node{$\bullet$};
\draw(0.5,0)..controls(0.5,1.4)and(-0.5,1.3)..(-0.5,1.3)node{$\bullet$};
\draw(2,0)..controls(2,2.5)and(-0.5,2)..(-0.5,2)node{$\bullet$};
\draw(2.5,0)..controls(2.5,3)and(-0.5,2.5)..(-0.5,2.5)node{$\bullet$};
\draw(0.25,0)node[anchor=north]{$1$}(2.25,0)node[anchor=north]{$2$};
}
\caption{Nine diagrams in $2$-$\mathcal{SC}_{2}^{(2)}$.}
\label{fig:2SNC}
\end{figure}
\end{example}

We first enumerate the generalized chord diagrams in $2$-$\mathcal{SC}_{n}^{(r)}$.
Let $C\in\mathcal{SC}_{n}^{(r)}$ be a symmetric generalized chord diagram.
Given a diagram $C$, we denote by $v^{\downarrow}(C)$ one plus the number of arches which cross 
the vertical line in the middle.
We define the number $V_{n}^{(r)}$ by
\begin{align*}
V_{n}^{(r)}:=\sum_{C\in\mathcal{SC}_{n}^{(r)}}v^{\downarrow}(C).
\end{align*}

\begin{prop}
\label{prop:2SCVnr}
We have $|2\text{-}\mathcal{SC}_{n}^{(r)}|=V_{n}^{(r)}$.
\end{prop}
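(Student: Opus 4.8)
The plan is to construct an explicit bijection
$$2\text{-}\mathcal{SC}_{n}^{(r)} \xrightarrow{\ \sim\ } \{(C,k) : C\in\mathcal{SC}_{n}^{(r)},\ 1\le k\le v^{\downarrow}(C)\},$$
since the cardinality of the right-hand set is precisely $\sum_{C\in\mathcal{SC}_{n}^{(r)}}v^{\downarrow}(C)=V_{n}^{(r)}$. First I would record the structure of a symmetric diagram $C\in\mathcal{SC}_{n}^{(r)}$: because $C$ is non-crossing and symmetric along the vertical middle line, every arch crossing that line must be invariant under the reflection, so the crossing arches are self-symmetric and nested around the middle. If $m$ denotes their number then $v^{\downarrow}(C)=m+1$, and their left endpoints are positions $p_{1}<p_{2}<\cdots<p_{m}$ with $p_{1}$ belonging to the outermost arch.

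Next I would define the two maps. Given $D\in2\text{-}\mathcal{SC}_{n}^{(r)}$ on the $rn$ bottom points, let $\ell$ be the number of left-end points and close up both boundaries: every left-end and every right-end half-arch is promoted to a self-symmetric crossing arch, while the full arches of $D$ are kept in the left half and reflected into the right half. This produces a symmetric diagram $C(D)$, and I set $k:=\ell+1$. Conversely, given $(C,k)$ I cut $C$ along the middle and redirect the outer $k-1$ crossing arches (those with left endpoints $p_{1},\dots,p_{k-1}$) to left-end points and the remaining inner arches $p_{k},\dots,p_{m}$ to right-end points, leaving the left-half full arches untouched; this yields a diagram in $2\text{-}\mathcal{SC}_{n}^{(r)}$.

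The three facts that make these maps well-defined and mutually inverse are where the work lies. First, planarity of a two-boundary diagram forces every left-end point to sit strictly to the left of every right-end point, so the left/right designation of the wall points is governed by a single threshold; this is exactly what makes the number of $D$ closing up to a fixed $C$ equal to the number of admissible thresholds, namely $m+1=v^{\downarrow}(C)$. Second, I would verify the ``footprint'' compatibility: a full arch $(a,b)$ crosses a left-end, right-end, or crossing arch anchored at a wall point $q$ if and only if $a<q<b$ in all three cases, so the non-crossing condition on $D$ transfers verbatim to $C(D)$ and back, guaranteeing that closing up produces a genuine non-crossing symmetric diagram and that the inverse produces a genuine planar two-boundary diagram. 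Third, I would check that condition (\ref{eq:condA}) is preserved: the full arches inherit it, and a self-symmetric crossing arch $(i,2rn+1-i)$ automatically satisfies $i+(2rn+1-i)-1=2rn\equiv0\pmod{2r}$, which matches the parity constraint defining admissible end points.

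I expect the main obstacle to be the careful verification of the planarity/footprint step, that is, proving rigorously that in any diagram of $2\text{-}\mathcal{SC}_{n}^{(r)}$ the left-end points precede the right-end points and that no wall point lies strictly inside a full arch; once this separation is established, the count $|2\text{-}\mathcal{SC}_{n}^{(r)}|=\sum_{C}(m(C)+1)=\sum_{C}v^{\downarrow}(C)=V_{n}^{(r)}$ follows immediately. The remaining bookkeeping with the $r$-fold bundling and the parity of admissible end points is routine but must be spelled out to confirm that the two constructions land in the correct sets.
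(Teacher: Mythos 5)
Your proposal is correct and follows essentially the same route as the paper: both arguments identify a two-boundary diagram with a symmetric diagram obtained by mirroring/closing up the end points, and both count the preimages of a fixed $C\in\mathcal{SC}_{n}^{(r)}$ by observing that the nested self-symmetric crossing arches admit exactly one threshold choice separating outer arches (sent to left-end points) from inner ones (sent to right-end points), giving $m+1=v^{\downarrow}(C)$ diagrams. Your write-up is somewhat more explicit than the paper's about why the threshold structure is forced by planarity and why condition (A1) is preserved, but the underlying bijection is the same.
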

\begin{proof}
Let $C$ be a diagram in $2$-$\mathcal{SC}_{n}^{(r)}$, and $C^{\vee}$ be 
a mirror image of $C$ along a vertical line.
If $C$ has $p$ right-end points and $q$ left-end points, then $C^{\vee}$
has $q$ right-end points and $p$ left-end points.
We place $C^{\vee}$ right to $C$ and connect the right-end points of $C$ with 
the left-end points of $C^{\vee}$.
The new diagram $C'$ is symmetric and it has $q$ right-end and $q$ left-end points.
We connect the $i$-th right-end point with the $i$-th left-end point by an arch 
for $1\le i\le q$.
The new diagram $C''$ is a symmetric along the vertical line in the middle.
Note that the number of arches which cross the vertical line in the middle is 
$p+q$.

Conversely, suppose that a symmetric diagram $C''$ has $p+q$ arches which cross the 
vertical line in the middle. 
We cut the diagram $C''$ in the middle and it has $p+q$ right-end points.
We bend the top $q$ strands which have right-end points in such a way that a new diagram has 
$q$ left-end points.
This implies $C''$ gives $v^{\downarrow}(C'')=p+q+1$ diagrams in $2$-$\mathcal{SC}_{n}^{(r)}$.

From these, we have $|2\text{-}\mathcal{SC}_{n}^{(r)}|=V_{n}^{(r)}$.
\end{proof}

Let $K$ be a diagram obtained from a diagram $C\in\mathcal{C}_{2n}^{(r)}$ by folding 
$C$ in the middle such that $K$ has $2n$ top and $2n$ bottom points.
Let $\mathcal{K}_{n}^{(r)}$ be the set of diagrams $K$ such that it is symmetric 
along the vertical line in the middle.
We denote by $v(K)$ half the number of vertical strand in $K$.
If $v(K)=0$, there is no arch which crosses the horizontal line in the middle. 
In this case, we define $v^{\uparrow}(K)$ (resp. $v^{\downarrow}(K)$) be one plus the number of arches which are 
above (resp. below) the horizontal line in the middle, and cross the vertical line in the middle. 
We define the weight $\mathrm{wt}(K)$ by 
\begin{align*}
\mathrm{wt}(K):=
\begin{cases}
v(K), & \text{ if } v(K)\ge1, \\
v^{\uparrow}(K)v^{\downarrow}(K), & \text{ if } v(K)=0. 
\end{cases}
\end{align*}
Then, we define the number $K_{n}^{(r)}$ by 
\begin{align*}
K_{n}^{(r)}:=\sum_{K\in\mathcal{K}_{n}^{(r)}}\mathrm{wt}(K).
\end{align*}

\begin{prop}
Suppose that $2$-$\mathbb{BFC}_{n}^{(r)}$ satisfies ($\star1$).
We have 
\begin{align}
\label{eq:dim2BTL}
\dim(2\text{-}\mathbb{BFC}_{n}^{(r)})=K_{n}^{(r)}.
\end{align}
\end{prop}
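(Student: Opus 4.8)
The plan is to exhibit an explicit diagrammatic basis of $2\text{-}\mathbb{BFC}_{n}^{(r)}$ and to count it by grouping the basis diagrams according to a symmetric folded diagram $K\in\mathcal{K}_{n}^{(r)}$, showing that exactly $\mathrm{wt}(K)$ basis diagrams are attached to each $K$. First I would describe the basis. Since the product is computed by stacking diagrams and every removable configuration carries a nonzero scalar (a closed loop gives $-(q+q^{-1})$, a strand joining two left-end or two right-end points gives one of the boundary weights, and by the relation ($\star$1) a strand joining a left-end point to a right-end point gives $\theta$), any element of $2\text{-}\mathbb{BFC}_{n}^{(r)}$ reduces to a scalar multiple of a diagram $X$ with $n$ bundled points on the top and $n$ on the bottom such that the strands are non-crossing, satisfy the condition (\ref{eq:condA}), contain no closed loop, and contain no strand joining two boundary-end points on the same side nor a left-end point to a right-end point. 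Exactly as in the paragraph preceding Proposition \ref{prop:TLP}, these reduced diagrams form the basis, so $\dim(2\text{-}\mathbb{BFC}_{n}^{(r)})$ is their number.

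Next I would set up the folding/doubling map to $\mathcal{K}_{n}^{(r)}$, in the spirit of Proposition \ref{prop:TLP} and of the proof of Eq. (\ref{eq:dimBTL}). Given a reduced $X$, reflect it in a vertical line to obtain $X^{\vee}$, place $X^{\vee}$ to the right of $X$, and glue the right-end points of $X$ to the mirrored points of $X^{\vee}$ into through-strands; folding the resulting diagram in the middle produces a diagram with $2n$ top and $2n$ bottom bundled points that is symmetric along the vertical middle line, and the condition (\ref{eq:condA}) is preserved just as in the earlier foldings. After resolving the remaining left-end points of $X$ together with their mirror images one lands in $\mathcal{K}_{n}^{(r)}$, and I would check that this construction is well defined and that the underlying matching is genuinely symmetric, so that $K\in\mathcal{K}_{n}^{(r)}$.

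The decisive step is the fiber count: I must show that the number of reduced diagrams $X$ producing a fixed $K$ equals $\mathrm{wt}(K)$, which splits into the two cases built into the definition of the weight. If $v(K)\geq 1$, the $2v(K)$ vertical through-strands pair up under the vertical symmetry, and reconstructing $X$ amounts to choosing the level along these through-strands at which the cut separating the left-end points from the right-end points is taken, giving exactly $v(K)$ admissible reduced diagrams. If $v(K)=0$ there is no through-strand, so $K$ splits into an upper part and a lower part that are reconstructed independently; the number of ways to distribute the arches crossing the vertical middle line between left-end and right-end points is $v^{\uparrow}(K)$ for the upper part and $v^{\downarrow}(K)$ for the lower part, and since the two halves decouple the fiber has size $v^{\uparrow}(K)v^{\downarrow}(K)$. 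The main obstacle is precisely this bookkeeping, as in the converse construction of Proposition \ref{prop:2SCVnr}: one must verify that distinct cut levels and distinct boundary distributions yield distinct reduced diagrams, that every reduced diagram arises in this way, and that the admissibility coming from (\ref{eq:condA}) together with the odd/even labelling of the end points is respected throughout. Summing the fiber sizes over $K\in\mathcal{K}_{n}^{(r)}$ then gives $\dim(2\text{-}\mathbb{BFC}_{n}^{(r)})=\sum_{K\in\mathcal{K}_{n}^{(r)}}\mathrm{wt}(K)=K_{n}^{(r)}$, as claimed.
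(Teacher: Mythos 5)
Your proposal follows essentially the same route as the paper: identify the reduced non-crossing diagrams as a basis, group them by the symmetric doubled/folded diagram $K\in\mathcal{K}_{n}^{(r)}$, and show the fiber over each $K$ has size $\mathrm{wt}(K)$ by splitting into the cases $v(K)\ge 1$ (giving $v(K)$ choices) and $v(K)=0$ (giving $v^{\uparrow}(K)v^{\downarrow}(K)$ choices). One minor imprecision: in the case $v(K)\ge 1$ the paper converts the $i$ leftmost through-strands, $0\le i\le v(K)-1$, into pairs of left-end points while the remaining through-strands stay vertical (they do not become right-end points, which instead arise from the arches cut by the middle line), but this does not affect your count of $v(K)$.
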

\begin{proof}
An element $U$ in $2$-$\mathbb{BFC}_{n}^{(r)}$ is a diagram such that it has 
$n$ top and $n$ bottom points, each point has $r$ strands, and several right-end 
and left-end points. 

Let $K$ be a diagram in $\mathcal{K}_{n}^{(r)}$. By definition, $K$ is symmetric 
along a vertical line in the middle.
First, suppose $v(K)\ge1$. Then, we cut $K$ by a vertical line in the middle. 
A new diagram has several right-end points, and $v(K)$ vertical strands.
We cut $i$, $0\le i\le v(K)-1$, vertical lines from left in the middle, and make them 
$2i$ left-end points. This gives an element $U$.
Secondly, suppose $v(K)=0$.
We cut $K$ along the vertical line in the middle. By definition, we have 
$v^{\uparrow}(K)+v^{\downarrow}(K)-2$ arches which cross the vertical line.
As in the proof of Proposition \ref{prop:2SCVnr}, we have $v^{\uparrow}(K)v^{\downarrow}(K)$
ways to have right-end and left-end points.
Each gives an element $U$.

Conversely, one can construct $K$ from an element $U$.
Let $U^{\vee}$ be the mirror image of $U$ along a vertical line.
We place $U^{\vee}$ right to $U$ and connect right-end points of $U$ and left-end points of $U^{\vee}$.
We denote by $K'$ this new diagram.
First, suppose that $K'$ has a vertical strand.
In this case, the number of left-end points of $K'$ is even, and we connect the $i$-th left-end point 
and the $N+1-i$-the left-end point by a strand, where $N$ is the number of left-end points. 
We do the same procedure for the right-end points of $K'$.
The new diagram $K$ is symmetric and in $\mathcal{K}_{n}^{(r)}$.
Secondly, suppose that $K'$ has no vertical strand.
Then, each left-end point starts from a bottom point or from a top point.
We connect the $i$-th left-end point from bottom and the $i$-th right-end point from bottom, both of which start from bottom points,
by an arch. Similarly, we do the same procedure for right-end and left-end points which start from top points.
The new diagram $K$ is symmetric and in $\mathcal{K}_{n}^{(r)}$.

We have one-to-$\mathrm{wt}(K)$ correspondence between a diagram $K$ and elements $U$'s.
This implies Eq. (\ref{eq:dim2BTL}). 
\end{proof}

\begin{example}
We consider the case $(n,r)=(4,2)$.
Let $K$ be a diagram in $\mathcal{K}_{4}^{(2)}$:
\begin{align*}
K:=
\tikzpic{-0.5}{[xscale=0.5,yscale=0.4]
\draw(0,0)..controls(0,1)and(2,1)..(2,0);
\draw(0.5,0)..controls(0.5,0.5)and(1.5,0.5)..(1.5,0);
\draw(9,0)..controls(9,1)and(11,1)..(11,0);
\draw(9.5,0)..controls(9.5,0.5)and(10.5,0.5)..(10.5,0);
\draw(3,0)..controls(3,1)and(0,3)..(0,4);
\draw(8,0)..controls(8,1)and(11,3)..(11,4);
\draw(3.5,0)--(3.5,4)(7.5,0)--(7.5,4);
\draw(0.5,4)..controls(0.5,3)and(1.5,3)..(1.5,4);
\draw(2,4)..controls(2,3)and(3,3)..(3,4);
\draw(8,4)..controls(8,3)and(9,3)..(9,4);
\draw(9.5,4)..controls(9.5,3)and(10.5,3)..(10.5,4);
\draw(4.5,0)--(4.5,4)(6.5,0)--(6.5,4);
\draw(5,0)..controls(5,1)and(6,1)..(6,0);
\draw(5,4)..controls(5,3)and(6,3)..(6,4);
\draw[red](5.5,-0.3)--(5.5,4.3);
}
\end{align*}
The diagram $K$ is symmetric along the vertical red line in the middle. 
Since the diagram $K$ has $v(K)=3$, $K$ yields the following three 
diagrams in $2\mathbb{BFL}_{4}^{(2)}$:
\begin{align*}
\tikzpic{-0.5}{[xscale=0.5,yscale=0.4]
\draw(0,0)..controls(0,1)and(2,1)..(2,0);
\draw(0.5,0)..controls(0.5,0.5)and(1.5,0.5)..(1.5,0);
\draw(3,0)..controls(3,1)and(0,3)..(0,4);
\draw(3.5,0)--(3.5,4);
\draw(0.5,4)..controls(0.5,3)and(1.5,3)..(1.5,4);
\draw(2,4)..controls(2,3)and(3,3)..(3,4);
\draw(4.5,0)--(4.5,4);
\draw(5,0)..controls(5,1)and(5.4,0.8)..(5.5,0.8)node{$\bullet$};
\draw(5,4)..controls(5,3)and(5.4,3.2)..(5.5,3.2)node{$\bullet$};
}
\qquad
\tikzpic{-0.5}{[xscale=0.5,yscale=0.4]
\draw(0,0)..controls(0,1)and(2,1)..(2,0);
\draw(0.5,0)..controls(0.5,0.5)and(1.5,0.5)..(1.5,0);
\draw(3,0)..controls(3,1)and(-0.2,1.3)..(-0.5,1.3)node{$\bullet$};
\draw(0,4)..controls(0,3.5)and(-0.2,3)..(-0.5,3)node{$\bullet$};
\draw(3.5,0)--(3.5,4);
\draw(0.5,4)..controls(0.5,3)and(1.5,3)..(1.5,4);
\draw(2,4)..controls(2,3)and(3,3)..(3,4);
\draw(4.5,0)--(4.5,4);
\draw(5,0)..controls(5,1)and(5.4,0.8)..(5.5,0.8)node{$\bullet$};
\draw(5,4)..controls(5,3)and(5.4,3.2)..(5.5,3.2)node{$\bullet$};
}
\qquad
\tikzpic{-0.5}{[xscale=0.5,yscale=0.4]
\draw(0,0)..controls(0,1)and(2,1)..(2,0);
\draw(0.5,0)..controls(0.5,0.5)and(1.5,0.5)..(1.5,0);
\draw(3,0)..controls(3,1)and(-0.2,1.3)..(-0.5,1.3)node{$\bullet$};
\draw(0,4)..controls(0,3.5)and(-0.2,3)..(-0.5,3)node{$\bullet$};
\draw(4.5,0)--(4.5,4);
\draw(0.5,4)..controls(0.5,3)and(1.5,3)..(1.5,4);
\draw(2,4)..controls(2,3)and(3,3)..(3,4);
\draw(3.5,0)..controls(3.5,1)and(-0.3,1.7)..(-0.5,1.7)node{$\bullet$};
\draw(3.5,4)..controls(3.5,2)and(-0.3,2.3)..(-0.5,2.3)node{$\bullet$};
\draw(5,0)..controls(5,1)and(5.4,0.8)..(5.5,0.8)node{$\bullet$};
\draw(5,4)..controls(5,3)and(5.4,3.2)..(5.5,3.2)node{$\bullet$};
}
\end{align*}
Note that these three diagrams have at least one vertical strands, and 
the right-end (resp. left-end) points are right (resp. left) to the vertical strands.  
The three diagrams correspond to the elements $E_1^{(2)}E_2^{(1)}E_4^{(1)}$, 
$E_{1}^{(2)}E_2^{(1)}E_{0}^{(1)}E_4^{(1)}$, and $E_1^{(2)}E_{0}^{(2)}E_2^{(2)}E_{1}^{(1)}E_4^{(1)}$ 
from left to right.
\end{example}

The $2$-$\mathbb{BFC}_{n}^{(r)}$ contains $1$-$\mathbb{BFC}_{n}^{(r)}$ and $\mathbb{TL}_{n}^{(r)}$
as subalgebras.
More precisely, let $\mathcal{A}_{n}^{(r)}$ be the set of chord diagrams $C\in2\text{-}\mathcal{SC}_{n}^{(r)}$ 
such that $C$ is a diagram without neither right-end and left-end points. 
Note that $\mathcal{A}_{n}^{(r)}$ is bijective to the set $\mathcal{C}_{n}^{(r)}$.
We have $\mathbb{TL}_{n}^{(r)}$ if we take the set of generators 
$\{E_{i}^{(s)} :1\le i\le n-1, 1\le s\le r\}$ and the set $\mathcal{A}_{n}^{(r)}$. 

Let $\mathcal{B}_{n}^{(r)}$ be the set of chord diagrams $C\in2$-$\mathcal{SC}_{n}^{(r)}$ such 
that $C$ is a diagram without left-end points.
Then, the set $\mathcal{B}_{n}^{(r)}$ is bijective to the set red-$\mathcal{SC}_{n}^{(r)}$.
We have $1$-$\mathbb{BFC}_{n}^{(r)}$ if we take the set of generators $\{E_{i}^{(s)}:1\le i\le n, 1\le s\le r\}$
and the set $\mathcal{B}_{n}^{(r)}$.
Similarly, suppose that $\mathcal{B'}_{n}^{(r)}$ is the set of chord diagrams without right-end points.
Then, if we take the set of generators $\{E_{i}^{(s)}: 0\le i\le n-1, 1\le s\le r\}$ and 
the set $\mathcal{B'}_{n}^{(r)}$, we have another $1$-$\mathbb{BFC}_{n}^{(r)}$.

\subsection{Two algebras \texorpdfstring{$2$-$\mathbb{SNC}_{n}$}{2SNCn} 
and \texorpdfstring{$2$-$\mathbb{BFC}_{n}^{(1)}$}{2BFCn}}
In Section \ref{sec:2SNCn}, we have introduced the algebra $2$-$\mathbb{SNC}_{n}$ 
acting on symmetric non-crossing partitions with primed integers.
In Section \ref{sec:defn2BTL}, we have introduced the two-boundary Fuss--Catalan 
algebra $2$-$\mathbb{BFC}_{n}^{(r)}$, $r\ge1$, whose defining relations are 
obtained by diagrammatic calculations.

We first consider the two-boundary Temperley--Lieb algebra and $2$-$\mathbb{SNC}_{n}$.
The next theorem connects $2$-$\mathbb{SNC}_{n}$ with $2$-$\mathbb{BFC}_{n}^{(1)}$.
 
\begin{theorem}
\label{thrm:2SNCiso2BTL1}
The two algebras $2$-$\mathbb{SNC}_{n}$ and $2$-$\mathbb{BFC}_{n}^{(1)}$ are 
isomorphic. We have $G_i\mapsto E_{i}$.
\end{theorem}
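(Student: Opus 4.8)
The plan is to mirror the proof of Theorem~\ref{thrm:isoSNC1BTL}, upgrading the bijection $\Psi$ of Proposition~\ref{prop:SNCtoSC} to a bijection $\Psi'$ that records the primed integers, and then to verify generator-by-generator that $\Psi'$ intertwines $G_i$ with $E_i$. Concretely, I would first extend $\Psi$ to a bijection $\Psi'$ from $\mathcal{SNC'}_{n}$ onto the set of reduced two-boundary symmetric chord diagrams, i.e. the left halves of diagrams in $\mathcal{SC}_{n}^{(1)}$ in which each half-arch crossing the central line terminates at a right- or left-end point, declaring that an \emph{unprimed} symmetric chord of $\Psi(\pi)$ is cut into a half-arch ending at a right-end point while a \emph{primed} symmetric chord is cut into a half-arch ending at a left-end point. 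Once this bijection is in place, the isomorphism $G_i\mapsto E_i$ follows formally, exactly as in Theorem~\ref{thrm:isoSNC1BTL}, from the operator identities $G_i=\Psi'^{-1}E_i\Psi'$ for $0\le i\le n$.

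The first step is to check that $\Psi'$ is well-defined and bijective. Here the linear order $>_1$ of Eq.~(\ref{eq:lo1}) and the nesting remark for $\mathcal{SNC'}_{n}$ do the work: because a symmetric chord carries a dot only if every symmetric chord outer than it also carries one, the half-arches ending at left-end points are precisely the outermost ones, so the resulting reduced diagram is admissible and its left/right-end data is recovered uniquely. This is just the diagrammatic translation of the $\{\pm\}^{n}$ description in the remark following Definition~\ref{defn:SNCdash}, and it already identifies $|\mathcal{SNC'}_{n}|=2^{n}$ with the number of such reduced diagrams.

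With $\Psi'$ fixed, I would verify $G_i=\Psi'^{-1}E_i\Psi'$ case by case. For $1\le i\le n$ the prime-\emph{free} cases are exactly those already handled in Theorem~\ref{thrm:isoSNC1BTL}, so only the prime-dependent alternatives in Eqs.~(\ref{eq:gi2}) and (\ref{eq:gn2}) are new: one checks that merging two symmetric chords both of which are primed reproduces the diagrammatic weight $\tau'_{p}$ (a strand joining two left-type ends), that the mixed primed/unprimed branch produces the weight $\theta$ coming from relation~($\star$1) (a strand joining a left-end point to a right-end point), and that $g_n$ on a primed central chord matches $E_n^{(1)}$ via the $\theta\pi^{(4)}$ branch. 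The genuinely new generator is $G_0=g_0$: I would match the three branches of Eq.~(\ref{eq:g0}) with the action of $E_0$ at the left boundary, so that $n_1=1$ with $1$ primed gives the loop weight $-(q_0+q_0^{-1})$ (an even-to-odd left-end strand), $n_1=1$ with $1$ unprimed gives $\theta\pi^{(5)}$ (conversion of a right-end half-arch into a primed, left-end one via ($\star$1)), and $n_1\neq1$ gives the block-splitting diagram $\pi^{(6)}$.

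The main obstacle will be the scalar bookkeeping that ties the two boundaries together: one must confirm that every weight $\theta$, $\tau'_{p}$, $-(q_0+q_0^{-1})$, and $qq_0^{-1}+q^{-1}q_0$ produced algebraically by $g_0$, $g_n$, and the $g_p$ coincides with the diagrammatic removal rule it is meant to encode, and, crucially, that each generator action preserves conditions (a) and (b) of Definition~\ref{defn:SNCdash} so that $G_i\pi$ remains in $\mathcal{SNC'}_{n}$ and $\Psi'$ continues to apply. Since the $E_i$ are local, acting trivially away from positions $i,i+1$ or the relevant boundary, these verifications reduce to the finitely many local configurations above, after which the identities $G_i=\Psi'^{-1}E_i\Psi'$ yield the isomorphism.
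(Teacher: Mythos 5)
Your proposal matches the paper's proof essentially step for step: the paper first establishes the bijection $\mathcal{SNC'}_{n}\xrightarrow{\sim}2\text{-}\mathcal{SC}_{n}^{(1)}$ (Lemma \ref{lemma:SNCiso2SC}) by exactly your recipe — dot the symmetric arches corresponding to primed integers, cut along the middle, and bend dotted half-arches into left-end points — and then verifies compatibility of $G_{0}$, $G_{n}$, and $G_{i}$ ($1\le i\le n-1$) with $E_{0}$, $E_{n}$, $E_{i}$ branch by branch, matching each scalar ($\tau$, $\tau_{i}$, $\tau'_{i}$, $\theta$, $-(q_{0}+q_{0}^{-1})$, $-(q_{n}+q_{n}^{-1})$) to the corresponding diagrammatic removal rule. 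The approach and the case decomposition are the same, so no further comparison is needed.
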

To prove Theorem \ref{thrm:2SNCiso2BTL1}, we first show that 
the set $\mathcal{SNC'}_{n}$ is bijective to the set $2$-$\mathcal{SC}_{n}^{(1)}$.

\begin{lemma}
\label{lemma:SNCiso2SC}
There exists a bijection $\mathcal{SNC'}_{n}\xrightarrow{\sim}2$-$\mathcal{SC}_{n}^{(1)}$.
\end{lemma}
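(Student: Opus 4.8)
The plan is to realize the bijection geometrically by ``bending'' the boundary decorations of a reduced symmetric chord diagram, using the already-established correspondences between symmetric non-crossing partitions and symmetric chord diagrams. Recall from Proposition \ref{prop:SNCtoSC} (with $\epsilon=0$) that $\Psi$ restricts to a bijection between $\mathcal{SNC}_{n}$ and the symmetric chord diagrams $\mathcal{SC}_{n}$. Under $\Psi$, the symmetric chords of $C=\Psi(\pi)$, i.e.\ those crossing the vertical midline, correspond exactly to the elements of $\mathcal{E}^{sym}(\pi)$, and their outer-to-inner ordering is precisely the linear order $>_{1}$ on $\mathcal{E}^{sym}(\pi)$, as noted in the remark following the definition of $\mathcal{SNC}'_{n}$. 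Hence an element of $\mathcal{SNC}'_{n}$ is the same data as a diagram $C\in\mathcal{SC}_{n}$ together with a choice of a $>_{1}$-initial segment $s_{1}>_{1}\cdots>_{1}s_{k}$ of symmetric chords to be dotted (primed).

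First I would pass to the reduced picture: cutting $C$ along the vertical midline turns each symmetric chord into a right-end point and identifies $C$ with a reduced symmetric chord diagram $\widetilde{C}\in\text{red-}\mathcal{SC}_{n}$ by Lemma \ref{lemma:SCredSC}, now carrying dots on the right-end points coming from primed chords. Then I would define the map to $2$-$\mathcal{SC}_{n}^{(1)}$ by converting every \emph{dotted} right-end point into a left-end point, bending the corresponding half-arch around to terminate on the left boundary, while keeping every undotted right-end point on the right boundary and leaving all genuine arches unchanged. The inverse map simply records the left-end points of a diagram in $2$-$\mathcal{SC}_{n}^{(1)}$ as the primed (dotted) symmetric chords and the right-end points as the undotted ones, reconstructing $\widetilde{C}$ together with its dots and hence, by the previous paragraph, the element of $\mathcal{SNC}'_{n}$.

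The step I expect to be the main obstacle is verifying that this bending always yields a legal element of $2$-$\mathcal{SC}_{n}^{(1)}$ and that every such element arises exactly once. The key point is that, in a non-crossing diagram, a strand may escape to the left boundary only if every symmetric chord lying outside it also escapes to the left; thus the set of left-end points must be \emph{outer-closed}. This is exactly the down-closedness (in $>_{1}$) imposed on the set of primed integers in the definition of $\mathcal{SNC}'_{n}$, so the two admissibility conditions coincide. Moreover condition (\ref{eq:condA}) is preserved, since bending a chord to the boundary changes neither endpoint nor its parity.

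As bookkeeping, I would record each bottom point by $+$ or $-$ according to whether its strand heads toward the right or the left boundary. On the $\mathcal{SNC}'_{n}$ side this recovers precisely the encoding $\mathcal{SNC}'_{n}\cong\{\pm\}^{n}$ of the earlier remark, giving $|\mathcal{SNC}'_{n}|=2^{n}$; on the $2$-$\mathcal{SC}_{n}^{(1)}$ side the same reading shows that a diagram is determined by, and determines, an arbitrary sign vector in $\{\pm\}^{n}$, so that the constructed map is a bijection and $|2\text{-}\mathcal{SC}_{n}^{(1)}|=2^{n}$ as well. This completes the identification $\mathcal{SNC}'_{n}\xrightarrow{\sim}2\text{-}\mathcal{SC}_{n}^{(1)}$.
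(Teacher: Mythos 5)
Your proposal is correct and follows essentially the same route as the paper's proof: forget the primes to obtain a symmetric chord diagram via $\Psi$, dot the arches corresponding to primed integers, cut along the vertical midline, and bend the dotted half-arches to the left boundary, with the admissibility of the resulting diagram (outer-closedness of the left-end points) matching the $>_{1}$-initial-segment condition on primes. The sign-vector bookkeeping at the end is a nice extra consistency check that the paper relegates to a remark, but the core construction and its inverse are the same.
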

\begin{proof}
Suppose we have a non-crossing partition $\pi\in\mathcal{SNC'}_{n}$.
Recall that a chord diagram $C\in2\text{-}\mathcal{SC}_{n}^{(1)}$ 
consists of $n$ points labeled $1,2,\ldots,n$, arches connecting labeled points,
and strands with a left-end and right-end point.
We will construct $C$ from $\pi$.
By definition, if we forget the primes on integers in $\pi$, we have a symmetric chord diagram $\overline{C}$
corresponding to $\pi$. If an integer $i$ in $\pi$ has a prime, we put a ``dot" ($\bullet$) on the 
arch connecting $i$ and $j'$ for some $j'$.   
By the condition on primes, we have a dot only on symmetric arches which intersect the vertical line 
in the middle. Further, the linear order (\ref{eq:lo1}) and the set $S^{sym}(\pi)$ 
imply that we have dots on several outer-most symmetric arches.
We cut the diagram $\overline{C}$ by a vertical line in the middle, and obtain a new diagram $C'$.
The diagram has arches and strands with a right-end point. 
Note that there is no strand with a left-end point in $C'$.
Some strands with a right-end point have a dot. Therefore, we bend strands with a dot leftward such that 
they become strands with a left-end point. In this way, we obtain a diagram $C\in2\text{-}\mathcal{SC}_{n}^{(1)}$ 
from $C'$.
These operations are obviously invertible. We have a natural bijection between $\mathcal{SNC'}_{n}$ 
and $2$-$\mathcal{SC}_{n}^{(1)}$.
\end{proof}

\begin{proof}[Proof of Theorem \ref{thrm:2SNCiso2BTL1}]
From Lemma \ref{lemma:SNCiso2SC}, the vector spaces on which the two algebra act are isomorphic.
It is enough to show that the action of $G_{i}$ on $\mathcal{SNC'}_n$ is 
compatible with the action of $E_{i}$ on $2\text{-}\mathcal{SC}_{n}^{(1)}$.

We first compare the actions of $G_{0}$ on $\pi$ and $E_{0}$ on $C$.
We have three cases from Eq. (\ref{eq:g0}). 
In the first case, the conditions that $n_1=1$ and $1$ is primed in $\pi$ imply 
that $C$ has a strand starting from the point $1$ and with a left-end point. 
The diagram calculation implies that the action of $E_0$ on $C$ gives  $-(q_0+q_0^{-1})C$ 
since we have a strand from an even left-end point to an odd left-end point. 
In the second case, the condition on $\pi$ implies that $C$ has a strand starting from the point $1$ and 
with a right-end point. By the diagram calculation, the action of $E_{0}$ on $C$ gives $\theta C'$ 
where $C'$ is obtained from $C$ by changing the strand from $1$ with a right-end point to a strand 
from $1$ with a left-end point. This is compatible with $\pi^{(5)}$. The factor $\theta$ comes from 
a horizontal strand from a left-end point to a right-end point.
In the third case, the condition on $\pi$ implies that $C$ has an arch from the point $1$ to a primed point 
$j'$ with $j\le n/2$. Thus, $C$ has two non-symmetric arches connecting $1$ and $j'$, and $n+1-j$ and $n'$.
By the diagram calculation, the action of $E_{0}$ on $C$ gives a new diagram $C'$ having 
two strands with a left-end points. In terms of arches, $C'$ has two symmetric arches 
with a dot which connect $1$ and $n'$, and $n+1-j$ and $j'$. 
By the bijection in Lemma \ref{lemma:SNCiso2SC}, the diagram $C'$ gives 
the partition $\pi^{(6)}$.

Secondly, we compare the action of $G_n$ on $\pi$ with that of $E_{n}$ on $C$ as above.
We have four cases from Eq. (\ref{eq:gn2}).
In the first case, the condition on $\pi$ implies that $C$ does not have an arch 
connecting the primed point $(n/2)'$ and the point $n/2+1$ since $|B(n/2+1)|\neq1$.
By the diagram calculation, the action of $E_{n}$ gives a new diagram $C'$ which has
two strands with a right-end point.
It is easy to verify that these two strands correspond to the two blocks $B_1=\{n/2+1\}$ and 
$B(n/2+1)\setminus B_1$. The diagram $C'$ corresponds to $\pi^{(2)}$ by the bijection in Lemma \ref{lemma:SNCiso2SC}.
The second case can be verified in a similar manner to the first case. The main difference is that 
we merge two blocks rather than dividing a block.
In the third case, the condition $\pi$ implies that $C$ has a strand from the point $\lfloor n/2+1\rfloor$ with 
a left-end point. By a diagram calculation, the action of $E_{n}$ on $C$ gives a new diagram $C'$ which 
has a strand from the right-most point with a right-end point and a strand $s$ which has both left-end and right-end 
points.  
The strand $s$ gives the factor $\theta$ by the condition ($\star1$). 
The diagram $C'$ corresponds to $\pi^{(4)}$ by the bijection in Lemma \ref{lemma:SNCiso2SC}.
In the fourth case, the condition on $\pi$ implies that $C$ has a strand from the right-most point with 
a right-end point. It is easy to verify that the action of $E_n$ on $C$ gives $-(q_n+q_{n}^{-1})C$.

Finally, we consider $G_{i}$ and $E_{i}$ with $1\le i\le n-1$.
We have five cases from Eq. (\ref{eq:gi2}).
In the first case, the condition on $\pi$ implies that we have an arch connecting $1$ and $1'$ in $C$.
It is obvious that $E_{i}C=-(q+q^{-1})C$.
In the second case, the condition on $\pi$ implies that $C$ has two strands from the two left-most points 
with a right-end point. By a diagram calculation, the action of $E_i$ on $C$ gives a new diagram $C'$
such that it has an arch connecting the two left-most points, and a strand from an odd (resp. even) right-end point 
to an even (resp. odd) right-end point if $i\equiv1\pmod2$ (resp. $i\equiv0\pmod2$).
Then, it is clear that $C'$ corresponds to $\pi^{(1)}$ and has an overall factor $\tau_{i}$. 
In the third case, the condition on $\pi$ implies that $C$ has a strand from the point $1$ with a left-end point, and 
a strand from the point $2$ with a right-end point.
By a diagram calculation, $E_{i}C$ is a new diagram $C'$ which has an arch connecting the two left-most points, and 
a strand from a left-end point to a right-end point. From the condition ($\star1$), we have a factor $\theta$.
By the bijection in Lemma \ref{lemma:SNCiso2SC}, $C'$ corresponds to $\pi^{(1)}$. 
In the fourth case, the condition on $\pi$ implies that $C$ has two strands from the left-most points 
with a left-end point. By a diagram calculation, $E_{i}C$ gives  a new diagram $C'$ which has an arch connecting 
the two left-most points and a strand from an odd (resp. even) left-end point to an even (resp. odd) 
left-end point	if $i\equiv1\pmod2$ (resp. $i\equiv0\pmod2$). The strand from a left-end point to another left-end point
gives the factor $\tau'_{i}$. The diagram $C'$ corresponds to $\pi^{(1)}$ with the factor $\tau'_{i}$.
In the fifth case, the condition on $\pi$ implies that $C$ has an arch from the point $1$ to another point $j>2$.
The diagram $C$ has another arch from $2$ to $i$ with $3<i<j$. 
Then, the action of $E_{i}$ on $C$ gives a new diagram $C'$ which has arches from $1$ to $2$ and from $i$ to $j$.
It is clear that $C'$ corresponds to $\pi^{(1)}$ without an extra overall factor.

In all cases, the action of $E_i$ on $C$ coincides with that of $G_{i}$ on $\pi$. This completes the proof.
\end{proof}

\subsection{Realization of \texorpdfstring{$2$-$\mathbb{BFC}_{n}^{(r)}$}{2BFC} 
by \texorpdfstring{$2$-$\mathbb{SNC}_{n}^{(r)}$}{2SNC}}
We generalize Theorem \ref{thrm:2SNCiso2BTL1} to the case of $r\ge2$.

Let $\pi^{(r)}:=(\pi_1,\ldots,\pi_{r})$ be a symmetric $r$-chain of $\mathcal{SNC}_{n}$.
Let $C$ be a chord diagram in $\mathcal{SC}_{n}^{(r)}$ corresponding to $\pi^{(r)}$.
Below, we consider an $r$-chain of symmetric non-crossing partitions $\pi'^{(r)}$ with primed integers.
A primed integer corresponds to an arch with a ``dot" $\bullet$ in $C$.
Namely, if the integer $i$ has a prime in $\pi'^{(r)}$, then the corresponding arch connecting $i$ and $j'$
has a dot on it.  
Suppose that $C\in\mathcal{SC}_{n}^{(r)}$ has $m$ symmetric arches which intersect the vertical line in the middle.
Then, we say that the chord diagram $C$ is admissible, if the $k\le m$ outer-most symmetric arches have a dot.

\begin{defn}
\label{defn:SCdash}
We denote by $\mathcal{SC'}_{n}^{(r)}$ the set of admissible symmetric chord diagrams with dots.
\end{defn}

\begin{remark}
Two remarks are in order.
\begin{enumerate}
\item
We have two types of symmetric arches in a diagram in $\mathcal{SC'}_{n}^{(r)}$. 
Since the underlying chord diagram is 
in $\mathcal{SC}_{n}^{(r)}$, {\it i.e.}, the diagram is symmetric, a symmetric arch without a dot 
corresponds to a strand which has a right-end point.
Similarly, a symmetric arch with a dot corresponds to a strand which has a left-end point.
\item
The admissibility condition of a chord diagram implies that there is no strand with 
right-end (resp. left-end) point which is between two left-end (resp. right-end) points. 
\end{enumerate}
\end{remark}

\begin{theorem}
\label{thrm:2SNC2BTL}
The two algebras $2$-$\mathbb{SNC}_{n}^{(r)}$ and $2$-$\mathbb{BFC}_{n}^{(r)}$
are isomorphic. We have $G_{i}^{(s)}\mapsto E_{i}^{(s)}$.		
\end{theorem}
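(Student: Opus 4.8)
The plan is to reduce the general $r$ statement to the already-established single-boundary/single-color case, Theorem \ref{thrm:2SNCiso2BTL1}, by presenting both $2$-$\mathbb{SNC}_{n}^{(r)}$ and $2$-$\mathbb{BFC}_{n}^{(r)}$ as $r$-fold ``colored'' versions of their $r=1$ counterparts and then verifying that the generators act one color at a time. Since $2$-$\mathbb{SNC}_{n}^{(r)}$ acts on $\mathcal{SNC'}_{n}^{(r)}$ while $2$-$\mathbb{BFC}_{n}^{(r)}$ acts on $2$-$\mathcal{SC}_{n}^{(r)}$, the first task is to produce a bijection $\mathcal{SNC'}_{n}^{(r)}\xrightarrow{\sim}2$-$\mathcal{SC}_{n}^{(r)}$ that intertwines the two generator actions, after which the map $G_{i}^{(s)}\mapsto E_{i}^{(s)}$ can be promoted to an algebra isomorphism.

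To build the bijection I would generalize Lemma \ref{lemma:SNCiso2SC} through the superposition map $\Psi^{(r)}$ of Section \ref{sec:fbijNCC}. Starting from $\pi'^{(r)}=(\pi_1,\ldots,\pi_r)\in\mathcal{SNC'}_{n}^{(r)}$, I first forget the primes to obtain an increasing chain of symmetric non-crossing partitions; by Proposition \ref{prop:SNCtoSC} each $\Psi(\pi_s)$ is a symmetric chord diagram, and by Proposition \ref{prop:chainadd} their superposition is an admissible (non-crossing) symmetric generalized chord diagram in $\mathcal{SC}_{n}^{(r)}$. Restoring the primes as dots on the corresponding symmetric arches, I would check that conditions (a) and (b) of Definition \ref{defn:SNCdash} translate exactly into the admissibility condition of Definition \ref{defn:SCdash}, namely that the dots occupy the outer-most symmetric arches, so that the image lies in $\mathcal{SC'}_{n}^{(r)}$. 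Cutting along the vertical middle line then sends undotted symmetric arches to right-end strands and dotted ones to left-end strands, as in the Remark following Definition \ref{defn:SCdash}, yielding an element of $2$-$\mathcal{SC}_{n}^{(r)}$; each step is visibly invertible.

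Next I would verify that $G_{i}^{(s)}\mapsto E_{i}^{(s)}$ intertwines the actions colorwise. By Eq. (\ref{eq:defngis}), $G_{i}^{(s)}$ acts as $G_i$ on exactly $s$ of the partitions in the chain, the top $s$ when $i$ is odd and the bottom $s$ when $i$ is even, and as the identity on the remaining $r-s$. On the diagram side, Eq. (\ref{eq:picFis}) together with its $i=0,n$ analogues shows that $E_{i}^{(s)}$ applies the cap-cup/boundary generator on precisely $s$ bundled strands and leaves the other $r-s$ vertical, with the parity of $i$ selecting the inner $s$ strands from the appropriate side; under the embedding $\mathcal{C}_{n}^{(r)}\hookrightarrow\mathcal{C}_{rn}$ with $\sigma^{(r)}=\sigma^{r}$ this orientation flip is exactly the order reversal induced by the extended Kreweras endomorphism $\rho$ on the chain. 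Because the bijection of the previous paragraph is the colorwise superposition of the single-color bijection of Lemma \ref{lemma:SNCiso2SC}, applying Theorem \ref{thrm:2SNCiso2BTL1} to each affected color shows that each color of $G_{i}^{(s)}\pi'^{(r)}$ matches the corresponding strand-action of $E_{i}^{(s)}$, including all boundary weights governed by $q_0$, $q_n$, and $\theta$.

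The delicate point, and the step I expect to require the most care, is the bijection of the second paragraph: I must confirm that conditions (a) and (b) of $\mathcal{SNC'}_{n}^{(r)}$ are equivalent to admissibility in $\mathcal{SC'}_{n}^{(r)}$ uniformly across all $r$ colors, so that no dotted symmetric arch is ever nested inside an undotted one. A related subtlety is the relation ($\star$1): since $2$-$\mathbb{BFC}_{n}^{(r)}$ is infinite dimensional before ($\star$1) is imposed, I would confirm that the factor $\theta$ produced when a left-end strand meets a right-end strand matches precisely the $\theta$-cases of Eqs. (\ref{eq:gi2}), (\ref{eq:gn2}), and (\ref{eq:g0}). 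This is where the colorwise reduction does the real work, as Theorem \ref{thrm:2SNCiso2BTL1} already pins down every boundary weight in the single-color setting; once the bijection and the colorwise factorization are in place, the remaining checks are routine extensions of the $r=1$ diagram calculations.
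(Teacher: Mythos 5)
Your proposal is correct and follows essentially the same route as the paper: establish a bijection between $\mathcal{SNC'}_{n}^{(r)}$ and the admissible dotted symmetric chord diagrams by superposing the $r$ single-color bijections (with primes becoming dots on the outer-most symmetric arches), and then reduce the compatibility of $G_{i}^{(s)}$ with $E_{i}^{(s)}$ to the $r=1$ isomorphism of Theorem \ref{thrm:2SNCiso2BTL1} applied color by color. The points you flag as delicate (matching conditions (a)--(b) with admissibility, and the $\theta$ weights under ($\star$1)) are exactly the ones the paper's Lemma \ref{lemma:SNCdisoSCd} and its $r=1$ case already handle.
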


To prove Theorem \ref{thrm:2SNC2BTL}, we study the two vector spaces 
$\mathcal{SNC'}_{n}^{(r)}$ and $\mathcal{SC'}_{n}^{(r)}$.
\begin{lemma}
\label{lemma:SNCdisoSCd}
We have a bijection $\mathcal{SNC'}_{n}^{(r)}\xrightarrow{\sim}\mathcal{SC'}_{n}^{(r)}$.
\end{lemma}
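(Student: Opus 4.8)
The plan is to construct the bijection in two stages: first erase the extra decoration (primes on one side, dots on the other) and invoke the symmetric bijection already in hand, and then match the decorations back. Concretely, given $\pi'^{(r)}\in\mathcal{SNC'}_{n}^{(r)}$, condition (a) of Definition \ref{defn:SNCdash} says that forgetting the primes produces an honest increasing $r$-chain $\pi^{(r)}=(\pi_1,\ldots,\pi_r)\in\mathcal{SNC}_{n}^{(r)}$; dually, Definition \ref{defn:SCdash} says that forgetting the dots on a diagram in $\mathcal{SC'}_{n}^{(r)}$ yields a symmetric generalized chord diagram in $\mathcal{SC}_{n}^{(r)}$. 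By Proposition \ref{prop:SNCBn} (equivalently, the restriction to symmetric chains of the bijection $\Psi^{(r)}$ of Section \ref{sec:fbijNCC}, which lands in symmetric diagrams because each layer $\Psi(\pi_s)$ is symmetric by Proposition \ref{prop:SNCtoSC}), these undecorated objects are in canonical bijection. It therefore remains to show that the primes and dots correspond under this bijection.

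First I would identify which arches can carry a dot. In the construction of $\Psi$, a pair $(b,c)\in\mathcal{E}(\pi_s)$ produces the arch joining the point $b$ to the primed point $(c-1)'$, and this arch crosses the vertical line in the middle precisely when $b+c=n+2$, i.e. when $b\in\mathcal{E}^{sym}(\pi_s)$. Thus the integers eligible to be primed in the $s$-th member of the chain are exactly the integers indexing the symmetric arches contributed by the layer $\Psi(\pi_s)$ of the generalized diagram. Since a non-crossing generalized chord diagram has all of its middle-crossing arches nested, placing a prime on such an integer and placing a dot on the corresponding symmetric arch are mutually inverse operations, applied layer by layer for $1\le s\le r$.

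The substantive point is to match the admissibility hypotheses. The linear order $>_{1}$ of Eq. (\ref{eq:lo1}) is, by the Remark in Section \ref{sec:2SNCn} relating $>_{1}$ to the order of arches, nothing but the outer-to-inner ordering of symmetric arches within a single member; coupling this across the chain, the combined nesting of all symmetric arches of the generalized diagram is the merge of the $>_{1}$ order on bundle positions with the layer index $s$ (a larger $s$ giving a more inner arch at a fixed integer). Condition (b) of Definition \ref{defn:SNCdash} forces the primed integers to form an up-set for $>_{1}$ that only grows as one moves up the chain; translated through the previous paragraph and the Remark preceding Definition \ref{defn:SNCdash}, this is exactly the statement that the $k$ outermost symmetric arches of the generalized diagram carry dots, which is the admissibility condition of Definition \ref{defn:SCdash}. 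Hence the decorated object lands in $\mathcal{SC'}_{n}^{(r)}$, and conversely every admissible dotted diagram reads off a prime-assignment satisfying (a) and (b). Reversing the two stages gives a two-sided inverse, so the correspondence is a bijection.

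The step I expect to be the main obstacle is the bookkeeping in the third paragraph: one must check that the per-member order $>_{1}$ together with the chain coupling (b) reproduces \emph{exactly} the single nesting order of all middle-crossing arches, including how arches from distinct members $\pi_i$ and $\pi_{i+1}$ interleave. Condition (b) is phrased with a parity distinction (whether the maximal undecorated integer $m$ has odd or even parity, and whether $m$ itself ``may have a prime''), and one must verify that this parity clause tracks precisely whether the outermost not-yet-dotted symmetric arch sits just above or below the register at which a new dot appears as $s$ increases. I would dispose of this by writing out the two parities separately and, in each case, comparing the position of the arch indexed by $m$ against the dots already present, exactly as in the base case handled in Lemma \ref{lemma:SNCiso2SC}.
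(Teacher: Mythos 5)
Your proposal is correct and follows essentially the same route as the paper: first invoke the existing bijection $\mathcal{SNC}_{n}^{(r)}\xrightarrow{\sim}\mathcal{SC}_{n}^{(r)}$ on the undecorated objects (via Proposition \ref{prop:SNCBn}), then match primes with dots by comparing Definitions \ref{defn:SNCdash} and \ref{defn:SCdash}. The paper dismisses the decoration-matching step as ``clear by comparing'' the two definitions, whereas you actually carry out the comparison of the $>_{1}$ order with the nesting order of symmetric arches, so your write-up is, if anything, more complete than the paper's.
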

\begin{proof}
From Definition \ref{defn:Bn} and Proposition \ref{prop:SNCBn}, we have a bijection 
between $\mathcal{SNC}_{n}^{(r)}$ and $\mathcal{SC}_{n}^{(r)}$.
It is enough to show that a primed integer in a symmetric non-crossing partition
is bijective to an arch with a dot in a symmetric chord diagram.
However, this correspondence is clear by comparing Definition \ref{defn:SNCdash} 
with Definition \ref{defn:SCdash}.
This completes the proof.
\end{proof}

\begin{proof}[Proof of Theorem \ref{thrm:2SNC2BTL}]
From Lemma \ref{lemma:SNCdisoSCd}, we have a bijection between the vector spaces 
on which two algebras act. It is enough to show that the action of $G_{i}^{(s)}$ on 
$\mathcal{SNC'}_{n}^{(r)}$ is compatible with the action of $E_{i}^{(s)}$ on 
$\mathcal{SC'}_{n}^{(r)}$.
From Theorem \ref{thrm:2SNCiso2BTL1}, we have an isomorphism in the case of $r=1$.
A diagram $C$ in $\mathcal{SC'}_{n}^{(r)}$ is a superposition of $r$ diagrams in $\mathcal{SC'}_{n}^{(1)}$.
The definition (\ref{eq:defnGis}) of $G_{i}^{(s)}$ implies that the action $g_{i}^{(s)}$ on $\pi^{(r)}$ 
is the identity on the $r-s$ non-crossing partitions, and $g_{i}$ on the $s$ non-crossing partition.
Further, the definition (\ref{eq:defngis}) implies that $G_{i}$ acts on $s$ outer-most arches of $C$.
This coincides with the definition of $E_{i}^{(s)}$, {\it i.e.}, the action of $E_{i}^{(s)}$ changes 
the connectivity of $s$ inner-most arches in $C$. These mean that the isomorphism in the case of $r=1$ is preserved 
by the superposition of the $r$ diagrams. 
Therefore, we have an isomorphism $G_{i}^{(s)}\mapsto E_{i}^{(s)}$.
This completes the proof.
\end{proof}

\section{Integrability}
\label{sec:RE}
\subsection{The Yang--Baxter equation and the reflection equation}
In what follows, we consider a two-dimensional statistical model. 
In this paper, we consider only the square lattice vertex models.
Then, the Yang--Baxter equation \cite{Bax82} is a sufficient condition 
for the existence of an infinite set of commuting transfer matrices 
for the model. 
The Yang--Baxter equation is explicitly given by 
\begin{align}
\label{eq:YBE}
R_{i}(w)R_{i+1}(wz)R_{i}(z)=R_{i+1}(z)R_{i}(wz)R_{i+1}(w),
\end{align}
with the normalization condition
\begin{align*}
R_{i}(w)R_{i}(1/w)=1.
\end{align*}
We impose a further condition 
\begin{align*}
R_{i}(1)=1,
\end{align*}
for the normalization of $R_{i}(w)$.

Similarly, the reflection equation together with the Yang--Baxter equation 
is a sufficient condition for the existence of an infinite set of commuting 
transfer matrices for the model with boundaries.
The reflection equation \cite{Skl88} is explicitly given by
\begin{align}
\label{eq:RKRK}
K_2(w)R_{1}(1/(wz))K_2(z)R_{1}(w/z)=R_{1}(w/z)K_2(z)R_{1}(1/(wz))K_2(w),
\end{align}
with the normalization condition
\begin{align*}
K_2(w)K_{2}(1/w)=1.
\end{align*}
As in the case of $R_{i}(w)$, we further impose the condition
\begin{align*}
K_2(1)=1,
\end{align*}
for the normalization of $K_2(w)$.

Similarly, the reflection equation for the left boundary is given by 
\begin{align}
\label{eq:RKRK2}
K_{0}(z)R_{1}(zw)K_{0}(w)R_{1}(w/z)=R_{1}(w/z)K_{0}(w)R_{1}(wz)K_0(z),
\end{align}
with the normalization conditions
\begin{align*}
&K_{0}(w)K_{0}(1/w)=1, \\
&K_{0}(1)=1.
\end{align*}
We focus on the reflection equation (\ref{eq:RKRK}) since Eq. (\ref{eq:RKRK2}) can 
be solved by a similar way.

\subsection{Solution for \texorpdfstring{$r=2$}{r=2}}
In \cite{Dif98}, Di Francesco obtained a solution of the Yang--Baxter equation (\ref{eq:YBE})
for $r\ge1$.
As in \cite{Dif98}, we consider the solution for $r=2$ which has the following form:
\begin{align}
\label{eq:R1}
R_{i}(w):=\mathbf{1}_{i}+r_{1}(w)E_i^{(1)}+r_2(w)E_{i}^{(2)},
\end{align}
where $\mathbf{1}_{i}$ is the identity. 
By solving the functional relations obtained from the Yang--Baxter equation (\ref{eq:YBE}),
one can obtain 
\begin{align}
\label{eq:rsol1}
r_1(w)=\tau^{-1}(w-1), \qquad r_{2}(w)=\genfrac{}{}{}{}{w(w-1)}{\tau^2-1-w}.
\end{align}
This solution was explicitly given in \cite{Dif98}.

Given the solution (\ref{eq:rsol1}) of the Yang--Baxter equation, a
solution of the reflection equation can be given as follows.
\begin{prop}
\label{prop:RE}
Let $r=2$.
If $\tau \tau_e\neq \tau_o$, $\tau \tau_o\neq\tau_e$ and $\tau^2\neq1$, 
the solution of the reflection equation (\ref{eq:RKRK}) is given by
\begin{align}
\label{eq:K1}
K_{2}(w)=\mathbf{1}+k_1(w)E_2^{(1)}+k_2(w)E_2^{(2)},
\end{align}
where 
\begin{align}
\label{eq:k1k2sol1}
\begin{aligned}
k_1(w)=-\genfrac{}{}{}{}{C_2(w^2-1)(w-\tau_{o}C_1)}{w(1-2\tau_{o}\tau_{e}C_1C_2 w+\tau\tau_{e}^2C_1C_2w+\tau_{e}C_{2}w^2)}, \\
k_2(w)=-\genfrac{}{}{}{}{\tau C_1C_2(w^2-1)}{w(1-2\tau_{o}\tau_{e}C_1C_2 w+\tau\tau_{e}^2C_1C_2w+\tau_{e}C_{2}w^2)},
\end{aligned}
\end{align}
and
\begin{align}
C_1=\pm\sqrt{\genfrac{}{}{}{}{\tau\tau_{o}-\tau_{e}}{(\tau^2-1)\tau_{e}\tau_{o}(\tau\tau_{e}-\tau_{o})}},
\quad C_2=-\genfrac{}{}{}{}{\tau^2-1}{\tau\tau_{o}-\tau_{e}}.
\end{align}
Similarly, if $\tau\tau_e=\tau_o$ or $\tau\tau_o=\tau_e$, then we have 
\begin{align}
\label{eq:REsol2}
k_1(w)=-\genfrac{}{}{}{}{w^2-1}{\tau_e w^2}, \quad
k_2(w)=\genfrac{}{}{}{}{\tau(w^2-1)}{\tau_o\tau_e w^2}.
\end{align}
\end{prop}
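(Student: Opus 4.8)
The plan is to substitute the ansatz (\ref{eq:R1}) for $R_1$ with the known coefficients (\ref{eq:rsol1}), together with the ansatz (\ref{eq:K1}) for $K_2$, directly into the reflection equation (\ref{eq:RKRK}), and to verify the resulting identity inside the one-boundary Fuss--Catalan algebra $1$-$\mathbb{BFC}_{2}^{(2)}$. Every term appearing on either side is a product of the generators $E_1^{(1)}, E_1^{(2)}, E_2^{(1)}, E_2^{(2)}$ together with $\mathbf{1}$, so both sides are finite linear combinations of such monomials whose coefficients are rational functions of $w$ and $z$. The first step is therefore to fix a spanning set of reduced monomials for the subalgebra generated by these four generators (the local algebra around the boundary site), using the diagrammatic calculus of Section \ref{sec:1bFC}; here the bulk closed loops contribute the factor $\tau=-(q+q^{-1})$ and the boundary strands contribute the weights $\tau_e=-(q_n+q_n^{-1})$ and $\tau_o=qq_n^{-1}+q^{-1}q_n$ recorded in Definition \ref{defn:1BTL} and Eq.~(\ref{eq:taup}).

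Second, I would expand both sides of (\ref{eq:RKRK}) and reduce each product of generators to this normal form, using the relations of order up to three in Theorem \ref{thrm:relF} for the interaction of $E_1^{(s)}$ and $E_2^{(s')}$, together with the boundary rules that absorb a closed loop or a boundary strand into a scalar. Equating the coefficients of each basis monomial on the two sides converts the operator identity into a finite system of functional equations relating $k_1(w), k_2(w), k_1(z), k_2(z)$ and the already-determined $r_1, r_2$. The symmetry of the equation under $w\leftrightarrow z$ and the inversion variable $1/(wz)$ force many of these equations to coincide or to be equivalent, so that the genuinely independent constraints are few.

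Third, I would solve the reduced system. The normalization $K_2(1)=\mathbf{1}$ forces $k_1(1)=k_2(1)=0$, which is visibly satisfied by the factor $w^2-1$ in (\ref{eq:k1k2sol1}), and the inversion relation $K_2(w)K_2(1/w)=\mathbf{1}$ fixes the overall denominators. The constants $C_1, C_2$ are then pinned down by the remaining consistency conditions: $C_2$ obeys a linear equation giving $C_2=-\frac{\tau^2-1}{\tau\tau_o-\tau_e}$, while $C_1$ obeys a quadratic, whence the $\pm$ square root, which is well defined precisely under the genericity hypotheses $\tau\tau_e\neq\tau_o$, $\tau\tau_o\neq\tau_e$ and $\tau^2\neq1$ that keep every denominator and the radicand finite and nonzero. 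When one of the degeneracies $\tau\tau_e=\tau_o$ or $\tau\tau_o=\tau_e$ occurs, the quadratic for $C_1$ collapses and the system must be re-solved from scratch, yielding the simpler closed form (\ref{eq:REsol2}), which I would then check directly against (\ref{eq:RKRK}).

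The hard part will be the second step: the algebraic reduction of the triple and quadruple products of $E_1^{(s)}$ and $E_2^{(s')}$ to a common normal form. Because the $r=2$ Fuss--Catalan relations of Theorem \ref{thrm:relF} branch on the values of the superscripts and generate the weights $\tau, \tau_e, \tau_o$ in several combinations, the bookkeeping of coefficients, and the verification that the two sides agree monomial by monomial after inserting (\ref{eq:rsol1}), is where essentially all of the work lies; the diagrammatic representations (\ref{eq:picFis}) and (\ref{eq:picFn}) of the generators are the tool I would rely on to keep this computation under control.
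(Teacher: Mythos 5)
Your plan coincides with the paper's proof: the paper likewise restricts to the four generators of $1$-$\mathbb{BFC}_{2}^{(2)}$, lists their relations (with the boundary weights $\tau_e$, $\tau_o$), substitutes (\ref{eq:R1}) and (\ref{eq:K1}) into (\ref{eq:RKRK}), extracts the coefficient equations of the monomials $E_1^{(1)}E_2^{(2)}$, $E_2^{(1)}E_1^{(2)}$, $E_2^{(2)}E_1^{(2)}$, $E_2^{(2)}E_1^{(2)}E_2^{(1)}$ together with the normalization conditions, and solves. The only step you leave implicit is how the constants $C_1$, $C_2$ actually arise — the paper obtains them by separating the variables $w$ and $z$ in the two-variable functional equations (a ratio depending only on $w$ equals one depending only on $z$, hence is constant), which is exactly the mechanism producing the case split into (\ref{eq:k1k2sol1}) versus the degenerate solution (\ref{eq:REsol2}).
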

\begin{proof}
Given a solution of the Yang--Baxter equation, we look for a solution of the reflection
equation (\ref{eq:RKRK}).
Since we consider the right boundary of the system, it is enough to 
consider $1$-$\mathbb{BFC}_{2}^{(2)}$ for simplicity. 
This algebra has only four generators 
$\{E_{i}^{(s)}: 1\le i\le 2, 1\le s\le2\}$.
They satisfy the following relations:
\begin{align*}
E_1^{(s)}E_{1}^{(s')}&=\tau^{\min\{s,s'\}}E_{1}^{(\max\{s,s'\})}, \\
E_2^{(s)}E_{2}^{(s')}&=\tau_{e}^{\min\{s,s'\}}E_2^{(\max\{s,s'\})},
\end{align*}
and 
\begin{align*}
&E_1^{(2)}E_2^{(2)}E_{1}^{(2)}=\tau_o^{2}E_1^{(2)}, 
\quad E_1^{(2)}E_2^{(2)}E_{1}^{(1)}=\tau_o E_1^{(2)}E_2^{(1)},
\quad E_{1}^{(2)}E_{2}^{(1)}E_1^{(2)}=\tau\tau_o E_{1}^{(2)}, \\
&E_1^{(2)}E_{2}^{(1)}E_{1}^{(1)}=\tau E_1^{(2)}E_{2}^{(1)}, 
\quad E_1^{(1)}E_{2}^{(2)}E_{1}^{(2)}=\tau_o E_2^{(1)}E_1^{(2)}, 
\quad E_1^{(1)}E_{2}^{(2)}E_{1}^{(1)}=\tau_o E_2^{(1)}E_1^{(1)}, \\
&E_1^{(1)}E_{2}^{(1)}E_{1}^{(2)}=\tau E_{2}^{(1)}E_1^{(2)}, 
\quad E_1^{(1)}E_{2}^{(1)}E_{1}^{(1)}=\tau E_2^{(1)}E_1^{(1)},
\end{align*}
where $\tau_{0}:=qq_2^{-1}+q^{-1}q_{2}$ and $\tau_{e}:=-(q_2+q_2^{-1})$.

We will find a solution of the reflection equation (\ref{eq:RKRK})
of the form (\ref{eq:K1}). 
By substituting the expressions (\ref{eq:R1}) and (\ref{eq:K1}) into the 
reflection equation (\ref{eq:RKRK}), we find that the coefficients $k_i(w)$ and $r_i(w)$
should satisfy the relations given below. 
From the coefficients of $E_1^{(1)}E_2^{(2)}$ or $E_2^{(2)}E_1^{(1)}$, we have 
\begin{align}
\label{eq:condK1}
\begin{aligned}
&r_1(w/z) k_2(w) + r_1(1/(zw)) k_2(w) + \tau\ r_1(w/z) r_1(1/(zw)) k_2(w) + \tau_e\ r_1(w/z) k_1(z) k_2(w)\\
&\ + \tau_e\ r_1(1/(zw)) k_1(z) k_2(w) + \tau\tau_e\  r_1(w/z) r_1(1/(zw)) k_1(z) k_2(w) + r_1(w/z) k_2(z) \\ 
&\ -r_1(1/(zw)) k_2(z)+ \tau_e\ r_1(w/z) k_1(w) k_2(z) - \tau_e\ r_1(1/(zw)) k_1(w) k_2(z) \\ 
&\ + \tau_e^2\ r_1(w/z) k_2(w)k_2(z) + \tau_{o}\tau_e\ r_1(w/z) r_1(1/(zw)) k_2(w) k_2(z)=0.
\end{aligned}
\end{align}
From the coefficients of $E_2^{(1)}E_1^{(2)}$ or $E_1^{(2)}E_2^{(1)}$, we have
\begin{align}
\label{eq:condK2}
\begin{aligned}
&r_2(w/z)k_1(w) + \tau\ r_1(1/(zw)) r_2(w/z) k_1(w) + r_2(1/(zw)) k_1(w) + \tau\ r_1(w/z) r_2(1/(zw)) k_1(w)\\
&\ + \tau^2\ r_2(w/z) r_2(1/(zw)) k_1(w) + r_2(w/z) k_1(z) + \tau\ r_1(1/(zw)) r_2(w/z) k_1(z)\\ 
&\ - r_2(1/(zw)) k_1(z) - \tau \ r_1(w/z) r_2(1/(zw)) k_1(z) + \tau_e\ r_2(w/z) k_1(w) k_1(z)\\ 
&\ + \tau \tau_e\ r_1(1/(zw)) r_2(w/z) k_1(w) k_1(z) + \tau \tau_o\ r_2(w/z) r_2(1/(zw)) k_1(w) k_1(z) \\
&\ + \tau_o\ r_1(1/(zw)) r_2(w/z) k_2(z) - \tau_o\ r_1(w/z) r_2(1/(zw)) k_2(z) \\
&\ + \tau_o\tau_e\ r_1(1/(zw)) r_2(w/z) k_1(w) k_2(z) + \tau_o^2\ r_2(w/z) r_2(1/(zw)) k_1(w) k_2(z)=0
\end{aligned}
\end{align}

From the coefficients of $E_2^{(2)}E_1^{(2)}$ or $E_1^{(2)}E_2^{(2)}$, we have 
\begin{align}
\label{eq:condK3}
\begin{aligned}
&r_2(w/z) k_2(w) + \tau\ r_1(1/(zw)) r_2(w/z) k_2(w) + r_2(1/(zw)) k_2(w) + \tau\ r_1(w/z) r_2(1/(zw)) k_2(w)\\
&\ + \tau^2\ r_2(w/z) r_2(1/(zw)) k_2(w) + \tau_e\ r_2(w/z) k_1(z) k_2(w) + \tau\tau_{e}\ r_1(1/(zw)) r_2(w/z) k_1(z) k_2(w)\\
&\ + \tau \tau_o\ r_2(wz) r_2(1/(zw)) k_1(z) k_2(w) + r_2(w/z) k_2(z) - r_2(1/(zw)) k_2(z)\\ 
&\ + \tau_e\ r_2(w/z) k_1(w) k_2(z) + \tau_e^2\ r_2(w/z) k_2(w) k_2(z) + \tau_o\tau_e\ r_1(1/(zw)) r_2(w/z) k_2(w) k_2(z) \\ 
&\ + \tau_o^2\ r_2(w/z) r_2(1/(zw)) k_2(w) k_2(z)=0.
\end{aligned}
\end{align}
From the coefficients of $E_2^{(2)}E_1^{(2)}E_2^{(1)}$ or $E_2^{(1)}E_1^{(2)}E_{2}^{(2)}$, we have 
\begin{align}
\label{eq:condK4}
\begin{aligned}
&r_2(1/(zw)) k_1(z) k_2(w)+\tau\ r_1(w/z) r_2(1/(zw)) k_1(z) k_2(w) \\
&\ - r_2(1/(zw)) k_1(w) k_2(z) + \tau_o\ r_1(w/z) r_2(1/(zw)) k_2(w) k_2(z)=0.
\end{aligned}
\end{align}
Further, from the normalization condition, we have 
\begin{align}
\label{eq:normK}
\begin{aligned}
&k_1(w) + k_1(1/w) + \tau_e\ k_1(w) k_1(1/w)=0, \\
&k_2(w) + \tau_e\ k_1(w)k_2(1/w) +k_2(1/w) + \tau_e\ k_1(1/w) k_2(w) + \tau_e^2 k_2(w)k_2(1/w)=0.
\end{aligned}
\end{align}

By substituting the expression (\ref{eq:rsol1}) into Eq. (\ref{eq:condK4}), we obtain
\begin{align*}
\tau w k_1(z)k_2(w)-\tau z k_1(w)k_2(z)+\tau_{o}w k_2(w)k_2(z)-\tau_{o} zk_2(w)k_2(z)=0.
\end{align*}
This equation is equivalent to 
\begin{align}
\label{eq:k1k21}
\tau k_1(w)+\tau_{o}k_2(w)=0,
\end{align}
or
\begin{align}
\label{eq:condK5}
\genfrac{}{}{}{}{wk_2(w)}{\tau k_1(w)+\tau_{o}k_2(w)}=\genfrac{}{}{}{}{zk_2(z)}{\tau k_1(z)+\tau_{o}k_2(z)}=C_{1},
\end{align}
where $C_1$ is a constant since the l.h.s. (resp. r.h.s) is a function of only $w$ (resp. $z$).

We first consider Eq. (\ref{eq:k1k21}). We have $k_2(w)=-\tau\tau_{o}^{-1}k_1(w)$. By substituting this relation 
into Eq. (\ref{eq:condK1}), we obtain 
\begin{align}
\tau_o+\tau\tau^2_ek_1(w)w^2-\tau_oz^2-2k_1(w)\tau_e\tau_ow^2=0,
\end{align}
or 
\begin{align}
C'_2 k_1(w)w=\tau_o+\tau\tau^2_ek_1(w)w^2-\tau_oz^2-2k_1(w)\tau_e\tau_ow^2,
\end{align}
where $C'_2$ is a constant. In both cases, by solving the equations with respect to $k_{1}(w)$, and 
substituting it into Eqs. (\ref{eq:condK2}), (\ref{eq:condK3}) and (\ref{eq:normK}),
we obtain Eq. (\ref{eq:REsol2}) with $C'_2=0$ and $\tau\tau_e=\tau_o$.
 
Secondly, we consider Eq. (\ref{eq:condK5}).
By solving Eq. (\ref{eq:condK5}) with respect to $k_2(w)$, we obtain 
\begin{align}
\label{eq:k2byk1}
k_2(w)=\genfrac{}{}{}{}{\tau C_1 k_1(w)}{w-\tau_{o}C_{1}}.
\end{align}
By substituting Eq. (\ref{eq:rsol1}) and Eq. (\ref{eq:k2byk1}) into Eq. (\ref{eq:condK1}), we have 
\begin{align}
\label{eq:condk1}
\begin{aligned}
&-\tau_{o} C_1 w k_1(w) + w z k_1(w) + \tau_{o} C_1 w z^2 k_1(w) - w z^3 k_1(w) \\
&\quad+ \tau_{o} C_1 z k_1(z) -  w z k_1(z) - \tau_{o} C_1 w^2 z k_1(z) 
+ w^3 z k_1(z) - 2 \tau_{o} \tau_{e} C_1 w^2 z k_1(w) k_1(z) \\ 
&\quad+ \tau \tau_{e}^2 C_1 w^2 z k_1(w) k_1(z) + \tau_{e} w^3 z k_1(w) k_1(z) 
+ 2 \tau_{o}\tau_{e}C_1 w z^2 k_1(w) k_1(z) \\ 
&\quad- \tau \tau_{e}^2 C_1 w z^2 k_1(w) k_1(z) - \tau_{e} w z^3 k_1(w) k_1(z)=0.
\end{aligned}
\end{align}
By a separation of variables, Eq. (\ref{eq:condk1}) is equivalent to 
either
\begin{align}
\label{eq:condk1null}
-\tau_{o}C_1+w +\tau_{o}C_1w^2-w^3+2\tau_{o}\tau_{e}C_1w^2k_1(w)-\tau\tau_{e}^{2}C_1w^2k_1(w)-\tau_{e}w^3k_1(w)=0
\end{align}
or
\begin{align}
\label{eq:condk12}
\genfrac{}{}{}{}{wk_1(w)}{-\tau_{o}C_1+w +\tau_{o}C_1w^2-w^3+2\tau_{o}\tau_{e}C_1w^2k_1(w)-\tau\tau_{e}^{2}C_1w^2k_1(w)-\tau_{e}w^3k_1(w)}
=C_2,
\end{align}
where $C_2$ is a constant.

We fist consider Eq. (\ref{eq:condk1null}). By solving Eq. (\ref{eq:condk1null}) for $k_1(w)$, and 
substituting the expression into Eqs. (\ref{eq:condK2}), (\ref{eq:condK3}) and (\ref{eq:normK}),
we obtain Eq. (\ref{eq:REsol2}) with $C_1=0$ and $\tau\tau_o=\tau_e$.

Finally, we consider Eq. (\ref{eq:condk12}).
From Eq. (\ref{eq:k2byk1}) and Eq. (\ref{eq:condk12}), we obtain 
\begin{align}
\label{eq:k1k2sol2}
\begin{aligned}
k_1(w)=-\genfrac{}{}{}{}{C_2(w^2-1)(w-\tau_{o}C_1)}{w(1-2\tau_{o}\tau_{e}C_1C_2 w+\tau\tau_{e}^2C_1C_2w+\tau_{e}C_{2}w^2)}, \\
k_2(w)=-\genfrac{}{}{}{}{\tau C_1C_2(w^2-1)}{w(1-2\tau_{o}\tau_{e}C_1C_2 w+\tau\tau_{e}^2C_1C_2w+\tau_{e}C_{2}w^2)}.
\end{aligned}
\end{align}
To fix constants $C_1$ and $C_2$, we substitute Eq. (\ref{eq:k1k2sol2}) into 
Eqs. (\ref{eq:condK2}), (\ref{eq:condK3}) and (\ref{eq:normK}).
These uniquely fix the values of $C_1$ and $C_2$ as 
\begin{align}
C_1&=\pm\sqrt{\genfrac{}{}{}{}{\tau\tau_{o}-\tau_{e}}{(\tau^2-1)\tau_{e}\tau_{o}(\tau\tau_{e}-\tau_{o})}}, 
\qquad C_2=-\genfrac{}{}{}{}{\tau^2-1}{\tau\tau_{o}-\tau_{e}}.
\end{align}
\end{proof}

\bibliographystyle{amsplainhyper} 
\bibliography{biblio}

\end{document}